\documentclass[11pt,oneside]{amsart}
\usepackage{bbm}
\usepackage{mathrsfs}

\usepackage[english]{babel}
\usepackage[utf8]{inputenc}
\usepackage[T1]{fontenc}

\usepackage[top=3cm,bottom=2cm,left=3cm,right=3cm,marginparwidth=1.75cm]{geometry}
\usepackage{physics}

\usepackage{amsmath,amssymb}
\usepackage{esint}
\usepackage{graphicx}
\usepackage[dvipsnames]{xcolor}
\usepackage[colorlinks=true,allcolors=BlueViolet, pagebackref=true]{hyperref}
\renewcommand*\backref[1]{\ifx#1\relax \else (Cited on p.#1) \fi}
\hypersetup{hypertexnames=false}
\usepackage{cleveref}

\usepackage[shortlabels]{enumitem}

\usepackage[all,cmtip]{xy}

\usepackage{relsize}
\usepackage{subcaption}
\usepackage{dsfont}

\usepackage{import}
\graphicspath{{Figures/}}

\usepackage{xspace} 
\usepackage{cancel}
\usepackage{fancyvrb}

\DeclareFontFamily{U}{BOONDOX-calo}{\skewchar\font=45 }
\DeclareFontShape{U}{BOONDOX-calo}{m}{n}{
  <-> s*[1.05] BOONDOX-r-calo}{}
\DeclareFontShape{U}{BOONDOX-calo}{b}{n}{
  <-> s*[1.05] BOONDOX-b-calo}{}
\DeclareMathAlphabet{\mathbdx}{U}{BOONDOX-calo}{m}{n}
\SetMathAlphabet{\mathbdx}{bold}{U}{BOONDOX-calo}{b}{n}
\DeclareMathAlphabet{\mathbbdx}{U}{BOONDOX-calo}{b}{n}

\newtheorem{theorem}{Theorem}[subsection]
\newtheorem{proposition}[theorem]{Proposition}
\newtheorem{corollary}[theorem]{Corollary}
\newtheorem{lemma}[theorem]{Lemma}

\newtheorem{yauconjecture}{Yau's conjecture}
\newtheorem{probyauconjecture}{The Probabilistic Yau's conjecture}
\newtheorem{definition}[theorem]{Definition}

\newtheorem{mainthm}{Theorem}

\theoremstyle{remark} 

\newtheorem{remark}[theorem]{Remark}

\crefname{equation}{Equation}{Equations}
\crefname{gather}{Equation}{Equations}
\crefname{multline}{Equation}{Equations}
\crefname{figure}{Figure}{Figures}
\crefname{question}{Question}{Question}
\crefname{section}{Section}{Sections}
\crefname{subsection}{Subsection}{Subsections}
\crefname{appendix}{Appendix}{Appendices}
\crefname{lemma}{Lemma}{Lemmas}
\crefname{proposition}{Proposition}{Propositions}
\crefname{theorem}{Theorem}{Theorems}
\crefname{innercustomthm}{Theorem}{Theorems}
\crefname{mainthm}{Theorem}{Theorems}
\crefname{corollary}{Corollary}{Corollaries}
\crefname{definition}{Definition}{Definitions}
\crefname{remark}{Remark}{Remarks}
\crefname{proposition}{Proposition}{Proposition}
\crefname{corollary}{Corollary}{Corollaries}
\crefname{example}{Example}{Examples}
\crefname{claim}{Claim}{Claim}
\crefname{conjecture}{Conjecture}{Conjecture}
\crefname{yauconjecture}{Yau's conjecture}{Yau's conjecture}

\definecolor{bluola}{RGB}{138,43,226}

\newcommand{\R}{\mathbb{R}}

\newcommand{\N}{\mathbb{N}}
\newcommand{\PP}{\mathbb{P}}

\newcommand{\E}{\mathbb{E}}

\newcommand{\de}{\partial}

\newcommand{\inter}[1]{%
  {\kern0pt#1}^{\mathrm{o}}%
}

\newcommand{\f}{\varphi}
\renewcommand{\a}{\alpha}
\newcommand{\e}{\varepsilon}

\renewcommand{\t}{\tau}

\newcommand{\vol}[1]{\mathrm{Vol}^{#1}}

\newcommand{\n}{n}
\renewcommand{\d}{\n}

\newcommand{\Var}{{\mathbb{V}\mathrm{ar}}}
\newcommand{\Cov}{\mathrm{Cov}}
\newcommand{\coeff}{\Theta}
\newcommand{\lf}{\mathcal{L}_f}

\newcommand{\lfl}{\mathcal{L}_{f_\ell}}

\newcommand{\berry}{\mathbdx{b}}
\newcommand{\horm}{\mathbdx{h}}
\newcommand{\lI}{\{i:\ \lambda_i\in I\}}

\newcommand{\B}{\mathbbm{B}}

\renewcommand{\S}{\mathbbm{S}}

\def\randin{%
  \mathchoice%
    {\raisebox{-.35ex}{$\displaystyle{^\subset}$}\mkern-11.5mu\raisebox{+.45ex}{$\displaystyle{_\subset}$}}
    {\mkern+1mu\raisebox{-.27ex}{$\textstyle{^\subset}$}\mkern-11.7mu\raisebox{+.45ex}{$\textstyle{_\subset}$}}
    {\raisebox{.35ex}{$\scriptstyle\subset$}\mkern-14mu\raisebox{-.15ex}{$\scriptstyle\subset$}}
    {\raisebox{.3ex}{$\scriptscriptstyle\subset$}\mkern-13.5mu\raisebox{-.10ex}{$\scriptscriptstyle\subset$}}
}

\makeatletter
\newcommand{\tpitchfork}{%
  \raise-0.1ex\vbox{
    \baselineskip\z@skip
    \lineskip-.52ex
    \lineskiplimit\maxdimen
    \m@th
    \ialign{##\crcr\hidewidth\smash{$-$}\hidewidth\crcr$\pitchfork$\crcr}
  }%
}
\newcommand{\Oh}{\mathrm{O}}
\newcommand{\oh}{\mathrm{o}}
\newcommand{\iid}{\emph{i.i.d.}\xspace}
\newcommand{\MRW}{\emph{MRW}\xspace}
\newcommand{\CRW}{\emph{CRW}\xspace}
\newcommand{\RW}{\emph{RW}\xspace}
\newcommand{\wMRW}{\emph{wMRW}\xspace}
\newcommand{\mC}{\mathcal{C}}
\newcommand{\m}[1]{\mathcal{#1}}
\newcommand{\be}{\begin{equation}}
\newcommand{\ee}{\end{equation}}
\numberwithin{equation}{section}

\newcommand{\bega}{\begin{equation}\begin{aligned}}
\newcommand{\eega}{\end{aligned}\end{equation}}

\newcommand{\begt}{\begin{equation}\begin{gathered}}
\newcommand{\eegt}{\end{gathered}\end{equation}}
\newcommand{\kop}{\left\{}
\newcommand{\pok}{\right\}}
\newcommand{\tyu}{\left(}
\newcommand{\uyt}{\right)}
\newcommand{\qwe}{\left[}
\newcommand{\ewq}{\right]}

\newcommand{\dist}[2]{\mathrm{dist}\!\tyu #1 , #2\uyt}

\title[Statistics on Yau's conjecture]{
Statistics on Yau's conjecture: Variance asymptotics
}
\author{Michele Stecconi, Anna Paola Todino}
\date{\today}
\begin{document}

\begin{abstract}
We investigate the probabilistic counterpart of Yau’s conjecture on the nodal volume of Laplace eigenfunctions on compact manifolds, by studying the high-frequency variance asymptotics of Riemannian random waves. We establish (\cref{thm:ape}) a quantitative bound for the fluctuations of their nodal volumes, depending on different regimes of spectral windows, including the monochromatic one, of spectral size 1. Notably, our bounds improve, by more than a power 2, the existing results in the literature, cf.
\cite{CH20}, in the case of manifolds without conjugate points, in particular negatively curved ones. As a corollary, we prove that Berry's cancellation phenomenon occurs for monochromatic Riemannian Random Waves on such chaotic manifolds. 
Our proofs rely on a local and global analysis combining the Kac-Rice formula, the new Wiener-Itô chaos decomposition of \cite{cgv2025StecconiTodino}, and a sharp analysis of the error in the pointwise Weyl law associated to an arbitrary spectral window (\cref{thm:OdeltaRWintro}).
We introduce a general machinery (\cref{thm:drago}), which ensures variance decay under broad geometric conditions, subject to correlation decay assumptions.
\end{abstract}
\maketitle
\setcounter{tocdepth}{2}
\tableofcontents 
\section{Conventions and notation}\label{sec:notations}

The following list contains some recurring conventions adopted in our work.
\begin{enumerate}[(i)]

\item A \emph{random element} (see \cite{Billingsley}) of the topological space $T$ (or \emph{with values} in $T$) is a measurable mapping $X\colon \Omega\to T$, defined on a probability space $\tyu \Omega,\mathscr{E},\PP \uyt$. In this case, we write
    \be\label{eq:randin}
    X\randin T
    \ee 
    and denote by $[X]=\PP X^{-1}$ the (push-forward) Borel probability measure on $T$ induced by $X$. We will use the notation
\be 
\PP\{X\in U\}:=
\PP X^{-1}(U)
\ee 
to indicate the probability that $X\in U$, for some Borel measurable subset $U\subset T$, and write (as usual)
\be 
\E\{f(X)\}:=\int_{T}f(t)d[X](t),
\ee
to denote the expectation of the random variable $f(X)$, where $f\colon T\to \R^k$ is a measurable mapping such that the above integral is well-defined.
We will say that $X$ is a \emph{random variable}, a \emph{random vector} or a \emph{random field}, respectively, when $T$ is the real line, a vector space, or a space of functions, 
respectively. 
\item The sentence: ``$X$ has the property $\mathcal{P}$ almost surely'' (abbreviated ``a.s.'') means that the set  $S=\{t\in T : t \text{ has the property }\mathcal{P}\}$ contains a Borel set of $[X]$-measure $1$. It follows, in particular, that the set $S$ is $[X]$-measurable, i.e. it belongs to the $\sigma$-algebra obtained from the completion of the measure space $(T,\mathcal{B}(T),[X])$.
\item \emph{Notation for measures:}\label{subsec:notameas}
Given a measure $\mu$ on a measurable space $\Omega$, we denote the measure of a measurable subset $A$ and, when defined, the integral of a function $\alpha\colon \Omega\to \R$ as 
\be 
\mu(A)=\int_A\mu(dx) \quad \text{and} \quad \langle \mu, \alpha\rangle=\int_\Omega\alpha(x)\mu (dx),
\ee
respectively. We will sometimes formally identify the measure with its density $\mu(dx)$, interpreted in the distributional sense. Moreover, when $\mu=\vol{n}$ is the $n$-Hausdorff measure of a Riemannian manifold $(\Omega=M,g)$ of dimension $\dim(M)=\n$, we will just write
\be 
d x\equiv \vol{\n}(d x). 
\ee
Finally, if $\mu(A)\neq 0$, then we may write 
\be 
\fint_A \a(x)\mu(d x):=\frac{1}{\mu(A)}\int_A\a(x)\mu(d x).
\ee
\item We will denote the $n-1$-volume of the round unit sphere $\S^{n-1}=\kop x \in \R^{n}\colon |x|=1\pok$ as $s_{n-1}=\frac{2\pi^{\frac{n}2}}{\Gamma\tyu\frac{n}{2}\uyt}$ and the volume of the standard unit ball $\B^n= \kop  x \in \R^{n}\colon |x|\le 1\pok$ as $b_n=\frac{s_{n-1}}{n}$.
\end{enumerate}

\section{Introduction}
\subsection{Acknowlegdments} 
M. Stecconi was supported by the  Luxembourg National Research Fund (Grant: O24/18972745/GFRF). A.P. Todino has been partially funded by INdAM-GNAMPA project (CUP:E53C25002010001). We are grateful to Louis Gass, Boris Hanin, Giovanni Peccati, and Domenico Marinucci for useful discussions.
\subsection{General setting}
We will take up the language and notations of \cite{cgv2025StecconiTodino}, in which we studied the nodal volume of a general Gaussian field on a Riemannian manifold. In this paper, we will focus on a special class of Gaussian fields, called \emph{Riemannian random Waves} (abbrv. \RW). Let us briefly recall some of the main objects and notations, for a more detailed description we refer to \cite{cgv2025StecconiTodino}.
\subsection{Gaussian nodal volumes}
Let $(M,g)$ be a $n$-dimensional Riemannian manifold, possibly with boundary. For all $r\in \N$ or $r=\infty$, we denote by $\mC^r(M)$ the space of real valued functions on $M$ of class $\mC^r$. 
The \emph{nodal volume measure} of $\phi\in \mC^0(M)$ is the mapping 
\be \label{eq:vol1}
A\mapsto \m L_\phi(A)=\vol{\d-1}(\phi^{-1}(0)\cap A)=\int_A\m L_\phi(dx),
\ee 
defined for $A\subset M$ Borel subsets. 
That is, the restriction of the $(n-1)$-Hausdorff measure of $(M,g)$ to the \emph{nodal set} $\phi^{-1}(0)$ of $\phi$. The \emph{nodal volume} of $\phi$ is the real number $\m L_\phi(M)$. 
We will consider \eqref{eq:vol1} for $\phi$ a smooth Gaussian random field on $M$, in the sense of \cite{AT07,AzaisWscheborbook,bogachev}, i.e., a random function that admits a representation as
\be \label{eq:karlo}
\phi=\sum_{i=0}^{N}\gamma_i\f_i,
\ee
for some collection of functions $\f_i\in \mC^\infty(M)$, $N\in \N\cup\{+\infty\}$, and a family of independent and identically distributed $\gamma_i\sim \mathcal{N}(0,1)$ (Karhunen-Lo\`{e}ve expansion \cite{bogachev}).

 Then, $\phi \randin \mC^\infty(M)$ (see \cref{sec:notations}) defines a probability space $\tyu \mC^\infty(M), \mathscr{B},\PP \uyt$, with $\mathscr{B}$ being the Borel $\sigma$-algebra. 
Here, $N \in \N$, or $N=+\infty$; where it is intended that if $N=+\infty$, the series \eqref{eq:karlo} is almost surely convergent in $\mC^\infty(M)$.
\subsection{Yau's conjecture}
On a compact Riemannian manifold $M$ without boundary, the eigenvalues of the negative Laplace-Beltrami operator $-\Delta$ are all positive and can be ordered as an increasing diverging sequence: $\lambda_0^2=0< \lambda_1^2\le \lambda_2^2\le \dots \le \lambda_i^2\to +\infty$, possibly with repetitions. The corresponding eigenfunctions can be selected so that they form a complete orthonormal system $(\f_i)_{i\in \N}$ in $L^2(M)$. 
Yau's conjecture states that the nodal volume of an eigenfunction $\f$, such that $\Delta \f =-\lambda^2\f$ is of the order of the corresponding frequency $\lambda$. 
\begin{yauconjecture}\label{yauconj} On any compact Riemannian manifold $(M,g)$ with $\de M=\emptyset$,
there exist constants $c_1(M,g),c_2(M,g)>0$ such that if $\Delta \f =-\lambda^2 \f$, then
\be\label{eq:yau} 
c_1(M,g)\le \frac{\mathcal{L}_{\f}(M)}{\lambda} \le c_2(M,g),
\ee
where we recall Equation \eqref{eq:vol1}.
\end{yauconjecture}
The conjecture was originally formulated by Yau in \cite[Problem 74]{yauconjecture} for $n=2$, who then extended the question to arbitrary dimension in \cite{Yau_1990_Open_problems_in_geometry}. It was proven for analytic manifolds by Donnelly and Fefferman \cite{Yau_1988_DonnellyFefferman}; in the smooth case, the state of the art is represented in the results of Logunov and Malinnikova \cite{Yau_2018_LogunovMalinnikova,Yau_2018a_Logunov,Yau_2018b_Logunov}, who proved the validity of the lower bound on arbitrary manifolds, and obtained a polynomial upper bound of ${\mathcal{L}_{\f}(M)}/{\lambda}$  of order $O(\lambda^{\a})$, for some $\alpha > 0$. In particular, if $n=2$, they show \cite{Yau_2018_LogunovMalinnikova} that the bound holds for some $\a=\frac{1}2-\e$.\footnote{In comparing with the literature, one has to pay attention to the fact that in\cref{eq:yau}, $\lambda$ denotes the square root of the eigenvalue and that the volume is already divided by $\lambda$.} We refer to the excellent surveys \cite{logunovsurvey,Ingremeau_yau} and the references therein for more details. In this paper we are interested in exploring the probabilistic point of view.

\subsection{Riemannian Random Waves}
\begin{definition}\label{def:RRW}
Given a closed Riemannian manifold $(M,g)$ and a bounded interval $I\subset \R$, we call a \emph{Riemannian Random Wave} any Gaussian field on $M$ of the form
\be \label{eq:RW}
\phi_I^M(x)=\sum_{\lI}\gamma_i \f_i(x),\qquad \Delta \f_i=-\lambda_i^2\f_i
\ee
with $\gamma_i\sim \m N(0,1)$ \iid and where $(\f_i)_{i\in\N}$ is an $L^2(M)$-orthonormal basis of eigenfunctions of $-\Delta$, ordered so that the eigenvalues are non-decreasing. We will denote
the covariance function of $\phi^M_{I}$, as:
\be\label{eq:E}
E^M_{I}(x,y):=\E\kop \phi_{I}^M(x)\phi_{I}^M(y)\pok =\sum_{\{i :\lambda_i \in I\} }\f_i(x)\f_i(y).
\ee
In case $M$ has boundary, we will discuss the boundary conditions case-by-case. In particular, for $M\subset \R^n$, we define $\phi_I^{M}$ as the restriction of the stationary Gaussian field $\phi_I^{\R^n}$ on $\R^n$ with covariance function
\be \label{eq:defRWberry}
E^{\R^n}_{I}(x,x+u):=E^n_{I}(|u|):=\kop \begin{aligned}
&\frac{1}{(2\pi)^n}\int_{I}\int_{\lambda\S^{n-1}} e^{i\langle u, \xi \rangle} \dd \xi \dd \lambda,
\quad &\text{ if $I$ is an interval, }
\\
& \frac{1}{(2\pi)^n}\int_{\ell \S^{n-1}} e^{i\langle u , \xi \rangle} \dd \xi, \quad &\text{ if $I=\ell$ is a real number.
}
\end{aligned}\right.\footnote{In fact, it is easy to see that the imaginary part of this integral vanishes, so that 
\begin{equation*} \label{eq:Eeig}
\E\kop e^{ig_x\langle u-v,\xi\rangle }\pok=\E\kop \cos\tyu g_x\langle u-v,\xi\rangle\uyt\pok.
\end{equation*}}
\ee
In particular, $\phi_{1}^{\R^n}$ is known as Berry's random field on $\R^n$.
\end{definition}
\begin{remark}
Let us comment on  \cref{def:RRW}. It
is a special case of $\phi^{\R^n}_{\mu}$, where $\mu$ is any finite measure on $\R_+$.
\be \label{eq:defRWberrymu}
\E \kop \phi_\mu^{\R^n}(x)\phi_\mu^{\R^n}(x+u) \pok:=\frac{1}{(2\pi)^n}\int_{\R_+}\tyu \int_{\lambda\S^{n-1}} e^{i\langle u, \xi \rangle} \dd \xi \uyt \mu\tyu \dd \lambda\uyt
\ee
Clearly, the case of an interval $I$ corresponds to the Lebesgue measure on $I$, while the case of a number $\ell$ corresponds to the delta measure $\mu=\delta_\ell$. This language would be the most appropriate for a more general treatment as there is continuity with respect to $\mu$, however, we chose here to use a lighter notation. The only other measure $\mu$ that will be of some importance in the technical part of this paper (see \cref{def:RWUI}) is the uniform probability measure on $I$, denoted $\mu=\mathsf U(I)$, corresponding to the random field 
\be \label{eq:phiU}
\phi_{\mathsf U(I)}^{\R^n}=|I|^{-\frac12}\phi_{I}^{\R^n} \xrightarrow[I\to \{\ell\}]{}\phi_{\mathsf U(\{\ell\})}^{\R^n}=\phi_{\ell}^{\R^n};
\ee
while with our conventions $\phi_{\{\ell\}}^{\R^n}=\phi_{[\ell,\ell]}^{\R^n}=0$.
\end{remark}
\begin{remark}
    The constant $(2\pi)^{-n}$ in \cref{def:RRW} is carefully chosen. It is the only one for which, as $\ell\to +\infty$, one can directly compare:
\be\label{eq:const} 
\quad 
\phi^M_{[0,\ell]}\tyu x+ u\uyt \approx \phi^{\R^n}_{[0,\ell]}(u)=\ell^{\frac{n}2}\phi_{[0,1]}^{\R^n}(\ell u),
\quad
\phi^M_{[\ell-1,\ell]}\tyu x+ u\uyt \approx \phi^{\R^n}_{\ell}(u)=\ell^{\frac{n-1}2}\phi_{1}^{\R^n}(\ell u).
\ee
Such a comparison can be made precise, and we will use it in the form of \cite[Thm. 1.3]{Keeler}, a refinement of H\"ormander's pointwise Weyl's law \cref{thm:hoermplaw}, see also \cref{def:scalimRW} below.
Indeed, whatever other choice would require to correct the r.h.s. of the above asymptotics with a multiplicative constant. 
\end{remark}

Such fields and terminology were introduced by Zelditch in \cite{Zel09}, although the notation we are using is new.
 
Let us immediately fix two parameters associated with $\phi_I^M$, which will be of fundamental importance later on. 
\begin{definition}\label{def:lambsigm_intro} For $M$ and $I$ as in \cref{def:RRW}, we define
\begin{gather}\label{eq:sigmalambdaRW}
\sigma(\phi_I^M)^2:=\fint_M\Var\kop \phi_I^M(x)\pok dx,
\quad
\lambda(\phi_I^M)^2:=\frac{1}{\sigma(\phi_I^M)^2}\fint_M\E\kop \|d_x\phi_I\|^2 \pok dx.
\end{gather}
and call them \emph{average variance} and \emph{average frequency} (consistently with the terminology introduced in \cite[Def. 1.11]{cgv2025StecconiTodino}). If $\sigma(\phi^M_I)=0$, we set $\lambda(\phi^M_I)=0$. We extend such notation to any setting in which the integrands in \cref{eq:sigmalambdaRW} are constant, including non-compact manifolds $M$.
\end{definition}
If $M$ is closed, we can alternatively express the average variance $\sigma(\phi^M_I)$ in terms of the dimension of the eigenspace:
\begin{gather}\label{eq:phiI} 
N_I^M:=\dim \tyu \bigoplus_{ \lambda\in I}\ker(\Delta+\lambda^2)\uyt=\sum_{\lI}1,
\\
\text{then,}
\quad 
\sigma(\phi_I^M)^2
=\frac{N_I^M}{\vol{n}(M)}
\quad \text{and} \quad
\lambda(\phi_I^M)^2
=\frac1{N_I^M}\sum_{\lambda_i\in I}\lambda_i^2.
\end{gather}
From this is clear that $\lambda(\phi_I)^2$ is the average eigenvalue in $I$, indeed it corresponds to the \emph{average frequency} defined in \cite{cgv2025StecconiTodino} together with the \emph{average variance} $\sigma(\phi_I)$ and the \emph{maximal eccentricity} $\e(\phi_I)$, which we will discuss later.
For $M=r\B^n$, the average variance and the average frequency of $\phi^{\R^n}_I$ are constant values
that can be easily computed from \cref{eq:defRWberry}; in such case,
the dimension of the corresponding eigenspace is infinite, but we will adopt the convention that 
\be N^{r\B^n}_{I}:=\Var\kop \phi_I^{\R^n}(0)\pok\vol{n}(r\B^n) .
\ee
This is coherent with Weyl's law, originally proven in \cite{WeylLaw_1912}.
\begin{theorem}[Weyl's law]\label{thm:weyllaw}
\be \label{eq:Weyllaw}
N_{[0,\ell]}^M = \ell^n\tyu \frac{b_n}{(2\pi)^{n}}\vol{n}(M)+\Oh (\ell^{-1})\uyt.
\ee
\end{theorem}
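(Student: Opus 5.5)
This is the classical Weyl law with the Avakumović–Levitan–Hörmander remainder $\Oh(\ell^{n-1})$; written as in \eqref{eq:Weyllaw}, the error term is $\ell^n\cdot\Oh(\ell^{-1})$. I would derive it from the local (pointwise) version by integrating over $M$, via the standard wave-equation approach of Hörmander. Write $e(x,x,\ell):=E^M_{[0,\ell]}(x,x)=\sum_{\lambda_i\le \ell}\f_i(x)^2$. Then $N^M_{[0,\ell]}=\int_M e(x,x,\ell)\,dx$ because the $\f_i$ are $L^2$-normalized. It therefore suffices to prove the pointwise law
\be
e(x,x,\ell)=\frac{b_n}{(2\pi)^n}\ell^n+\Oh(\ell^{n-1}),
\ee
uniformly in $x\in M$, and then integrate; compactness of $M$ makes the integrated remainder $\Oh(\ell^{n-1})\vol{n}(M)$.

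For the pointwise law, consider the spectral measure $d_\lambda e(x,x,\lambda)$, which is a positive measure on $\R_+$, extended evenly to $\R$. Its Fourier transform is the half-wave kernel $\cos(t\sqrt{-\Delta})(x,x)$. Fix $\rho\in\mathcal S(\R)$ with $\hat\rho$ supported in $(-\e,\e)$, where $\e$ is below the injectivity radius, and with $\rho>0$ near $0$ and $\hat\rho(0)=1$.

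\emph{Local step.} For $|t|<\e$, Hadamard's parametrix (equivalently, the Lax FIO construction) expresses the wave kernel near the diagonal in normal coordinates. Its leading term coincides with the Euclidean one, and the corrections are lower order in frequency. It follows that
\be
\rho * d_\lambda e(x,x,\cdot)(\ell)=\frac{s_{n-1}}{(2\pi)^n}\ell^{n-1}+\Oh(\ell^{n-2}),
\ee
uniformly in $x$. The constant $s_{n-1}/(2\pi)^n$ is exactly the $\ell$-derivative of $b_n\ell^n/(2\pi)^n$, matching the normalization in \eqref{eq:defRWberry}.

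\emph{Tauberian step.} Since $d_\lambda e$ is positive and its smoothing by $\rho$ is $\Oh(\ell^{n-1})$, we get the unit-window bound $e(x,x,\ell+1)-e(x,x,\ell)=\Oh(\ell^{n-1})$. Hörmander's Tauberian lemma then says: if a monotone function $F$ has $\rho*dF$ within $\Oh(\ell^{n-2})$ of a smooth density $G'$, and unit windows have mass $\Oh(\ell^{n-1})$, then $|F-G|=\Oh(\ell^{n-1})$. Applying this with $F(\ell)=e(x,x,\ell)$ and $G(\ell)=b_n\ell^n/(2\pi)^n$ gives the pointwise law, and integrating over $M$ finishes the proof.

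The main obstacle is the local step, namely obtaining the parametrix with remainder estimates uniform in $x$. Here I would simply invoke Hörmander's construction, i.e.\ the pointwise Weyl law \cref{thm:hoermplaw} to which the paper refers. The Tauberian step is routine once positivity is in place. Alternatively, one could cite \cite{WeylLaw_1912} for the leading term and Hörmander for the remainder, since the statement is classical.
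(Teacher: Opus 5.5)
Your proposal is correct and follows the paper's route: the paper treats the statement as classical and notes, via the identity $N^M_{[0,\ell]}=\int_M E^M_{[0,\ell]}(x,x)\,dx$ in \cref{eq:NEEsig}, that it is a direct corollary of H\"ormander's pointwise law \cref{thm:hoermplaw} evaluated on the diagonal (where $E^n_{[0,1]}(0)=\frac{b_n}{(2\pi)^n}$), integrated over the compact $M$ with its uniform $\Oh(\ell^{-1})$ remainder. Once you invoke \cref{thm:hoermplaw} itself, your parametrix and Tauberian steps only recap its classical proof and are not needed; note also that the pointwise law alone would not give the smoothed $\Oh(\ell^{n-2})$ bound of your local step (that bound comes from the parametrix), so it is cleaner to apply the pointwise law directly at $x=y$.
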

The above can be seen as a consequence of \cite[Theorem 1.1]{hoerm1968} concerning the asymptotics of $E_{[0,\ell]}^M(x,y)$ for large $\ell$ and $x,y\in M$. This object is known as the \emph{spectral function} of $M$ (cf. \cite{hoerm1968}) and play a crucial role in this context, see for instance \cite{hoerm1968,canzani_hanin_2018_cinfscasymp,CH20,Keeler,Gass2020}. We extend the same definition to arbitrary intervals $I$.
\begin{theorem}[H\"ormander's pointwise Weyl's law, reported from \texorpdfstring{\cite[eq. (1.2)]{Keeler}}{}]\label{thm:hoermplaw}
For any $M$ compact Riemannian manifold, 
\be \label{eq:horlaw}
E^M_{[0,\ell]}(x,y) = \ell^n\tyu E_{[0,1]}^{n}(\ell\dist xy)+R_{[0,\ell]}^M(x,y)\uyt,
\ee
with remainder of order $R_{[0,\ell]}^M(x,y)=\Oh(\ell^{-1})$. Precisely, the estimate continues to hold for the derivatives in the following sense: for $\rho(M)$ small enough, for any $k\in \N$,
\be
\sup_{\mathrm{dist}(x,y)\le \rho(M)}\max_{|a|,|b|\le k} \frac{\| \nabla^a \nabla^b R_{[0,\ell]}^{M}(x,y)\|}{\ell^{|a|+|b|}}=\Oh(\ell^{-1}). 
\ee
In particular, it follows that: 
\be \label{eq:siglam_colorful}
\sigma(\phi_{[0,\ell]}^M)^2=\ell^{n}\tyu \frac{b_n}{(2\pi)^n}+\Oh(\ell^{-1})\uyt,\quad \lambda(\phi_{[0,\ell]}^M)^2=\ell^2\tyu\frac{n}{n+2}+\Oh(\ell^{-1})\uyt.
\ee
\end{theorem}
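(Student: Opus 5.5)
The first two claims, \eqref{eq:horlaw} and the derivative bound for $R^M_{[0,\ell]}$, are taken as the cited result \cite[eq. (1.2)]{Keeler}, which refines \cite[Theorem 1.1]{hoerm1968}. So the only thing to derive is the ``in particular'' part \eqref{eq:siglam_colorful}. The approach is to evaluate the two expectations in \eqref{eq:sigmalambdaRW} on the diagonal $x=y$ using \eqref{eq:horlaw}, and then average over $M$.

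\textbf{Average variance.} Setting $y=x$ in \eqref{eq:horlaw} gives
\[
\Var\kop \phi^M_{[0,\ell]}(x)\pok=E^M_{[0,\ell]}(x,x)=\ell^n\tyu E^n_{[0,1]}(0)+\Oh(\ell^{-1})\uyt,
\]
uniformly in $x$. From \eqref{eq:defRWberry},
\[
E^n_{[0,1]}(0)=(2\pi)^{-n}\int_0^1 s_{n-1}\lambda^{n-1}\,d\lambda=(2\pi)^{-n}\frac{s_{n-1}}{n}=\frac{b_n}{(2\pi)^n}.
\]
Averaging over $M$ preserves the uniform error, which gives the first formula. Integrating over $M$ instead also recovers \cref{thm:weyllaw}, since $\int_M E^M_{[0,\ell]}(x,x)\,dx=N^M_{[0,\ell]}$.

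\textbf{Average frequency.} We use
\[
\E\kop\|d_x\phi\|^2\pok=\mathrm{tr}_g\,\nabla_x\nabla_y E^M_{[0,\ell]}(x,y)\big|_{y=x}.
\]
Applying the derivative estimate with $|a|=|b|=1$ shows that the remainder contributes $\ell^{n}\,\ell^{2}\,\Oh(\ell^{-1})$. For the main term $\ell^n E^n_{[0,1]}(\ell\,\dist xy)$, we work in normal coordinates at $x$, where $\dist xy=|u|+\Oh(|u|^3)$ with $u=\exp_x^{-1}y$. The curvature corrections are of order at least two in $u$, so they do not affect a mixed second derivative at the diagonal. The main term therefore equals $\ell^{n+2}$ times the Euclidean quantity
\[
-\Delta_u E^n_{[0,1]}(|u|)\big|_{u=0}=(2\pi)^{-n}\int_0^1 \lambda^2 s_{n-1}\lambda^{n-1}\,d\lambda=(2\pi)^{-n}\frac{s_{n-1}}{n+2}.
\]
Dividing by $\sigma^2=\ell^n(2\pi)^{-n}(s_{n-1}/n+\Oh(\ell^{-1}))$ gives $\lambda^2=\ell^2\big(\tfrac{n}{n+2}+\Oh(\ell^{-1})\big)$. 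The quotient is legitimate because the leading term of $\sigma^2/\ell^n$ is bounded away from zero.

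\textbf{Main obstacle.} The only delicate step is the derivative part of the frequency computation. The derivative bound controls $\nabla^a\nabla^b R$ in terms of the Riemannian covariant derivatives, so one must check two things: that the identification of $\E\|d\phi\|^2$ with the mixed derivative of the kernel is correct, and that replacing $\dist xy$ by the flat distance produces no first-order error. Both follow from the normal-coordinate expansion of the distance together with the smoothness of $E^n_{[0,1]}$ as an even function of $|u|$.

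Alternatively, on a closed $M$ one can bypass this by computing $\int_M\E\|d\phi\|^2=\sum_{\lambda_i\le\ell}\lambda_i^2$ via Stieltjes integration by parts against Weyl's law. However, the error in that route is only $\Oh(\ell^{n+1})$ relative to $\ell^{n+2}$, so it gives the same $\Oh(\ell^{-1})$ relative error.
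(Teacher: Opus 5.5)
Your proposal is correct and is essentially the paper's argument. The paper states \eqref{eq:siglam_colorful} without a separate proof, but the computation it relies on elsewhere is exactly your diagonal evaluation of $E^M_{[0,\ell]}$ and of $\nabla^{1,1}E^M_{[0,\ell]}$, via the two-point expansion $\dist{\exp_x(u)}{\exp_x(v)}^2=\|u-v\|^2+\Oh(\|(u,v)\|^4)$ (see \cref{rem:jet11}, the ``variance and frequency'' step in the proof of \cref{thm:scasca}, and \cref{lem:lambda_ell_RRW} with $\a=0$); your alternative route is just the identity \cref{eq:phiI} combined with Weyl's law.

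One cosmetic point: for $u=\exp_x^{-1}y$ one has $\dist xy=|u|$ exactly, so what controls the mixed derivative is the two-point expansion above, which your ``Main obstacle'' paragraph implicitly uses.
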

\begin{remark}
By construction one has the following identities, valid for any interval $I$: \be\label{eq:NEEsig} N_{I}^M=\int_M E^M_{I}(x,x) dx=\int_{M\times M}| E^M_{I}(x,y)|^2 dxdy=\sigma(\phi_{I}^M)^2\vol{n}(M)
.\ee 
Therefore, by considering $I=[0,\ell]$, one can prove \cref{thm:weyllaw} as a straightforward corollary of \cref{thm:hoermplaw}.
\end{remark}
The above theorem effectively implies that the restriction of $\phi^M_{[0,\ell]}$ to a Riemannian ball of radius $\ell^{-1}$, is asymptotically equivalent to  $\phi^{\R^n}_{[0,\ell]}$. Since $\ell$ is approximately the average frequency $\lambda(\phi^M_{[0,\ell]})$, that is, the square-root of the average eigenvalue $-\lambda(\phi^M_{[0,\ell]})^2$, the distance value $\ell^{-1}$ is commonly called the \emph{wave-length scale}. At a heuristic level, such an asymptotic comparison is the natural ansatz for all sequences of intervals high frequency intervals $I$, although this is either not true or not proven in full generality (see \cref{thm:OdeltaRWintro}). This is perfectly coherent with \cref{yauconj}, as it suggests that the nodal volume of $\phi^M_I$, should behave like $\lambda(\phi^M_I)^{-n}$ at wavelength scale $\lambda(\phi^M_I)^{-1}$, therefore by standard dimensional arguments, it should globally behave like $\lambda(\phi^M_I)$.
\subsection{Probabilistic Yau's conjecture}
The object of this paper is an investigation of the probabilistic perspective on Yau's conjecture, guided by the following---intentionally vaguely formulated---questions.
\begin{probyauconjecture}\label{probYauconj}
Choose a compact Riemannian manifold $(M,g)$ and a bounded interval $I$ with average frequency $\lambda(\phi_I^M)$. 
Is there a Law Of Large Numbers
\be\label{eq:probyau}  \frac{\mathcal{L}_{\phi_{I}^M}(M)}{\lambda(\phi_I^M)}\approx \frac{s_{n-1}}{s_n\sqrt{n}} \vol{n}(M),
\ee
as $\lambda(\phi_I^M)\to +\infty$? What is the order of the variance? Is there a Central Limit Theorem?
\end{probyauconjecture}
(Recall \cref{eq:vol1} and \cref{eq:RW} and \cref{eq:phiI} for definitions.)
The r.h.s. in \cref{eq:probyau} can be easily justified, after computing the expectation of the l.h.s., either with the Kac-Rice formula or with the chaos decomposition, see \cite[Cor. (1.3) and Eq. (1.42)]{cgv2025StecconiTodino}.

In order to investigate the above question, we give the following definition.
\begin{definition}\label{def:Y} Given a compact $n$-dimensional Riemannian manifold $(M,g)$, and any $I\subset (0,+\infty)$ bounded, we define the random variable $Y_M(I)$ and the constant $c_n$ as
    \be 
Y_M(I):=\frac{1}{c_n\vol{n}(M)}\cdot \frac{\mathcal{L}_{\phi_{I}^M}(M)}{\lambda(\phi_I^M)}, \quad c_n:= \frac{s_{n-1}}{s_n\sqrt{n}}
    \ee
where $\phi_I$ is defined as in \cref{def:RRW} and $\lambda(\phi_I^M)$ is as in \cref{eq:phiI}. 
\end{definition}
Thus, what we called the \emph{Probabilistic Yau's conjecture}, corresponds to describe the convergence of the sequence of random variables $Y_M(I)$ to $1$, for a sequence of intervals such that $\lambda(\phi_I^M)\to +\infty$. 
\begin{remark}
    One can show that $Y_{M}(cI)=Y_{cM}(I)$, for any constant $c>0$.
\end{remark}
\begin{remark}[On the constant $c_n$]
    The equality in \cref{eq:probyau} is realized for any linear function $\f_1:M=\S^n\to \R$, that is, a spherical harmonic of eigenvalue $\lambda_1^2=n$. Since the eigenvalues are: $0,n,2(n+1),\dots$, we can see $\f$ as a realization of $\phi_{\{\sqrt{n}\}}^M=\phi_I^M$ for any $\sqrt{n}\subset I\subset (0,\sqrt{2(n+1)})$. Therefore, 
$
Y_{S^n}(\kop \sqrt{n}\pok)=1.
$
Moreover, in \cite[Eq. (1.42)]{cgv2025StecconiTodino} the authors prove that 
\be 
\E\kop Y_M(I)\pok=1,
\ee 
for all manifolds on which $\phi_I^M$ is homothetic (i.e., $\e(\phi_I^M)=0$, see \cite[Def. 1.11]{cgv2025StecconiTodino}), thus including, in particular, $M\in\kop r\B^n, \S^n, (\S^1)^n\pok$ with arbitrary $I$.
\end{remark}
\begin{remark}
We will see below, that $\lambda(\phi_I)$ is always of the same order as the maximum of $I$, which later we will denote by $\ell$, and that is a more commonly used parameter for the sequence of intervals under examination. However, using the average frequency $\lambda(\phi_I)$ automatically yields the correct constant in \cref{eq:probyau}, regardless of the choice of $I$.
\end{remark}
\subsection{Models considered}\label{sec:intervals}
We want to consider sequences of intervals $I_\ell$, indexed by their maximum $\max I_\ell=\ell>0$, that is a real parameter going to $+\infty$, such that 
\be 
\frac{I_\ell}{\max I_\ell}\to [\a,1] 
\ee 
for some $\a\in [0,1]$. Equivalently, we can assume that $I_\ell$ is
of the form 
\be \label{eq:Il_intro}
I_\ell=[\a\ell-\beta_\ell,\ell]=[\ell-\eta_\ell, \ell]
\ee
where $\beta_\ell=o(\ell)$, or equivalently such that $\exists\lim_{\ell\to +\infty}\ell^{-1}\eta_\ell=1-\a\in [0,1]$, where $\eta_\ell=|I_\ell|$ denotes the amplitude of the interval. Such notation is kept consistent throughout the paper. 

In the sequel, we will mostly focus on two sequences of intervals $I_\ell$, indexed by $\ell\in \R$\footnote{A common natural choice is to use $\lambda$ in place of $\ell$. We choose $\ell$ to avoid confusion with $\lambda(\phi_\ell)$, which has the slightly different, more precise role of being the exact average eigenvalue in the interval $I_\ell$.}: the \emph{Colorful or Full Riemannian Random Wave} (\CRW), corresponding to $I=[0,\ell]$ and the \emph{Monochromatic Riemannian Random Wave} (\MRW), corresponding to $I=[\ell-1,\ell]$. 
However, to keep a general perspective, and a unifying language, we will also consider the more general intermediate regimes $I=[\a\ell-\beta_\ell,\ell]$, for $\a\in [0,1]$ and $\beta_\ell=o(\ell)$. 
They include the \CRW (with $\a=\beta_\ell=0$, i.e., $\eta_\ell=\ell$); \MRW (with $\a=\beta_\ell=\eta_\ell=1$); \emph{Random Band-Limited functions} (with $\a<1$, $\beta_\ell=0$, thus $\eta_\ell=(1-\a)\ell$) considered in \cite{igorsurvey,wigEBNRF,sarnakwigmanCPAM}; 
and \emph{weakly Monochromatic Riemannian random Waves} (\wMRW), with $\a=1$, and with $\eta_\ell=o(\ell)$ possibly diverging to $+\infty$. The latter is a terminology that we introduced in \cite{cgv2025StecconiTodino} in order to keep a clear distinction between the two models ($\eta_\ell=1$ versus $\eta_\ell\to +\infty$) which in the literature are both commonly referred to just as \emph{Monochromatic}
\footnote{From \cite{cgv2025StecconiTodino}: some references (\cite{Zel09,CH20,Keeler}) reserve the word \emph{monochromatic} only to the case $\eta_\ell=1$, while others (see \cite{canzani_MRWsurvey,canzani_sarnak}, or the monochromatic random band-limited functions of \cite{igorsurvey,sarnakwigmanCPAM,wigEBNRF}) admit that $\eta_\ell\to +\infty$.}. We will see that, in our study (\cref{sec:main results}), such distinction is crucial, as explained in \cref{sec:intrdiff} below.

The intermediate cases with $\a\in (0,1)$ are interesting in that they provide intuition by interpolation. However, one expects, as we will see in \cref{thm:ape}, that all $\a\in [0,1)$ exhibit the same qualitative behavior.

By taking the difference $N_{[\ell-\eta_\ell,\ell]}=N_{[0,\ell]}-N_{[0,\ell-\eta_\ell]}$ in \cref{thm:hoermplaw} one can deduce an analogue of Weyl's law for the sequence $\Phi^M_{I_\ell}$.
 \be\label{eq:weyllaw_interval} N_{[\ell-\eta_\ell,\ell]}^M=\Oh(\ell^{n-1}\eta_\ell)
.\ee 
and the order is sharp if $\eta_\ell\to +\infty$. Naturally, such asymptotic behavior extends to all quantities appearing in the identity \cref{eq:NEEsig}.
\subsection{The intrinsic difficulty of the monochromatic regime}\label{sec:intrdiff}

The analysis of the monochromatic case $\a=1$ presents a crucial difficulty. Considering the difference $E_{[\ell-\eta_\ell,\ell]}=E_{[0,\ell]}-E_{[0,\ell-\eta_\ell]}$, the resulting monochromatic analogue of the pointwise law \cref{thm:hoermplaw} is, in general, of this form:
\be \label{eq:horlaw2}
E^M_{[\ell-\beta_\ell,\ell]}(x,y) = \ell^{n-1}\beta_\ell\tyu E^n_1(\ell\dist xy)+\Oh\tyu \frac{
1}{\beta_\ell}\uyt\uyt,
\ee
The error size $\Oh(1)$, resulting in the strictly monochromatic regime $\beta_\ell=1$, prevents a precise comparison of $\phi^M_{[\ell-1,\ell]}$ with its Euclidean counterpart, Berry's random field $\phi^{\R^n}_1$, and this is precisely the boundary of many techniques, including our \cref{thm:drago}. Typically, e.g. in \cite{igorsurvey, canzani_MRWsurvey,sarnakwigmanCPAM}, the monochromatic random waves on general manifolds are studied in the {\wMRW} category. In order to be able to deal with {\MRW}, we will rely on \cite{Keeler}, which refines the error in  \cref{thm:hoermplaw} under additional geometric assumptions on the manifold.
\section{Main results}\label{sec:main results}
\subsection{Berry's conjecture and the monochromatic regime}\label{sec:broad}
 In 1977, Berry \cite{Berry1977} conjectured that when the geodesic
flow on $M$ is sufficiently chaotic, e.g. on negatively curved manifolds, a
high-frequency eigenfunction $\f_i$, restricted to a ball of wavelength radius $\lambda_i^{-1}$ about
a typical point, behaves like \emph{Berry's field} $\phi^{\R^n}_1$, the unique (up to a multiplicative constant) stationary,
isotropic, Gaussian field on $\R^n$ solving $\Delta f=-f$. Meant to bridge
classical chaos and quantum mechanics, the conjecture is
precisely the heuristic underlying the study of Riemannian random waves, and the monochromatic
wave $\phi^M_{[\ell-1,\ell]}$ is its natural probabilistic counterpart.

The strict monochromatic regime, characterized by spectral
windows of size $O(1)$ or smaller, is where Berry's conjecture lives, yet for random waves on general
manifolds it has remained almost untouched, see \cite{Zel09, CH20,Dierickx,Keeler}: the literature concentrates on the integrable model
cases, spheres and tori, and on the Euclidean field itself, while random waves on arbitrary
manifolds are scarcely explored (we review the state of the art concerning the nodal volume in \cref{sec:related}, and
refer to the survey \cite{canzani_MRWsurvey}), and the majority of the literature concerns the \wMRW regime, a spectral window of growing size $\oh(\ell)$, see \cite{sarnakwigmanCPAM,Gass2020,BELIAEVWigman, wigEBNRF, NazarovSodin2016}. Beyond the deterministic setting of Yau's
conjecture, essentially no results reach a spectral window smaller than $O(1)$. The obstruction is
analytic: any such study first requires the fine asymptotics of the covariance function, which
demand increasingly delicate microlocal analysis as the window shrinks (cf.\ \cref{sec:intrdiff}).
Our work is fundamentally based on two recent and independent advances, and is the first to combine them.

The first is the logarithmic improvement in the pointwise Weyl law of Keeler \cite{Keeler} (refining
\cite{canzani_hanin_2018_cinfscasymp}): on a manifold without conjugate points it improves H\"ormander's $\Oh(1/\ell)$ in \cref{thm:hoermplaw} and gives
$\Oh(1/(\ell\log\ell))$.
This gain confirms, in a strong $\mathcal C^\infty$ sense, that
Berry's field is the scaling limit of monochromatic waves on such manifolds (i.e. that \cref{eq:horlaw2} holds with an infinitesimal error), thus reducing Berry's
conjecture to the assertion that \emph{a single high-frequency eigenfunction, near a random point,
behaves like a monochromatic random wave}. Keeler's result is, to our knowledge, largely underexploited \cite{CH20,Dierickx}.

The second is the Wiener--It\^o chaos decomposition of the nodal volume obtained in our companion
paper \cite{cgv2025StecconiTodino}. Earlier chaos expansions were, in practice, confined to
geometry-compatible (homothetic) fields and required the expectation of a product of $2+2n$
Hermite polynomials, a computation that becomes prohibitive in higher dimension; our formula
instead holds for arbitrary Gaussian fields on arbitrary manifolds and reduces every variance
computation to a product of just four. This allows us to reduce the variance computation to simpler pointwise correlation bounds. 
\subsection{Probabilistic Yau's conjecture: related existing results}\label{sec:related}
The behavior of the covariance function of $\phi_{[0,\ell]}^M$ has been extensively investigated in several contexts (see, e.g. \cite{Ber85, Zel09, BELIAEVWigman, Keeler}, and \cite{igorsurvey} for a recent survey). The natural scaling properties of these kernels suggest a universal asymptotic local behavior, supporting the heuristic that the field should locally resemble its Euclidean counterpart.
Specifically, this leads to the conjecture that $$\phi_I^M \approx \phi_I^{\mathbb{B}^n}, \quad \text{and consequently} \quad Y_M(I) \approx Y_{\mathbb{B}^n}(I).$$ A rigorous discussion of these scaling limits is deferred to \cref{sec:scaling}.

At the level of first moments, Zelditch \cite{Zel09} established that for Zoll or aperiodic manifolds, the total expected nodal $(n-1)$-volume satisfies the  relation
$$ \mathbb{E}[\vol{{n-1}}    (\phi_I^{-1}(0))] = C_M \ell,$$ where $C_M > 0$ is an explicit constant proportional to the Riemannian volume of $M$. This result confirms that \cref{eq:probyau} holds at the level of the expectation for this class of  manifolds.

Regarding the concentration of the nodal volume, Gass \cite[Th. 1.2 and Cor. 3.11]{Gass2020} recently proved that for $\tau \in (0,1]$,\begin{equation}Y_M([\ell-\ell^\tau, \ell]) \to   1\end{equation} almost surely and in $L^p$ for all $p \in \mathbb{N}$. Thus, in particular, the variance goes to $0$. 
This setting is (strictly) contained in that of
\wMRW (for $\tau \in (0,1)$) and \CRW (for $\tau=1$). Note that, in \cite{Gass2020}, the case $\tau\in (0,1)$ and $\tau=1$ are treated separately, and that the constant in the r.h.s. of \cite[Th. 1.2 and Cor. 3.11]{Gass2020} is different in the two cases. This is because the nodal volume is divided by the $\ell$ (called $\lambda$ in \cite{Gass2020}); in our framework, since we divide instead by $\lambda(\phi_{[\ell-\ell^\tau,\ell]})$, and $c_n$, the limit constant is always $1$. 

The fluctuations of the nodal volume have been further characterized for specific geometries. For the $n$-dimensional torus $M=(\mathbb{S}^1)^n$ in \cite{MPRW, Cammarota2017NodalAD} and for the sphere $M=\mathbb{S}^n$ in \cite{W09, W, MRossiWigman2020, marinucci2023laguerre, Tod24}, it has been shown  that the variance satisfies 
\be\label{eq:sphericalYAu} \text{Var}(Y_M(\{\ell\})) =\begin{cases} O(\ell^{-n}) & \text{if } n \ge 3, \\
O(\ell^{-2} \log \ell) & \text{if } n = 2.\end{cases} \ee
Recently, Gass \cite[Theorem 1.10]{Gass2025} showed that by interpreting Berry's random field as the Riemannian random wave $\phi_{\{1\}}(\ell\cdot)=\phi_{\{\ell\}}$ on $\R^n$ and on any ball $\ell \B^n$ of radius $\ell$ (a choice that does not correspond to any standard boundary condition on the ball, but rather to ask no boundary conditions at all), then one has the same behavior for the variance of $ 
 Y_{\ell \B^n}(\{1\}) = Y_{\B^n}(\{\ell\})$. In this framework, the nodal volume of Berry’s random field (see \cref{sec:broad}) has been the subject of extensive independent study.
The nodal length of planar random waves was investigated in \cite{ NourdinPeccatiRossi2019, Vidotto}, while the three-dimensional case has been considered in \cite{DalmaoEstradeLeon}.

In a more general geometric setting, Canzani and Hanin \cite{CH20} investigated manifolds with isotropic scaling \cite[Def. 1]{CH20} and short-range correlations \cite[Eq. (2)]{CH20}. Under these assumptions, which are satisfied for instance by manifolds without pairs of mutually focal points, cf. \cite[Section 1.5]{CH20}, and thus, in particular, by manifolds with negative curvature, 
the variance obeys the bound\begin{equation}\label{eq:varCH}\text{Var}(Y_M([\ell-1, \ell])) = O(\ell^{-\frac{n-1}{2}}).\end{equation}
This estimate ensures that the Law of Large Numbers holds in $L^2$, since the variance goes to 0. However, note that this bound is significantly larger (roughly the square root) than the one obtained for the sphere, and it is instead the order of magnitude of the generic bounds for the $L^\infty$-norm of eigenfunctions {\cite{soggezel, DONNELLY2001247}}.

In \cite[p.\ 1946]{igorsurvey}, it is argued that the bound \eqref{eq:varCH} could be 
improved combining the Kac–Rice technique with sharp covariance $L^2$ bounds, leading, in dimension $n=2$, to $\text{Var}(Y_M[\ell-1,\ell]) = O(\ell^{-1})$. 

In \cite{Dierickx},  a Central Limit Theorem has been established for the nodal length on general compact surfaces satisfying the Canzani–Hanin assumptions, in the monochromatic regime $[\ell-1,\ell]$. Such work is the first that make substantial use of Keeler's logarithmic improvement in Weyl's law \cite{Keeler}. An important difference with respect to our framework is that this result is local, meaning that it concerns the nodal set intersected with a ball of infinitesimal radius, whereas, in this paper, we aim at studying the whole nodal set.

In the present paper (see \cref{thm:ape} and \cref{cor:A}) we place the general
manifold between these two extremes: on any manifold without conjugate points we obtain
\be\label{eq:mono} \Var(Y_M([\ell-1,\ell])) = \Oh(\ell^{1-n}/\log\ell), \quad \text{(\cref{cor:A})}
\ee the logarithmic-square sharpening of
the Canzani--Hanin bound \eqref{eq:varCH}, while for every non-monochromatic window
($\a<1$) we obtain the exact order 
\be \label{colo}
\Var(Y_M([\a\ell+\oh(\ell),\ell])) = \Theta(\ell^{-n}), \quad \text{(\cref{thm:ape})}
\ee 
matching
\eqref{eq:sphericalYAu}. The precise monochromatic order remains open, the residual gap to
\eqref{eq:sphericalYAu} being a factor $\ell/\log\ell$.

Separately, Weyl's law (see \cref{thm:weyllaw}) yields that the dimension parameter is $N^M_{I_\ell}=\Theta(\ell^{n-1}|I_\ell|)$. Therefore, the asymptotic established in \cref{eq:mono} is the first, among all the above, from which we can observe a form of Berry's cancellation phenomenon: the variance is of the same order as $1/N_{I_\ell}^M$ for generic random waves, while it has a strictly lower order in the monochromatic regime. We will discuss this in detail while stating \cref{cor:berrycanc} and \cref{cor:berrycanc2} below.

A related probabilistic reformulation exists for a different classical problem: Lerario and Lundberg's \emph{Statistics on Hilbert's 16th Problem}~\cite{LeLuStat16} study the expected number of components and volume of random real algebraic hypersurfaces. The question there is one of expectation and topology rather than of variance and concentration.

\subsection{Main results at a glance}
We prove three main theorems, previewed here and stated precisely in the subsections that
follow. We present them from the most concrete to the most abstract; logically, the order of the proofs is
reversed: the abstract variance machine of \cref{thm:drago} is the engine, the scaling-error
analysis of \cref{thm:OdeltaRWintro} supplies its hypotheses for random waves, and the rates of
\cref{thm:ape} are the payoff.

\smallskip
\noindent\textbf{\cref{thm:ape}: variance rates and Berry cancellation.}
The high-frequency variance of the normalized nodal volume $Y_M(I_\ell)$ of a Riemannian random
wave, for \CRW, \wMRW, and \MRW. For every non-monochromatic window ($\a<1$) it has the exact order \eqref{colo} with no assumption on the geometry, whereas in the monochromatic regime (\MRW) on a manifold without conjugate points
the order is \eqref{eq:mono}; essentially the square of the Canzani--Hanin bound \eqref{eq:varCH} (with also an infinitesimal logarithmic factor). As a consequence, Berry's cancellation (\cref{cor:berrycanc} and \cref{cor:berrycanc2}) holds on every such manifold.

\smallskip
\noindent\textbf{\cref{thm:OdeltaRWintro}: the scaling error in the pointwise Weyl law of random waves.}
A general estimate reducing the scaling error $\delta^M_{[\ell-\eta_\ell,\ell]}$ of the pointwise Weyl law associated to an arbitrary spectral window to two ingredients: the explicit Euclidean error $\delta^{\R^n}_{[\ell-\eta_\ell,\ell]}$ and the pointwise Weyl error
$\delta^M_{[0,\ell]}$. It gives sharp sufficient conditions for the random wave to admit a scaling
\emph{limit} in a \emph{fixed} range. This is the property on which \cref{thm:ape} rests, and the one
that separates the chaotic manifolds we treat from others (see \cref{sec:scope}).

\smallskip
\noindent\textbf{\cref{thm:drago}: general abstract machinery.}
A quantitative Law of Large Numbers for the nodal volume of an \emph{arbitrary} Gaussian field,
derived from only two local hypotheses on its covariance: a near-diagonal correlation bound and an off-diagonal short-correlation bound. This result makes no reference to random waves or to the manifold's geometry, and is the engine behind the two theorems above.

\subsection{\cref{thm:ape}: variance rates and Berry cancellation}

Let $\e(\phi_{I_\ell}^M)$ be as in \cite[Def. 1.11]{cgv2025StecconiTodino}.

\begin{proposition}
Let $(M,g)$ be a compact Riemannian manifold and let $0< \eta_\ell\le \ell$ be a real sequence such that $\exists\lim_{\ell\to +\infty} \ell^{-1}\eta_\ell =1-\a $. Assume that $\e_\ell=\e(\phi_{I_\ell}^M)<1$. Then, 
\be 
\E\kop Y_M([\ell-\eta_\ell,\ell]) \pok=1+\Oh(\e_\ell).
\ee
\end{proposition}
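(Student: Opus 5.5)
The plan is to compute $\E\{\m L_{\phi}(M)\}$ pointwise with the Kac--Rice formula, writing $\phi=\phi^M_{I_\ell}$. This is allowed because $\phi$ is smooth and nondegenerate wherever $\Var\{\phi(x)\}>0$. The result is
\be
\E\kop \m L_\phi(M)\pok=\int_M \frac{\E\kop \|d_x\phi\|\,\big|\,\phi(x)=0\pok}{\sqrt{2\pi}\,\sigma_x}\,dx ,\qquad \sigma_x^2=\Var\kop\phi(x)\pok .
\ee
Equivalently, we can read this off from the zeroth chaos term in the decomposition of \cite{cgv2025StecconiTodino}; that is the form behind \cite[Eq. (1.42)]{cgv2025StecconiTodino}, which gives $\E\{Y_M(I)\}=1$ exactly in the homothetic case $\e=0$.

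First, for each $x$ we would decompose $d_x\phi$ into its projection orthogonal to $\phi(x)$, a centered Gaussian vector with covariance $\Sigma_x$ on $T_xM$. We then compare $\Sigma_x$ with the isotropic model $\frac{\lambda_x^2\sigma_x^2}{n}\,\mathrm{Id}$, where $\lambda_x^2\sigma_x^2=\mathrm{tr}\,\Sigma_x$. The eccentricity $\e(\phi)$ of \cite[Def. 1.11]{cgv2025StecconiTodino} is designed to measure the deviation of this local data from the homothetic model. That covers the spread of $\sigma_x^2$ around $\sigma(\phi)^2$, of the trace of $\Sigma_x$ around $\lambda(\phi)^2\sigma(\phi)^2$, of the anisotropy of $\Sigma_x$, and of the correlation between $\phi(x)$ and $d_x\phi$. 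If $\e(\phi)=0$, every integrand equals $c_n\lambda(\phi)$ by the formula of \cite{cgv2025StecconiTodino}, since $\E\|Z\|$ for $Z\sim\m N(0,\frac{\lambda^2}{n}\mathrm{Id})$ produces exactly the constant $c_n$ after normalization.

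Second, the expected density is the function $F(\Sigma_x,\sigma_x)=\E\|\Sigma_x^{1/2}\gamma\|/(\sqrt{2\pi}\sigma_x)$. This is positively homogeneous and smooth (indeed Lipschitz) in a neighborhood of the isotropic point, uniformly as long as the normalized parameters stay in a compact set. The hypothesis $\e_\ell<1$ is what keeps us in that compact set: it guarantees nondegeneracy, $\sigma_x>0$, and a uniform lower bound on the normalized $\Sigma_x$. A first-order expansion then gives $F=c_n\lambda(\phi)\,(1+\Oh(\e_\ell))$ pointwise, with a constant depending only on $n$. Integrating over $M$ and dividing by $c_n\vol{n}(M)\lambda(\phi)$ yields $\E\{Y_M\}=1+\Oh(\e_\ell)$. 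The window assumption on $\eta_\ell$ plays no role beyond making $\lambda(\phi)>0$ and $\phi$ nontrivial.

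The main obstacle is the uniform Lipschitz estimate. The $\Oh(\e_\ell)$ must be uniform, while the precise normalization in \cite[Def. 1.11]{cgv2025StecconiTodino} may measure anisotropy relative to the average quantities rather than pointwise ones. I would handle this by writing $\Sigma_x=\frac{\lambda(\phi)^2\sigma(\phi)^2}{n}(\mathrm{Id}+A_x)$ with $\|A_x\|\le\e_\ell<1$, which makes $(\mathrm{Id}+A_x)^{1/2}$ analytic. We would then bound $\big|\E\|(\mathrm{Id}+A)^{1/2}\gamma\|-\E\|\gamma\|\big|\le C_n\|A\|$ and $\big|\sigma(\phi)/\sigma_x-1\big|\le C\e_\ell$ directly.
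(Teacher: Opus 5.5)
Your proposal is correct, but it takes a different route from the paper. The paper gives no argument of its own: it simply invokes \cite[Corollary 1.3 and Eq. (1.42)]{cgv2025StecconiTodino}. You instead re-derive that corollary from the pointwise Kac--Rice density, which is what the zeroth-chaos term of that decomposition reduces to, together with an elementary comparison against the isotropic Gaussian norm. The citation is shorter and inherits the generality of the companion paper's machinery. Your direct argument shows exactly where $\e<1$ is used and gives an explicit constant.

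A few details need fixing. Write $\e=a+b+c$ for the three terms of \cite[Def. 1.11]{cgv2025StecconiTodino}: the directional spread of $d_x\phi$, the spread of $\sigma_x/\sigma$, and the size of $\frac{\sqrt n}{\lambda}d_x(\sigma_\cdot/\sigma)$.

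\begin{itemize}
\item The claim $\|A_x\|\le\e_\ell$ does not follow from the definition. The first term only puts the eigenvalues of $\frac{n}{\sigma^2\lambda^2}\Cov(d_x\phi)$ in $[(1-a)^2,(1+a)^2]$.
\item Conditioning on $\phi(x)=0$ then subtracts the rank-one form $d_x\sigma_\cdot\otimes d_x\sigma_\cdot$, because $\E\{\phi(x)\,d_x\phi\}=\sigma_x\,d_x\sigma_\cdot$. Its normalized size is at most $c^2$.
\item Hence the eigenvalues of $\mathrm{Id}+A_x$ lie between $(1-a)^2-c^2=(1-a-c)(1-a+c)\ge(1-\e)^2$ and $(1+\e)^2$. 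So $\|A_x\|$ can reach $2\e+\e^2$, which exceeds $1$ once $\e>\sqrt2-1$, and the binomial series for the square root is not available.
\item You do not need it: pointwise $(1-\e)\|\gamma\|\le\langle(\mathrm{Id}+A_x)\gamma,\gamma\rangle^{1/2}\le(1+\e)\|\gamma\|$.
\item Combined with $\sigma/\sigma_x\in[(1+\e)^{-1},(1-\e)^{-1}]$, this gives $|\E\{Y_M\}-1|\le 2\e/(1-\e)$. So the implied constant is uniform only for $\e$ bounded away from $1$, for instance when $\e_\ell\to0$.
\end{itemize}

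Finally, Kac--Rice for the nodal volume needs the $1$-jet $(\phi(x),d_x\phi)$ to be nondegenerate, not merely $\sigma_x>0$ as your opening sentence says. The lower bound $(1-\e)^2>0$ above is exactly what provides this. Note that this relies on $\e$ being the sum, not the maximum, of its three terms, since you need $a+c<1$.
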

\begin{proof}
    See \cite[Corollary 1.3]{cgv2025StecconiTodino} and \cite[Eq. (1.42)]{cgv2025StecconiTodino}.
\end{proof}
The next theorem identifies some cases in which $\e_\ell\to 0$ and provides a new upper bound on the variance.
Define 
\be 
a(\ell):=\begin{cases}
    \log \ell, \quad \text{if $n=2$,}
    \\
1, \quad \text{if $n\ge 3$,}
\end{cases}
\ee
Recall that in the spherical case, when $M=S^n$, \cref{eq:sphericalYAu} yields $\Var\tyu Y_M([\ell-1,\ell])\uyt=\Theta(\ell^{-n}) a(\ell)$.

\begin{mainthm}\label{thm:ape}
Let $(M,g)$ be a compact Riemannian manifold of dimension $n\ge 2$. Let $0< \eta_\ell\le \ell$ be two real sequences such that $\exists\lim_{\ell\to +\infty} \ell^{-1}\eta_\ell =1-\a $. Then, if $\a<1$, 
\be 
\Var\tyu Y_M([\ell-\eta_\ell,\ell])\uyt=\Theta\tyu \ell^{-n}\uyt
\ee
If $\a=1$ and $\liminf_{\ell\to +\infty} \eta_\ell =+\infty$, 
\be \label{eq:1}
 \Var\tyu Y_M([\ell-\eta_\ell,\ell])\uyt\le
 \Oh(\ell^{-n})\tyu a(\ell)+\frac{\ell}{\eta_\ell^2}\uyt,
\ee
If $\a=1$ and $(M,g)$ has no conjugate points,
\be 
\Var\tyu Y_M([\ell-\eta_\ell,\ell])\uyt\le \Oh(\ell^{-n})\tyu a(\ell)+\frac{\ell}{\eta_\ell^2\log\ell}\uyt,
\ee
In all above cases, $\e_\ell\to 0$ and Weyl law (\cref{eq:weyllaw_interval}) is sharp:
\be 
N^M_{[\ell-\eta_\ell,\ell]}=\Theta(\ell^{n-1}\eta_\ell).
\ee
\end{mainthm}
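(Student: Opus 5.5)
The plan is to derive \cref{thm:ape} from the abstract variance bound of \cref{thm:drago}. Its hypotheses will be checked by the scaling-error estimate \cref{thm:OdeltaRWintro}, and the lower bound for $\a<1$ will be obtained separately from the second Wiener chaos. We write $I_\ell=[\ell-\eta_\ell,\ell]$ and rescale at wavelength $\ell^{-1}$.

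\emph{Step 1: scaling error.} We first invoke \cref{thm:OdeltaRWintro}. On a ball of radius $\rho(M)$ it expresses the normalized covariance $\ell^{1-n}\eta_\ell^{-1}E^M_{I_\ell}(x,\exp_x(u/\ell))$ as the Euclidean kernel $E^{\R^n}_{\mathsf U(\cdot)}$ plus an error $\delta^M_{I_\ell}$. This error is controlled by the Euclidean error $\delta^{\R^n}_{I_\ell}$ together with the Hörmander error $\delta^M_{[0,\ell]}$ rescaled by $\ell/\eta_\ell$, and the control holds together with derivatives.

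The Hörmander error $\delta^M_{[0,\ell]}$ is $\Oh(\ell^{-1})$ by \cref{thm:hoermplaw}, and $\Oh((\ell\log\ell)^{-1})$ under the no-conjugate-points assumption by \cite{Keeler}. Hence $\delta^M_{I_\ell}=\Oh(1/\eta_\ell)$ in general, and $\Oh(1/(\eta_\ell\log\ell))$ without conjugate points.

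Consequences of this step:
\begin{enumerate}[(i)]
\item The diagonal values of the normalized covariance and its derivatives converge to their Euclidean counterparts. This gives $N^M_{I_\ell}=\Theta(\ell^{n-1}\eta_\ell)$ through \cref{eq:NEEsig}, and it gives $\e_\ell\to0$, since the Euclidean field is isotropic.
\item In the case $\a=1$, $\eta_\ell=\Oh(1)$, the no-conjugate-points hypothesis is exactly what makes the error infinitesimal, so that a scaling limit (Berry's field) exists.
\end{enumerate}

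\emph{Step 2: applying \cref{thm:drago}.} We split $M\times M$ into a near-diagonal region $\dist xy\le R/\ell$ and an off-diagonal region.
\begin{enumerate}[(i)]
\item \emph{Near-diagonal.} The near-diagonal correlation hypothesis follows from the $\mathcal C^2$ convergence to a nondegenerate Euclidean field. It contributes $\Oh(\ell^{-n})$ to $\Var(Y_M)$ after normalization by $\lambda(\phi_{I_\ell})^2\vol{n}(M)^2\approx \ell^2$.
\item \emph{Off-diagonal.} Through the chaos decomposition of \cite{cgv2025StecconiTodino}, \cref{thm:drago} bounds this contribution by $\int\!\!\int |\bar E|^2$, where $\bar E$ is the normalized covariance including first derivatives. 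Here the $L^2$ correlation decay enters in two parts:
\begin{itemize}
\item The Euclidean part decays like $(\ell d)^{-(n-1)}$, multiplied by a factor $(\eta_\ell d)^{-1}$ for wide windows. Integrating $|\cdot|^2$ gives $\ell^{-n}$ times $\int_{1}^{\ell}r^{n-1}r^{-(n-1)}dr\sim\ell$ in the monochromatic case, or a convergent integral when $\eta_\ell\sim\ell$. The $\log\ell$ for $n=2$ in $a(\ell)$ comes from the marginal $(n-1)$-st/ fourth-chaos term.
\item The error part, at distance $\ge\rho(M)$ and globally, is handled by Weyl-type $L^2$ identities: $\int\!\!\int|E|^2=N_{I_\ell}$. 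The part of the error not captured by the Euclidean model has squared $L^2$ size at most $(\delta^M_{I_\ell})^2$ times the natural scale. This yields the terms $\ell^{1-n}/\eta_\ell^2$, respectively $\ell^{1-n}/(\eta_\ell^2\log\ell)$.
\end{itemize}
\end{enumerate}
Bookkeeping these contributions gives \cref{eq:1} and its no-conjugate-points analogue. For $\a<1$ the error terms are $\Oh(\ell^{-n})$, since $\eta_\ell\sim\ell$.

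\emph{Step 3: lower bound for $\a<1$.} We project onto the second chaos, whose variance is a sum of squared integrals $\int\!\!\int (\text{quadratic in }\bar E)^2$. The main contribution of this integral comes from the region $\dist xy\lesssim \ell^{-1}$. There the Euclidean limit gives a strictly positive constant times $\ell^{-n}$: for wide windows the Euclidean second-chaos coefficient does not vanish, in contrast to Berry's field.

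\emph{Main obstacle.} The hardest part will be the off-diagonal error in Step 2 at distances between $R/\ell$ and $\rho(M)$. There the pointwise error $\delta^M_{I_\ell}$ is not small relative to the decaying Euclidean kernel, so it must be controlled in $L^2$ rather than pointwise. I would first try to write $E^M_{I_\ell}$ as a spectral difference and use orthogonality, $\int|E_{I}(x,y)|^2dy=E_I(x,x)$, to absorb that region.
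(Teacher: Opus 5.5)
You have Step 1 and the near-diagonal part of Step 2 essentially as in the paper. The off-diagonal bookkeeping, however, has a genuine gap, and it sits exactly where the $\a=1$ statements live. Those bounds are $\oh(1/N^M_{I_\ell})$; this is the Berry cancellation of \cref{cor:berrycanc_intro}. Any estimate of the off-diagonal contribution, second chaos included, by $\iint\|\bar E\|^2$ is stuck at order $1/N^M_{I_\ell}\asymp \ell^{1-n}/\eta_\ell$. The reason is that for $\a=1$ the square of the normalized kernel decays only like $(\ell\,\mathrm{dist}(x,y))^{1-n}$ up to distance $\eta_\ell^{-1}$. So most of the $L^2$ mass, which by orthogonality is $\asymp 1/N$, lies at distances $\ge r/\ell$.

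Your own computation shows this. The Euclidean part gives $\ell^{-n}\cdot \ell$ when $\eta_\ell=1$, which exceeds the target $\ell^{1-n}/\log\ell$. In general it gives $\ell^{-n}\cdot\ell/\eta_\ell$, which exceeds $\ell^{-n}(a(\ell)+\ell/\eta_\ell^2)$ as soon as $\eta_\ell\to\infty$ with $\eta_\ell\ll \ell/a(\ell)$. Note also that \cref{thm:drago} does not bound the off-diagonal part by a square. It keeps the second-chaos variance aside and controls only the chaoses $q\ge 4$, and it does so by $\iint\|\bar E\|^4$.

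The missing ingredient is the exact second-chaos identity \cref{cor:berrycanc}. For random waves it gives $\Var(Y_M(I_\ell)[2])\le \frac{1}{\ell^{n-1}\eta_\ell}\bigl(\Oh(\eta_\ell^2/\ell^2)+\Oh(\e_\ell)\bigr)$ (\cref{cor:wMRWbercanc}). The terms $\ell/\eta_\ell^2$ and $\ell/(\eta_\ell^2\log\ell)$ are precisely this $\Oh(\e_\ell)/N$, with $\e_\ell\lesssim \frac{\ell}{\eta_\ell}\delta^M_{[0,\ell]}$. They are therefore linear in the scaling error, not the $\delta^2$-type term you describe; that term is also present, as $\delta_\ell^2/N$, but it is subdominant.

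For the chaoses $q\ge4$, one splits $\|\bar E\|^4\le \max\{\delta_\ell^2\|\bar E\|^2,(1+\ell\,\mathrm{dist}(x,y))^{2-2n}\}$, using \cref{cor:decor} and the Bessel decay. The orthogonality identity you propose handles the first piece, giving $\delta_\ell^2/N$. The second piece integrates to $\ell^{-n}a(\ell)$, and this is where $a(\ell)$ actually comes from. Orthogonality produces a gain only when applied to one factor of a fourth power whose other factor is pointwise small. Applied to $\|\bar E\|^2$ itself, it just returns $1/N$.

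Your Step 3 is a different route to the lower bound for $\a<1$, and as written it is unjustified. The second-chaos covariance density is not sign-definite. You would therefore need two things: positivity of the integrated Euclidean second-chaos density, and a proof that the region $\mathrm{dist}(x,y)\gtrsim R/\ell$ is negligible. The pointwise H\"ormander error $\Oh(1/\ell)$ does not give the latter. Squared over an $\Oh(1)$-volume region it is $\ell^{-2}\gg\ell^{-n}$ when $n\ge 3$, and beyond $\rho$ there is no scaling statement at all. You would need extra global input, such as comparing exact $L^2$ identities on $M$ and on $\R^n$.

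The paper gets this lower bound for free from the same identity \cref{cor:berrycanc}. When $\a<1$, Weyl's law makes the spectral variance $\frac1N\sum_{\lambda_i\in I_\ell}(\lambda_i^2/\lambda^2-1)^2$ of order one, hence $\Var(Y_M(I_\ell)[2])=\Theta(\ell^{-n})$. Your Step 2 with the square does suffice for the matching upper bound in that regime; this is what \cref{prop:addendum} provides.
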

{
Note that all three asymptotics include the case $[0,\ell]$, but only the third includes the purely monochromatic case $[\ell-1,\ell]$. The second and third asymptotics are significant only for $\a=1$, that is, the weakly monochromatic case.
The case of the sphere $M=S^n$ is not included in the above possibilities. }
\begin{remark}
Our result could probably be extended to $\e_\ell<1$, after a careful reworking of the proof. Such assumptions also ensures that the field $\phi^M_{I_\ell}$ has non-degenerate first jet. However, in settings where $\e_\ell$ is just bounded, and where we have no information ensuring the non-degeneracy, then even the expectation is unclear as nor the Kac-Rice formula nor the chaos decomposition from \cite{cgv2025StecconiTodino} is applicable; this is the case for monochromatic windows $[\ell-\Oh(1),\ell]$ on general manifolds.
\end{remark}
\begin{corollary}\label{cor:A}
If $(M,g)$ is a compact Riemannian manifold of dimension $n\ge 2$, with no pairs of conjugate points, then
\be 
\Var\tyu Y_M([\ell-1,\ell])\uyt=\Oh\tyu \frac{\ell^{1-n}}{\log\ell}\uyt, \quad \text{and} \quad N^M_{[\ell-1,\ell]}=\Theta(\ell^{n-1}).
\ee
\end{corollary}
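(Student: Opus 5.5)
This is a direct specialization of the third case of \cref{thm:ape} to $\eta_\ell\equiv 1$, so the plan is to check its hypotheses and then simplify the bound.

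First, the hypotheses. With $\eta_\ell=1$ we have $\ell^{-1}\eta_\ell=\ell^{-1}\to 0$, so the limit exists and equals $1-\a$ with $\a=1$. Moreover $0<\eta_\ell\le\ell$ for $\ell\ge1$. Since $(M,g)$ is compact, of dimension $n\ge 2$, and has no pairs of conjugate points, the third alternative of \cref{thm:ape} applies. Unlike the second alternative, it does not require $\liminf\eta_\ell=+\infty$, which is exactly why the strictly monochromatic window is admissible here.

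Next, the bound. The theorem gives
\be
\Var\tyu Y_M([\ell-1,\ell])\uyt\le \Oh(\ell^{-n})\tyu a(\ell)+\frac{\ell}{\log\ell}\uyt .
\ee
Since $a(\ell)\le\log\ell$ and $\log\ell=\oh(\ell/\log\ell)$, the term $\ell/\log\ell$ dominates. Hence the variance is $\Oh(\ell^{1-n}/\log\ell)$.

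Finally, the dimension count. The last assertion of \cref{thm:ape} gives $N^M_{[\ell-\eta_\ell,\ell]}=\Theta(\ell^{n-1}\eta_\ell)$ in all three cases. With $\eta_\ell=1$ this is $N^M_{[\ell-1,\ell]}=\Theta(\ell^{n-1})$.

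There is no genuine obstacle at the level of the corollary; all the difficulty sits inside the third case of \cref{thm:ape}. That case rests on Keeler's $\Oh(1/(\ell\log\ell))$ pointwise Weyl remainder. Through \cref{thm:OdeltaRWintro}, this remainder produces a monochromatic scaling error of order $\ell/(\eta_\ell\log\ell)\cdot\ell^{-1}$, which is infinitesimal even when $\eta_\ell=1$. That infinitesimal error is what makes $\e_\ell\to0$ and makes \cref{thm:drago} applicable. The one point to double-check is that the implied constants in \cref{thm:ape} are uniform along the constant sequence $\eta_\ell=1$. I expect this to be so, since the theorem is stated for arbitrary sequences satisfying its hypotheses.
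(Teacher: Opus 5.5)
Your proposal is correct and is exactly how the paper obtains \cref{cor:A}: it is the third case of \cref{thm:ape} with $\eta_\ell\equiv 1$, so $\a=1$ and no divergence of $\eta_\ell$ is required. There $a(\ell)\le\log\ell=\oh(\ell/\log\ell)$ makes $\ell^{1-n}/\log\ell$ the dominant term, and $N^M_{[\ell-1,\ell]}=\Theta(\ell^{n-1})$ is read off from the sharp Weyl law stated in the same theorem. Your closing worry about uniformity of the constants is moot, because the $\Oh$ in \cref{thm:ape} is taken as $\ell\to+\infty$ along a single fixed sequence, here $\eta_\ell=1$.
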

\begin{remark}
Such bound significantly improves the state of the art, currently given by \cref{eq:varCH} from \cite{CH20}. The bound given by \cref{cor:A} is indeed logarithmically smaller than the square of the former. It should be noted that our context requires a slightly more restricting assumption on the geometry, namely, no conjugate pairs instead of just non-self focal points required in \cite{CH20} (a sufficient condition for a manifold to be of isotropic scaling). 
\end{remark}
\subsubsection{Interpretation in view of Berry  cancellation}
In all the above cases we can observe a form of Berry's cancellation, 
\begin{corollary}[Berry cancellation]\label{cor:berrycanc_intro}
Let the assumptions of \cref{thm:ape} prevail and assume in addition that: if the dimension is $\d=2$, then $\eta_\ell\log\ell=\Oh(\ell)$. Then,
    \be 
\Var\tyu Y_M([\ell-\eta_\ell,\ell])\uyt= \frac{1}{N^M_{[\ell-\eta_\ell,\ell]}}\cdot\begin{cases}
    \Theta(1), \ &\text{if $\a\in [0,1)$,}
    \\
    \oh(1), \ &\text{if $\a=1$ and $(M,g)$ has no conjugate points.}
\end{cases}
    \ee
\end{corollary}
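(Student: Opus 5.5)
This is a direct corollary of \cref{thm:ape}, combined with the sharp Weyl law $N^M_{[\ell-\eta_\ell,\ell]}=\Theta(\ell^{n-1}\eta_\ell)$ stated at the end of that theorem. The plan is to write $\eta_\ell=|I_\ell|$ and compute the ratio $N^M_{I_\ell}\cdot\Var(Y_M(I_\ell))$ in each regime.

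\emph{Case $\a\in[0,1)$.} Here $\eta_\ell=\Theta(\ell)$, since $\ell^{-1}\eta_\ell\to 1-\a>0$. Hence $N^M_{I_\ell}=\Theta(\ell^n)$. By \cref{thm:ape}, $\Var(Y_M(I_\ell))=\Theta(\ell^{-n})$. The product is therefore $\Theta(1)$, which is the claim. No dimensional restriction is needed in this case.

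\emph{Case $\a=1$ on a manifold without conjugate points.} The third bound of \cref{thm:ape} gives
\be
N^M_{I_\ell}\Var\tyu Y_M(I_\ell)\uyt\le \Oh(\ell^{n-1}\eta_\ell)\,\Oh(\ell^{-n})\tyu a(\ell)+\frac{\ell}{\eta_\ell^2\log \ell}\uyt=\Oh\tyu\frac{\eta_\ell\, a(\ell)}{\ell}+\frac{1}{\eta_\ell\log\ell}\uyt.
\ee
The second term tends to $0$, because $\eta_\ell$ is bounded below by a positive constant (e.g. $\eta_\ell\ge 1$ in the monochromatic range) and $\log\ell\to\infty$. If $\eta_\ell\to 0$ were allowed, I would need to check that the implicit constants still give $\eta_\ell\log\ell\to\infty$. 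I would state the standing assumption $\liminf\eta_\ell>0$, which is implicit in \cref{thm:ape} since $N^M_{I_\ell}=\Theta(\ell^{n-1}\eta_\ell)$ must be meaningful.

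For the first term, $\a=1$ means $\eta_\ell=\oh(\ell)$. For $n\ge3$ we have $a(\ell)=1$, so the term is $\eta_\ell/\ell=\oh(1)$.

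The main obstacle is $n=2$, where $a(\ell)=\log\ell$ and the first term is $\eta_\ell\log\ell/\ell$. The extra hypothesis $\eta_\ell\log\ell=\Oh(\ell)$ only makes this term $\Oh(1)$, not $\oh(1)$. To get $\oh(1)$ I would try to split according to how large $\eta_\ell$ is. If $\eta_\ell\log\ell=\oh(\ell)$, the term is already $\oh(1)$. On the remaining subsequence, where $\eta_\ell\asymp \ell/\log\ell$, I would go back into the proof of \cref{thm:ape}. There I would check that the $a(\ell)$ term comes from the Euclidean (Berry) contribution, whose $\log$ factor is attached to the $\ell^{-n}$ term with a constant that compares favourably to $1/N$. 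Failing that, I would read the hypothesis as intended to give $\oh(1)$, i.e. $\eta_\ell\log\ell=\oh(\ell)$, and note that this is exactly what the argument uses. The lower-order constants are then assembled to conclude $\oh(1)$.
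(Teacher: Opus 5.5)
Your route is the paper's: the corollary is read off from \cref{thm:ape} together with the sharp Weyl law $N^M_{[\ell-\eta_\ell,\ell]}=\Theta(\ell^{n-1}\eta_\ell)$, and your computation of $N\cdot\Var$ in each regime is correct.

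\textbf{The two-dimensional case.} The place where you stall is a defect of the paper's own argument, not of yours. The paper's one-line proof asserts that for $n=2$ the hypothesis gives $\ell^{-2}\bigl(\log\ell+\ell\eta_\ell^{-2}/\log\ell\bigr)=\ell^{-1}\eta_\ell^{-1}\,\oh(1)$. Multiplying by $\ell\eta_\ell$ gives $\eta_\ell\log\ell/\ell+1/(\eta_\ell\log\ell)$. Under $\eta_\ell\log\ell=\Oh(\ell)$ the first term is only $\Oh(1)$. So if \cref{thm:ape} is the only input, the hypothesis must be $\eta_\ell\log\ell=\oh(\ell)$, exactly as you say. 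Note also that the two-dimensional condition is incompatible with $\a<1$, since there $\eta_\ell\log\ell\sim(1-\a)\ell\log\ell$. It must therefore be read as belonging to the case $\a=1$ only, which you implicitly do.

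\textbf{Drop the ``favourable constants'' fallback.} No choice of constants turns a $\Theta(1)$ bound into $\oh(1)$. To keep the weaker hypothesis you would need a genuinely sharper, window-dependent bound on the fourth-chaos integral in \cref{thm:ratefall}. One way would be to exploit the extra decay of the window-averaged kernel at distances much larger than $1/\eta_\ell$, which heuristically replaces $a(\ell)=\log\ell$ by $\log(\ell/\eta_\ell)$. That refinement is not contained in \cref{thm:ape}.

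\textbf{The second term.} The condition you actually need is $\eta_\ell\log\ell\to+\infty$, not a lower bound on $\eta_\ell$. This condition is already implicit in the third case of \cref{thm:ape}. That case is proved through \cref{thm:ratefall}, which requires $\delta_\ell\to0$. By \cref{thm:OdeltaRW} with Keeler's bound, $\delta_\ell=\Oh\bigl(1/(\eta_\ell\log\ell)\bigr)+\oh(1)$. Hence your standing assumption costs nothing.
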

\begin{proof}
The result is a straightforward consequence of \cref{thm:ape}. In the special case, $n=2$, the additional assumption ensures that $\ell^{-n}(\log \ell+\eta_\ell^{-2}\ell/\log\ell)=\ell^{1-n}\eta_\ell^{-1} \cdot \oh(1)$.
\end{proof}
Berry Cancellation refers to a general phenomenon according to which the variance of the nodal volume of monochromatic random waves is smaller than the natural upper bound, valid for more general Gaussian fields, including non-monochromatic random waves and non-centered fields, which corresponds to looking at a nonzero level. For the nodal volume this phenomenon is caused by a cancellation occurring in the second Wiener chaos, precisely due to the field being an eigenfunction. Such cancellation of the second Wiener chaos has been discussed and proven at high level of generality in \cite[Corollaries 2.1 and 2.2]{cgv2025StecconiTodino}. 

The natural heuristic, in our context would suggest that the following inequality is sharp:
    \begt 
 \Var\tyu Y_M([\ell-\eta_\ell,\ell])[2]\uyt
\le 
\int_{M\times M}\|(\ell^{n-1}\eta_\ell)^{-1}j^{1,1}_{x,y}E_{[\ell-\eta_\ell,\ell]}\|^2_{g^{\ell}} \dd x \dd y \sim \tyu N^M_{[\ell-\eta_\ell,\ell]}\uyt^{-1} \sim \frac{1}{\ell^{n-1}\eta_\ell},
    \eegt
  where $j^{1,1}_{x,y}$ is the (1,1)-jet of $E$ defined in \cref{eq:jlkC} and the norm is defined in \eqref{eq:norm}. In such case, one has that $\Var\tyu Y_M([\ell-\eta_\ell,\ell])\uyt\sim (N^M_{[\ell-\eta_\ell,\ell]})^{-1}$, since the full variance is lower bounded by its second chaotic component. \cref{cor:berrycanc} confirms that this is indeed the case if $\a<1$, but in the monochromatic regime: $\a=1$, the above inequality is asymptotically loose.

  Berry's cancellation is often associated also to the difference between the behavior of the zero level (nodal set) and that of other levels. While in the rest of the paper we focus solely on the nodal volume (sticking to the setting of Yau's conjecture), we include here one result on the variance of the level volume of monochromatic random wave, as a matter of comparison with \cref{cor:berrycanc}. Indeed, the next theorem confirms the aforementioned second version of Berry's cancellation. Let $\sigma_\ell(x)^2:=\Var(\phi^M_{[\ell-\eta_\ell,\ell]}(x))$, for any $x\in M$.
 \begin{corollary}[Berry cancellation - non-zero level version]\label{cor:berrycanc2}
Under the same hypotheses of \cref{cor:berrycanc_intro}, if $\a=1$ and $(M,g)$ has no conjugate points, then
    \be 
\Var\tyu \frac{\vol{n-1}(\{x:\phi_{[\ell-\eta_\ell,\ell]}^M(x)=t\sigma_\ell(x)\})}{\lambda\tyu \phi_{[\ell-\eta_\ell,\ell]}^M\uyt}\uyt
=
\frac{1}{N^M_{[\ell-\eta_\ell,\ell]}}\cdot\begin{cases}
    \Omega(1), \quad &\text{if $t\neq 0$, }
    \\
    \oh(1), \quad &\text{if $t=0$.}
\end{cases}
    \ee
\end{corollary}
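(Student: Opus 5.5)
The plan is to split the variance of the level volume into its Wiener chaos components, using the decomposition of \cite{cgv2025StecconiTodino}, and to compare the two cases $t\neq0$ and $t=0$ through the second chaos.

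\textbf{Case $t=0$.} Here the level set is the nodal set. Dividing by $\lambda(\phi^M_{I_\ell})$ differs from $Y_M$ only by the deterministic factor $c_n\vol{n}(M)$. So the $\oh(1)$ statement is exactly the second case of \cref{cor:berrycanc_intro}, which follows from \cref{thm:ape} (third bound) together with $N^M_{I_\ell}=\Theta(\ell^{n-1}\eta_\ell)$. In dimension $2$ one also needs the extra hypothesis $\eta_\ell\log\ell=\Oh(\ell)$, which is what makes $\ell^{-n}a(\ell)=\oh\tyu(\ell^{n-1}\eta_\ell)^{-1}\uyt$. In dimension $n\ge3$ one only needs $\eta_\ell=\oh(\ell)$, which holds because $\a=1$.

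\textbf{Case $t\neq0$.} It suffices to bound the variance from below by its second chaos component, since chaos components are orthogonal.

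1. Write the level functional via Kac--Rice, in the form $\int_M\delta_t(\phi/\sigma_\ell)\|d\phi\|\,dx$. Its second chaos projection $Q_2$ is an integral over $M$ of a quadratic form in $(\phi(x),d_x\phi)$.

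2. Because $\phi$ is a monochromatic eigenfunction up to the scaling error, the $\|d\phi\|^2$-part and the $\phi^2$-part of $Q_2$ are linked by Green's identity, $\int\|d\phi\|^2\approx\ell^2\int\phi^2$. At $t=0$ their coefficients cancel exactly; this is the cancellation of \cite[Cor. 2.1--2.2]{cgv2025StecconiTodino}. For $t\neq0$ the coefficient of $H_2(\phi/\sigma_\ell)$ acquires an extra term proportional to $t^2$ (through $H_2(t)$ in the expansion of $\delta_t$), so it survives. One then gets
\be
Q_2=c(t)\,\frac{\ell}{\sigma_\ell^2}\int_M\tyu\phi^2-\sigma_\ell^2\uyt dx+\text{(lower order)},\qquad c(t)\neq0,
\ee
up to normalisation, using that $\e_\ell\to0$ so that $\sigma_\ell(x)$ is asymptotically constant.

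3. Compute the variance of this leading term by Wick's formula:
\be
\Var\tyu\int_M\phi^2\uyt=2\int_{M\times M}E^M_{I_\ell}(x,y)^2\,dx\,dy=2N^M_{I_\ell},
\ee
by \cref{eq:NEEsig}. After dividing by $\lambda^2\sim\ell^2$ and by $\sigma_\ell^4\sim(N/\vol{n}(M))^2$, this gives an order $1/N^M_{I_\ell}$. Together with $N=\Theta(\ell^{n-1}\eta_\ell)$ from \cref{thm:ape}, this yields $\Omega(1/N)$.

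\textbf{Main obstacle.} The hard step is controlling the lower-order remainder in step 2 so that it does not cancel the main term. This remainder consists of the non-exact eigenfunction defect across the window $[\ell-\eta_\ell,\ell]$, the eccentricity $\e_\ell$, and the variation of $\sigma_\ell$. I would bound its $L^2$ norm by the same near-diagonal and short-correlation estimates (\cref{thm:drago}, \cref{thm:OdeltaRWintro}) used for the $t=0$ bound, which show its variance is $\oh(1/N)$. Since that is smaller than the main term's $\Theta(1/N)$, the lower bound survives.
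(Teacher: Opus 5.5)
Your proposal is essentially the paper's proof of \cref{thm:nonzero}. The case $t=0$ is read off from \cref{cor:berrycanc_intro}. For $t\neq0$ the paper likewise lower-bounds the variance by the second chaos, uses \cite[Eq. (3.34)]{cgv2025StecconiTodino} with $f=\phi/\sigma_\ell(x)$ to write the level-$t$ second chaos as $e^{-t^2/2}\bigl[\frac{t^2}{2s_n\sqrt{n}}H_2(f(x))\|u\|_{g^f}+\hat{\mathcal L}_f(x,u)[2]\bigr]$, and gets order $1/N$ for the first piece from $\int_{M\times M}C(x,y)^2\,dx\,dy\asymp\int E^2/\sigma^4$, exactly as in your Wick computation.

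The step you flag as the main obstacle is easier than you suggest. The remainder is exactly $e^{-t^2/2}$ times the level-zero second chaos. Its variance is therefore at most the level-zero variance, which is already $\oh(1/N)$ by the $t=0$ case; alternatively, \cref{cor:berrycanc} bounds it directly by $\frac{1}{4N}\bigl(\Oh(\eta_\ell^2/\ell^2)+\Oh(\e_\ell)\bigr)$. In particular, \cref{thm:drago} is the wrong tool here: it only controls chaoses of order at least four, and short-correlation bounds alone would give only $\Oh(1/N)$ for a second-chaos term, the same order as your main term.
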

The lower bound in this theorem is proven in the appendix, see \cref{thm:nonzero}, while the upper bound ($t=0$) is the same as before.  Considering the non-centered field $\phi_{[\ell-\eta_\ell,\ell]}^M(x)-t\sigma_\ell(x)$, instead than $\phi_{[\ell-\eta_\ell,\ell]}^M(x)-t$, allows for more natural computations than for a constant level. In standard well explored cases, like sphere, tori, and Euclidean space, the function $\sigma_\ell(x)$ is constant so this subtlety does not appear. 
\begin{remark}
In the two dimensional case, both \cref{cor:berrycanc_intro} and \cref{cor:berrycanc2}, additionally assume that $\eta_\ell$ does not grow too fast: it should be of order bounded by $\ell/\log \ell$. Such regime includes our main focus, the monochromatic case $\eta_\ell=1$, with a large margin. We don't know if the assumption is strictly necessary for the result to hold. 
However, the case $n=2$ is expected to be special. This can be observed in the sphere \eqref{eq:sphericalYAu} and, more crucially, in the error in the pointwise Weyl law, see \eqref{eq:RWscalerrorr_intro} in \cref{thm:OdeltaRWintro} below, see also \cref{rem:sharp_scabo2}. 
\end{remark}
\subsection{\cref{thm:OdeltaRWintro}: the scaling error in the pointwise Weyl law of random waves}

\subsubsection{The scaling limits of random waves}

For $I_\ell$ as above, we can compare the covariance function $E_{I_\ell}^M$ of $\phi_{I_\ell}^M$ with that of the Euclidean analogue:
\be 
E^M_{I_\ell}(x,y)\approx E^{\R^n}_{I_\ell}(u,v), \quad \text{where $\dist xy=|u-v|$}.
\ee
We find that a most insightful way to express such relation is as follows. 
\begin{definition}\label{def:RWUI}
Let us define the Gaussian field $\phi_{\mathsf{U}(I)}^{\R^n}$  with covariance function
\be 
E^n_{\mathsf U (I)}(t):=\begin{cases} 
\frac{1}{|I|}E^n_{I}
, &\text{ if $|I|\neq 0$}
\\
E^n_{\ell}
, &\text{ if $I=\{\ell\}$}.
\end{cases}
\ee
for all $I$ bounded intervals. (The choice of notation is because $\mathsf U (I)$ also denotes the uniform probability measure on $I$.)
\end{definition}

    \begin{definition}\label{def:scalimRW}
Let $M$ be either a compact Riemannian manifold of dimension $n$, or $M=\R^n$. Let $\dist \cdot \cdot$ denote the Riemannian distance function on $M\times M$.
Let $I_\ell\subset [0,+\infty)$ be an interval such that $\ell=\max I_\ell\in \R$ and let $\a\in [0,1]$.
We will refer to the following scaling bound for $\phi^M_{I_\ell}$ as the \emph{$\mC^k$ Scaling Bound of Random Waves of type $\a$ in range $[\rho_\ell',\rho_\ell]$}:
\begin{gather}\label{eq:scalimRW}
E_{I_\ell}^M(x,y)=\ell^{n-1}{|I_\ell|} \tyu 
{E^n_{\mathsf U(\a,1)}}\tyu \ell\cdot \dist x y \uyt
+
R_{\a,I_\ell}^{M}(x,y)
\uyt, \quad \text{and define}
\\
\sup_{ \rho_\ell'\le \mathrm{dist}(x,y)\le \rho_\ell}\sup_{|a|,|b|\le k} \frac{\| \nabla^a \nabla^b R_{\a,I_\ell}^{M}(x,y)\|}{\ell^{|a|+|b|}}\sqrt{n}^{|a|+|b|}=: \delta_{I_\ell}^{M}(\a,\mC^k,[\rho_\ell',\rho_\ell])
    \end{gather} 
where $0\le\rho_\ell'\le \rho_\ell$ and $k\in \N$. We call the number $\delta_{I_\ell}^{M}(\a,\mC^k,[\rho_\ell',\rho_\ell])$ the 
\emph{$\mC^k$ Scaling error of Random Waves of type $\a$ in range $[\rho_\ell',\rho_\ell]$}. 

\textbf{Notation:} in what follows, to keep a lighter notation, we shall write $\delta_{I_\ell}^M \equiv \delta_{I_\ell}^{M}(\a,\mC^k,[\rho_\ell',\rho_\ell])$.
    \end{definition}
Intuitively, if $\delta_{I_\ell}^M\approx 0$, this is saying that for $u\le \ell \rho_\ell$, one has
\be \label{eq:approxscalimRWintro}
\tyu\ell^{n-1}{|I_\ell|}\uyt^{-\frac12}\phi_{I_\ell}^M\tyu x+ \frac{u}{\ell}\uyt\approx \phi_{\mathsf U(\a,1)}^{\R^n}(u)
\ee
and $\delta_{I_\ell}^M$ keeps track of the error in such approximation. The case of $I_\ell=[0,\ell]$ has been widely studied in the literature, and is known as \emph{H\"ormander pointwise Weyl's law}. In particular, on any manifold $M$, we have that $\delta^M_{[0,\ell]}=\Oh(\ell^{-1})$.

\subsubsection{The scaling bound in the Colorful case}

What we know about the above scaling bound derives from the colorful case $I_\ell=[0,\ell]$; the corresponding scaling limit is commonly referred to as \emph{H\"ormander's Pointwise Weyl's law}, and corresponds to \cref{thm:hoermplaw} and to point (1) of the proposition below.
The next proposition collects some results on the behavior of $\delta_{[0,\ell]}:=\delta^M_{[0,\ell]}(0,\mC^k,[0,\rho_\ell])$ on different geometries and for different choices of range $\rho_\ell$.
\begin{proposition}\label{prop:chk}
[\texorpdfstring{\cite{Keeler,hoerm1968,canzani_MRWsurvey,CH20}}{}]
We have the following pointwise Weyl's laws on $(M,g)$, a compact Riemannian manifold without boundary. The next asymptotic relations hold, as $\ell\to +\infty$, for all $k\in \N$ but not necessarily uniformly in $k$.
\begin{enumerate}
    \item Without additional assumptions (result by H\"ormander \cite{hoerm1968}, equivalent to \cref{thm:hoermplaw})
    \be 
\delta_{[0,\ell]}^M=O\tyu \frac{1}{\ell}\uyt,
    \ee
  with $\rho_\ell=\rho(M)$ fixed small enough. 
    \item On a manifold without self-focal points (result by Canzani and Hanin \cite{canzani_hanin_2018_cinfscasymp,CH20} reported in \cite[Eq. (1.3)]{Keeler})
   \be 
\delta_{[0,\ell]}^M=o\tyu \frac{1}{\ell}\uyt
    \ee 
    with $\rho_\ell=o(1)$. Such $M$ is a particular case of what Canzani and Hanin \cite{CH20} define as a \emph{manifold of isotropic scaling}, that is a manifold $M$ such that: with $\a=1$ one has $\delta_{[\ell-1,\ell]}=o(1)$ in infinitesimal range $\rho_\ell=o(1)$.
    \item On a manifold without conjugate points (result by Keeler  \cite[Theorem 1.1]{Keeler}):
   \be 
\delta_{[0,\ell]}^M=O\tyu \frac{1}{\ell\log(\ell)}\uyt
    \ee 
in constant range $\rho_\ell=\frac12 \mathrm{inj}(M,g)$, where $\mathrm{inj}(M,g)>0$ is the injectivity radius.
\item If $M=\R^n$, then (obviously, and by \cref{lem:euclidWeyl})
\be 
\delta_{[0,\ell]}^{\R^n}=0,
\ee 
in any range $\rho$.
\end{enumerate}
\end{proposition}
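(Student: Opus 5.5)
The plan is to reduce each item of \cref{prop:chk} to the corresponding statement in the cited literature, after matching normalizations. For $I_\ell=[0,\ell]$ and type $\a=0$ we have $|I_\ell|=\ell$, so $\ell^{n-1}|I_\ell|=\ell^n$. Moreover, by \cref{def:RWUI}, $E^n_{\mathsf U(0,1)}=E^n_{[0,1]}$, because the interval $[0,1]$ has length one. Hence the scaling bound \cref{eq:scalimRW} reads
\[
E^M_{[0,\ell]}(x,y)=\ell^n\tyu E^n_{[0,1]}(\ell\dist xy)+R^M_{0,[0,\ell]}(x,y)\uyt,
\]
which is exactly the form of \cref{eq:horlaw}; in particular $R^M_{0,[0,\ell]}=R^M_{[0,\ell]}$. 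The weight $\sqrt n^{|a|+|b|}$ in the definition of $\delta^M_{[0,\ell]}$ is a constant depending only on $n$ and $k$, so it does not affect any $\Oh$ or $\oh$ statement. Each item therefore amounts to a bound on $\sup_{|a|,|b|\le k}\ell^{-|a|-|b|}\|\nabla^a\nabla^b R^M_{[0,\ell]}\|$ in the stated range of $\dist xy$.

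Item (1) is then literally the derivative estimate in \cref{thm:hoermplaw}, with $\rho_\ell=\rho(M)$. For item (2), I would invoke Canzani--Hanin \cite{canzani_hanin_2018_cinfscasymp,CH20} in the form reported in \cite[Eq. (1.3)]{Keeler}. On manifolds without self-focal points this gives the $\mC^\infty$ remainder $\oh(\ell^{n-1})$ for $E^M_{[0,\ell]}$ in normal coordinates at scale $\dist xy=\oh(1)$. After dividing by $\ell^n$, this becomes $\delta=\oh(\ell^{-1})$ with $\rho_\ell=\oh(1)$.

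The remark on isotropic scaling follows by differencing. Writing $E^M_{[\ell-1,\ell]}=E^M_{[0,\ell]}-E^M_{[0,\ell-1]}$ and Taylor expanding $\ell^nE^n_{[0,1]}(\ell r)-(\ell-1)^nE^n_{[0,1]}((\ell-1)r)$ in $\ell$, the leading term is $\ell^{n-1}E^n_{1}(\ell r)$ and the Taylor remainder is harmless. The two $\oh(\ell^{n-1})$ errors then give $\delta_{[\ell-1,\ell]}=\oh(1)$.

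For item (3), I would quote \cite[Theorem 1.1]{Keeler}. On manifolds without conjugate points it provides a remainder $\Oh(\ell^{n-1}/\log\ell)$ together with all rescaled derivatives, uniformly for $\dist xy\le \frac12\mathrm{inj}(M,g)$. In that range the exponential map is a diffeomorphism and $\dist xy$ is smooth away from the diagonal, so the comparison with $E^n_{[0,1]}(\ell\dist xy)$ is meaningful. Dividing by $\ell^n$ gives $\Oh(1/(\ell\log\ell))$.

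For item (4), the covariance \cref{eq:defRWberry} together with the change of variables $\xi\mapsto\ell\xi$ and $\lambda\mapsto\ell\lambda$ gives $E^{\R^n}_{[0,\ell]}(u)=\ell^nE^n_{[0,1]}(\ell|u|)$ exactly. Thus the remainder vanishes identically in every range; this is \cref{lem:euclidWeyl}.

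The main obstacle is bookkeeping rather than new analysis. The cited works state their remainders in slightly different coordinates, scalings and ranges: some write them in normal coordinates as functions of $u=\exp_x^{-1}y$, some include derivatives only at fixed order. So the step needing most care is checking that each cited derivative bound indeed controls $\nabla^a\nabla^b R$ with the $\ell^{-|a|-|b|}$ normalization, uniformly on the stated range. I expect this to hold for each fixed $k$, with constants depending on $k$, which is why the proposition does not claim uniformity in $k$.
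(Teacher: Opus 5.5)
Your proposal is correct and follows the paper's own route. The proposition is established purely by citation: for $I_\ell=[0,\ell]$ and $\a=0$ the scaling bound \eqref{eq:scalimRW} coincides with H\"ormander's form \eqref{eq:horlaw}, so each item is the cited remainder estimate divided by $\ell^n$, and item (4) follows from the exact identity $E^{\R^n}_{[0,\ell]}(u)=\ell^nE^n_{[0,1]}(\ell|u|)$. Your differencing sketch for the isotropic-scaling remark is the $\eta_\ell=1$ case of the argument the paper later formalizes in \cref{thm:OdeltaRW} together with \cref{lem:euclidWeyl}.
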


By evaluating at $x=y$, one obtains the classical Weyl law (\cref{thm:weyllaw}) since $N^M_{[0,\ell]}=\Theta(\ell^n)$; this explains the prefactor in \cref{eq:scalimRW} and \cref{eq:approxscalimRWintro}, since by \cref{eq:NEEsig}, we deduce that whenever the scaling limit holds, it implies the sharpness of the bound at \cref{eq:weyllaw_interval}.
\begin{proposition}
\be 
\delta^M_{I_\ell}=\oh(1) \implies N_{I_\ell}^M=\Theta\tyu \ell^{n-1}{|I_\ell|} \uyt.
\ee
\end{proposition}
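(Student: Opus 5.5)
The plan is to evaluate the scaling bound \cref{eq:scalimRW} on the diagonal $x=y$ and then integrate over $M$, using the identity \cref{eq:NEEsig}. The range $[\rho_\ell',\rho_\ell]$ must contain $0$, i.e. $\rho_\ell'=0$, so that diagonal points are admissible; the implicit convention, as in \cref{prop:chk}, is that the scaling error is taken in a range starting at $0$. At $x=y$ we have $\dist xx=0$, and \cref{eq:scalimRW} with $a=b=0$ gives
\be
E^M_{I_\ell}(x,x)=\ell^{n-1}|I_\ell|\tyu E^n_{\mathsf U(\a,1)}(0)+R^M_{\a,I_\ell}(x,x)\uyt,
\qquad |R^M_{\a,I_\ell}(x,x)|\le \delta^M_{I_\ell},
\ee
uniformly in $x\in M$, since the $k=0$ term of the supremum defining $\delta^M_{I_\ell}$ dominates the pointwise remainder.

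Next I integrate over $M$. By \cref{eq:NEEsig}, $N^M_{I_\ell}=\int_M E^M_{I_\ell}(x,x)\,dx$, so
\be
N^M_{I_\ell}=\ell^{n-1}|I_\ell|\,\vol{n}(M)\tyu E^n_{\mathsf U(\a,1)}(0)+\Oh(\delta^M_{I_\ell})\uyt .
\ee
It then remains to check that the constant $c_\a:=E^n_{\mathsf U(\a,1)}(0)$ is strictly positive and finite. From \cref{eq:defRWberry} and \cref{def:RWUI}, for $\a<1$ we get
\be
c_\a=\frac{1}{(1-\a)(2\pi)^n}\int_\a^1 s_{n-1}\lambda^{n-1}\,d\lambda=\frac{b_n(1-\a^n)}{(2\pi)^n(1-\a)}\in(0,\infty),
\ee
while for $\a=1$ we get $c_1=s_{n-1}(2\pi)^{-n}>0$. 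This value is also the limit of the previous expression as $\a\to1$. Hence $c_\a$ is bounded above and below by positive constants depending only on $n$.

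Since $\delta^M_{I_\ell}=\oh(1)$, for $\ell$ large the bracket lies in $[c_\a/2,\,2c_\a]$, and so $N^M_{I_\ell}=\Theta(\ell^{n-1}|I_\ell|)$. The only potential obstacle is the bookkeeping of the range: the argument needs the diagonal to be included in the range, and it needs the normalisation $|I_\ell|$ in \cref{eq:scalimRW} to match $\mathsf U(\a,1)$ in the sense of \cref{eq:phiU}. If $\a$ is only a limit of $I_\ell/\ell$, the error incurred by replacing $I_\ell/\ell$ by $[\a,1]$ is already absorbed into $R$ by the definition, so it causes no further difficulty.
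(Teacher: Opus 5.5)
Your proposal is correct and follows the paper's own argument. The paper likewise evaluates the scaling bound on the diagonal and integrates using $N^M_{I}=\int_M E^M_I(x,x)\,dx$ from \cref{eq:NEEsig}. It then uses that $E^n_{\mathsf U(\a,1)}(0)=\frac{s_{n-1}}{(2\pi)^n}\frac{1-\a^n}{n(1-\a)}$ is bounded away from $0$ and $\infty$, which is what \cref{lem:lambda_ell_RRW} combined with \cref{eq:phiI} records. Your caveat that the range must start at $\rho'_\ell=0$, so that the diagonal is admissible, correctly reads the paper's implicit convention.
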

The above asymptotic is derived by expressing $E_{[\ell-\eta_\ell,\ell]}=E_{[0,\ell]}-E_{[0,\ell-\eta_\ell]}$. This is the standard technique used in the literature to reduce to the case of the spectral function $E_{[0,\ell]}$. In general, by analyzing the Euclidean case, we prove the following general relation between $\delta_{I_\ell}^M$ and $\delta_{[0,\ell]}^M$. Combined with \cref{prop:chk}, the next theorem resumes all that we know about the error in the scaling bound of general random waves, the $\delta_{I_\ell}^M$ is in \cref{def:scalimRW}.
\begin{mainthm}\label{thm:OdeltaRWintro}
Let $k\in \N$ and $\rho_\ell$ be a bounded sequence.
When $\a\in[0,1)$, the scaling bound in \eqref{eq:scalimRW} is a scaling limit with error
\be 
\delta_{[\a\ell-\beta_\ell,\ell]}^M=\Oh\tyu\frac{\beta_\ell+1}{\ell}\uyt =o(1),
\ee
When $\a=1$, the scaling error in \eqref{eq:scalimRW} is 
\be \label{eq:RWscalerrorr_intro}
\delta_{[\ell-\beta_\ell,\ell]}^M
=\Oh\tyu \frac{\ell}{\beta_\ell} \delta_{[0,\ell]}^M\uyt +\Oh(1)\begin{cases}
    \frac{\beta_\ell}{\ell} & n\ge 3
    \\
    m_\ell \sqrt{\frac{\beta_\ell}{\ell}} 
    & n =2 
    \\
    m_\ell^2& n =1
\end{cases},
\ee
as $\ell\to +\infty$, for all $\a\in [0,1]$, where
\be 
m_\ell:=\sqrt{
    \min\kop\max\kop \frac{\beta_\ell}{\ell}, \beta_\ell\rho_\ell\pok,1 \pok.
    }
    \ee
 In particular, the $\mC^k$ Scaling Bound of Random Waves of type $\a=1$ in fixed range $[0,\rho]$, with $\rho=\rho_\ell$, is a scaling limit, that is,  $\delta^M_{I_\ell}=\oh(1)$, under either of the following (sufficient) conditions:
\begin{enumerate}[(i)]
    \item $n\ge 2$ and $\beta_\ell\to +\infty$;
    \item $n\ge 2$, $\frac 1{\beta_\ell}$ is bounded, and $\delta_{[0,\ell]}^M=\oh(\ell^{-1})$.
\end{enumerate}
\end{mainthm}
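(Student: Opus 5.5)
We propose to write the window kernel as a difference of two spectral functions, $E^M_{[\ell-\eta_\ell,\ell]}=E^M_{[0,\ell]}-E^M_{[0,\ell-\eta_\ell]}$, with $\eta_\ell=(1-\a)\ell+\beta_\ell$, and apply the pointwise Weyl law to each term:
\[
E^M_{[0,\ell']}(x,y)=\ell'^n\bigl(E^n_{[0,1]}(\ell'\dist xy)+R^M_{[0,\ell']}(x,y)\bigr),\qquad \ell'\in\{\ell,\ell-\eta_\ell\}.
\]
We also use the exact Euclidean identity $E^{\R^n}_{[\ell-\eta_\ell,\ell]}=E^{\R^n}_{[0,\ell]}-E^{\R^n}_{[0,\ell-\eta_\ell]}$, which holds with zero error by \cref{prop:chk}(4). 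Subtracting, the manifold kernel equals the Euclidean window kernel $E^n_{[\ell-\eta_\ell,\ell]}(\dist xy)$ plus $\ell^nR_{[0,\ell]}-(\ell-\eta_\ell)^nR_{[0,\ell-\eta_\ell]}$. Dividing by $\ell^{n-1}|I_\ell|$, and using that $\delta^M_{[0,\ell']}$ is essentially monotone in $\ell'$ (we may replace it by $\sup_{\ell'\le\ell}$, as all the bounds in \cref{prop:chk} are), this remainder contributes $\Oh(\ell\,\delta^M_{[0,\ell]}/\eta_\ell)$. Derivatives cost at most $\ell^{|a|+|b|}$, so this holds in $\mC^k$.

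The problem thus reduces to a purely Euclidean comparison: we must bound
\[
\delta^{\R^n}_{I_\ell}:=\sup\Bigl|(\ell^{n-1}\eta_\ell)^{-1}E^n_{[\ell-\eta_\ell,\ell]}(r)-E^n_{\mathsf U(\a,1)}(\ell r)\Bigr|
\]
together with its scaled derivatives, over $r\le\rho_\ell$. By \cref{eq:defRWberry}, $E^n_{[\ell-\eta_\ell,\ell]}(r)=\int_{\ell-\eta_\ell}^{\ell}\lambda^{n-1}E^n_1(\lambda r)\,d\lambda$. After the change of variables $\lambda=\ell s$, the difference with the uniform average of $s^{n-1}E^n_1(s\ell r)$ over $[\a,1]$ comes from two sources: the shift of the window $[\a,1]\to[1-\eta_\ell/\ell,1]$, of size $\beta_\ell/\ell$, and, when $\a=1$, the weight $s^{n-1}$ versus $1$ together with the variation of $E^n_1(s\ell r)$ across a window of width $\beta_\ell/\ell$.

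For $\a<1$ the integrand is bounded and smooth in $s$, uniformly in $\ell r$ (including derivatives, since the Bessel-type functions $E^n_1$ are bounded with bounded derivatives). The shift therefore gives $\Oh((\beta_\ell+1)/\ell)$. Since the remainder term is $\Oh(\ell\cdot\ell^{-1}/\ell)=\Oh(1/\ell)$ by \cref{prop:chk}(1), the first claim follows.

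For $\a=1$ we compare the window average $\fint_{1-\beta_\ell/\ell}^1 s^{n-1}E^n_1(s t)\,ds$, with $t=\ell r$, against $E^n_1(t)$. Taylor expansion in $s$ gives an error $\lesssim(\beta_\ell/\ell)\bigl(|E^n_1(t)|+t\,|E^{n\prime}_1(t)|\bigr)$. Using the decay $|E^n_1(t)|,|E^{n\prime}_1(t)|\lesssim t^{-(n-1)/2}$, we get $t|E'|\lesssim t^{(3-n)/2}$. For $n\ge3$ this is bounded, which yields the error $\beta_\ell/\ell$. For $n\le2$ it grows, and we cap it instead with the trivial bound $\Oh(1)$, because averaging an oscillating function never exceeds its sup. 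Interpolating between the two bounds, and evaluating at $t\le \ell\rho_\ell$, the error is $\min\{(\beta_\ell/\ell)\,t^{(3-n)/2},1\}$ evaluated at $t=\ell\rho_\ell$. This should produce exactly $m_\ell\sqrt{\beta_\ell/\ell}$ for $n=2$ and $m_\ell^2$ for $n=1$; the $\max\{\beta_\ell/\ell,\beta_\ell\rho_\ell\}$ inside $m_\ell$ accounts for small ranges. Making the same estimates for derivatives amounts to applying them to $\nabla^a\nabla^b$ of the radial function, which shares the same decay.

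The main obstacle is the $n=2$ bookkeeping: confirming that the interpolation matches the stated $m_\ell$ uniformly in $\mC^k$, and checking the monotonicity-in-$\ell'$ claim for $\delta^M_{[0,\ell']}$.

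The sufficient conditions then follow. In case (i), $\frac{\ell}{\beta_\ell}\delta^M_{[0,\ell]}=\Oh(1/\beta_\ell)\to0$, while $m_\ell\sqrt{\beta_\ell/\ell}$ is either $\le\sqrt{\beta_\ell/\ell}$ or $\le\beta_\ell\sqrt{\rho/\ell}$. Here I would need to handle $\beta_\ell\gtrsim\sqrt\ell$ carefully, possibly by using $\beta_\ell=\oh(\ell)$ and refining the range. In case (ii), the assumption $\delta^M_{[0,\ell]}=\oh(1/\ell)$ makes the first term vanish, and the Euclidean terms are $\Oh(\beta_\ell/\ell)\to0$.
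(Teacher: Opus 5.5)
Your reduction is the same as the paper's. Writing the window kernel as a difference of spectral functions gives the decomposition \eqref{eq:genRemainder}, hence $\delta^M_{[\ell-\eta_\ell,\ell]}=\delta^{\R^n}_{[\ell-\eta_\ell,\ell]}+\Oh(\frac{\ell}{\eta_\ell}\delta^M_{[0,\ell]})$ as in \cref{thm:OdeltaRW}. The paper relies on the same tacit comparability of $\delta^M_{[0,\ell-\eta_\ell]}$ with $\delta^M_{[0,\ell]}$ that you flag. Your Euclidean estimates for $\a<1$, for $n\ge 3$ and for $n=1$ agree with \cref{lem:euclidWeyl}. One small correction there: for $\a<1$ you only need boundedness of $E^n_1$; the integrand is not smooth in $s$ uniformly in $\ell r$ when $n\le 2$.

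The genuine gap is in dimension $n=2$. Set $\varepsilon=\beta_\ell/\ell$ and $t=\ell r$. Capping the Taylor bound $\varepsilon(1+t)^{1/2}$ by the trivial $\Oh(1)$ gives only $\min\{\max\{\varepsilon,\varepsilon\sqrt{\ell\rho_\ell}\},1\}$. The claimed quantity is instead $m_\ell\sqrt{\beta_\ell/\ell}=\min\{\max\{\varepsilon,\varepsilon\sqrt{\ell\rho_\ell}\},\sqrt{\varepsilon}\}$. The two differ exactly when $\beta_\ell\rho_\ell>1$. In that regime your bound exceeds $\sqrt{\varepsilon}$, and it degenerates to $\Oh(1)$ once $\beta_\ell\ge\sqrt{\ell/\rho_\ell}$. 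So you obtain neither \eqref{eq:RWscalerrorr_intro} for $n=2$ nor condition (i) in fixed range when $\beta_\ell\gtrsim\sqrt{\ell}$, which is exactly the case you leave open. Refining the range is not an option, since the fixed range is part of the statement.

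The missing idea is to keep the \emph{decay} of Berry's covariance in the trivial bound, not just its boundedness. By the mean value theorem, the window average equals $\lambda_\varepsilon E^2_1(\lambda_\varepsilon t)$ for some $\lambda_\varepsilon\in[1-\varepsilon,1]\subset[\frac12,1]$. By \cref{lem:bessympt}, $|E^2_1(u)|\lesssim(1+u)^{-1/2}$, so the difference with $E^2_1(t)$ is also $\lesssim(1+t)^{-1/2}$. Now split at $t=\varepsilon^{-1}$:
\begin{itemize}
\item for $t\le\varepsilon^{-1}$, the Taylor bound gives $\varepsilon(1+t)^{1/2}\lesssim\sqrt{\varepsilon}$;
\item for $t\ge\varepsilon^{-1}$, the decay gives $\lesssim t^{-1/2}\le\sqrt{\varepsilon}$.
\end{itemize}
Hence the Euclidean error is $\Oh(\sqrt{\beta_\ell/\ell})$ uniformly in $t$. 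Combined with the range-restricted Taylor bound, this is exactly $m_\ell\sqrt{\beta_\ell/\ell}$, which is how \cref{lem:euclidWeyl} treats $n=2$. It tends to $0$ because $\beta_\ell=\oh(\ell)$, however fast $\beta_\ell\to+\infty$. This closes (i) for $n=2$ and also (ii). In (ii) you should correct the $n=2$ Euclidean term from $\Oh(\beta_\ell/\ell)$ to $m_\ell\sqrt{\beta_\ell/\ell}\le\sqrt{\beta_\ell/\ell}$.
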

All results \cite{canzani_hanin_2018_cinfscasymp,Keeler}, on the behavior of \cref{eq:scalimRW} for $\eta_\ell=1$ are based on a study of $E_{[0,\ell]}^M$, in combination with an argument analogue to that of \cref{thm:OdeltaRWintro}. In particular, \cref{thm:OdeltaRWintro} confirms the generally known fact that, without additional assumptions on the manifold $M$, a sufficient condition to ensure a scaling limit is that $\eta_\ell\to+\infty$, since $\delta^M_{[0,\ell]}=\Oh(\ell^{-1})$. 
Instead, the case $\a\neq 1$ behaves essentially as the colorful case $[0,\ell]$. Even more, if $\beta_\ell=0$, which is the most commonly studied situation, in the existing literature, see \cite{sarnakwigmanCPAM,wigEBNRF,igorsurvey}.

The latter two results prove a scaling limit without any further assumption on the geometry of the manifold $(M,g)$ in the colorful, band-limited and weak monochromatic cases. For what concerns the (strict) \emph{monochromatic} case: $\beta_\ell=\eta_\ell=1$, in general the scaling bound \eqref{eq:scalimRW} does not need to be a scaling limit, since \cref{thm:OdeltaRWintro} and \cref{prop:chk} only give 
\be 
\delta_{[\ell-1,\ell]}^M=\Oh(1),
\ee
regardless of the (bounded) range $\rho_\ell$.  
\begin{corollary}[strict monochromatic] Let $n\ge 2$. 
When $\a=1$, if $\beta_\ell$ and $\frac{1}{\beta_\ell}$ are bounded, the scaling bound in \eqref{eq:scalimRW} becomes
\be 
\delta_{[\ell-\beta_\ell,\ell]}^M
=\Oh\tyu \ell \delta_{[0,\ell]}^M\uyt+
\Oh(1)\begin{cases}
    \frac{1}{\ell} & n\ge 3, \\
    \frac{1}{\sqrt{\ell}}  & n =2 .
\end{cases}
\ee 
It is a scaling limit if the manifold $(M,g)$ satisfies the following additional assumption:
\begin{enumerate}
    \item If the manifold has no self-focal points, the error is
\be 
\delta_{[\ell-\beta_\ell,\ell]}^M
=\oh(1),
\ee
in infinitesimal range: $\rho_\ell=\oh(1)$ (equivalent to \cite[Theorem 1]{canzani_hanin_2018_cinfscasymp}).
\item If the manifold has no conjugate points, the error is
\be 
\delta_{[\ell-\beta_\ell,\ell]}^M
= \Oh\tyu \frac{1}{\log \ell}\uyt,
\ee
in fixed range $\rho_\ell=\frac{1}{2}\mathrm{inj}(M,g)$. {(Equivalent to \cite[Theorem 1.1 and Theorem 1.3]{Keeler}.)}
\end{enumerate}
\end{corollary}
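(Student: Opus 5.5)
This corollary follows by specializing \cref{thm:OdeltaRWintro} to the case $\a=1$ with $\beta_\ell$ and $1/\beta_\ell$ bounded, and then inserting the estimates on $\delta^M_{[0,\ell]}$ from \cref{prop:chk}. Boundedness of $\beta_\ell$ and $1/\beta_\ell$ gives $\ell/\beta_\ell=\Theta(\ell)$ and $\beta_\ell/\ell=\Theta(\ell^{-1})$. Hence the first term in \eqref{eq:RWscalerrorr_intro} becomes $\Oh(\ell\,\delta^M_{[0,\ell]})$, and for $n\ge3$ the Euclidean term becomes $\Oh(1/\ell)$.

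For $n=2$ we need to control $m_\ell$. Since $\rho_\ell$ is bounded and $\beta_\ell$ is bounded, $\max\{\beta_\ell/\ell,\beta_\ell\rho_\ell\}$ is bounded, so $m_\ell=\Oh(1)$. The $n=2$ term is then $m_\ell\sqrt{\beta_\ell/\ell}=\Oh(\ell^{-1/2})$. This gives the displayed general bound.

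\emph{No self-focal points.} By \cref{prop:chk}(2), $\delta^M_{[0,\ell]}=\oh(\ell^{-1})$ in a range $\rho_\ell=\oh(1)$. Then $\ell\,\delta^M_{[0,\ell]}=\oh(1)$, and the Euclidean terms are $\Oh(\ell^{-1})$ or $\Oh(\ell^{-1/2})$, hence also $\oh(1)$. The infinitesimal range only makes $m_\ell$ smaller. The one thing to check is that \cref{thm:OdeltaRWintro}, stated for a bounded sequence $\rho_\ell$, transfers the $\delta^M_{[0,\ell]}$ bound in the same range $[0,\rho_\ell]$. This is the point I would verify in the proof of that theorem: the difference $E_{[0,\ell]}-E_{[0,\ell-\beta_\ell]}$ is evaluated on the same set of pairs $(x,y)$, so the range is inherited.

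\emph{No conjugate points.} By \cref{prop:chk}(3), $\delta^M_{[0,\ell]}=\Oh(1/(\ell\log\ell))$ in the fixed range $\rho=\frac12\mathrm{inj}(M,g)$. Then $\ell\,\delta^M_{[0,\ell]}=\Oh(1/\log\ell)$, which dominates both $\ell^{-1}$ and $\ell^{-1/2}$, so $\delta^M_{[\ell-\beta_\ell,\ell]}=\Oh(1/\log\ell)$.

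The main obstacle is that $\delta^M_{[0,\ell-\beta_\ell]}$ is needed alongside $\delta^M_{[0,\ell]}$. The estimates of \cref{prop:chk} hold at every large parameter, and $\ell-\beta_\ell\sim\ell$, so the two errors are of the same order; here one must use that the Keeler and Canzani--Hanin bounds are uniform in the spectral parameter. One must also absorb the rescaling $\ell\mapsto\ell-\beta_\ell$ in the definition of $\delta$, since derivatives are normalized by $\ell^{|a|+|b|}$ and the Euclidean profile is dilated. Both adjustments cost at most a factor $1+\Oh(\beta_\ell/\ell)$ and are already accounted for in \cref{thm:OdeltaRWintro}.
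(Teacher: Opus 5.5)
Your proposal is correct and is essentially the paper's argument: specialize \cref{thm:OdeltaRWintro} (that is, \cref{thm:OdeltaRW} combined with \cref{lem:euclidWeyl}) to $\a=1$ with $\beta_\ell$ and $\beta_\ell^{-1}$ bounded, then insert items (2) and (3) of \cref{prop:chk}. Your extra care about $\delta^M_{[0,\ell-\beta_\ell]}$ and about normalizing by $\ell$ rather than $\ell-\beta_\ell$ addresses a point the paper simply calls a triangle inequality, and it is harmless here; note also that $m_\ell\le 1$ holds by definition, so the $n=2$ term is $\Oh(\ell^{-1/2})$ without invoking boundedness of $\rho_\ell$.
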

As discussed in \cref{rem:sharp_scabo2}, in the case $n=2$, the order $\frac{1}{\sqrt{\ell}}$ of the second term is sharp.
\subsection{More precise bound in terms of the scaling error}\label{sec:scaling}
We have no reason to expect \cref{thm:ape} to be optimal in the monochromatic regime. Indeed, it does not give the optimal rate known on the sphere. 
The theorem is obtained through many intermediate steps, and although we cannot obtain a better rate with the results available so far, we shall report an intermediate result, which we conjecture should yield the optimal rate, after a further investigation of the behavior of $\e_\ell$ or $\delta_\ell$. 
{\begin{proposition}\label{thm:ratefall} 
For any sequence $\eta_\ell= \oh(\ell)$, let $\e_\ell=\e(\phi_{[\ell-\eta_\ell,\ell]}^M)$. Under the assumption that the scaling limit  $\delta_\ell=\delta_{[\ell-\eta_\ell,\ell]}^M\to 0$ holds, there exists $r_0>0$ such that for all $r_0\le r_\ell\le \rho\ell$, we have
\begin{gather}\label{eq:RWvarmono_intro} 
\Var\tyu Y_M([\ell-\eta_\ell,\ell])\uyt
\le
\frac{1}{\ell^n}\tyu\frac\ell{\eta_\ell}\Oh( \e_\ell) 
+\Oh(r_\ell)\uyt
\\
+ \Oh(1)\int_{\substack{x,y\in M,
\\
\frac{r_\ell}{\ell}
\le \mathrm{dist}(x,y)\le \rho}}\|(\ell^{n-1}\eta_\ell)^{-1}j^{1,1}_{x,y}E_{[\ell-\eta_\ell,\ell]}\|^4_{g^{\ell}} \dd x \dd y 
\\
\text{ setting $r_\ell=\Oh(1)$, we get:}
\\
\Var\tyu Y_M([\ell-\eta_\ell,\ell])\uyt
\le
\frac{1}{\ell^{n}}
\tyu 
\frac\ell{\eta_\ell}\Oh( \e_\ell+\delta_\ell^2) 
+a(\ell) \uyt,
\\
\text{ using that $\e_\ell\le \delta_\ell \le \Oh(\frac{\ell}{\eta_\ell}\delta^M_{[0,\ell]})$,}
\\
\le
\frac{1}{\ell^{n}}\Oh\tyu 
\frac{\ell^2}{\eta_\ell^2}\delta_{[0,\ell]}^M
+a(\ell)
\uyt \label{bo1}
\\
\text{ using the general bound $\delta^M_{[0,\ell]}=\Oh(\ell^{-1})$,}
\\
\le
\frac{1}{\ell^{n}}\Oh\tyu 
\frac{\ell}{\eta_\ell^2}+a(\ell)
\uyt.
    \end{gather}
    \end{proposition}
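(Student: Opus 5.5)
The plan is to start from the Wiener--It\^o chaos decomposition of the nodal volume in \cite{cgv2025StecconiTodino}. It gives $\Var(Y_M(I_\ell))$ as a sum over chaoses of double integrals over $M\times M$. Each term is controlled by the normalized $(1,1)$-jet $(\ell^{n-1}\eta_\ell)^{-1}j^{1,1}_{x,y}E_{[\ell-\eta_\ell,\ell]}$, measured in the rescaled metric $g^\ell$. For every chaos of order $q\ge 2$ there is a pointwise bound of the form $\Oh(\|j^{1,1}\|^{q})$, and for $q\ge 4$ this is dominated by the fourth power as long as the correlation stays below a fixed threshold.

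I would split $M\times M$ into three regions according to $\ell\,\dist xy$: a near-diagonal region $\ell\,\dist xy\le r_\ell$, an intermediate region $r_\ell/\ell\le\dist xy\le\rho$, and a far region $\dist xy>\rho$.

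\textbf{Near-diagonal region.} Here I use the Kac--Rice bound of \cref{thm:drago}, its near-diagonal hypothesis. This gives the local variance at wavelength scale, which is $\Oh(\ell^{-n})$ per unit of $r_\ell$, so this region contributes $\Oh(r_\ell)\ell^{-n}$. The constant $r_0$ is chosen so that beyond $r_0$ the rescaled Berry covariance $E^n_{\mathsf U(1,1)}$ and its derivatives are uniformly below the non-degeneracy threshold. Since $\delta_\ell\to0$, the same then holds for $E^M$.

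\textbf{Intermediate region.} The chaoses of order $q\ge4$ give exactly the displayed $\int\|j^{1,1}\|^4$ term. The main obstacle is the second chaos (odd chaoses vanish). By \cite[Corollaries 2.1--2.2]{cgv2025StecconiTodino}, its projection cancels up to a term proportional to the eccentricity $\e_\ell$, because the field is nearly an eigenfunction. I would write the second chaos as $\langle \lf, H_2\rangle$ with a kernel whose principal part integrates to zero via $\Delta\phi\approx-\ell^2\phi$. The remainder is then bounded by $\e_\ell$ times $(N^M_{I_\ell})^{-1}\sim \ell^{1-n}\eta_\ell^{-1}$, using \cref{eq:NEEsig}. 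This yields $\ell^{-n}\frac{\ell}{\eta_\ell}\Oh(\e_\ell)$.

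\textbf{Far region.} On the far region the correlations are bounded by the off-diagonal hypothesis of \cref{thm:drago}, and the corresponding term can be absorbed into the same terms.

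\textbf{Second display.} Take $r_\ell=\Oh(1)$. Insert the scaling limit \cref{eq:scalimRW}, so that $\|j^{1,1}E\|\le \|j^{1,1}E^n_{\mathsf U(1,1)}(\ell\,\cdot)\|+\delta_\ell$. The Euclidean part decays like $(\ell\,\dist xy)^{-(n-1)/2}$, so its fourth power integrates to $\ell^{-n}\int_{r_0}^{\rho\ell}t^{-2(n-1)}t^{n-1}\,dt=\Oh(\ell^{-n}a(\ell))$; the logarithm appears only when $n=2$. The cross term, using $\delta_\ell^4\le\delta_\ell^2$ together with the $L^2$ identity \cref{eq:NEEsig}, gives $\frac{\ell}{\eta_\ell}\delta_\ell^2\ell^{-n}$.

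\textbf{Remaining displays.} First, $\e_\ell\le\delta_\ell$ holds because the eccentricity is computed from the diagonal $2$-jet of the covariance, which differs from the isotropic Euclidean one by $\delta_\ell$. Next, $\delta_\ell\le\Oh(\frac{\ell}{\eta_\ell}\delta^M_{[0,\ell]})$ follows from \cref{thm:OdeltaRWintro}. The Euclidean error terms there are dominated whenever $\eta_\ell=\oh(\ell)$, or can be absorbed into $a(\ell)$. Combining, and using $\delta_\ell^2\le\delta_\ell$, gives \eqref{bo1}. Finally, plugging in H\"ormander's $\delta^M_{[0,\ell]}=\Oh(\ell^{-1})$ from \cref{prop:chk} gives the last line.
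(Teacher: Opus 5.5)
Your ingredients match the paper's: it applies \cref{thm:scasca} (i.e.\ \cref{thm:drago}) to $\phi^M_{[\ell-\eta_\ell,\ell]}$, treats the second chaos with \cref{cor:berrycanc}, and then bounds the fourth-power integral. However, your far-region step fails as written.

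\cref{thm:drago} has no hypothesis for $\mathrm{dist}(x,y)>\rho$; its condition (3) only covers $r\le \mathrm{dist}(x,y)\le\rho$. The scaling limit is also only assumed in the range $[0,\rho]$ (e.g.\ $\rho=\frac12\mathrm{inj}(M,g)$ in Keeler's setting). So nothing in the hypotheses prevents $\|j^{1,1}_{x,y}C\|_{g^f}\ge\frac12$ for far pairs. In that case the bound of \cref{cor:var_dx} cannot even be summed over $q\ge 4$.

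The missing idea is the semi-localization in step (V) of the proof of \cref{thm:dragoverkill}. Take a partition into $\#\le\mathfrak C\rho^{-n}$ cells of diameter $<\rho$. By linearity of the chaos projections, $\Var(\lf(M))-\Var(\lf(M)[2])=\Var\big(\sum_i(\lf(M_i)-\lf(M_i)[2])\big)$. Cauchy--Schwarz bounds this by $\#\sum_i\Var(\lf(M_i)-\lf(M_i)[2])$, which only involves pairs at distance $<\rho$, at the price of the constant $\rho^{-n}$ (see \cref{eq:semilocalize}). This is also why the second chaos must be subtracted globally before localizing, rather than treated as an intermediate-region term: its cancellation is a statement about integration over all of $M$.

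Moreover, \cref{cor:berrycanc} contains, besides $\Oh(\e_\ell)$, the spread term $\frac1N\sum_i(\lambda_i^2-\lambda^2)^2\lambda^{-4}=\Oh(\eta_\ell^2/\ell^2)$. You dropped it, and it is not controlled by $\e_\ell$. It is harmless only because it contributes $\Oh(\ell^{-n}\eta_\ell/\ell)=\oh(\ell^{-n})$.

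Second, the fourth-power estimate. Write $\|j\|:=\|(\ell^{n-1}\eta_\ell)^{-1}j^{1,1}_{x,y}E_{[\ell-\eta_\ell,\ell]}\|_{g^{\ell}}$ and let $\kappa$ be the Euclidean part. Inserting $\|j\|\le \kappa+\delta_\ell$ and expanding does not yield $\frac{\ell}{\eta_\ell}\delta_\ell^2\ell^{-n}$. The pure term gives $\delta_\ell^4$ times a volume of order one. The mixed term gives $\delta_\ell^2\int\kappa^2\asymp\delta_\ell^2\ell^{1-n}$. Both exceed $\delta_\ell^2/(\ell^{n-1}\eta_\ell)$ in general, e.g.\ when only $\delta_\ell=\Oh(1/\log\ell)$ is known, or when $\eta_\ell\to\infty$.

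What works, and what the paper does, is the pointwise dichotomy coming from \cref{cor:decor}:
\begin{itemize}
\item either the Euclidean part dominates, so $\|j\|^4\lesssim(1+\ell\,\mathrm{dist}(x,y))^{2(1-n)}$, which integrates to $\Oh(\ell^{-n}a(\ell))$;
\item or $\|j\|\lesssim\delta_\ell$, so $\|j\|^4\lesssim\delta_\ell^2\|j\|^2$.
\end{itemize}
This way $\delta_\ell^2$ is always paired with $\|j\|^2$, and \cref{eq:dafuck} gives $\delta_\ell^2\,\Oh(1/(\ell^{n-1}\eta_\ell))$.

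A minor point: by \cref{lem:unifboundlambda} the near-diagonal contribution is $\Oh(r_\ell^n\ell^{-n})$, not $\Oh(r_\ell)\ell^{-n}$; this is immaterial once $r_\ell=\Oh(1)$. With these repairs your argument becomes the paper's proof.
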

    }
\subsection{\cref{thm:drago}: General abstract machinery}
\subsubsection{How to cook one Law of Large numbers using two covariance rates.}
The starting point of all our results is the next general abstract theorem, which allows one to produce a variance bound by using, in combination, the Kac-Rice formula near the diagonal of $M\times M$ and the Wiener chaos decomposition (obtained in the companion paper \cite{cgv2025StecconiTodino}) to control the off-diagonal behavior. 

We will use the following quantity to measure the covariance of a Gaussian field $\phi$ and its derivatives, collected in the form of a jet, see \cref{sec:preliminaries} for precise definitions. Given $k\in \N$, $E\in \mC^{k,k}(M\times M)$ and $\lambda>0$, we define
\be 
\|j^{k,k}_{x,y}E\|_{g^\lambda}:=\max_{a,b \le k} \frac{\|\nabla^a_x \nabla^b_y E\|}{\lambda_{\ell}^{a+b}}(\sqrt{n})^{a+b},
\ee
where by $g^\lambda$ we denote the Riemannian metric obtained from the rescaling: $g^\lambda=\frac{\lambda^2}{n}g$ (precise definition in \cref{eq:defnorm} below). 

\be \label{eq:deft}
\t(\phi):=
\max_{x\in M}\|j^3_x(\frac{\sqrt{\Var\{\phi(\cdot)\}}}{\sigma}-1)\|_{g^{\lambda(\phi)}},
\ee 
{where $j_x^3$ is the 3-jet as defined in \eqref{def:kjet}.}
\begin{mainthm}\label{thm:drago}
There exists real number $T_0>0$ such that the following holds for any $T\ge T_0$.
Let $(M,g)$ be a compact smooth Riemannian manifold of dimension $\n$, possibly with boundary. Let $\phi$ be a Gaussian field of class $\mC^3(M)$ with covariance function $E$ and parameters $\lambda=\lambda(\phi)>0$, $\sigma=\sigma(\phi)>0$, $\e=\e(\phi)$ defined as in \cite[Def. 1.11]{cgv2025StecconiTodino} (see also \cref{def:three}), and let $\t:=\t(\phi)$ be as in \cref{eq:deft}.
Assume that there exist positive numbers: $ \frac{1} {\lambda T} \le r\le \rho$, satisfying the following assumptions.
\begin{enumerate}
\item The eccentricity is small:
\be 
\e=\e(\phi)<\frac{1}{T}, \quad \text{and} \quad \t=\t(\phi)<\frac{1}{T}.
\ee
\item The pull-back field satisfies a $\mC^3$ boundedness assumption: 
\be\label{eq:comp_intro} 
\sup_{\substack{x,y\in M,
\\
 \mathrm{dist}(x,y)\le r}
}
\sigma^{-2}\|j^{3,3}_{x,y}E\|_{g^{\lambda}}
\le T
\ee
\item The field satisfies a $\mC^1$ short-correlation assumption:
    \be\label{eq:shortc2_intro} 
\sup_{\substack{x,y\in M,
\\
r
\le \mathrm{dist}(x,y)\le \rho
}}
\sigma^{-2}\|j^{1,1}_{x,y}E\|_{g^{\lambda}}
\le \frac{1}{4}
\ee
\end{enumerate}
Then, we have the following quantitative Law Of Large Numbers:
\begin{gather}\label{eq:drago_rdw_intro}
\Var\kop \frac{\lf(M)}{\lambda}\pok\le
\Var\kop \frac{\lf(M)[2]}{\lambda}\pok
+\mathfrak{T} \rho^{-n}\tyu r^{\n}
+
\int_{\substack{x,y\in M,
\\
r
\le \mathrm{dist}(x,y)\le \rho}}\|\sigma^{-2}j^{1,1}_{x,y}E\|^4_{g^{\lambda}} \dd x \dd y 
\uyt.
\end{gather}
where $\mathfrak{T}=\mathfrak{T}(n,M,g;T)$ is a constant depending only on $n,M,g$ and on the constant $T$. 
\end{mainthm}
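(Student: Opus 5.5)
The plan is to split the variance integral over $M\times M$ according to the distance $\mathrm{dist}(x,y)$ into three regions: the near-diagonal region $\{\mathrm{dist}(x,y)\le r\}$, the intermediate region $\{r\le \mathrm{dist}(x,y)\le\rho\}$, and the far region $\{\mathrm{dist}(x,y)\ge\rho\}$. Write $\lf(M)=\sum_{q}\lf(M)[q]$ for the Wiener--It\^o decomposition of \cite{cgv2025StecconiTodino}. The chaoses are orthogonal and the odd ones vanish by symmetry of the level $0$, so
\[
\Var\{\lf(M)\}=\Var\{\lf(M)[2]\}+\sum_{q\ge 4}\E\{\lf(M)[q]^2\}.
\]
The task is therefore to bound the chaos tail $\sum_{q\ge4}$ by the bracket in \eqref{eq:drago_rdw_intro}. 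To localize, take a partition of unity (or a measurable partition of $M$ into cells of diameter comparable to $\rho$, which is possible with a number of cells $\lesssim \rho^{-n}\vol{n}(M)$ up to constants depending on $(M,g)$). This writes the tail as a double integral $\iint \E\{\lf(dx)[\ge4]\,\lf(dy)[\ge4]\}$, and the double integral will be estimated region by region.

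\emph{Near-diagonal region.} I would avoid the chaos expansion here and instead bound $\iint_{\mathrm{dist}\le r}$ of the full second moment, through the Kac--Rice formula for the two-point correlation $\E\{\lf(dx)\lf(dy)\}$. Assumptions (1) and (2) let me rescale to the metric $g^\lambda$ and normalize by $\sigma$, with $\t<1/T$ controlling the non-constant variance. After this, the pull-back of $\phi/\sigma$ to a ball of $g^\lambda$-radius $\lambda r$ is a $\mC^3$-bounded, nearly homothetic field with non-degenerate $1$-jet. The standard two-point Kac--Rice estimate then shows that the two-point density is locally integrable, uniformly in the parameters and bounded by a constant $\mathfrak T$. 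The integrable singularity $\|x-y\|^{2-n}$ at the diagonal is exactly where the $\mC^3$ bound (divided differences of third order) is needed. Integrating over $\mathrm{dist}\le r$ gives $\lambda^2\,\mathfrak T\, r^n\vol{n}(M)$, that is, a contribution $\mathfrak T r^n$ after dividing by $\lambda^2$. The lower-order chaos parts subtracted off on this region are bounded by the same quantity via Cauchy--Schwarz. The requirement $r\ge 1/(\lambda T)$ ensures that the scale is at least the wavelength, so no extra factors appear. I expect this step to be the main obstacle: making the Kac--Rice two-point bound uniform with constants depending only on $T$ and $(M,g)$ requires a quantitative non-degeneracy of the joint law of $(\phi(x),\phi(y),d\phi(x),d\phi(y))$ for $x\ne y$ close. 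My first approach would be a Taylor expansion of the covariance at order $3$ in normal coordinates, using the $\mC^3$ bound.

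\emph{Intermediate and far regions.} For $r\le\mathrm{dist}\le\rho$, I would use the chaos formula of \cite{cgv2025StecconiTodino}, which expresses $\E\{\lf(dx)[q]\,\lf(dy)[q]\}$ through products of four Hermite-type terms contracted against the $(1,1)$-jet of $E$. The $q$-th term is bounded by $C^q\|\sigma^{-2}j^{1,1}_{x,y}E\|_{g^\lambda}^q$ times coefficients whose squares are summable. Since assumption (3) makes this correlation at most $1/4$, the geometric series from $q\ge4$ converges and is dominated by its fourth power, which gives exactly the integral term. For $\mathrm{dist}\ge\rho$ no decay is assumed. Here I would fall back on Cauchy--Schwarz over the partition into cells: each cell has $\Var\{\lf(\text{cell})\}\le \mathfrak T\lambda^2\rho^{n}\cdot(\text{local count})$, obtained from the near-diagonal step applied at scale $\rho$ together with the intermediate bound. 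Summing over pairs of cells then produces the prefactor $\rho^{-n}$ multiplying the bracket. Collecting the three contributions and dividing by $\lambda^2$ yields \eqref{eq:drago_rdw_intro}, with $T_0$ chosen large enough that the non-degeneracy and the series convergence hold.
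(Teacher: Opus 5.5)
Your plan is essentially the paper's proof: you reduce to the unit-variance field using $\e,\t<1/T$, localize by Cauchy--Schwarz over $\#\lesssim\rho^{-n}$ cells, bound the full two-point Kac--Rice density on $\{\mathrm{dist}(x,y)\le r\}$ after rescaling by $\lambda$, and sum the geometric series of the chaos bound \cite[Thm.~1.10]{cgv2025StecconiTodino} on $\{r\le \mathrm{dist}(x,y)\le\rho\}$; the uniformity you flag as the main obstacle is obtained in the paper from $\mC^2$-compactness of the rescaled pull-backs together with a cited uniform second-moment bound, with $\lambda r\ge 1/T$ keeping the constant finite. The one thing to reorganize is the far region: the paper applies $\Var(\sum_i X_i)\le \#\sum_i\Var(X_i)$ first, to the chaos tails $X_i=\sum_{q\ge 4}\lf(M_i)[q]$ of cells of diameter $<\rho$, so $\{\mathrm{dist}(x,y)\ge\rho\}$ never has to be estimated; applying it to $\lf(M_i)$ would bring back second-chaos terms not controlled by the bracket, and applying it to a distance-defined region fails because that region is not a union of cell products.
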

The proof of \cref{thm:drago} is in \cref{thm:dragodraft}.
\begin{remark}
The constant $T_0$ in the above theorem is just a big enough number, valid for all manifolds; for instance $T_0=77$ should be fine.
\end{remark}
\subsection{Scope of the method: chaotic geometries, the sphere, and open problems}
\label{sec:scope}

Our approach reduces the variance asymptotics of $Y_M(I_\ell)$ to a scaling
\emph{limit}, i.e., the vanishing $\delta^M_{I_\ell}\to 0$ of the scaling error of
\cref{def:scalimRW}, in a \emph{fixed}, macroscopic range $\rho_\ell=\rho>0$.
We stress that this is genuinely stronger than the classical local scaling limit
of the random wave model, which asserts the same convergence only in the
\emph{infinitesimal} range $\rho_\ell=\oh(1)$, i.e. on wavelength-sized
neighborhoods of the diagonal (e.g.\ \cite{Dierickx}). The passage from
infinitesimal to fixed range is a decay-of-correlations statement, and it is
precisely here that the geometry of the geodesic flow enters.

On a manifold without conjugate points, Keeler's logarithmic refinement of
H\"ormander's law gives $\delta^M_{[0,\ell]}=\Oh\!\big(\tfrac{1}{\ell\log\ell}\big)
=\oh(\ell^{-1})$ in fixed range (\cref{prop:chk}), while \cref{thm:OdeltaRWintro}
yields, for the monochromatic window, $\delta^M_{[\ell-1,\ell]}
=\Oh\!\big(\tfrac{1}{\log\ell}\big)\to 0$. Negatively curved manifolds are the
prominent example. This is no accident: the random wave model is a heuristic
for the high-energy behavior of \emph{chaotic} systems, and it is exactly in
this regime, where geodesics diverge and correlations decay at macroscopic
scale, that our method operates.

The round sphere $S^n$ sits at the opposite, integrable extreme. Being a symmetric space, spherical
random waves are homothetic ($\e=0$), so every hypothesis of our results except
the scaling limit is automatically satisfied; granted a fixed-range scaling limit, our
method would return the correct order. However $S^n$ has conjugate points (indeed every point is self-focal) so Keeler's refinement is unavailable, and
the plain H\"ormander bound $\delta^{S^n}_{[0,\ell]}=\Oh(\ell^{-1})$ only yields
$\delta^{S^n}_{[\ell-1,\ell]}=\Oh(1)$ through \cref{thm:OdeltaRW}. We were unable
to determine whether the monochromatic scaling limit holds on $S^n$ in fixed
range. 
Our technique is therefore silent on the
sphere, a limitation we state plainly, as $S^n$ is often taken as the model
case and its local, infinitesimal-range scaling limit is classical.

This is a limitation of our \emph{general} method, not a gap in knowledge: the
variance of the nodal volume of spherical random waves is known to be that of \cref{eq:sphericalYAu}, proven in a series of papers completed by
\cite{marinucci2023laguerre}, and including \cite{W09,MRossiWigman2020,Tod24}.
The method is based on the Kac-Rice formula and a chaos-decomposition argument close in spirit to ours but adapted to $S^n$. The two pictures are complementary (see also \cite{ZELDITCH1997}):
the integrable sphere lies outside the natural scope of the random wave
heuristic, its nodal fluctuations governed by symmetry features
rather than by universality, whereas our results aims at capturing the chaotic
manifold, for which Berry's conjecture heuristic was designed.

\medskip
\noindent\textbf{Open problems.} Three questions are left open.
\emph{(i)} The sharp order in the monochromatic regime on chaotic manifolds:
\cref{cor:A} improves \cref{eq:varCH} to its logarithmic square, but leaves a
factor $\ell/\log\ell$ to the order $\Theta(\ell^{-n})$ conjectured by comparison
with \eqref{eq:sphericalYAu}; \cref{thm:ratefall} isolates, conditionally, the
fourth-chaos estimate that would close this gap.
\emph{(ii)} Whether the monochromatic scaling limit holds on $S^n$ in fixed range.
\emph{(iii)} A central limit theorem for $Y_M(I_\ell)$, which our second-moment
analysis does not address.
\section{Preliminaries}\label{sec:preliminaries}
\subsection{Technical definitions (jets and derivatives norms)}
In this section we fix the conventions and notations that we will adopt to work with tensors, norms and covariant derivatives. Our definitions rely on standard concepts, for which we refer to the monograph \cite{leeriemann}. 
\subsubsection{Tensor calculus}
Let $M$ be a smooth $n$-dimensional manifold. For any $l,k\in \N$, we denote the bundle of \emph{tensors of type $(l,k)$} as
\be 
T^{(l,k)}M:=TM^{\otimes l}\otimes T^*M^{\otimes k}
\ee
whose fiber $T^{(l,k)}_xM$ over $x\in M$, is the space of multilinear functions $\tau\colon T^*_xM^l\times T_xM^k\to \R$. By standard properties of tensor calculus, such $\tau$ can be canonically identified with a linear function on $T_x^{(k,l)}M$. 
Sections of the bundle $T^{(l,k)}M$ are called \emph{tensor fields of type $(l,k)$} and we denote by $\m{T}^{(l,k)}$ the space that they form.

For $\tau\in T^{(l,k)}_xM$ and $\tau'\in T^{(k,l)}_xM$, we denote the standard tensor contraction as $\langle\tau,{\tau'}\rangle \in \R$, that is, the evaluation of the $\tau$ on $\tau'$. For tensor fields $\tau,\tau'$, the same notation $\langle\tau,{\tau'}\rangle$ denotes the function $x\mapsto \langle\tau_x,{\tau'}_x\rangle$.

By convention $T^{(0,0)}_xM=\R$, so tensor fields of type $(0,0)$ are functions.
$TM=T^{(1,0)}M$ is the tangent space, $T^*M=T^{(0,1)}M$ is the cotangent space; $(1,0)$ and $(0,1)$ tensor fields are, respectively, vector fields and differential $1$-forms. $T^{(0,2)}_xM$ is the space of bilinear forms on $T_xM$, and a Riemannian metric $g$ is a tensor field of type $(0,2)$ that is, in addition, symmetric and positive definite. 

\subsubsection{Covariant derivative}
We fix, once and for all, a Riemannian metric $g$ on $M$, and denote by $\nabla$ the corresponding Levi-Civita connection, canonically extended to an operator acting on tensor fields, see \cite[Propositions 4.15-4.17]{leeriemann}
\be \label{eq:covder}
\nabla\colon \m{T}^{(l,k)}\to \m{T}^{(l,k+1)}, \quad \text{where}\quad \langle (\nabla_X \tau), \tau'\rangle \equiv \langle \nabla \tau, \tau'\otimes X\rangle, \quad \forall X\in \m T^{(1,0)},\tau'\in \m T^{(k,l)}.
\ee
We will mostly be concerned with tensors of type $(0,k)$, originating by differentiating functions.
Note that for a function $f\in \mC^\infty(M)$, we have that $\nabla_X f(x)=\langle d_xf,X\rangle$\footnote{Note that $\nabla f=df$ is the differential of $f$, and not the gradient.} and by iterating \cref{eq:covder} $k\in \N$ times, we obtain a tensor field $\nabla^kf$ of type $(0,k)$, corresponding to the standard differential of order $k$ in the Euclidean setting. For instance, $\nabla^2 f=\nabla(df)$ is the Riemannian Hessian of $f$. 

We will avoid the expression $\nabla_X^k f$, in favor of $\langle \nabla^k f, (\cdot)\otimes X\rangle$. The reason will be clear in a moment. We will denote by $\nabla_x^k f$ the $k$-multilinear form on $T_xM$ obtained by evaluating $\nabla^k f$ at a point $x\in M$.
\subsubsection{Jets}\label{sec:prelim_jets}
For $k\in \N$, we will call the \emph{jet bundle of order $k$} the vector bundle $J^kM$ with fiber over $x\in M$ given by
\be 
J^k_xM:=\R\oplus T^{(0,1)}_xM\oplus \dots \oplus T^{(0,k)}_xM
\ee
The $k$-\emph{jet} (also called $k$-\emph{jet prolongation}) of a function $f\in \mC^k(M)$ is a section $j^kf$ of the bundle $J^kM$, that is defined at $x\in M$ by
\be \label{def:kjet}
j^k_xf=\begin{pmatrix}
    f(x), & \nabla_x f, & \dots & ,\nabla^k_xf
\end{pmatrix}
\ee
(this expression gives an identification, depending on the metric $g$, with the canonical jet bundle.)
Moreover, for any pair of points $x,y\in M$ and $l,k\in \N$, we define 
\be\label{eq:directsum}
J^{l,k}_{x,y}M:=J^l_xM\otimes J^k_yM\cong\bigoplus_{a\le l,b\le k} T^{(0,a)}_xM\otimes T^{(0,b)}_yM,
\ee
and denote as $J^{l,k}M$ the resulting vector bundle over $M\times M$. Now, for any $a,b\in \N$ and $x,y\in M\times M$ we can differentiate a function $C\colon M\times M\to \R$ on each variable to obtain
\be
\nabla^{a,b}_{x,y}C:=\nabla_x^a\nabla_y^bC \in T^{(0,a)}_xM\otimes T^{(0,b)}_yM,
\ee
which defines $\nabla^{(a,b)}C$ as a \emph{double tensor field} (of type $(0,a)$-$(0,b)$) on $M \times M$. Finally, we are able to define the $(l,k)$-jet of $C$ as the section $j^{l,k}C$ of $J^{l,k}M$ defined by the collection of 
\be \label{eq:jlkC}
j^{l,k}_{x,y}C= \tyu \nabla_x^a\nabla_y^bC \uyt_{a\le l,b\le k}  
\ee
for any pair of points $x,y\in M$. Here, we are interpreting an element $j^{l,k}_{x,y}C$ of the space $J^{l,k}_{x,y}M$, defined in \cref{eq:directsum}, as a matrix of tensors in each of the direct summands in \cref{eq:directsum}\footnote{
A natural and deeper way to interpret a jet in $J^kM$ is as a polynomial function of degree at most $k$ on $T_xM$; and a jet in $J^{l,k}_{x,y}$ as a polynomial function on $T_xM\times T_yM$ of bi-degree $(l,k)$, that is a polynomial of degree at most $l$ in the variable $\dot{x}\in T_x^*M$ and of degree at most $k$ in the variable $\dot{y}\in T_y^*M$. Then, the elements of the matrix in \cref{eq:jlkC} are the bi-homogeneous (i.e., homogeneous in each variable) components of the polynomial. For us, it will be more convenient to keep the homogeneous components separated.
}. 
Note that all the terms are in different spaces. We call $\mC^{l,k}(M\times M)$ the space of functions $C$, said to be \emph{of class $\mC^{l,k}$}, for which $j^{l,k}C$ exists and is continuous.

\subsubsection{Gaussian fields}
All this language will turn out to be useful to describe the covariance of derivatives of a Gaussian field. First, let us recall that a coordinate-free way to express the covariance between two Gaussian vectors $G_i\randin V_i$, with $i\in \{1,2\}$, in two possibly different vector spaces is as a double tensor 
\be 
\Cov(G_1, G_2):=\E\kop G_1\otimes G_2\pok \in V_1\otimes V_2,
\ee
which can be equivalently (and canonically) seen as a bilinear form on $V_1^*\times V_2^*$, or as a linear map $V_2^*\to V_1$, or $V_2^*\to V_1$.
That said, we can formulate a classical fact on the derivatives of a covariance function as follows.
\begin{lemma}
    If $f\randin \mC^r(M)$ is a Gaussian field, then its covariance function $C$ is in $\mC^{r,r}(M\times M)$ and for any $l,k\le r$, we have
    \be 
\Cov\tyu j^l_xf, j^k_y f\uyt=\E\kop j^l_xf \otimes j^k_y f \pok=j^{l,k}_{x,y} C \in J^{l,k}_{x,y}M,
    \ee
    and the analogous identity holds for $\nabla^{a,b}_{x,y}C$.
\end{lemma}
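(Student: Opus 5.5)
The statement is the standard fact that differentiation commutes with expectation for a $\mathcal{C}^r$ Gaussian field. I would prove it first for finite sums and then pass to the limit in the Karhunen--Lo\`eve expansion \eqref{eq:karlo}, $f=\sum_i\gamma_i\varphi_i$.

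\textbf{Finite sums.} For a partial sum $f_N=\sum_{i\le N}\gamma_i\varphi_i$, each jet is linear in the coefficients: $j^l_xf_N=\sum_i\gamma_i\, j^l_x\varphi_i$. Since the $\gamma_i$ are i.i.d. $\mathcal N(0,1)$,
\[
\E\{j^l_xf_N\otimes j^k_yf_N\}=\sum_{i\le N}j^l_x\varphi_i\otimes j^k_y\varphi_i .
\]
The right-hand side is exactly $j^{l,k}_{x,y}C_N$, where $C_N(x,y)=\sum_{i\le N}\varphi_i(x)\varphi_i(y)$. This uses only that $\nabla_x^a\nabla_y^b\bigl(\varphi(x)\varphi(y)\bigr)=\nabla^a_x\varphi\otimes\nabla^b_y\varphi$, which holds because the two covariant derivatives act on separate factors of $M\times M$.

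\textbf{Passage to the limit.} The series converges a.s. in $\mathcal C^r$; I would upgrade this to convergence in $L^2(\Omega;\mathcal C^r(K))$ on compact sets $K$. For a Gaussian series with independent symmetric terms, a.s. convergence in a separable Banach space implies convergence in every $L^p$: this follows from the It\^o--Nisio theorem together with Fernique's theorem (see \cite{bogachev}). Then $j^l_xf_N\to j^l_xf$ in $L^2$, uniformly in $x\in K$. Hence the covariances $\E\{j^l_xf_N\otimes j^k_yf_N\}$ converge, uniformly on $K\times K$, to $\E\{j^l_xf\otimes j^k_yf\}$.

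\textbf{Identifying the limit.} The previous step shows that $j^{l,k}C_N$ converges uniformly on compacts, while $C_N\to C$ pointwise. A standard closedness argument for derivatives gives $C\in\mathcal C^{r,r}$ with $j^{l,k}C=\lim_N j^{l,k}C_N$: one integrates along curves, iterating in each variable separately. The componentwise statement for $\nabla^{a,b}_{x,y}C$ follows by projecting onto the summands of \eqref{eq:directsum}.

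\textbf{Main obstacle.} The only nontrivial step is the uniform $L^2$ control of the derivatives, i.e. the Fernique/It\^o--Nisio upgrade. As an alternative that avoids it, I would differentiate $C(x,y)=\E\{f(x)f(y)\}$ directly via difference quotients in charts. The quotients of $f$ converge a.s. to the derivatives, and they are Gaussian, so a.s. convergence implies $L^2$ convergence (Gaussian limits are closed in $L^2$). This lets one exchange the limit with $\E$ one derivative at a time, which again yields $\nabla^a_x\nabla^b_yC=\E\{\nabla^a_xf\otimes\nabla^b_yf\}$. Continuity of the result follows the same way.
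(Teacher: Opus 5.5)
Your argument is correct. The paper gives no proof of this lemma. It introduces it only as ``a classical fact on the derivatives of a covariance function,'' so there is no competing route to compare with: your proposal supplies a valid justification for something the paper takes for granted.

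\textbf{Difference-quotient route.} Of your two arguments, this is the more economical and self-contained one. Its key input is that the jet $(f(x),\partial^\alpha f(x))$ is jointly Gaussian, being an a.s.\ limit of Gaussian vectors. Hence the difference between a difference quotient and the corresponding derivative is a centered Gaussian variable tending to $0$ a.s., so its variance tends to $0$. Induction on the order, one derivative at a time, gives the identity. The $L^2$-continuity of $x\mapsto\partial^\alpha f(x)$ gives the continuity of $j^{l,k}C$. Passing from partial derivatives in charts to covariant derivatives is just linearity with deterministic coefficients.

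\textbf{Karhunen--Lo\`eve route.} This also works, but it imports heavier machinery: It\^o--Nisio, Fernique, and a closedness argument for mixed derivatives. One caveat: for finite $r$, the expansion you need is not literally \eqref{eq:karlo}, which the paper states with smooth $\varphi_i$ and convergence in $\mathcal{C}^\infty$. You need the general Karhunen--Lo\`eve theorem for Gaussian random elements of the separable Fr\'echet space $\mathcal{C}^r(M)$, as in \cite{bogachev}.

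In both routes, identifying the covariance with $\E\{j^l_xf\otimes j^k_yf\}$ uses that the field is centered. This is built into the paper's convention.
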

A particular case of the above is that of the Adler-Taylor metric (see \cite[Eq. (12.2.1)]{AT07})
of a Gaussian field $f$ with covariance $C$, such that $C(x,x)=1$, that is the $(0,2)$ tensor field defined by
\be 
g^f_x:=\Cov\tyu \nabla_xf,\nabla_x f\uyt=\nabla^{1,1}_{x,x}C\in T^*_xM\otimes T^*_xM.
\ee
Given a Riemannian metric $g$, and $\lambda \in \R$, we shall define the new metric 
\be 
g_x^\lambda := \frac{\lambda^2}{n}g.
\ee
Note that $g^\lambda$ is just a constant multiple of $g$. In many instances in the following, we will need to compare $g^\lambda$ with $g^f$.
\subsubsection{Norms}\label{sec:norms}
Given any Riemannian metric (even discontinuous) $\rho$ on $M$, possibly different than our reference metric $g$, we denote by $\|u\|_\rho=\sqrt{\rho_x(u,u)}$ the norm of $u\in T_xM$. We extend it to a norm on $J^{l,k}_{x,y}M$ by setting 
\be \label{eq:defnorm}
\|\nabla^{a,b}_{x,y}C\|_\rho:=\max_{
\substack{
u\in T_xM\smallsetminus\{0\} \\ v\in T_yM\smallsetminus\{0\} 
}
}\frac{\left|\langle \nabla_x^a\nabla_y^bC, u^{\otimes a}\otimes v^{\otimes b}\rangle \right|}{\|u\|_\rho\|v\|_\rho},
\quad 
\|j^{l,k}_{x,y}C\|_\rho:=\max_{a\le l, b\le k}
{\left \|\nabla_x^a\nabla_y^bC \right\|_\rho}
\ee
The norm on $J^k_x(M)$ is defined in the analogous way. When $\rho=g$, we will omit the subscript; $\|\cdot \|_g\equiv \|\cdot\|$. Observe that
\be \label{eq:norm}
\|j^{l,k}_{x,y}C\|_{g^\lambda}=\|j^{l,k}_{x,y}C\|_{\frac{\lambda^2}{n}g}=
\max_{
\substack{
u\in T_xM\smallsetminus\{0\} \\ v\in T_yM\smallsetminus\{0\} 
}}
\max_{a\le l, b\le k}
\frac{\left\|\nabla_x^a\nabla_y^bC \right\|}{\lambda^{a+b}} (\sqrt{n})^{a+b}.
\ee
\subsubsection{Covariance of the first jet}
Particularly important for our study is the case of the first jet $j^1_xf=(f(x),\nabla_xf)$, that is an element of the space $J^1_xM=\R\oplus T^*_xM$. This case is described in \cite[Sec. 1.3.1]{cgv2025StecconiTodino}, with a slightly different notation. Let us briefly discuss how it translates to the more general framework developed above.

If $f$ is a $\mC^1$ Gaussian field, and $x,y\in M$ are two points, then the covariance between $j^1_xf$ and $j^1_yf$ is the following object:
\be 
j^{1,1}_{x,y}C=\Cov\tyu j^1_xf,j^1_yf\uyt=\begin{pmatrix}
C({x,y}) & \nabla^{1,0} C (x,y)\\ \nabla^{0,1}C (x,y)& \nabla^{1,1}C(x,y)
\end{pmatrix}\in (\R\oplus T_x^*M){\otimes }(\R\oplus T_y^*M)=J^{1,1}_{x,y}M,
\ee
corresponding exactly to $j''_{x,y}C$ in \cite[Eq. (1.32)]{cgv2025StecconiTodino}. Consequently, the definition of $\|\cdot\|_\rho$ given in \cref{eq:defnorm} above corresponds to \cite[Def. 1.8]{cgv2025StecconiTodino}, so that
\be \label{eq:comparjet}
\|j''_{x,y}C\|_{g^f}=\|j^{1,1}_{x,y}C\|_{g^f}.
\ee 
This will be important in the following, when applying \cite[Theorems 1.10, 1.13, 1.14]{cgv2025StecconiTodino}.

\section{Proof of \cref{thm:drago}}
The proof is divided in a series of eight small steps. The crucial ones will be steps (VII) and (VIII). In the next two preliminary subsections, we prove two lemmas that are needed in those steps. 
\subsection{(Step VII) Near-diagonal behavior. }\label{eq:prooflemVII}
Consider a $\mC^3$ Gaussian random field $f$ defined on a $\mC^3$ compact manifold $M$ of dimension $\n \in \N$, possibly with boundary, and endowed with a $\mC^1$ Riemannian metric $g$. 
For this section, our only assumption on $f$, other than its $\mC^3$ regularity, is that it should have unit variance and non-degenerate differential $d_xf\in T_x^*M$, that is, we assume that 
\be\label{eq:assjet} 
\E\kop |f(x)|^2\pok=1,\quad \text{ and }\quad 
\|u\|_{g^f}^2:=\E\kop |d_xf(u)|^2\pok>0, 
\ee
for every $x\in M$ and $u\in T_xM\!\smallsetminus\! \{0\}$. 

 \begin{lemma}\label{lem:unifbound}
Assume that $f_\ell$, with $\ell \in \N$ is a sequence of Gaussian fields satisfying \cref{eq:assjet}, with covariances $C_\ell$ such that
\be 
\sup_{\ell\in \N}\sup_{x,y\in M,\a,\beta \le 3} |\nabla^\a_x \nabla^\beta_y C_\ell|<+\infty,
\ee
then for any compact subset $K\subset M$,
\be 
\sup_{\ell\in \N}\E\kop |\lfl(K)|^2 \pok<+\infty.
\ee
\end{lemma}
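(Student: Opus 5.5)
The plan is to reduce to a one–dimensional zero–counting problem via a Crofton–type formula, and then control second moments of one–dimensional zero counts using only the unit variance in \cref{eq:assjet} together with the uniform $\mC^3$ bounds on $C_\ell$.

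\emph{Step 1 (localization).} Cover $K$ by finitely many coordinate charts $U_1,\dots,U_m$, independent of $\ell$, each diffeomorphic to a cube $Q=[0,1]^n$, with metric comparable to the Euclidean one. Since $\lfl(K)\le\sum_j \lfl(U_j)$ and $(\sum_{j=1}^m a_j)^2\le m\sum_j a_j^2$, it suffices to bound $\E\{\lfl(Q)^2\}$ for the pulled-back fields. The pulled-back covariances still satisfy uniform $\mC^{3,3}$ bounds and still have unit variance, because the charts are fixed.

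\emph{Step 2 (Crofton).} On $Q$, the Euclidean Crofton formula bounds $\vol{n-1}(f^{-1}(0)\cap Q)$ by $c_n\int \#\big(f^{-1}(0)\cap L\big)\,dL$, integrated over the space of affine lines $L$ meeting $Q$, which has finite measure. By Cauchy–Schwarz (Jensen) in $L$,
\[
\E\{\lfl(Q)^2\}\le c\,\sup_{L}\E\{N_L^2\},
\]
where $N_L$ is the number of zeros of the restriction $f|_{L\cap Q}$. The metric distortion only contributes fixed constants. So everything reduces to a uniform bound for a one-dimensional Gaussian process $X(t)$ on an interval $J$ of length at most $\sqrt n$. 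This process has $\Var X(t)=1$ and covariance bounded in $\mC^{3,3}$ uniformly in $\ell$ and in $L$.

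\emph{Step 3 (one-dimensional bound).} Write $\E N^2=\E N+\E N(N-1)$.

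For the first moment, Kac–Rice gives $\E N=\int_J p_{X(t)}(0)\,\E\{|X'(t)|\,|\,X(t)=0\}\,dt\le (2\pi)^{-1/2}|J|\sup\|C\|_{\mC^{1,1}}$. Unit variance makes the density of $X(t)$ bounded by $(2\pi)^{-1/2}$. Also, $\Cov(X,X')=0$ because the variance is constant, so the conditioning does not inflate $X'$.

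For the factorial moment, split the pairs $(s,t)$ into the far region $|t-s|\ge h$ and the near region $|t-s|<h$, with $h$ a fixed small constant. I would rather avoid the two-point Kac–Rice density near the diagonal, since its nondegeneracy is not uniform in $\ell$ (only pointwise positivity of $g^f$ is assumed). Instead I use a Rolle-type argument.
\begin{itemize}
\item On each subinterval of length $h$, two distinct zeros of $X$ force a zero of $X'$ between them. Hence $N(N-1)$ restricted to such a subinterval is controlled by $N\cdot(1+N_{X'})$.
\item This in turn is controlled by $\E\{N_X\,N_{X'}\}$, handled by a mixed Kac–Rice formula. More simply, one can use divided differences: write $X(t)=X(s)+(t-s)X'(s)+\tfrac{(t-s)^2}{2}X''(\xi)$. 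Conditioning on $X(s)=0$, $X(t)=0$ then becomes the statement $X'(s)=-\tfrac{t-s}{2}X''(\xi)$, an event of small probability. The relevant Gaussian densities are evaluated at the unit-variance variable $X(s)$, and all moments of $X',X'',X'''$ are bounded by $\sup\|C_\ell\|_{\mC^{3,3}}$.
\end{itemize}
In the far region, $|t-s|\ge h$, the pair $(X(s),X(t))$ could in principle still be nearly degenerate (for example, under periodicity). To handle this, I would again bound the count of zeros in each of $O(1/h)$ subintervals separately. That yields $\E N^2\le (1/h)\sum_i \E N_i^2$ plus cross terms, each controlled by Cauchy–Schwarz, which reduces the far region to the near one.

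\emph{Main obstacle.} The main difficulty is the near-diagonal term of $\E N(N-1)$ on a short interval without any uniform lower bound on $\Var X'$. This is exactly where the two-point density degenerates. I expect to resolve it with the divided-difference/Rolle argument above, conditioning only on the single unit-variance variable $X(s)$, and using the $\mC^3$ bounds to control the third-order Taylor remainder. Everything else is bookkeeping with fixed constants depending on $n$, the charts, and $\sup_\ell\|C_\ell\|_{\mC^{3,3}}$.
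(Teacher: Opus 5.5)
\textbf{There is a genuine gap.} Your reductions are sound and differ from the paper's route:
\begin{itemize}
\item localizing to charts;
\item passing to lines by Crofton's formula (legitimate, since unit variance makes $f(x)$ and $d_xf$ independent, so the nodal set is a.s.\ a $\mC^1$ hypersurface);
\item splitting each segment into $m\approx 1/h$ pieces with $N^2\le m\sum_i N_i^2$, which avoids the two-point density away from the diagonal.
\end{itemize}
Everything then rests on a bound for $\E[N_J(N_J-1)]$ on intervals of small length $h$, uniform in $\ell$. That is where the proposal fails. \cref{eq:assjet} gives only pointwise positivity of $\Var X'$, and neither of your mechanisms removes the dependence on it.

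\emph{Rolle route.} It replaces $N_J-1$ by $N_{X',J}$, and any Kac--Rice formula for zeros of $X'$ (mixed or not) carries a factor $(2\pi\Var(X'(t)\mid\cdot\,))^{-1/2}$. In fact $N_{X'}$ has no uniform bound. With $\xi_i$ i.i.d.\ standard Gaussians, take $X_\kappa(t)=\sqrt{1-\kappa^3}\,\xi_0+\kappa^{3/2}(\sin(t/\sqrt{\kappa})\,\xi_1-\cos(t/\sqrt{\kappa})\,\xi_2)$. It has unit variance, all covariance derivatives of order at most $3$ in each variable are bounded by $1$, and $\Var X_\kappa'=\kappa^2>0$. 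Yet $X_\kappa'$ has about $1/(\pi\sqrt{\kappa})$ zeros on $[0,1]$.

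\emph{Divided-difference route.} You cannot condition on $X(s)$ alone. The second zero enters through $p_{X(s),X(t)}(0,0)=|t-s|^{-1}p_{X(s)}(0)\,p_{D\mid X(s)=0}(0)$, where $D=(X(t)-X(s))/(t-s)$ and $\Var(D\mid X(s))=\Var X'(s)+\Oh(|t-s|)$. The event $\{X'(s)=-\tfrac{t-s}{2}X''(\xi)\}$ is small only relative to the spread of $X'(s)$. Completing your sketch with the bounds you indicate (conditional second moments of $X'(s),X'(t)$ at most $(t-s)^2\sup\Var X''$) bounds the two-point density $\rho_2(s,t)=p_{X(s),X(t)}(0,0)\,\E[|X'(s)X'(t)|\mid X(s)=X(t)=0]$ by $\rho_2(s,t)\lesssim|t-s|\sup\Var X''/\sqrt{\Var(D\mid X(s))}$. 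This is not uniform. Take $Y_\kappa(t)=(\xi_0+\kappa t\,\xi_1+\tfrac{t^2}{2}\xi_2)(1+\kappa^2t^2+t^4/4)^{-1/2}$, which satisfies all hypotheses uniformly in $\kappa$ and has at most two zeros. For it, the bound integrates over $[-h,h]^2$ to order $h^2\log(1/\kappa)$. A uniform estimate must use that a nearly degenerate $D$ forces a correspondingly small conditional variance of the second divided difference, and the proposal does not supply this.

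\textbf{How the paper does it, and how your route could close.} The paper does not attempt this computation. It uses that the uniform $\mC^{3,3}$ bounds make the laws of $(f_\ell)$ relatively compact in $\mC^2$, then invokes the uniform moment bound of \cite{fom2024GassStecconi} (or, alternatively, the two-point Kac--Rice analysis of \cite{nvdfg2024PeccatiStecconi}).

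Your route does close if you add a uniform lower bound $\E[|d_xf_\ell(u)|^2]\ge c\|u\|^2$. Let $B\ge\sup\Var X''$.
\begin{itemize}
\item Then $\Var(D\mid X(s))\ge(\sqrt{c}-\tfrac{h}{2}\sqrt{B})^2$.
\item Given $X(s)=D=0$, one has $X'(s)=-(t-s)X[s,s,t]$ and $X'(t)=(t-s)X[s,t,t]$, with second divided differences of variance at most $B/4$.
\item Hence $\rho_2(s,t)=\Oh(|t-s|)$ uniformly, using only a bound on $\Var X''$.
\end{itemize}
Such a lower bound is available where the lemma is applied (Step VII of the proof of \cref{thm:dragoverkill}, where $\e(f)<1$). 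It is not among the hypotheses of the statement, however, so as written your argument does not prove it.
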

\begin{proof}
The boundedness of the third derivatives implies that $f_\ell$ is a relatively compact family in $\mC^2(M)$, see \cite[Appendix A]{NazarovSodin2016} and \cite{dtgrf2019LerarioStecconi}. Therefore, the second moment is uniformly bounded because of \cite[Remark 1]{fom2024GassStecconi}.
An alternative proof is the explicit study of the two-point Kac-Rice formula for $\E\kop \lf(dx)\lf(dy)\pok$ carried out in \cite[Sec. 8]{nvdfg2024PeccatiStecconi}, from which one can clearly see that all the bounds are uniform with respect to (at least) the third order derivatives. 
\end{proof}
\begin{remark}
    We conjecture that this lemma is true with $2$ instead than $3$, but false with $1$.
\end{remark}

 \begin{lemma}\label{lem:unifboundlambda} There exists a constant $\mathfrak{K}(n,M,g;B,\e)>0$, depending on the geometry $n,M,g$ and on an additional real parameters $B,\e>0$, decreasing in $\e$, such that the following holds.
 Assume that $f$ is a $\mC^3$ Gaussian field on $M$ satisfying \cref{eq:assjet}, with covariance $C$ such that
\be\label{eq:hypunifboundlambda} 
\sup_{\substack{x,y\in M,
\\
\mathrm{dist}(x,y)\le  r}}
\|j^{3,3}_{x,y}C\|_{g^{\lambda}}
\le B,
\ee

for some $r \le \mathrm{diam}(M,g)$, and some $\lambda>0$. Then, for any $M'\subset M$ Borel subset, we have that
\be 
\frac{\lambda^{-2}}{r^\n}
\int_{M'}\int_{B_{r}(x)} \E \kop  \lf(dx)\cdot \lf(dy)\pok < \mathfrak{K}(n,M,g;B,\lambda r)\vol{\n}(M').
\ee

 \end{lemma}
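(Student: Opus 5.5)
The plan is to reduce \cref{lem:unifboundlambda} to the uniform bound of \cref{lem:unifbound} by rescaling each small ball to unit size at wavelength scale $\lambda^{-1}$. Cover $M'$, up to bounded overlap depending only on $(M,g)$, by balls $B_{s}(x_i)$ of radius $s:=\min\{r,\lambda^{-1}\}$ (up to a geometric constant). Then note that $\int_{M'}\int_{B_r(x)}\E\{\lf(dx)\lf(dy)\}$ is bounded by $\sum_i \E\{\lf(B_{s}(x_i))\,\lf(B_{r+s}(x_i))\}$. By Cauchy--Schwarz, it suffices to bound $\E\{\lf(B_{2r}(x_i))^2\}$. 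Cover $B_{2r}(x_i)$ by $\Oh((1+\lambda r)^n)$ balls of radius $\lambda^{-1}$ (Bishop--Gromov type volume comparison on the compact $M$). Apply Cauchy--Schwarz once more to $\lf(B_{2r})=\sum_j\lf(B_{\lambda^{-1}}(z_j))$. Everything is then reduced to a \emph{uniform} bound on the second moment of the nodal volume in a ball of radius $\lambda^{-1}$, at a cost of a factor $(1+\lambda r)^{2n}$. This factor is absorbed into the dependence of $\mathfrak K$ on the parameter $\lambda r$; it is plausibly increasing in $\lambda r$, and the monotonicity claimed in the statement should follow after choosing the cover appropriately.

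For the unit-scale bound, pull back $f$ through the exponential chart $u\mapsto \exp_{z}(u/\lambda)$ on $\B^n$ (rescaled by $\sqrt n$ to match $g^\lambda$). This gives a field $\tilde f_z$ on the Euclidean unit ball with metric $\lambda^2 g$ pulled back, which converges in $\mC^1$ to the flat one as $\lambda\to\infty$ and is uniformly controlled for $\lambda$ bounded below. The hypothesis \cref{eq:hypunifboundlambda} says exactly that the covariance $\tilde C_z$ has derivatives up to order $(3,3)$ bounded by $B$ times a geometric constant. Here the assumption $\mathrm{dist}\le r$ covers the ball only if $\lambda^{-1}\lesssim r$; otherwise balls of radius $r$ are used instead, which is the reason for the choice of $s$. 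Unit variance is preserved, and nondegeneracy \cref{eq:assjet} is preserved qualitatively. The nodal volume scales as $\lf(B_{\lambda^{-1}}(z))\approx\lambda^{1-n}\m L_{\tilde f_z}(\B^n)$, up to metric distortion. Hence $\E\{\lf(B_{\lambda^{-1}})^2\}\lesssim\lambda^{2-2n}\sup\E\{\m L_{\tilde f}(\B^n)^2\}$. Summing over the $\Oh(\vol{n}(M')\lambda^n)$ balls and the $(\lambda r)^n$-type multiplicities yields $\lambda^2 r^n\vol{n}(M')$ times a constant, matching the normalization $\lambda^{-2}r^{-n}$.

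The main obstacle is the supremum $\sup\E\{\m L_{\tilde f}(\B^n)^2\}$ over the whole class of admissible fields. \cref{lem:unifbound} is stated for a sequence, and a uniform constant needs a \emph{quantitative} nondegeneracy of the Adler--Taylor metric $g^f$ relative to $g^\lambda$. The $\mC^3$ bound alone does not provide this, since the limiting field could have degenerate gradient. I would argue by contradiction and compactness. If the supremum were infinite, take a sequence of pulled-back covariances with bounded $\mC^{3,3}$ norms. By Arzel\`a--Ascoli a subsequence converges in $\mC^{2,2}$, so the fields are relatively compact in $\mC^2$. Then invoke the continuity/uniform integrability from \cite[Remark 1]{fom2024GassStecconi}, as in \cref{lem:unifbound}. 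This needs a lower bound $g^f\ge c\,g^\lambda$, which I expect the paper encodes through the eccentricity-type parameter $\e$, so that $\mathfrak K$ is decreasing in $\e$. If necessary, I would add $g^f\ge \e g^\lambda$ as the implicit quantitative nondegeneracy under which the constant is defined. The limit then still satisfies \cref{eq:assjet}, and the two-point Kac--Rice bounds of \cite[Sec. 8]{nvdfg2024PeccatiStecconi} are uniform, which gives the contradiction.
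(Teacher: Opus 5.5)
Your overall route (pull back through $\exp_p(\cdot/\lambda)$, cover, use Cauchy--Schwarz, then a uniform second-moment bound on a Euclidean ball from \cref{lem:unifbound}) is the paper's. However, the proposal has a gap exactly at the step you defer, and part of it comes from misreading the statement. The last argument of $\mathfrak K$ is $\lambda r$: the conclusion reads $\mathfrak K(n,M,g;B,\lambda r)$. It is not an eccentricity, nor a lower bound $g^f\ge \e\, g^\lambda$. Adding such a hypothesis proves a different lemma; the paper takes the Euclidean-ball constant $K(B,\e)$ directly from \cref{lem:unifbound}, with no quantitative non-degeneracy. (Whether that lemma really gives uniformity over the whole class is a fair question about \cref{lem:unifbound}, but it is not what the $\e$-dependence here encodes.) The claim that $\mathfrak K$ is \emph{decreasing in $\lambda r$} is substantive: it says the normalized near-diagonal mass is bounded uniformly for $\lambda r\ge c$. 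The near-diagonal step of \cref{thm:dragoverkill} uses exactly this when it replaces $\mathfrak K(\cdot\,;\lambda r)$ by $\mathfrak K(\cdot\,;c)$.

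Your reduction does not give this property.
\begin{itemize}
\item For $\lambda r\ge 1$ you take $s=\lambda^{-1}$, hence $\sim\lambda^n\vol{n}(M')$ centres.
\item You bound each $\E\{\lf(B_s(x_i))\lf(B_{r+s}(x_i))\}$ by $\E\{\lf(B_{2r}(x_i))^2\}\lesssim(\lambda r)^{2n}\lambda^{2-2n}$.
\item The total is therefore $\lambda^2 r^n\vol{n}(M')\cdot(\lambda r)^n$, so your constant grows like $(\lambda r)^n$, as you suspect. An unbounded increasing bound cannot be turned into a decreasing one.
\end{itemize}
The count in your second paragraph, with a linear $(\lambda r)^n$ multiplicity, is the correct one, but it does not follow from your first paragraph. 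The loss comes from enlarging the small factor $\lf(B_s(x_i))$ to $\lf(B_{2r}(x_i))$.

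There are two ways to fix it. First, you can apply Cauchy--Schwarz as $\|\lf(B_s(x_i))\|_{L^2}\|\lf(B_{2r}(x_i))\|_{L^2}$, which costs only one factor $(\lambda r)^n$. Second, you can do as the paper does:
\begin{itemize}
\item cover the $r$-neighbourhood of the diagonal over $M'$ by $\#D\le N r^{-n}\vol{n}(M')$ products $B_r(p)^2$;
\item pull back to $B_{\lambda r}(T_pM)$, which costs a factor $\lambda^{2-2n}$;
\item subdivide into balls of rescaled radius $\e$ with $L\e\le\lambda r$, paying the squared Cauchy--Schwarz factor $(\lambda r/\e)^{2n}$ only there.
\end{itemize}
Then $\lambda^{2-2n}\cdot r^{-n}\cdot(\lambda r)^{2n}=\lambda^2 r^n$ exactly. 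Defining $\mathfrak K$ as the infimum over admissible $\e$ of $J^{2n+2}N\e^{-2n}K(BE,\e)$ then makes it decreasing in $\lambda r$ automatically.
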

\subsubsection{Preliminary to the proof: the Kac-Rice formula}
Let $f$ be a Gaussian field with covariance function $C(x,y)$. The Kac-Rice formula for the second moment gives
\begin{gather} 
\E \kop  \lf(dx)\cdot \lf(dy)\pok=K(f,x,y)dxdy,
\\
K(f,x,y):=\E\kop \|d_xf\|\|d_yf\|\Big| f(x)=f(y)=0\pok\frac{dxdy}{(2\pi)\sqrt{1-C(x,y)^2}},
\end{gather}
for almost every pair $(x,y)$ of (necessarily distinct) points, such that $C(x,y)\neq 1$, see \cite{AzaisWscheborbook, AT07} for standard Kac-Rice formula. We will denote the above function as $K(f,x,y)$, whenever defined. This function is often called the two-point correlation function in the literature, see for instance \cite{W,benatarMW,MRossiWigman2020} among others. In \cite[Lemma 8.1]{nvdfg2024PeccatiStecconi} the authors prove that, under the assumption at \cref{eq:assjet}, one has that $C(x,y)\neq 1$ for almost every pair $(x,y)\in M\times M$; consequently, by standard arguments, we have that $\E\{|\int_M \lf(dx)|^2\}=\int_{M\times M} K(f,x,y)dxdy$ is well defined and, due to \cref{lem:unifbound}, finite.
\begin{proof}
Consider the case in which $M=\mathbb B_{ r}$ is the radius $r$ ball in $\R^\n$. Then, \cref{lem:unifbound} above implies that, for any $B>0$, there exists a constant $K(B,r)>0$, such that 
 \be \label{eq:bobbound}
\sup_{u,v\in \mathbb B_r,\a,\beta \le 3} |\nabla^\a_x \nabla^\beta_y C|<B,
 \implies 
\E\kop |\lf(\mathbb B_r)|^2 \pok<K(B,r)\in \R.\footnote{The behavior of $K(B,r)$ with respect to $r$ should be $\Theta(r^{2\n-1})$, but we don't want to rely on that.}
\ee

Moreover, let $\nu(s/r)$ denote the minimum number of balls of radius $r$ that are needed to cover a ball of radius $s>r$. Then, there is a dimensional constant $c(\n)>0$, such that $\nu(s/r)\le c(\n)(s/r)^{\n}$. Let $B_{r}(x_i)$ be one such collection of $\nu(s/r)$ elements, with $s>r$. Then,
\bega \label{eq:bobbound2}
\E\kop |\lf(\mathbb B_{s})|^2 \pok
&= 
\E\kop |\sum_{i=1}^{\nu(s/r)}\lf(B_r(x_i))|^2 \pok
\\
&\le  
\nu({s/r})\sum_{i=1}^{\nu({s/r})}\E\kop |\lf( B_r(x_i))|^2 \pok
\\
&<c(\n)^2K(B,r) (s/r)^{2\n}.
\eega
The latter inequality holds whenever the next implication holds:
\be \label{eq:imply}
u,v\in \B_s,\ |u-v|\le r \implies \max_{\a,\beta \le 3} |\nabla^\a_u \nabla^\beta_v C|<B.
\ee
We will use this later on in the proof.

Let $D\subset M$ be a finite set of points such that $\{(x,y)\in M'\times M: \mathrm{dist}(x,y)\le r\}\subset \cup_{p\in D}B_{r}(p)^2$. 
By standard arguments, we can assume that such a family has cardinality $\#(D)\le Nr^{-\n}\vol{\n}(M')$, for some geometric constant $N=N(n,M,g)>0$. 
Then, 

\be \label{eq:int-diag!}
\begin{gathered} 
\int_M\int_{B_{r}(x)} \E \kop  \lf(dx)\cdot \lf(dy)\pok
\le 
\sum_{p\in D}\int_{B_{r}(p)^2} \E \kop  \lf(dx)\cdot \lf(dy)\pok 
\\
\le 
\sum_{p\in D}\int_{B_{r}(T_pM)^2} K\tyu f,\exp_p\tyu  u\uyt,\exp_p\tyu v\uyt\uyt J^{2n} dudv
\\
\le J^{2\n+2} 
\sum_{p\in D}\int_{B_{\lambda r}(T_pM)^2} K\tyu f\circ \exp_p\tyu \frac{\cdot}{\lambda}\uyt, u, v\uyt(\lambda)^{2-2\n}dudv=\dots
\end{gathered}
\ee
Here, $J=J(n,M,g)>0$ is a geometric constant that bounds the $\mC^1$ norm of the map $\exp_p(\cdot)$. In the last step we changed variables $u,v \mapsto \lambda u ,\lambda v$ and used the fact that $K(F(\cdot),u,v)=\lambda^2K(F(\lambda^{-1} \cdot), \lambda u,\lambda v)$ for any $F$ Gaussian field. At this point, we recognize that the integral is the second moment of the nodal volume of the pull-back Gaussian field $f^p:=f\circ \exp_p\tyu \frac{\cdot}{\lambda}\uyt$ on $B_{\lambda r}(T_pM)$, then we can bound \cref{eq:int-diag!} by
\be 
\dots \le J^{2\n+2} 
\lambda^{2-2\n}\sum_{p\in D}\E\kop |\m L_{f^p}(B_{\lambda r}(T_pM))|^2\pok=\dots
\ee

Now, we check that the Implication \eqref{eq:imply} holds for the covariance $C^p$, with {$s:=\lambda r \ge \e$}. 

Let $L=L(n,M,g)$ denote the maximum Lipschitz constant of the exponential maps $\exp_x$ restricted to $u,v\in B_{\lambda r}(T_xM)$ such that $|u-v|\le \epsilon$. Notice that $L\ge 1$. For any $u,v \in B_{\lambda r}(T_xM)$ such that $|u-v| \le \epsilon$, the  distance on $M$ satisfies:\be\dist{\exp_x(u/\lambda)}{\exp_x(v/\lambda)} \le \frac{L}{\lambda} |u-v| \le \frac{L \epsilon}{\lambda}.\ee
If we choose $\varepsilon$ such that $L\varepsilon \le \lambda r$, the points stay within a distance $r$ on $M$, where the scaled jet of the covariance is bounded by $B$.
By applying the chain rule to the covariance $C^p$ of $f^p$, we obtain the following estimate for its derivatives up to order 3:
\bega 
\max_{|\a|,|\beta|\le 3}|\nabla^\a_u \nabla^\beta_v C^p|
&\le 
\max_{|\a|,|\beta|\le 3}\frac{|\nabla^\a_{\exp_x(\frac{u}{\lambda})} \nabla^\beta_{\exp_x( \frac{v}{\lambda})} C|}{\lambda^{|\a|+|\beta|}}\cdot \max_{|\a|,|\beta|\le 3}{|\nabla^\a_{\frac{u}{\lambda}} \nabla^\beta_{\frac{v}{\lambda}} \exp_x(\cdot)|}
\\
&\le 
\|j^{3,3}_{x,y}C\|_{g^{\lambda}}\cdot E \le B\cdot E
,
\eega
 where $E=E(n,M,g)$ is a geometric constant bounding the derivatives of the exponential map, since {$\frac{\|u\|}{\lambda}\le r\le \mathrm{diam}(M,g)$ }is bounded. 

By \cref{eq:bobbound2}, we deduce that 
\be 
\dots\le J^{2\n+2}\lambda^{2-2\n}\#(D)\cdot c(n)^2K(B\cdot E,\e)(\lambda r)^{2\n}\e^{-2\n}
\le r^{\n}\lambda^{2} J^{2\n+2}K(B\cdot E,\e) \e^{-2\n}N \vol{\n}(M').
\ee
Since the above inequality holds for any $\e$ such that $L\e\le \lambda r$, 
we conclude by defining $\mathfrak{K}(n,M,g;B,\lambda r):= \inf_{L\e\le \lambda r}J^{2\n+2}K(B\cdot E,\e)\e^{-2\n} N$, that is clearly decreasing in its last argument.

\end{proof}

\subsection{(Step VIII) Off-diagonal behavior: Proof of \cref{lem:offdiag}}\label{sec:prooffdiag}
Let us consider a $\mC^3$ field $f$ on $M$, satisfying \cref{eq:assjet} and with covariance $C$. 
We will use the following result from \cite{cgv2025StecconiTodino} (see also \cref{eq:comparjet}).
 \begin{theorem}[Theorem 1.10 \cite{cgv2025StecconiTodino}]\label{cor:var_dx}
Let $f:M\to \mathbb{R}$ be as above. Then,
\bega
|\E\kop \lf(dx)[q]\cdot \lf(dy)[q]\pok|
\le \
2^q\cdot\frac{\lambda(f,x)\lambda(f,y)}{\n}
\cdot
{\|j^{1,1}_{x,y}C\|_{g^f}^q}
\ dxdy.
\eega
\end{theorem}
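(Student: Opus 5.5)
The plan is to work pointwise with the Wiener--It\^o expansion of the nodal measure density and to reduce the covariance of two $q$-th chaos components to a contraction of symmetric tensors against the $q$-th tensor power of the cross-covariance of the normalized first jets.

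\textbf{Step 1 (normalization).} At each point $x$ I choose a frame of $T_xM$ that is orthonormal for the Adler--Taylor metric $g^f$. This frame exists because of the non-degeneracy in \cref{eq:assjet}. In it, the normalized first jet $X_x:=(f(x),\nabla_x f)$ is a standard Gaussian vector in $\R^{1+n}$. Formally, the nodal density is
\[
\lf(dx)=\delta_0\bigl(X_x^0\bigr)\,\|d_xf\|_g\,dx=\delta_0\bigl(X_x^0\bigr)\,\|A_x X_x'\|\,dx ,
\]
where $X_x'$ is the gradient part and $A_x$ is the linear map that converts $g^f$-norms into $g$-norms. Consequently $\lf(dx)$ is a functional of a single standard Gaussian vector. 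I will justify this rigorously through the usual $\e$-approximation $\frac{1}{2\e}\mathbbm 1_{[-\e,\e]}$, which is harmless at the level of chaos projections. The $q$-th chaos component of $\lf(dx)$ is then $\langle \Psi^{(q)}_x, H_q(X_x)\rangle$. Here $H_q$ denotes the tensor Hermite polynomial of degree $q$, and $\Psi^{(q)}_x$ is a symmetric $q$-tensor obtained as the product of the Hermite coefficients of $\delta_0$ in the first variable and of $\|A_x\cdot\|$ in the remaining $n$ variables.

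\textbf{Step 2 (Mehler / diagram formula).} For jointly Gaussian standard vectors $X,Y$ with cross-covariance $\Gamma=\Cov(X,Y)$ one has
\[
\E\bigl\{\langle a,H_q(X)\rangle\langle b,H_q(Y)\rangle\bigr\}=q!\,\langle a,\Gamma^{\otimes q}b\rangle .
\]
In the $g^f$-orthonormal frames, $\Gamma$ is exactly $j^{1,1}_{x,y}C$. By \cref{eq:comparjet}, its operator norm is controlled by $\|j^{1,1}_{x,y}C\|_{g^f}$. This yields
\[
\bigl|\E\{\lf(dx)[q]\,\lf(dy)[q]\}\bigr|\le q!\,\|\Psi^{(q)}_x\|\,\|\Psi^{(q)}_y\|\,\|j^{1,1}_{x,y}C\|_{g^f}^q ,
\]
where $\|\Psi\|$ is the Hilbert--Schmidt norm. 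Here I use $\|\Gamma^{\otimes q}\|_{op}=\|\Gamma\|_{op}^q$. Some care is needed because the norm in \cref{eq:defnorm} is defined blockwise through a maximum over pure tensors. I would bound the operator norm of the $(1+n)\times(1+n)$ block matrix by a constant times this blockwise maximum, and I expect that constant to be part of where the factor $2^q$ comes from.

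\textbf{Step 3 (bounding the kernels).} It remains to show that
\[
\sqrt{q!}\,\|\Psi^{(q)}_x\|\le 2^{q/2}\,\frac{\lambda(f,x)}{\sqrt n}.
\]
The kernel $\Psi^{(q)}_x$ splits as a sum over $q_0+q'=q$ of products of the coefficients $h_{q_0}(0)/\sqrt{2\pi}$ of $\delta_0$ with the $q'$-th Hermite projection of $v\mapsto\|A_xv\|$. I will bound each factor by its full $L^2(\gamma)$ norm. For the norm part this gives $\E\{\|A_xX'\|^2\}=\mathrm{tr}(A_x^*A_x)=\lambda(f,x)^2$, consistent with the definition of the pointwise frequency. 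For the delta part, where the $L^2$ norm is infinite, I use instead the explicit bound $h_{q_0}(0)^2/q_0!\le 1$. The sum over the splittings of $q$ is then controlled by $\sum_{q_0}\binom{q}{q_0}$-type counting, which produces $2^q$. The factor $1/n$ should follow from the precise normalization of $\lambda(f,x)$ relative to $\mathrm{tr}\,g^f$; matching it is bookkeeping.

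\textbf{Main obstacle.} I expect Step 3 to be the hardest part: obtaining exactly $2^q\lambda(f,x)\lambda(f,y)/n$ uniformly in $q$, without losing a factor growing with $q$. The delta function has no finite $L^2$ norm, so the summation over splittings must be organized so that the delta coefficients only enter through the bounded quantities $h_{q_0}(0)^2/q_0!$. My first attempt is a Cauchy--Schwarz inequality over the splittings, combined with the exact identity
\[
\sum_{q'}\|\text{proj}_{q'}\|A\cdot\|\|^2=\lambda^2 .
\]
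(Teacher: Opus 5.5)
\textbf{The gap is Step 3: the factor $1/n$ is not bookkeeping, and it cannot be obtained.} Your quantity $q!\,\|\Psi^{(q)}_x\|^2$ is exactly the squared $L^2$ norm of the density of $\lf(dx)[q]$. Since $f(x)$ and $\nabla_x f$ are independent, the pieces indexed by different splittings $q_0+q'=q$ are orthogonal. Writing $\pi_{q'}$ for the $q'$-th chaos projection of $v\mapsto\|A_xv\|$, this gives
\[
q!\,\|\Psi^{(q)}_x\|^2=\sum_{q_0+q'=q}\frac{H_{q_0}(0)^2}{2\pi\,q_0!}\,\|\pi_{q'}\|^2_{L^2(\gamma)} .
\]
Take $f$ homothetic, so $g^f=\frac{\lambda^2}{n}g$ (for instance normalized random spherical harmonics), and take $q=2$. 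The $q'=0$ term alone equals $\frac{1}{4\pi}(\E\|d_xf\|)^2=\frac{\lambda^2}{4\pi n}(\E\|Z\|)^2$ with $Z\sim\mathcal N(0,I_n)$. Since $(\E\|Z\|)^2\ge n-1$, this is at least $\frac{n-1}{4\pi n}\lambda^2$.

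Now let $y\to x$. Then $\Gamma\to I$, so your Step 2 Cauchy--Schwarz becomes an equality, and the left-hand side tends to $2\|\Psi^{(2)}_x\|^2\ge\frac{n-1}{4\pi n}\lambda^2$. Meanwhile $\|j^{1,1}_{x,y}C\|_{g^f}\to 1$, so the displayed bound would force $\frac{n-1}{4\pi n}\le\frac{4}{n}$, i.e.\ $n\le 1+16\pi$. Hence, with this paper's normalization $\lambda(f,x)^2=\E\|d_xf\|^2$, the inequality as displayed fails near the diagonal once $n\ge 52$. No arrangement of the splittings can produce a $1/n$ valid in all dimensions.

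\textbf{What your method does prove.} Use $H_{q_0}(0)^2/q_0!\le 1$ and $\sum_{q'}\|\pi_{q'}\|^2=\E\|A_xX'_x\|^2=\lambda(f,x)^2$ in the identity above. This gives $q!\,\|\Psi^{(q)}_x\|^2\le\lambda(f,x)^2/(2\pi)$ directly, with no binomial counting and no factor depending on $q$. The whole $2^q$ then comes from Step 2. For a unit vector $(s,w)$, each block row of $\Gamma$ has norm at most $m(|s|+\|w\|)\le\sqrt2\,m$, where $m=\|j^{1,1}_{x,y}C\|_{g^f}$; hence $\|\Gamma\|_{op}\le 2m$. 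Your plan draws a $2^q$ from both steps, which would give $4^q$.

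The correct outcome is
\[
\bigl|\E\{\lf(dx)[q]\,\lf(dy)[q]\}\bigr|\le 2^q\,\frac{\lambda(f,x)\lambda(f,y)}{2\pi}\,\|j^{1,1}_{x,y}C\|^q_{g^f}\,dxdy .
\]
This implies the displayed statement only for $n\le 6$. For this paper it is enough: in \cref{lem:prooffdiag} and in the near-diagonal step of \cref{thm:dragoverkill}, the factor $1/n$ only feeds dimensional constants that are absorbed into $\mathfrak D$ and $\mathfrak T$.
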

The term $\lambda(f,x)$ in the above statement is defined as 
\be 
\lambda(f,x)^2=\E\kop \|d_xf\|^2\pok=\fint_{u\in S(T_xM)}\E\kop |\langle d_xf,u\rangle|^2\pok \n \dd u.
\ee
Be aware that such a formula holds for a field $f$ satisfying \cref{eq:assjet}; we stress this, as later we will extend it to more general fields. Under the same caveat, we have
\be 
\lambda(f)^2=\fint_M\lambda(f,x)^2 dx,\quad \text{and} \quad \e(f)=\max_{x\in M,\ u\in S(T_xM)}\left|\frac{\sqrt{\E\kop |\langle d_xf,u\rangle|^2\pok}}{\lambda(f)}\sqrt{\n}-1\right|.
\ee
Such quantities are coherent with those defined in \cref{def:three} below and \cite[Definition 1.11]{cgv2025StecconiTodino} (also reported in \cref{def:three} below). Let us also define an additional pointwise quantity:
\be 
\e_1(f,x):=(\max_{u\in S(T_xM)}\frac{\sqrt{\E\kop |\langle d_xf,u\rangle|^2\pok}}{ \lambda(f)}\sqrt{n})-1, \quad \text{as in (5.28)}.
\ee
We will use the latter theorem to prove the next lemma.
\begin{lemma}\label{lem:prooffdiag}
Assume that $f$ is a $\mC^3$ Gaussian field on $M$, with covariance $C$ such that $\e(f)<1$ and for some $\delta, r>0$, one has 
\be\label{eq:shorttt} 
\sup_{\substack{x,y\in M,
\\
\mathrm{dist}(x,y)\ge r
}}
\|j^{1,1}_{x,y}C\|_{g^{\lambda(f)}} \le \delta < \frac{(1-\e(f))^2}{2}.
\ee
Then,
\begin{gather} 
\sum_{q\ge 4}\int_M\int_{B_\rho(x)\smallsetminus B_{r}(x)} |\E \kop  \lfl(dx)[q]\cdot \lfl(dy)[q]\pok |
\\
\le 
\frac{\frac{2^6}{n}}{1-\frac{2\delta}{(1-\e(f))^2}} \frac{\lambda(f)^2 }{(1-\e(f))^8}\int_{r\le \dist xy \le \rho}\|j^{1,1}_{x,y}C\|_{g^{\lambda(f)}} ^4 dxdy.
\end{gather}
\end{lemma}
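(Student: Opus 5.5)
The plan is to apply \cref{cor:var_dx} chaos by chaos, convert the $g^f$-norm into the $g^{\lambda(f)}$-norm at the cost of powers of $(1-\e(f))^{-1}$, and then sum the resulting geometric series in $q$.

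\emph{Pointwise bound for a single chaos.} For each $q\ge 4$ and each pair $(x,y)$ with $r\le \dist xy\le\rho$, \cref{cor:var_dx} gives
\[
|\E\kop \lf(dx)[q]\cdot \lf(dy)[q]\pok|\le 2^q\frac{\lambda(f,x)\lambda(f,y)}{n}\|j^{1,1}_{x,y}C\|_{g^f}^q\,dxdy .
\]
Both factors must now be expressed through $\lambda=\lambda(f)$.

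\emph{Comparison of metrics.} By the definition of $\e(f)$, for every $u\in T_xM$ we have
\[
(1-\e(f))\|u\|_{g^\lambda}\le \|u\|_{g^f}\le (1+\e(f))\|u\|_{g^\lambda}, \qquad g^\lambda=\tfrac{\lambda^2}{n}g .
\]
Hence $\lambda(f,x)\le (1+\e(f))\lambda$. Since $\e(f)<1$, we also have $1+\e\le 2\le 2(1-\e)^{-1}$, so $\lambda(f,x)\lambda(f,y)\le 4\lambda^2(1-\e)^{-2}$. This is where the extra factor $2^2$ comes from, turning $2^4$ into $2^6$.

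For the jet, the norm in \cref{eq:defnorm} divides by $\|u\|_\rho\|v\|_\rho$ in each of the four blocks. The $(0,0)$ block carries no such factor, and any block with a derivative gains at most a factor $(1-\e)^{-1}$ per slot. Therefore
\[
\|j^{1,1}_{x,y}C\|_{g^f}\le (1-\e)^{-2}\|j^{1,1}_{x,y}C\|_{g^\lambda}=:(1-\e)^{-2}s .
\]
Here $s\le\delta$ by \cref{eq:shorttt}.

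\emph{Summation in $q$.} The $q$-th term is bounded by $2^q(1-\e)^{-2q}s^q$. We factor out the $q=4$ term, namely $2^4(1-\e)^{-8}s^4$, and bound the remaining ratio by $\frac{2\delta}{(1-\e)^2}<1$ using the hypothesis. The series is then geometric:
\[
\sum_{q\ge4}\Big(\frac{2s}{(1-\e)^2}\Big)^q\le \frac{2^4 s^4(1-\e)^{-8}}{1-\frac{2\delta}{(1-\e)^2}} .
\]
Multiplying by $\frac{4\lambda^2}{n(1-\e)^2}$ would give a total power $(1-\e)^{-10}$, whereas the statement has $(1-\e)^{-8}$. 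So the two extra factors of $(1-\e)^{-1}$ in the $\lambda(f,x)$ bound should not be spent. Instead I would use the sharper bound $\lambda(f,x)\le(1+\e)\lambda\le 2\lambda$, which gives $\lambda(f,x)\lambda(f,y)\le 4\lambda^2$. That yields exactly $\frac{2^6}{n}\lambda^2(1-\e)^{-8}$ times the denominator.

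\emph{Integration.} Finally, I would integrate over $x\in M$ and $y\in B_\rho(x)\smallsetminus B_r(x)$, which is contained in $\{r\le\dist xy\le\rho\}$. Tonelli's theorem justifies exchanging the sum over $q$ with the integral, since all terms are nonnegative after taking absolute values.

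The main obstacle is the careful metric comparison for the mixed jet blocks, so that exactly $(1-\e)^{-2}$ appears per power of $q$ and the constants match the statement. This step also relies on the fact that $\lambda(f,x)^2$ is the trace of $g^f$ relative to $g$, so it is controlled by $\e$ through the maximal eigenvalue.
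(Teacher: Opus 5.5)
Your proposal is correct and follows the paper's proof essentially step by step. You apply \cref{cor:var_dx} and use $\|j^{1,1}_{x,y}C\|_{g^f}\le (1-\e(f))^{-2}\|j^{1,1}_{x,y}C\|_{g^{\lambda(f)}}\le \delta(1-\e(f))^{-2}<\frac12$ to sum the geometric series from $q=4$. You then absorb $\lambda(f,x)\lambda(f,y)\le (1+\e(f))^2\lambda(f)^2\le 4\lambda(f)^2$, which is exactly how the paper obtains the constant $2^6/n$ with only $(1-\e(f))^{-8}$. One point worth stating explicitly, as the paper does: the non-degeneracy required by \cref{cor:var_dx} follows from your lower bound $\|u\|_{g^f}\ge (1-\e(f))\|u\|_{g^{\lambda(f)}}>0$.
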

\begin{proof}
First of all, notice that $\lambda(f,x)=(1+\e_1(f,x))\lambda(f)$, for a function $\e_1(f,\cdot)$ such that $|\e_1(f,x)|\le \e(f)$. Moreover, since $\e(f)<1$, $f$ satisfies \cref{eq:assjet}. Therefore, we can apply \cref{cor:var_dx}, which implies that
\begin{gather}
\sum_{q\ge 4}\int_M\int_{B_\rho(x)\smallsetminus B_{r}(x)} \E \kop  \lfl(dx)[q]\cdot \lfl(dy)[q]\pok 
\le 
\\
\le
\sum_{q\ge 4}\tyu 2\|j^{1,1}_{x,y}C\|_{g^f}\uyt^{q-4} \int_M\int_{B_\rho(x)\smallsetminus B_{r}(x)}\
\frac{\lambda(f)^2(1+\e(f))^2}{n}
\cdot
{2^4\|j^{1,1}_{x,y}C\|_{g^f}^4}
\ dxdy=\dots
\label{eq:conti}
\end{gather}
Under the assumption in \eqref{eq:shorttt},
we have that (see \cref{sec:norms})
\be \label{eq:j11Cgfgl}
\|j^{1,1}_{x,y}C\|_{g^f}\le \|j^{1,1}_{x,y}C\|_{g^{\lambda(f)}}\frac{1}{(1-\e(f))^2}\le \frac{\delta}{(1-\e(f))^2}<\frac12.
\ee
\begin{remark}
    If the eccentricity $\e(f)$ is not smaller than one, this estimate is problematic. 
\end{remark}

(\ref{eq:j11Cgfgl}) ensures the convergence of the sum appearing in \cref{eq:conti}, which is equal to $\frac{1}{1-2\|j^{1,1}_{x,y}C\|_{g^f}}\leq \frac{1}{1-\frac{2\delta}{(1-\varepsilon(f))^2}}$. Then we have
\begin{gather}
\dots\le \frac{1}{1-\frac{2\delta}{(1-\e(f))^2}}\cdot \frac{2^4(1+\e(f))^2\lambda(f)^2}{n}\cdot \frac{1}{(1-\e(f))^8}\cdot\int_M\int_{B_\rho(x)\smallsetminus B_{r}(x)}\
{\|j^{1,1}_{x,y}C\|_{g^\lambda(f)}^4}
\ \dd y\dd x.
\end{gather}
Finally, noting $1+\varepsilon(f)\leq 2$ concludes the proof.
\end{proof}

\subsection{Auxiliary parameters}\label{sec:aux}
Consider a $\mC^3$ centered Gaussian random field $\phi$ defined on a $\mC^2$ compact manifold $M$ of dimension $\n \in \N$, possibly with boundary, and endowed with a $\mC^1$ Riemannian metric $g$. For the moment, let us assume only that $\phi\neq 0$. We recall the following definition from \cite{cgv2025StecconiTodino}. 
\begin{definition}[\texorpdfstring{\cite[Def. 1.11]{cgv2025StecconiTodino}}{}]\label{def:three}
We define three deterministic positive real numbers associated to $M,g,\phi$: the \emph{average variance $\sigma$}, the \emph{average frequency $\lambda$} as follows.
\be\label{eq:sigmalambda} 
\sigma^2=\sigma(\phi)^2:=\fint_M 
\E\kop \phi(x)^2\pok dx,\quad \lambda^2=\lambda(\phi)^2:=\frac{1}{\sigma^2}\fint_M \E\kop \|d_x\phi\|^2\pok dx,
\ee 
and the \emph{maximal eccentricity} of $\phi$:
\bega \label{eq:eps}
\e=\e(\phi):=\max_{x\in M,\ u\in S(T_xM)}\left|\frac{\sqrt{\E\kop |\langle d_x\phi,u\rangle|^2\pok}}{\sigma\lambda}\sqrt{\n}-1\right| 
\\
+\max_{x\in M}\left|\frac{\sqrt{\E\kop \phi(x)^2\pok}}{\sigma}-1\right|
+
\max_{x\in M,\ u\in S(T_xM)}\left|\langle d_x\frac{\sqrt{\E\kop \phi(\cdot)^2\pok}}{\sigma}, u\rangle \right|\frac{\sqrt{\n}}{\lambda}.
\eega
When $\e(\phi)=0$, we say that $\phi$ is a \emph{homothetic field}.
\end{definition}
Let us define auxiliary functions $\sigma,\e_0,\e_1\colon M\to \R$ as follows 
\be 
\sigma(x)^2:=\E\kop \phi(x)^2\pok,\quad \sigma^2:=\fint_M\sigma(x)^2dx \quad \text{and} \quad \e_0(x):=\frac{\sigma(x)}{\sigma}-1
\ee
let also 
\be 
\max_{u\in S(T_xM)}\frac{\|u\|_{{g_x}^{\phi}}}{\sigma}\sqrt{n}=\max_{u\in S(T_xM)}\frac{\sqrt{\E\kop |\langle d_x\phi,u\rangle|^2\pok}}{\sigma}\sqrt{n}=:(1+\e_1(x))\lambda(\phi)
\ee
Then, $\e(\phi)=\max\{\e_0,\e_0',\e_1\}$, where
\be 
\e_i=\max_{x\in M}|\e_i(x)|, \quad \e_0'=\max_{x\in M}\|d_x\e_0\|\frac{\sqrt{n}}{\lambda}.
\ee
Moreover, \cite[Def. 1.9]{cgv2025StecconiTodino} naturally extends $\lambda(\phi,x)$ to the non-unit variance field $\phi$ as:
\begin{gather} 
\lambda(\phi,x)^2:=
\frac{\E\kop \|d_x\phi\|^2\pok}{\sigma^2}
\\
\Big(=\fint_{u\in S(T_xM)}\frac{\|u\|^2_{{g_x}^{\phi}}}{\sigma^2}n \dd u=
\fint_{u\in S(T_xM)}\frac{\E\kop |\langle d_x\phi,u\rangle|^2\pok}{\sigma^2}n \dd u
\Big)
\end{gather}
In the following analysis, we shall substitute $\phi$ with the unit variance field $f$, defined by the identity
\be 
\phi(x)=f(x)\sigma(1+\e_0(x)),
\ee
without changing the random set $\{\phi=0\}=\{f=0\}$. We will use
\be \label{eq:defe03}
\tau(\phi)=\e_0^3(\phi):=\max_{x\in M}\|j^3_x\e_0\|_{g^{\lambda(\phi)}}=\max_{x\in M}\|j^3_x(\frac{\sqrt{\Var\{\phi(\cdot)\}}}{\sigma}-1)\|_{g^{\lambda(\phi)}}.
\ee 
to keep track, to some extent, of the factor $f(x)/\sigma\phi(x)$.
\subsection{Proof of the \cref{thm:drago}}\label{thm:dragodraft}
The next is a technical, overly precise, theorem of which \cref{thm:drago} is an immediate corollary.
\begin{theorem}\label{thm:dragoverkill}
There exists a smooth function $\Oh(t)>-1$, defined for $t\in [0,1)$ such that $\Oh(0)=0$, and such that the following holds.
Let $(M,g)$ be a compact smooth Riemannian manifold of dimension $\n$, possibly with boundary. Let $\phi$ be a Gaussian field of class $\mC^3(M)$ with covariance function $E$ and parameters $\lambda=\lambda(\phi)>0,\sigma=\sigma(\phi)>0,\e=\e(\phi)$ defined as in \cite[Def. 1.11]{cgv2025StecconiTodino} (see also \cref{def:three}), and let $\t:=\t(\phi)$ be as in \cref{eq:defe03}.
Assume that there exist positive numbers: $ \frac{c} {\lambda} \le r\le \rho$, $\delta$ and $B$, satisfying the following assumptions.
\begin{enumerate}
\item The eccentricity is less than one:
\be 
\e=\e(\phi)<1, \quad \text{and} \quad \t=\t(\phi)<1.
\ee
\item The pull-back field satisfies: 
\be\label{eq:hypunifboundlambdac2} 
\sup_{\substack{x,y\in M,
\\
 \mathrm{dist}(x,y)\le r}
}
\sigma^{-2}\|j^{3,3}_{x,y}E\|_{g^{\lambda}}
\le B
\ee
\item The field satisfies:
    \be\label{eq:shortc2} 
\sup_{\substack{x,y\in M,
\\
r
\le \mathrm{dist}(x,y)\le \rho
}}
\sigma^{-2}\|j^{1,1}_{x,y}E\|_{g^{\lambda}}
\le \delta  <\frac{(1-\e)^2}{2\tyu1+\Oh(\t)\uyt} 
\ee

\end{enumerate}
Then, we have the following quantitative Law Of Large Numbers:
\begin{gather}\label{eq:drago_rdw}
\Var\kop \frac{\lf(M)}{\lambda}\pok\le
\Var\kop \frac{\lf(M)[2]}{\lambda}\pok
%
+\mathfrak{C}\rho^{-n}\cdot  \Bigg(\mathfrak{D}r^{n}+
\\
\label{eq:drago_rdw2}
+
\frac{(1+\Oh(\t))(1+\Oh(\e))}{1-\delta\frac{2 (1+\Oh(\t))}{(1-\e)^2}}
\frac{2^4}{n}\int_{\substack{x,y\in M,
\\
r
\le \mathrm{dist}(x,y)\le \rho}}\|\sigma^{-2}j^{1,1}_{x,y}E\|^4_{g^{\lambda}} \dd x \dd y 
\Bigg).
\end{gather}
where $\mathfrak{D}=\mathfrak{D}(n,M,g;B,c)$ is a constant depending only on $n,M,g$ and on the constants $B,c$; $\mathfrak{C}=\mathfrak{C}(n,M,g)$ depends only on the geometry.
\end{theorem}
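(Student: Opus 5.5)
The plan is to normalize, decompose the variance through the Wiener chaos expansion, and split the double integral into a near-diagonal part and an off-diagonal part, each handled by one of the two lemmas already proved.

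\emph{Normalization.} First I replace $\phi$ by the unit-variance field $f=\phi/(\sigma(1+\e_0))$ of \cref{sec:aux}. This leaves the nodal measure unchanged, $\lf=\m L_\phi$. I then translate the hypotheses from $E$ to the covariance $C(x,y)=E(x,y)/(\sigma^2(1+\e_0(x))(1+\e_0(y)))$. Since $\t=\max\|j^3\e_0\|_{g^\lambda}<1$, the Leibniz rule gives
\begin{itemize}
\item $\|j^{k,k}_{x,y}C\|_{g^\lambda}\le(1+\Oh(\t))\,\sigma^{-2}\|j^{k,k}_{x,y}E\|_{g^\lambda}$ for $k\le 3$;
\item $\lambda(f)=\lambda(1+\Oh(\e))$ and $\e(f)\le\e$.
\end{itemize}
Consequently hypothesis (2) yields \cref{eq:hypunifboundlambda} for $C$ with $B'=B(1+\Oh(\t))$. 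Hypothesis (3) yields \cref{eq:shorttt} in the annulus $r\le \mathrm{dist}\le\rho$ with $\delta'=\delta(1+\Oh(\t))<(1-\e)^2/2$; this is exactly why the threshold in (3) carries the factor $(1+\Oh(\t))$.

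\emph{Variance decomposition.} Using orthogonality of chaoses, I write
\[
\Var\{\lf(M)\}=\Var\{\lf(M)[2]\}+\sum_{q\ge4}\E\{\lf(M)[q]^2\}.
\]
Odd chaoses vanish by symmetry, and chaos $0$ is the mean. Each term with $q\ge 4$ is $\int_{M\times M}\E\{\lf(dx)[q]\lf(dy)[q]\}$. I split $M\times M$ into three regions: $D_r=\{\mathrm{dist}\le r\}$, the annulus $A=\{r<\mathrm{dist}\le\rho\}$, and the far region $F=\{\mathrm{dist}>\rho\}$.

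\emph{The annulus.} This is handled directly by \cref{lem:prooffdiag} with $\delta'$. It gives the integral term in \cref{eq:drago_rdw2}, with the stated prefactor; dividing by $\lambda^2$ cancels the $\lambda(f)^2=\lambda^2(1+\Oh(\e))$.

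\emph{The near-diagonal region.} Here I do not split into chaoses. Instead I bound the whole higher-chaos sum by
\[
\sum_{q\ge4}\int_{D_r}\le \int_{D_r}\E\{\lf(dx)\lf(dy)\}+\Big|\int_{D_r}\E\{\lf(dx)[2]\lf(dy)[2]\}\Big|+\text{(chaos-0 term)}.
\]
The first term is bounded by \cref{lem:unifboundlambda} with $M'=M$, giving $\lambda^2 r^n\mathfrak{K}(n,M,g;B',\lambda r)\vol{n}(M)$. Monotonicity in $\lambda r\ge c$ lets me replace $\lambda r$ by $c$, which produces $\mathfrak D r^n$. The second- and zeroth-chaos contributions on $D_r$ are bounded using \cref{cor:var_dx} for $q=0,2$: they are at most $\lambda^2\,\Oh(1)\,\vol{2n}(D_r)\le \lambda^2\,\Oh(r^n)$ and can be absorbed into $\mathfrak D$. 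Strictly speaking the chaos-0 term is the mean squared, which must be subtracted from the full second moment rather than bounded; I will phrase this step as $\Var$ restricted to $D_r$.

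\emph{The far region.} This is the main obstacle, because no correlation assumption is made beyond distance $\rho$. My first attempt is to cover $M$ by $\Oh(\rho^{-n})$ balls of radius $\rho/2$; this is where the global factor $\mathfrak C\rho^{-n}$ should come from. I would bound the contribution of pairs in different balls by Cauchy–Schwarz in the chaos norm: $\E\{\lf(U)[q]\lf(V)[q]\}\le\|\lf(U)[q]\|\,\|\lf(V)[q]\|$, with each local higher-chaos norm controlled by the near-diagonal and annulus estimates above applied with $M'=U$. Summing over balls then gives the stated form. If this does not close, the fallback is to use the contractivity $\|j^{1,1}C\|_{g^f}\le1$ together with \cref{cor:var_dx} on $F$. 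That fallback requires $\rho$ to be at the scale of the diameter, so that $F$ is empty or negligible. In the intended applications $\rho$ is a fixed geometric constant, and this suffices.
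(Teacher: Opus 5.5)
This is essentially the paper's proof: the normalization to $f$, the near-diagonal bound via \cref{lem:unifboundlambda} together with \cref{cor:var_dx} for the second chaos, the annulus via \cref{lem:prooffdiag}, and your ``first attempt'' for the far region, which is exactly the paper's semi-localization Step (V) and does close.

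The one correction is the order of operations. Take a genuine partition $M_1,\dots,M_\#$ of $M$ into $\#\le\mathfrak{C}\rho^{-n}$ pieces of diameter $<\rho$, and use $\Var(\sum_i X_i)\le\#\sum_i\Var(X_i)$ with $X_i=\lf(M_i)-\lf(M_i)[2]$ \emph{before} splitting into near-diagonal and annulus parts. This matters because $\{\mathrm{dist}>\rho\}$ is not a union of blocks $M_i\times M_j$, so it cannot be handled on its own as you set it up. The fallback is then unnecessary, and it would fail anyway: summing the bound of \cref{cor:var_dx} over $q$ requires $\|j^{1,1}_{x,y}C\|_{g^f}<\tfrac12$, not merely $\le 1$.
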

\begin{proof}[Proof of \cref{thm:drago} given \cref{thm:dragoverkill}]
For any $T > 1$, it is immediate that:$$\epsilon < \frac{1}{T} < 1 \quad \text{and} \quad \tau < \frac{1}{T} < 1.$$
Thus, assumption (1) of \cref{thm:dragoverkill} is satisfied. Similarly, by setting the constant $B$ in \cref{thm:drago} to the value $T$, assumption (2) of \cref{thm:dragoverkill} is identical to assumption (2) in \cref{thm:drago}. For assumption (3) we must show that there exists a $T_0$ such that for all $T \ge T_0$, the threshold in \cref{thm:drago} implies the threshold in \cref{thm:dragoverkill}:$$\frac{1}{4}<\delta < \frac{(1-\varepsilon)^2}{2(1+\mathcal{O}(\tau))}.$$ 
This is true simply because the right hand side goes to $\frac12$ as $\e,\tau\to 0$. 
Now, for the same reason, we note that the term $$P(\epsilon, \tau, \delta) := \frac{(1+\mathcal{O}(\tau))(1+\mathcal{O}(\epsilon))}{1-\delta\frac{2 (1+\mathcal{O}(\tau))}{(1-\epsilon)^2}},$$ 
is bounded by $2$, for $T_0$ large enough. 
 Hence \cref{thm:dragoverkill} implies $$\Var\left[ \frac{\mathcal{L}_f(M)}{\lambda} \right] \le \Var\left[ \frac{\mathcal{L}_f(M)[2]}{\lambda} \right] + \mathfrak{C}\rho^{-n} \left( \mathfrak{D}r^n +  2\frac{2^4}{n} \int_{{\substack{x,y\in M,
\\
r
\le \mathrm{dist}(x,y)\le \rho}}} \|\sigma^{-2} j_{x,y}^{1,1} E\|^4_{g^\lambda} \right).$$
Finally, we define the constant $\mathfrak{T}(n, M, g; T) := \max\left( \mathfrak{C}\mathfrak{D}, \mathfrak{C} 2\frac{2^4}{n} \right)$, to absorb all geometric constants. 
Note that $\mathfrak{D}$ depends on $c$ and $B$, and in our case $c = 1/T$ and $B=T$. This yields the desired bound: $$\Var \kop\frac{\lf(M)}{\lambda} \pok \le \Var\kop \frac{\lf(M)[2]}{\lambda}\pok  + \mathfrak{T} \left[ \left( \frac{r}{\rho} \right)^n + \int_{{\substack{x,y\in M,
\\
r
\le \mathrm{dist}(x,y)\le \rho}}} \|\sigma^{-2} j_{x,y}^{1,1} E\|^4_{g^\lambda}\right].$$
\end{proof}
\begin{proof}[Proof of \cref{thm:dragoverkill}]
In the following, we are going to use the expression $\Oh(t)$ for whatever function is as in the statement of the theorem; it is left intended that it is not always the same function, but that it is redefined anytime needed as one that is larger than all the previous.
Moreover, we will denote $\Oh_1(\e)$ all quantities that are bounded by $\e$, that is, such that $|\Oh_1(\e)|\le \e$. 

We stress that in throughout the whole proof, we denote $\lambda=\lambda(\phi)$ and $\e=\e(\phi)$ and $\e_0^3=\e_0^3(\phi)=\tau(\phi)$.
\begin{enumerate}[(I).]
\item \textbf{Reduction to unit-variance field ($f$ s.t. $\e_0^3(f)=0$).} Let us take up the notation of \cref{sec:aux}. First, we want to substitute $E$ with
\be 
C(x,y)=\frac{E(x,y)}{\sqrt{E(x,x)}\sqrt{E(y,y)}}=\frac{\sigma^{-2}E(x,y)}{(1+\e_0(x))(1+\e_0(y))},
\ee
that is the covariance of a unit variance field $f$, so $\sigma(f)=1$. 
\item \textbf{Comparison of the frequencies:}
We have that 
\bega
\lambda(\phi,x)^2 &=\|d_x\e_0\|^2+(1+\e_0(x))^{2}\lambda(f,x)^2
\\
&=
\frac{\lambda^2}{\n}\Oh_1(\e_0')+(1+\Oh_1(\e_0))^2\lambda(f,x)^2
\eega
Moreover, $\lambda(\phi,x)=\lambda(1+\Oh(\e_1))$, therefore:
\be \label{eq:lambdafix}
\lambda(f,x)=(1+\Oh(\e))(1+\Oh(\e_0^3))\lambda.
\ee
Secondly, we have
\bega \label{eq:inutile1}
\lambda(\phi)^2&=\frac{\lambda(\phi)^2}{n}\fint_M \tyu\frac{|\nabla_x\sigma|}{\sigma\lambda(\phi)}\sqrt{n}\uyt^2 \dd x+ \fint_M\lambda(f,x)^2\tyu\frac{\sigma(x)}{\sigma} \uyt^2 \dd x
\\
&=\lambda(\phi)^2\frac1n\Oh_1(\e'_0)^2+\lambda(f)^2(1+\Oh_1(\e_0))^2
\eega
so 
\be\label{eq:prelambdafix} 
\frac{\lambda(\phi)}{\lambda(f)}\le \frac{1+\e_0}{\sqrt{1-\frac{(\e_0')^2}{n}}}=:1+\e_\lambda\le \frac{1+\e_0^3}{\sqrt{1-(\e_0^3)^2}}=(1+\Oh(\e_0^3))
\ee
\item \textbf{Comparison of the jet norms:}
It follows that (we cannot use $\e$ to control second derivatives: that is why we need $\e_0^3$), for points at distance at most $r$,
\bega \label{eq:EtoC2}
\|j^{3,3}_{x,y}C\|_{g^{\lambda(f)}}&\le \sigma^{-2}\|j^{3,3}_{x,y}E\|_{g^{\lambda(\phi)}}\tyu 1+\Oh(\e_0^3)\uyt
\\
&\le B\tyu 1+\Oh(\e_0^3)\uyt 
\eega
where, in the last line, we assume $k\le 3$.
While for $k=1$, using $\e_0$, we can show that\footnote{We are using the following relations: \be 
\frac{(1+\e_0'-\e_0)^2}{(1-\e_0)^4}=\frac{(1+\frac{\e_0'}{1-\e_0})^2}{(1-\e_0)^2}
\le \frac{(1+\frac{\e_0^3}{1-\e_0^3})^2}{(1-\e_0^3)^2}= \frac{1}{(1-\e_0^3)^4}.
\ee} for points at distance greater than $r$,
\bega \label{eq:EtoC3}
\|j^{1,1}_{x,y}C\|_{g^{\lambda(f)}}
&\le \sigma^{-2}\|j^{1,1}_{x,y}E\|_{g^{\lambda(\phi)}}\frac{(1+\e_0'-\e_0)^2}{(1-\e_0)^4}(1+\e_\lambda)^2
\\
&\le \sigma^{-2}\|j^{1,1}_{x,y}E\|_{g^{\lambda(\phi)}}\tyu 1+\Oh(\e_0^3)\uyt
\\
& \le \delta \tyu 1+\Oh(\e_0^3)\uyt
\eega
Here, the first two inequalities hold for any $x,y\in M$, while the third and last one is true for $r\le \dist xy \le \rho$, by hypothesis (3) of the statement.
\item \textbf{Assumptions in terms of $f$:}
From now on, we will replace $E$ with $C$ and $\phi$ with $f$. We have (1),(2),(3) replaced as follows.
\be 
\e(f)\le \e(\phi)<1, \quad \text{and}\quad \e_0^3(f)=0.
\ee
Taking (a smoothened version of) the maximum of the two terms $\tyu 1+\Oh(\e_0^3)\uyt$ in \cref{eq:EtoC2} and \cref{eq:EtoC3} and using (3), we have that
\be 
\|j^{3,3}_{x,y}C\|_{g^{\lambda(f)}}< B (1+\Oh(\e_0^3))=:B'
\ee
for points at distance at most $r$ and 
\be 
\|j^{1,1}_{x,y}C\|_{g^{\lambda(f)}}< \delta (1+\Oh(\e_0^3))= :\delta' < \frac{(1-\e(f))^2}{2},
\ee
for points at distance comprised between $r$ and $\rho$.
\item \textbf{Semi-localization ($\dist xy\le \rho$).} We can focus on a fixed $\rho$-neighborhood of the diagonal, by observing the following. Let $M_1,\dots, M_\#$ be a partition of $M$ with $\#$ subsets of diameter less than $\rho$, whose union covers $M$ and such that $M_i\cap M_j$ has measure zero. It well known that this can be realized with $\#\le \mathfrak C\rho^{-n}$ for some constant $\mathfrak C= C(n,M,g)$ depending on the geometry of $(M,g)$.\footnote{$\#\le C\rho^{-n}$ where $C$ is Bishop-Gromov's \emph{volume comparison constant}, depending on the dimension, lower Ricci curvature bound, and injectivity radius of $(M,g)$. } Then,
\bega\label{eq:semilocalize}
\Var\kop \frac{\lf(M)}{\lambda}\pok&-
\Var\kop \frac{\lf(M)[2]}{\lambda}\pok
=\Var\kop \sum_{i=1}^{\#}\frac{\lf(M_i)}{\lambda}
-
\frac{\lf(M_i)[2]}{\lambda}\pok
\\
&\le \#\sum_{i=1}^{\#}\Var\kop \frac{\lf(M_i)}{\lambda}
-
\frac{\lf(M_i)[2]}{\lambda}\pok
\\
&\le 
\#\sum_{i=1}^{\#}\sum_{q\ge 4}\int_{M_i\times M_i}\E\kop \frac{\lf(dx)[q]}{\lambda}
\frac{\lf(dy)[q]}{\lambda}\pok
\\
&\le \mathfrak C\rho^{-n}\int_{M}\int_{  B_\rho(x)}\left|\sum_{q\ge 4}\E\kop \frac{\lf(dx)[q]}{\lambda}
\frac{\lf(dy)[q]}{\lambda}\pok\right|
\eega
The last inequality is due to the fact that, by construction, we have an inclusion 
\be 
\bigcup_{i=1}^{\#}M_i\times M_i\subset \kop x,y\in M: \mathrm{dist}(x,y)\le \rho\pok,
\ee
with $(M_i\times M_i)\cap (M_j\times M_j)$ having measure zero.
\item \textbf{The 2 pieces to control.}
It is very clear that, to conclude the proof, we have to estimate the final term in \cref{eq:semilocalize}, which we divide in two pieces:
\be \label{eq:two_pieces}
\mathfrak C\rho^{-n}\int_{M}\int_{  B_\rho(x)}\left|\sum_{q\ge 4}\E\kop \frac{\lf(dx)[q]}{\lambda}\frac{\lf(dy)[q]}{\lambda}\pok\right| \le
\mathfrak C\rho^{-n}\tyu \Sigma_{\text{Near-diagonal}}+\Sigma_{\text{Off-diagonal}}\uyt,
\ee 
where
\bega 
\Sigma_{\text{Near-diagonal}}:&=\int_{M}\int_{  B_r(x)}\left|\sum_{q\ge 4}\E\kop \frac{\lf(dx)[q]}{\lambda}
\frac{\lf(dy)[q]}{\lambda}\pok\right|
\\
\Sigma_{\text{Off-diagonal}}:&=\sum_{q\ge 4}\int_{M}\int_{  B_\rho(x)\setminus B_r(x) }\left|\E\kop \frac{\lf(dx)[q]}{\lambda}
\frac{\lf(dy)[q]}{\lambda}\pok\right|.
\eega
Note that in the second term, we used the triangular inequality.
\item \textbf{Near-Diagonal behavior ($\dist xy \le r$)}.

In the study of the near-diagonal behavior, it is convenient to revert the chaos decomposition, and reduce the study to Kac-Rice formula, which can be controlled as in \cref{lem:prooffdiag}. 
\bega 
\Sigma_{\text{Near-diagonal}}:&=\int_{M}\int_{  B_r(x)}\left|\E\kop \frac{\lf(dx)}{\lambda}
\frac{\lf(dy)}{\lambda} - \frac{\lf(dx)[2]}{\lambda}
\frac{\lf(dy)[2]}{\lambda}\pok\right|
\\
&\le 
\int_{M}\int_{  B_r(x)}\left|\E\kop \frac{\lf(dx)}{\lambda}
\frac{\lf(dy)}{\lambda}\pok\right| +\left|\E\kop\frac{\lf(dx)[2]}{\lambda}
\frac{\lf(dy)[2]}{\lambda}\pok\right|
\\
&\le 
\int_{M}\int_{  B_r(x)}\left|\E\kop \frac{\lf(dx)}{\lambda}
\frac{\lf(dy)}{\lambda}\pok\right| +\frac{4}{\n} \frac{\lambda(f,x)\lambda(f,y)}{\lambda^2}\|j^{1,1}_{x,y}C\|^2_{g^f}
\eega
In the last step, we used \cref{cor:var_dx}. Recalling \cref{eq:lambdafix} and \cref{eq:EtoC3}, we deduce that
\bega 
\Sigma_{\text{Near-diagonal}}:
&\le 
\int_{M}\int_{  B_r(x)}\left|\E\kop \frac{\lf(dx)}{\lambda}
\frac{\lf(dy)}{\lambda}\pok\right| +\frac{4}{\n}(1+\Oh(\e))(1+\Oh(\e_0^3))B^2 d y d x
\eega

\begin{lemma}[see \cref{lem:unifboundlambda}]
Assume that $f$ satisfies (2) for a constant $\lambda >0$. Then, for any $M'\subset M$ Borel subset, we have
\be 
\int_{M'}\int_{  B_r(x)}\left|\E\kop \frac{\lf(dx)}{\lambda}
\frac{\lf(dy)}{\lambda}\pok\right|\dd x \dd y<\mathfrak{K}(n,M,g;B,\lambda r)\vol{n}(M')r^\n.
\ee
\end{lemma}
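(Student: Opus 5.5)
The statement is a restatement of \cref{lem:unifboundlambda}, specialised to the unit-variance field $f$ produced in Step (I). My plan is therefore to verify that its hypothesis \eqref{eq:hypunifboundlambda} holds for $C$ with constant $B'$, and then to invoke the lemma as proved above.

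First I record the hypothesis. By Step (III) (\cref{eq:EtoC2}), for $\mathrm{dist}(x,y)\le r$,
\[
\|j^{3,3}_{x,y}C\|_{g^{\lambda(f)}}\le B(1+\Oh(\e_0^3))=B'.
\]
The norm in the statement is taken with respect to $g^{\lambda}$, where $\lambda=\lambda(\phi)$. By \cref{eq:prelambdafix}, $\lambda(\phi)/\lambda(f)=1+\Oh(\e_0^3)$, so \eqref{eq:norm} shows the two norms differ by at most a factor $(1+\Oh(\e_0^3))^{6}$. Since $\e_0^3<1$, this factor is absorbed into a slightly larger bound $B''$. Also $f$ satisfies \cref{eq:assjet}, because $\sigma(f)=1$ pointwise and $\e(f)<1$ forces $d_xf$ to be non-degenerate.

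Next I apply \cref{lem:unifboundlambda} with $B''$ and frequency $\lambda$. The lemma's proof goes as follows.
\begin{enumerate}[(a)]
\item Cover the near-diagonal set $\{(x,y)\in M'\times M:\mathrm{dist}(x,y)\le r\}$ by products $B_r(p)^2$, using $\#D\lesssim r^{-n}\vol{n}(M')$ balls.
\item Pull each ball back through $\exp_p(\cdot/\lambda)$, which turns the integral into the second moment of the nodal volume of a field on a Euclidean ball of radius $\lambda r$. This step costs a factor $\lambda^{2-2n}$.
\item The rescaled field has $\mC^{3,3}$ covariance bounds of size $B''E$ on pairs at distance at most a small $\e$. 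Covering by $\e$-balls and using \cref{lem:unifbound} then bounds each second moment by $K(B''E,\e)(\lambda r/\e)^{2n}$.
\end{enumerate}
Multiplying these factors gives a bound of order $\lambda^2 r^n\vol{n}(M')$. Dividing by $\lambda^2$ yields exactly the claimed estimate, with constant $\mathfrak K(n,M,g;B'',\lambda r)$.

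One point needs care: the hypothesis $\lambda r\ge c$ (from $r\ge c/\lambda$) must be used so that an admissible $\e\le \lambda r/L$ exists. This gives a constant depending on $c$ rather than on $\lambda r$, because $\mathfrak K$ is decreasing in its last argument, so $\mathfrak K(\cdot;\lambda r)\le \mathfrak K(\cdot;c)$. The absolute value inside the integral is harmless, since the Kac–Rice density $K(f,x,y)$ is nonnegative.

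The main obstacle is the uniform second-moment bound of \cref{lem:unifbound}. It must hold uniformly over all fields with bounded $\mC^{3,3}$ covariance and non-degenerate first jet, on a ball of fixed radius $\e$. I would take it from the cited compactness result \cite{fom2024GassStecconi}. The subtle point is that non-degeneracy must also be uniform, and I would secure this from $\e(f)<1$ via $\lambda(f,x)=(1+\Oh(\e))\lambda$ (\cref{eq:lambdafix}). This keeps $\lambda^{-2}\E\{\|d_xf\|^2\}$ bounded away from zero after rescaling, so the relevant family is compact in $\mC^2$ with non-degenerate jets.
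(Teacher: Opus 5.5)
Your proposal is correct and follows the paper's own argument. You transfer hypothesis (2) to the unit-variance field $f$ as in Steps (III)--(IV), then run the proof of \cref{lem:unifboundlambda}: covering by products of balls, pull-back through $\exp_p(\cdot/\lambda)$ at cost $\lambda^{2-2n}$, covering the rescaled ball by balls of a small fixed radius and applying \cref{lem:unifbound}, and finally using that $\mathfrak{K}$ is decreasing in $\lambda r\ge c$.

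Two minor remarks. First, passing from $g^{\lambda(f)}$ to $g^{\lambda(\phi)}$ needs the lower bound $\lambda(\phi)\ge(1-\e_0)\lambda(f)$, which follows from \cref{eq:inutile1}, not from the one-sided \cref{eq:prelambdafix}. Second, uniform non-degeneracy of $d_xf$ is not needed, since \cref{lem:unifbound} only uses unit variance and the $\mC^{3,3}$ bounds; invoking it would make the constant depend on $1-\e$, which the stated $\mathfrak{K}(n,M,g;B,\lambda r)$ does not allow.
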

Thus, recalling that $B'=B(1+\Oh(\e_0^3))$, and that $\lambda r\ge q$ set
\be 
\mathfrak{K}:=\mathfrak{K}(n,M,g;B',c).
\ee
We conclude that there exists a constant $\mathfrak{D}=\mathfrak{D}(n,M,g;B,c)>0$ such that
\bega 
\Sigma_{\text{Near-diagonal}}
&\le 
\mathfrak{K}\vol{n}(M)r^\n+\frac{4}{\n}(1+\Oh(\e))(1+\Oh(\e_0^3))B^2\int_{M}\int_{  B_r(x)} d y d x
\le 
\mathfrak{D}r^\n.
\eega
\item \textbf{Off-diagonal behavior ($r\le \dist xy \le \rho$): fourth chaos.}


Since $\delta' <\frac{(1-\e)^2}2$ and $\e(f)\le \e(\phi)<1$ we have that
\begin{lemma}[See \cref{lem:prooffdiag}]\label{lem:offdiag}
\begin{gather} 
\sum_{q\ge 4}\int_{M}\int_{  B_\rho(x)\smallsetminus B_r(x)}\left|\E\kop \frac{\lf(dx)[q]}{\lambda}
\frac{\lf(dy)[q]}{\lambda}\pok\right|
\\
\le \frac{\lambda(f)^2}{\lambda^2}\frac{\frac{2^4}{n}}{1-\frac{2\delta'}{(1-\e(f))^2}} \frac{(1+\e(f))^2}{(1-\e(f))^8}\int_{\kop\substack{x,y\in M,
\\
r
\le \mathrm{dist}(x,y)\le \rho}\pok}
\|j^{1,1}_{x,y}C\|^4_{g^{\lambda(f)}} \dd x \dd y .
\end{gather}
\end{lemma}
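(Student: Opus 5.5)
This is essentially \cref{lem:prooffdiag} applied to the unit-variance field $f$ constructed in Step (I), divided by $\lambda^2=\lambda(\phi)^2$. The plan is to check that $f$ satisfies the hypotheses of that lemma and then reconcile the normalisations.

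First I verify the hypotheses for $f$ on the range $r\le\dist xy\le\rho$.
\begin{itemize}
\item From Step (IV) we have $\e(f)\le\e(\phi)<1$, so $f$ satisfies \cref{eq:assjet} and \cref{cor:var_dx} is available.
\item Also from Step (IV), $\|j^{1,1}_{x,y}C\|_{g^{\lambda(f)}}\le\delta'<\frac{(1-\e(f))^2}{2}$ for $r\le\dist xy\le\rho$.
\end{itemize}
\cref{lem:prooffdiag} is stated with a supremum over all pairs with $\dist xy\ge r$. However, its proof only uses the bound on the integration region $B_\rho(x)\smallsetminus B_r(x)$. So I rerun that proof with the restricted supremum; nothing changes.

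Next comes the chaos bound. For each pair $(x,y)$ in the annulus I apply \cref{cor:var_dx} at every level $q\ge4$:
\[
|\E\{\lf(dx)[q]\lf(dy)[q]\}|\le 2^q\,\frac{\lambda(f,x)\lambda(f,y)}{n}\,\|j^{1,1}_{x,y}C\|_{g^f}^q .
\]
I then need to pass from the $g^f$ norm to the $g^{\lambda(f)}$ norm. The metric $g^f$ is comparable to $g^{\lambda(f)}$: its principal values lie in $[(1-\e(f))^2,(1+\e(f))^2]$ times $\lambda(f)^2/n$. Hence each first-derivative slot costs at most a factor $(1-\e(f))^{-1}$, and
\[
\|j^{1,1}C\|_{g^f}\le(1-\e(f))^{-2}\,\|j^{1,1}C\|_{g^{\lambda(f)}} .
\]
The sum over $q\ge4$ is now geometric. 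I factor out $2^4\|j^{1,1}C\|^4_{g^f}\le 2^4(1-\e(f))^{-8}\|j^{1,1}C\|^4_{g^{\lambda(f)}}$. The remaining series $\sum_{k\ge0}\big(2\|j^{1,1}C\|_{g^f}\big)^k$ is at most $\big(1-\frac{2\delta'}{(1-\e(f))^2}\big)^{-1}$, by the bound on $\delta'$.

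For the pointwise frequencies I use $\lambda(f,x)=(1+\e_1(f,x))\lambda(f)$ with $|\e_1|\le\e(f)$. This gives $\lambda(f,x)\lambda(f,y)\le(1+\e(f))^2\lambda(f)^2$. Finally I integrate over $M\times M$ restricted to $r\le\dist xy\le\rho$ and divide by $\lambda^2$. This produces the prefactor $\lambda(f)^2/\lambda^2$, which gives exactly the stated bound.

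The main point of care is the normalisation bookkeeping. The left-hand side is normalised by $\lambda=\lambda(\phi)$, while the jet norms on the right are taken in $g^{\lambda(f)}$. I keep the ratio $\lambda(f)^2/\lambda^2$ explicit, rather than absorbing it, since \cref{eq:prelambdafix} and \cref{eq:lambdafix} only bound it by $1+\Oh(\e)+\Oh(\e_0^3)$, and this is what will later be absorbed into $(1+\Oh(\t))(1+\Oh(\e))$ in \cref{thm:dragoverkill}.

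The only genuinely delicate inequality is the metric comparison. I would check it carefully using the definition \cref{eq:defnorm}: the mixed entries of the jet, $\nabla^{1,0}$ and $\nabla^{0,1}$, cost only one factor of $(1-\e(f))^{-1}$, and the pure-value entry costs none. The bound $(1-\e(f))^{-2}$ is therefore a safe upper bound for every entry of the jet.
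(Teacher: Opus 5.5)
Your proposal is correct and follows the paper's route. The lemma is just \cref{lem:prooffdiag} rerun for the unit-variance field $f$ with $\delta'$ in place of $\delta$, using \cref{cor:var_dx}, the comparison $\|j^{1,1}_{x,y}C\|_{g^f}\le(1-\varepsilon(f))^{-2}\|j^{1,1}_{x,y}C\|_{g^{\lambda(f)}}$, the geometric series and $\lambda(f,x)\le(1+\varepsilon(f))\lambda(f)$, and then dividing by $\lambda^2$. You also note that the decorrelation hypothesis is only needed on the annulus $r\le\mathrm{dist}(x,y)\le\rho$, which is all Step (IV) gives, and you keep $(1+\varepsilon(f))^2$ rather than bounding it by $4$; these are exactly the adjustments implicit in the paper's ``See \cref{lem:prooffdiag}''.
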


so that
\begin{gather}
\Sigma_{\text{Off-diagonal}}\le
\\
\le 
\frac{(1+\Oh(\e_0^3))(1+\Oh(\e))2^4n^{-1}}{1-\frac{2\delta(1+\Oh(\e_0^3))}{(1-\e)^2}}\int_{B_\rho(x)\smallsetminus B_{r}(x)}\
{\|\sigma^{-2}j^{1,1}_{x,y}E\|_{g^{\lambda(\phi)}}^4}
\ \dd y\dd x.
\end{gather}
\end{enumerate}
\end{proof}
\subsection{Additional remarks on \cref{thm:dragoverkill}}
Notice that the final rate in \cref{eq:drago_rdw}, of \cref{thm:dragoverkill} does not depend on $\delta$ at first order, however it will turn out to be convenient to keep track of it to have a control on the integral in the right-hand-side, given that such quantity is not always accessible. For instance in the case of random waves $E_\ell=E_{[\ell-1,\ell]}$, despite its behavior on the sphere and plane is well known, see \cite{GMT}, we are not aware of any result that provides a sharp control on the asymptotic behavior of $\int_{M\times M}E_\ell^4$, on general manifolds. However, Weyl's law (\cref{thm:weyllaw}) provides a sharp estimate of the form
\be \label{eq:pseudoweyl}
\int_{M\times M}
\tyu \sigma_\ell^{-2}\|j^{1,1}_{x,y}E_\ell\|_{g^{\lambda_\ell}}\uyt^2 \dd x \dd y
\le w_\ell,
\ee 
for a known sequence $w_\ell>0$, for instance in the case of general colorful random waves, this is $w_\ell=\Oh(\ell^{-n})$, see \cref{thm:hoermplaw} and \cref{eq:NEEsig}. This can be used in combination with \cref{thm:dragoverkill} to give an upper bound to \eqref{eq:drago_rdw2}.
\begin{proposition}
Let the hypotheses of \cref{thm:dragoverkill} hold for each $\ell$, with respect to $r=r_\ell, \lambda=\lambda_\ell,\delta=\delta_\ell,\sigma=\sigma_\ell$. If moreover, the field satisfies an analogous of the Weyl law, meaning that \cref{eq:pseudoweyl} holds for some sequence $w_\ell$,
then
\be
 \int_{M\times M}\|\sigma_\ell^{-2}j^{1,1}_{x,y}E_\ell\|^4_{\lambda_\ell^2 g} \dd x \dd y\le O(1)r_\ell^{\n}
 +w_\ell \delta_\ell^2.
\ee
\end{proposition}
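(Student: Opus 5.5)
The plan is to split $M\times M$ according to the distance $\mathrm{dist}(x,y)$ and to bound the quartic integrand in two ways: by a uniform constant near the diagonal, and by $\delta_\ell^2$ times the quadratic integrand, which is controlled by \cref{eq:pseudoweyl}, away from it. The main obstacle is the region $\mathrm{dist}(x,y)>\rho_\ell$, where hypothesis (3) of \cref{thm:dragoverkill} gives no information; this is handled in the last paragraph.

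First I compare the two norms. Since $\lambda_\ell^2 g = n\, g^{\lambda_\ell}$, definition \cref{eq:norm} shows that each component $\nabla^a_x\nabla^b_y$ is weighted by $\lambda_\ell^{-(a+b)}$ in the first norm and by $\lambda_\ell^{-(a+b)}\sqrt n^{\,a+b}$ in the second. Hence
\[
\|\cdot\|_{\lambda_\ell^2 g}\le \|\cdot\|_{g^{\lambda_\ell}} .
\]
It therefore suffices to prove the bound with $g^{\lambda_\ell}$ in place of $\lambda_\ell^2 g$. Write $F_\ell(x,y):=\|\sigma_\ell^{-2}j^{1,1}_{x,y}E_\ell\|_{g^{\lambda_\ell}}$, and decompose $M\times M$ into three regions:
\begin{itemize}
\item the near-diagonal region $\{\mathrm{dist}(x,y)\le r_\ell\}$;
\item the annular region $\{r_\ell\le \mathrm{dist}(x,y)\le \rho_\ell\}$;
\item the far region $\{\mathrm{dist}(x,y)>\rho_\ell\}$.
\end{itemize}

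\emph{Near-diagonal region.} The $(1,1)$-jet is a sub-block of the $(3,3)$-jet, so $F_\ell\le \sigma_\ell^{-2}\|j^{3,3}_{x,y}E_\ell\|_{g^{\lambda_\ell}}\le B$ by hypothesis (2). The measure of this region is at most $\mathrm{Vol}^n(M)\sup_x \mathrm{Vol}^n(B_{r_\ell}(x))\le \mathfrak C(n,M,g)\, r_\ell^n$, using that $r_\ell\le\mathrm{diam}(M)$. This piece is therefore at most $B^4\mathfrak C\, r_\ell^n=O(1)r_\ell^n$.

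\emph{Annular region.} Hypothesis (3) gives $F_\ell\le\delta_\ell$ there. Hence $F_\ell^4\le \delta_\ell^2F_\ell^2$, and integrating and enlarging the domain to all of $M\times M$ gives, by \cref{eq:pseudoweyl}, a bound of $\delta_\ell^2 w_\ell$.

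\emph{Far region.} Since the statement integrates over all of $M\times M$, I would use that hypothesis (3) of \cref{thm:dragoverkill} may be taken with any $\rho\ge r$. I therefore apply the hypotheses with $\rho_\ell\ge \mathrm{diam}(M,g)$. This is legitimate since $\rho$ is a free parameter there, and it is the reading under which the proposition is meant to feed into \cref{eq:drago_rdw2}. With this choice the far region is empty, and adding the first two pieces gives exactly $O(1)r_\ell^n+w_\ell\delta_\ell^2$.

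If one insisted on keeping a smaller $\rho_\ell$, the fallback would be as follows. Cauchy--Schwarz for covariances, together with $\e<1$ and $\t<1$ as in steps (II)--(III), gives $F_\ell\le K$ uniformly, hence $F_\ell^4\le K^2F_\ell^2$. The far region would then add only $O(w_\ell)$ rather than $O(\delta_\ell^2 w_\ell)$. This is why I would commit to the choice $\rho_\ell\ge\mathrm{diam}(M)$.
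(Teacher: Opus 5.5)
Your proof is correct and follows the paper's argument. You split at $\mathrm{dist}(x,y)=r_\ell$. Near the diagonal you bound the integrand by $B^4$ using hypothesis (2) and the region's volume by $O(r_\ell^n)$; off the diagonal you write $F_\ell^4\le\delta_\ell^2F_\ell^2$ and integrate with \cref{eq:pseudoweyl}. Your comparison $\|\cdot\|_{\lambda_\ell^2 g}\le\|\cdot\|_{g^{\lambda_\ell}}$ is a detail the paper leaves implicit.

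On the far region, taking $\rho_\ell\ge\mathrm{diam}(M,g)$ is not a free choice but a strengthening of hypothesis (3). The paper's proof makes exactly this assumption tacitly, since it applies (3) on all of $\{\mathrm{dist}(x,y)\ge r\}$. As your fallback shows, without that assumption the far region only yields $O(w_\ell)$.
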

\begin{proof}
We split the integral in two parts: $$\int_{M\times M} \|\sigma_\ell^{-2}j^{1,1}_{x,y}E_\ell\|^4_{\lambda_\ell^2 g} = \underbrace{\iint_{\text{dist}(x,y) < r} \|\sigma_\ell^{-2}j^{1,1}_{x,y}E_\ell\|^4_{\lambda_\ell^2 g}}_{\mathcal{I}_{\text{near}}} + \underbrace{\iint_{\text{dist}(x,y) \ge r} \|\sigma_\ell^{-2}j^{1,1}_{x,y}E_\ell\|^4_{\lambda_\ell^2 g}}_{\mathcal{I}_{\text{off}}}.$$ For the part near the diagonal $\mathcal{I}_{\text{near}}$ we use assumption (2) to get $$\sigma_\ell^{-2} \|j^{1,1}_{x,y}E\|_{g^\lambda} \le \sigma_\ell^{-2} \|j^{3,3}_{x,y}E\|_{g^{\lambda_\ell}} \le B.$$ Then $$\mathcal{I}_{\text{near}} \le B^4 \cdot \text{Vol}(\{(x,y) : \text{dist}(x,y) < r\})$$ which scales as $O(r^n)$. 
For the part off-diagonal $\mathcal{I}_{\text{off}}$ we write the four power as the product of two squares and we exploit assumption (3) to one of the two factors and \cref{eq:pseudoweyl} to the remaining integral to get $$\mathcal{I}_{\text{off}} \le \delta^2 \cdot w_\ell,$$ concluding the proof.
\end{proof}
\begin{remark}
Given $f$ non-degenerate and an arbitrary positive smooth function $\sigma$ of class $\mC^\infty(M,(0,+\infty))$, one can always define a field $\phi(x)=\sigma(x)f(x)$ that satisfies the identity at \cref{eq:inutile1} and that will be non-degenerate. In particular, this implies that for any $\phi$ we have
\be 
\lambda(\phi)^2\tyu \fint_M \sigma(x)^2 \dd x\uyt >\fint_M |\nabla_x\sigma|^2 \dd x. 
\ee
\end{remark}

\begin{remark}
About \cref{eq:prelambdafix} in the proof. There is no way to control uniformly the ratio $\frac{\lambda(\phi)}{\lambda(f)}$ using only $\tau$, without assuming that $\tau<1$. The denominator in \cref{eq:prelambdafix} \emph{can} be arbitrary large. It is possible to construct a sequence of Gaussian fields $\phi_\ell$ such that 
\be 
\sup_\ell \frac{\lambda(\phi_\ell)}{\lambda(f_\ell)}=+\infty, \quad \text{with } \quad \sup_\ell \t(\phi_\ell)\le \sqrt{n}.
\ee
\end{remark}

\section{General scheme for scaling bounds and scaling limits}
\subsection{General definition of scaling limit}
In this section, we shall consider a sequence $\phi_\ell$ of $\mC^k$, indexed by $\ell\in \N$, Gaussian random fields on a given Riemannian manifold $(M,g)$ of dimension $\n$. For this section, we are not going to restrict to the case or Riemannian random waves, but we will assume that a scaling limit property holds. In fact, we are now going to define in a very precise way two notions of \emph{scaling limit} and \emph{scaling bound}. While we might pay the price of risking to be overly pedantic, we will gain a whole language to distinguish the subtleties among the existing different instances of, vaguely speaking, scaling limits. The reason being that some of these aspects make all the difference in our analysis, towards a full understanding of \cref{probYauconj}.

 We can represent the covariance function $E_\ell(x,y)=\E\kop \phi_\ell(x)\phi_\ell(y)\pok$, that is a function of class $\mC^{k,k}$ (see \cref{sec:prelim_jets}), as follows:
    \begin{gather}\label{eq:scalimsetting}
E_\ell(x,y)=\sigma_{\ell}^2 \tyu 
K\tyu \lambda_\ell\cdot \dist x y \uyt
+
R_\ell(x,y)
\uyt, \quad \text{and define}
\\
\sup_{\rho'_\ell\le \mathrm{dist}(x,y)\le \rho_\ell}\sup_{|\a|,|\beta|\le k} \frac{\| \nabla^\a \nabla^\beta R_{\ell}(x,y)\|}{\lambda_\ell^{|\a|+|\beta|}}\sqrt{n}^{|\a|+|\beta|}=: \delta_{\ell};
    \end{gather}

for some set of sequences $\rho'_\ell\le \rho_\ell,\sigma_\ell,\lambda_\ell>0$ and $\delta_\ell=\delta_\ell(E_\ell,K; \rho'_\ell,\rho_\ell,\sigma_\ell,\lambda_\ell)\ge 0$; a function $R_\ell\in \mC^{k,k}(M\times M)$; and where $K\colon \R\to \R$ is the covariance function of a stationary $\mC^k$ Gaussian field $F$ on $\R^n$, that is, $K(|u|)=\E\kop F(x)F(x+u) \pok$, for all $x,u\in \R^n$.
The variance and frequency of the field $F$ are thus 
\be 
\sigma(F)^2=\Var\kop F(x)\pok=K(0), \quad \text{and} \quad \lambda(F)^2=\sigma(F)^{-2}\E\kop \|\nabla_x F\|^2\pok=\frac{-nK''(0)}{K(0)}.
\ee
Moreover, $\E\kop F(0) \nabla_x F\pok=0$ and $K'(0)=0$.
\begin{definition}
If $K(0)=1$ and $-nK''(0)=1$, we say that $F$ is \emph{Berry-normal}. 
\end{definition}
The term \emph{normal field} is already taken and has a different meaning: it is used in \cite{ersz2024MathisStecconi,cpcf2024MarinucciStecconi} to refer to homothetic Gaussian fields $f$ with $\sigma(f)=\lambda(f)^2\n^{-1}=1$. 

One can always renormalize $K$ to be Berry-normal, by replacing it with $\sigma(F)^{-2}K(\lambda(F)^{-1}(\cdot))$. In what follows it will be convenient to work with such normalized fields, in order to get rid of annoying constants.
\begin{definition}[Scaling Bound and Scaling Limit]\label{def:scalim}
We say that $E_\ell$ (or $\phi_\ell$) satisfies a $\mC^k$ \emph{scaling bound}, with
\emph{translation invariant limit $K$ (or $F$)}, with \emph{parameters}: $\mathsf P_\ell=(K, \sigma_\ell,\lambda_\ell, \rho'_\ell,\rho_\ell)$ if \cref{eq:scalimsetting} holds. We say that the scaling bound holds with \emph{effective variance $(\sigma_\ell)_\ell$}, with \emph{effective frequency} $(\lambda_\ell)_\ell$, with \emph{remainder function} $(R_\ell)_\ell$, in the \emph{range} $(\rho'_\ell,\rho_\ell)$, with \emph{error} $\delta_\ell$.
If in addition, we have that
\be 
\delta_\ell\to_{\ell \to +\infty} 0,
\ee
then we shall say \emph{scaling limit} instead of scaling bound. 
\end{definition}
\begin{remark}\label{rem:jet11}
In particular, a $\mC^1$ scaling bound in the range $(0,0)$, with error $\delta_\ell$ means that 
\be 
j^{1,1}_{x,x}E_\ell=\sigma_\ell^2\begin{pmatrix}
    \sigma(F)^2+\Oh_1(\delta_\ell) & \lambda_\ell \Oh_1(\delta_\ell) 
    \\ \lambda_\ell \Oh_1(\delta_\ell)  & \frac{\lambda_\ell^2}{n}\tyu \lambda(F)^2 g_x +\Oh_1(\delta_\ell)\uyt,
\end{pmatrix}
\ee
where $\Oh_1(\delta_\ell)\le \delta_\ell$. If $K$ is Berry-normal, then $\sigma(F)^2=1=\lambda(F)^2$. Therefore, we have that 
\be \label{eq:compare_lambda_scalim}
\sigma(\phi_\ell)^2=\sigma_\ell^2 \tyu \sigma(F)^2 +\Oh_1(\delta_\ell)\uyt, \quad \text{and} \quad \lambda(\phi_\ell)^2=\lambda_\ell^2 \tyu \lambda(F)^2 + \Oh_1(\delta_\ell)\uyt.
\ee

\end{remark}
\subsection{Statement for general scaling limits.}
\begin{theorem}\label{thm:scasca}
    Assume that $\phi_\ell\randin \mC^\infty(M)$ is a sequences of Gaussian fields that satisfies a $\mC^3$ scaling limit as in \cref{def:scalim}, in the range $(0,\rho)$, with parameters $\mathsf P_\ell=(K, \sigma_\ell,\lambda_\ell, 0,\rho)$. Assume that
    \be\label{eq:limsupK} 
\limsup_{r \to +\infty} \max_{ r\le u }\max_{0\le i\le 6} |K^{(i)}(u)|=0
    \ee
    Then, there is $r_0=r_0(n,M,g;K)>0$ such that for any sequence $r_0\le r_\ell\le \rho \lambda_\ell$, we have 
\begin{gather}
\Var\kop \frac{\m L_{\phi_\ell}(M)}{\lambda_\ell}\pok\le 
\Var\kop \frac{\m L_{\phi_\ell}(M)[2]}{\lambda_\ell}\pok
+\Oh(1)\Bigg( r_\ell^{n}\lambda_\ell^{-n}
+
\int_{\substack{x,y\in M,
\\
\frac{r_\ell}{\lambda_\ell}
\le \mathrm{dist}(x,y)\le \rho}}\|\sigma_\ell^{-2}j^{1,1}_{x,y}E_\ell\|^4_{g^{\lambda_\ell}} \dd x \dd y\Bigg).
\end{gather} 
where $\Oh(1)$ denotes a bounded sequence.
\end{theorem}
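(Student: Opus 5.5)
The plan is to deduce \cref{thm:scasca} from \cref{thm:drago}, applied to each $\phi_\ell$ with $r=r_\ell/\lambda_\ell$ and the fixed $\rho$. The point is to choose $T\ge T_0$ independent of $\ell$ so that hypotheses (1)–(3) of \cref{thm:drago} hold for all large $\ell$. Then the constant $\mathfrak T(n,M,g;T)$ is a bounded sequence. Finitely many initial $\ell$ can be absorbed into $\Oh(1)$, provided each variance is finite, which follows from \cref{lem:unifbound}. Without loss of generality I normalize $K$ to be Berry-normal, replacing $\sigma_\ell,\lambda_\ell$ by constant multiples. This only changes constants and the range $r_\ell\le\rho\lambda_\ell$ up to a fixed factor.

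\textbf{Parameters.} First I relate the parameters of $\phi_\ell$ to the effective ones. By \cref{rem:jet11}, \cref{eq:compare_lambda_scalim} gives
\[
\sigma(\phi_\ell)^2=\sigma_\ell^2(1+\Oh_1(\delta_\ell)),\qquad \lambda(\phi_\ell)^2=\lambda_\ell^2(1+\Oh_1(\delta_\ell)).
\]
Hence the norms $\|\cdot\|_{g^{\lambda(\phi_\ell)}}$ and $\|\cdot\|_{g^{\lambda_\ell}}$ are comparable up to factors $1+\Oh(\delta_\ell)$. For the same reason, the left-hand side and the second-chaos term may be normalized by $\lambda_\ell$ instead of $\lambda(\phi_\ell)$ at a cost of $1+\oh(1)$. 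Since the second-chaos term appears on both sides, I will state the bound with the $\lambda(\phi_\ell)$ normalization and rescale.

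\textbf{Hypothesis (1).} On the diagonal, the $\mC^3$ expansion shows that
\[
\sigma_\ell^{-2}E_\ell(x,x)=1+\Oh(\delta_\ell),\qquad \sigma_\ell^{-2}\nabla^{1,1}_{x,x}E_\ell=\tfrac{\lambda_\ell^2}{n}\bigl(g_x+\Oh(\delta_\ell)\bigr),
\]
and all scaled derivatives of $x\mapsto \sigma_\ell^{-2}E_\ell(x,x)$ up to order 3 are $\Oh(\delta_\ell)$, because $K(\lambda_\ell\,\mathrm{dist}(x,x))=K(0)$ is constant. Therefore $\e(\phi_\ell)=\Oh(\delta_\ell)\to 0$ and $\t(\phi_\ell)=\Oh(\delta_\ell)\to0$, which gives hypothesis (1) for large $\ell$.

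\textbf{Hypothesis (2).} Write $\sigma_\ell^{-2}j^{3,3}_{x,y}E_\ell$ as the jet of $K(\lambda_\ell\,\mathrm{dist}(x,y))$ plus a term of norm at most $\delta_\ell$. The jet of $K(\lambda_\ell d)$ in the $g^{\lambda_\ell}$ norm involves derivatives of $K$ up to order 6 evaluated at $\lambda_\ell d$, multiplied by derivatives of the distance function. This is the main obstacle: $\mathrm{dist}$ is not smooth on the diagonal, and its derivatives carry factors $d^{-j}$. I would handle it by working in normal coordinates, writing $K(\lambda_\ell|u-v|)$ as a smooth radial function, so that $K(|\cdot|)$ is a smooth even function of $w=\lambda_\ell(u-v)$. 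Curvature corrections are then $\Oh(\lambda_\ell^{-1})$ per derivative, since $\rho$ is below the injectivity radius, which may require shrinking $\rho$ or treating it as fixed. This gives a bound $T$ depending only on $K$ and the geometry.

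\textbf{Hypothesis (3).} The same representation bounds $\sigma_\ell^{-2}\|j^{1,1}_{x,y}E_\ell\|_{g^{\lambda_\ell}}$ by $C\max_{s\ge r_\ell,\,i\le 2}|K^{(i)}(s)|+\delta_\ell$ for $\mathrm{dist}(x,y)\ge r_\ell/\lambda_\ell$. By \cref{eq:limsupK}, choosing $r_0=r_0(n,M,g;K)$ so that the first term is at most $1/8$, and taking $\ell$ large so that $\delta_\ell<1/8$, gives hypothesis (3).

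\textbf{Conclusion.} The condition $r\ge 1/(\lambda T)$ in \cref{thm:drago} holds once $r_0\ge 1/T$ (up to the $1+\oh(1)$ factor between $\lambda_\ell$ and $\lambda(\phi_\ell)$). Applying \cref{thm:drago} with $r=r_\ell/\lambda_\ell$, and replacing $\sigma(\phi_\ell),\lambda(\phi_\ell)$ by $\sigma_\ell,\lambda_\ell$ in the integral (a bounded factor), yields the claimed estimate, with the term $\rho^{-n}r^n=\Oh(1)\,r_\ell^n\lambda_\ell^{-n}$.
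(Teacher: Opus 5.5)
Your proposal is correct and follows essentially the same route as the paper. You apply \cref{thm:drago} with $r=r_\ell/\lambda_\ell$ and a fixed $T$, obtain hypothesis (1) from $\varepsilon(\phi_\ell),\tau(\phi_\ell)=\mathrm{O}(\delta_\ell)$ (as in \cref{lem:USEFULREL}), and obtain (2)--(3) by bounding the jets of $K(\lambda_\ell\,\mathrm{dist})$ through $\max_{i\le 6}|K^{(i)}|$ plus the remainder (as in \cref{cor:decor}), with $r_0$ fixed by the decay of $K$.

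Your concern that $\mathrm{dist}$ is non-smooth on the diagonal is legitimate, and the paper's \cref{cor:decor} glosses over it. The cleanest way to make your normal-coordinates fix precise is to write $K(\lambda_\ell\,\mathrm{dist}(x,y))=\mathcal{K}(\lambda_\ell\exp_x^{-1}(y))$, where $\mathcal{K}(w)=K(|w|)$ is $\mathcal{C}^6$ on $\mathbb{R}^n$ and $(x,y)\mapsto\exp_x^{-1}(y)$ is smooth (in an orthonormal frame) below the injectivity radius. This works better than comparing with $|u-v|$, and any extra derivatives landing on the geometry simply lose powers of $\lambda_\ell$.
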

The proof of \cref{thm:scasca} is in the next subsection, see \cref{sec:scascaproof}.

\begin{corollary}\label{cor:scaberry}
    Assume that $\phi_\ell\randin \mC^\infty(M)$ is a sequence of Gaussian fields that satisfies a $\mC^3$ scaling limit as in \cref{def:scalim}, in the range $(0,\rho)$, with \emph{parameters}: $\mathsf P_\ell=(K, \sigma_\ell,\lambda_\ell, 0,\rho)$. Assume that the limit field $F$ is $\phi_{[\a,1]}^{\R^n}$ for some $\a\in [0,1)$, or that it is $\phi_{1}^{\R^n}$ and that $n\ge 2$.
    Then, there is $r_0>0$ such that for any sequence $r_0\le r_\ell\le \rho \lambda_\ell$, we have 
\begin{gather}
\Var\kop \frac{\m L_{\phi_\ell}(M)}{\lambda_\ell}\pok\le 
\Var\kop \frac{\m L_{\phi_\ell}(M)[2]}{\lambda_\ell}\pok
+\Oh(1)\Bigg( r_\ell^{n}\lambda_\ell^{-n}
+
\int_{\substack{x,y\in M,
\\
\frac{r_\ell}{\lambda_\ell}
\le \mathrm{dist}(x,y)\le \rho}}\|\sigma_\ell^{-2}j^{1,1}_{x,y}E\|^4_{g^{\lambda_\ell}} \dd x \dd y 
\Bigg).
\end{gather} 
where $\Oh(1)$ denotes a bounded sequence.
\end{corollary}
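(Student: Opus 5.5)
The plan is to deduce \cref{cor:scaberry} directly from \cref{thm:scasca}. The conclusions coincide word for word, so the only hypothesis left to check is the decay condition \eqref{eq:limsupK} for the covariance $K$ of the limit field $F$. Here $F$ is either $\phi^{\R^n}_{[\a,1]}$ with $\a<1$ (possibly renormalized as in \cref{def:RWUI}, which only rescales $K$ by a constant), or Berry's field $\phi^{\R^n}_1$ with $n\ge 2$. The task therefore reduces to showing
\be
\max_{u\ge r}\max_{0\le i\le 6}|K^{(i)}(u)|\to 0 \quad \text{as } r\to+\infty
\ee
for these radial kernels.

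\textbf{Step 1: Bessel representation.} By \cref{eq:defRWberry}, the monochromatic kernel is the Fourier transform of the uniform measure on $\S^{n-1}$. Hence
\be
E^n_1(t)=c_n'\, t^{-\frac{n-2}{2}}J_{\frac{n-2}{2}}(t)
\ee
for a dimensional constant $c_n'$. The band-limited kernel is the radial superposition $E^n_{[\a,1]}(t)=\int_\a^1 \lambda^{n-1}E^n_1(\lambda t)\,d\lambda$.

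\textbf{Step 2: Decay of the monochromatic kernel and its derivatives.} Each derivative in $t$ can be written as $\frac{d^i}{dt^i}E^n_1(t)=\frac{c}{(2\pi)^n}\int_{\S^{n-1}}(i\omega_1)^i e^{it\omega_1}\,d\omega$, which is an oscillatory integral over the sphere with a smooth amplitude. Stationary phase at the two poles $\omega_1=\pm1$ (equivalently, the standard asymptotics $J_\nu(t)=O(t^{-1/2})$ together with $J_\nu'=\frac12(J_{\nu-1}-J_{\nu+1})$) gives
\be
|K^{(i)}(t)|=O\tyu t^{-\frac{n-1}{2}}\uyt, \qquad i\le 6.
\ee
This tends to $0$ exactly when $n\ge 2$, which accounts for the dimensional restriction in the monochromatic case. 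For $n=1$ the kernel is $\cos t$, which does not decay.

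\textbf{Step 3: Band-limited case.} For $\a<1$, integrating in $\lambda$ over $[\a,1]$ yields additional cancellation: one gets $O(t^{-\frac{n+1}{2}})$, and for $n=1$ the kernel $\frac{\sin t-\sin\a t}{t}$ still decays. Differentiating under the integral only introduces bounded factors $\lambda^i$, so the bound remains valid for all derivatives up to order six. In both cases \eqref{eq:limsupK} holds, and \cref{thm:scasca} applies with $r_0=r_0(n,M,g;K)$.

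The main obstacle is the uniformity of the derivative bounds up to order $6$ in Step 2. I would handle it by the oscillatory-integral formulation, since then every derivative has the same structure as $K$ itself, rather than by tracking Bessel recurrences case by case.
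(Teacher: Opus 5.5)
Your proposal is correct and follows the same route as the paper. The paper also notes that the conclusion is exactly that of \cref{thm:scasca}, so only the decay hypothesis \eqref{eq:limsupK} needs checking. It obtains that decay from the Bessel bounds of \cref{lem:bessympt}: $t^{-\frac{n-1}{2}}$ for Berry's kernel and $t^{-\frac{n+1}{2}}$ for the band-limited kernel, uniformly over derivatives. One minor point: \eqref{eq:limsupK} only asks for decay to zero, not a rate. So for $n\ge 2$ the crude bound $\int_\a^1\lambda^{n-1+i}\,|(E^n_1)^{(i)}(\lambda t)|\,d\lambda\to 0$ already suffices, and the extra cancellation in your Step 3 is only needed for $n=1$, where the explicit formula $\frac{\sin t-\sin \a t}{t}$ settles it.
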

\begin{proof}[Proof of \cref{cor:scaberry} given \cref{thm:scasca}]
By \cref{lem:bessympt}, one immediately sees that the covariance $K$ of the limit field  satisfies \cref{eq:limsupK} in all cases considered in the assumptions.
\end{proof}
\begin{proposition}[Addendum]\label{prop:addendum}
In the context of \cref{thm:scasca} or \cref{cor:scaberry} we have the following additional relations.
\begin{enumerate}
\item There exists $r_*>0$ such that
\be 
0<\liminf_\ell\inf_{\dist xy \le \frac{r_*}{\lambda_\ell}}\|\sigma_\ell^{-2}j^{1,1}_{x,y}E_\ell\|_{g^{\lambda_\ell}}, \quad  \quad 
\limsup_\ell\sup_{\dist xy \le \rho}\|\sigma_\ell^{-2}j^{1,1}_{x,y}E_\ell\|_{g^{\lambda_\ell}}<+\infty.
\ee
\item
\be 
\lambda_\ell^{-n}\le \Oh(1)\int_{\substack{x,y\in M,
\\
\mathrm{dist}(x,y)\le 
\frac{r_\ell}{\lambda_\ell}}}
\|\sigma_\ell^{-2}j^{1,1}_{x,y}E_\ell\|^4_{g^{\lambda_\ell}}
\dd x \dd y\le \Oh(1)r_\ell^n \lambda_\ell^{-n}.
\ee
\item If $r_\ell$ is constant, then the conclusion can be equivalently stated as
 \begin{gather}
\Var\kop \frac{\m L_{\phi_\ell}(M)}{\lambda_\ell}\pok\le 
\Var\kop \frac{\m L_{\phi_\ell}(M)[2]}{\lambda_\ell}\pok
+\Oh(1)\Bigg(\int_{M\times M}
\|\sigma_\ell^{-2}j^{1,1}_{x,y}E_\ell\|^4_{g^{\lambda_\ell}} \dd x \dd y 
\Bigg).
\end{gather} 
\item An implied weaker bound is
\be 
\Var\kop \frac{\m L_{\phi_\ell}(M)}{\lambda_\ell}\pok
\le 
\Oh(1)\Bigg(
\int_{M\times M}\|\sigma^{-2}j^{1,1}_{x,y}E_\ell\|^2_{g^{\lambda_\ell}} \dd x \dd y 
\Bigg).
\ee
\item The actual variance, frequency and eccentricity of the field $\phi_\ell$ satisfy
\be 
\|\frac{\sigma(\phi_\ell)}{\sigma_\ell\sigma(F)}-1\|\le \delta_\ell,\quad \|\frac{\lambda(\phi_\ell)}{\lambda_\ell\lambda(F)}-1\|\le \delta_\ell, \quad \e(\phi_\ell)\le 8\delta_\ell.
\ee
The latter holds for any scaling bound, not necessarily a scaling limit.
\end{enumerate}

\end{proposition}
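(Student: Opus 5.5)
The plan is to prove the five items in the order (5), (1), (2), (3), (4), because each one feeds the next. Throughout, $K$ is taken Berry-normal, so $\sigma(F)=\lambda(F)=1$; the general case follows by the renormalization $K\mapsto \sigma(F)^{-2}K(\lambda(F)^{-1}\cdot)$.

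\emph{Item (5).} I would start from \cref{rem:jet11}, which uses only the diagonal ($\rho'_\ell=0$) part of the scaling bound, and not $\delta_\ell\to 0$. It gives $E_\ell(x,x)=\sigma_\ell^2(1+\Oh_1(\delta_\ell))$ and $\nabla^{1,1}_{x,x}E_\ell=\sigma_\ell^2\frac{\lambda_\ell^2}{n}(g_x+\Oh_1(\delta_\ell))$.
\begin{itemize}
\item Averaging over $M$ and using \cref{eq:compare_lambda_scalim} gives the first two inequalities, after taking square roots.
\item For $\e(\phi_\ell)$, I would bound the three summands of \cref{def:three} one at a time.
\begin{itemize}
\item The variance term $\e_0$ is controlled by $\sqrt{1+\Oh_1(\delta_\ell)}/\sqrt{1+\Oh_1(\delta_\ell)}-1$.
\item The frequency term $\e_1$ is handled the same way, now also dividing by $\lambda(\phi_\ell)/\lambda_\ell$.
\item The derivative term $\e_0'$ uses $d_x\sqrt{E(x,x)}=\frac{(\nabla^{1,0}+\nabla^{0,1})E(x,x)}{2\sqrt{E(x,x)}}$, whose off-diagonal entries are $\lambda_\ell\Oh_1(\delta_\ell)$.
\end{itemize}
\item Collecting the elementary estimates (for $\delta_\ell$ small; otherwise the bound is trivial or is handled by enlarging constants) yields a total of at most $8\delta_\ell$.
\end{itemize}

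\emph{Items (1) and (2).}
\begin{itemize}
\item \emph{Lower bound in (1).} For $\dist xy\le r_*/\lambda_\ell$, the scaling limit with $\rho'_\ell=0$ gives $\sigma_\ell^{-2}E_\ell(x,y)=K(\lambda_\ell\dist xy)+\Oh(\delta_\ell)$. Since $K(0)=1$ and $K$ is continuous, I would pick $r_*$ with $K\ge\frac12$ on $[0,r_*]$. The $(0,0)$ entry of the jet, and hence the norm, is then at least $\frac12-\delta_\ell$.
\item \emph{Upper bound in (1).} The rescaled derivatives of $K(\lambda_\ell\dist xy)$ are bounded by $\max_{i\le 2}\sup|K^{(i)}|$ times geometric constants (derivatives of the distance function in the range $\dist xy\le\rho$, with $\rho$ below the injectivity radius). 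Adding $\delta_\ell$ gives the bound.
\item \emph{Item (2).} Integrate the two bounds of (1) over $\{\dist xy\le r_\ell/\lambda_\ell\}$. This set has volume $\Theta((r_\ell/\lambda_\ell)^n)$, and contains the set $\{\dist xy\le r_*/\lambda_\ell\}$, which has volume $\Theta(\lambda_\ell^{-n})$.
\end{itemize}

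\emph{Items (3) and (4).}
\begin{itemize}
\item \emph{Item (3).} By (2), $r_\ell^n\lambda_\ell^{-n}\le \Oh(1)\lambda_\ell^{-n}\le\Oh(1)\int_{\dist xy\le r_\ell/\lambda_\ell}\|\cdot\|^4$ when $r_\ell$ is constant. So the two error terms of \cref{thm:scasca} are together bounded by $\Oh(1)\int_{M\times M}\|\sigma_\ell^{-2}j^{1,1}_{x,y}E_\ell\|^4_{g^{\lambda_\ell}}$. For the reverse direction, the near-diagonal part of this full integral is $\Oh(r_\ell^n\lambda_\ell^{-n})$ by (2). The part with $\dist xy>\rho$ is the delicate one, and I would handle it as in (4) below.
\item \emph{Item (4).}
\begin{itemize}
\item First bound the second chaos by \cref{cor:var_dx} with $q=2$, integrated over $M\times M$. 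Using (5) to compare $g^{f}$ with $g^{\lambda_\ell}$, which is allowed since $\e$ is small, this gives $\Oh(1)\int\|\cdot\|^2$.
\item Then write $\|\cdot\|^4\le(\sup_{M\times M}\|\cdot\|^2)\,\|\cdot\|^2$.
\item The global supremum is controlled by Cauchy--Schwarz for covariances: $|\nabla^a_x\nabla^b_yE_\ell|\le\|\nabla^a_x\nabla^a_{x'}E_\ell\|^{1/2}_{x'=x}\|\nabla^b_y\nabla^b_{y'}E_\ell\|^{1/2}_{y'=y}$. The diagonal quantities on the right are $\Oh(\sigma_\ell^2\lambda_\ell^{a+b})$ by the $\mC^1$ diagonal bound of \cref{rem:jet11}.
\item Combined with (3), this gives (4).
\end{itemize}
\end{itemize}

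The step I expect to be the main obstacle is the bookkeeping in (5), namely reaching the explicit constant $8$. I would first try propagating each $\Oh_1(\delta_\ell)$ through the elementary inequality $|\sqrt{1+a}/\sqrt{1+b}-1|\le 2\max(|a|,|b|)$ for $|a|,|b|\le\frac12$, and then summing the three contributions.
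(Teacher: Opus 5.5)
Your plan follows the paper's proof, which is itself only a sketch. There, (1) is read off from the scaling limit, then (1)$\Rightarrow$(2)$\Rightarrow$(3)$\Rightarrow$(4), with \cref{cor:var_dx} and \cref{eq:EtoC3} handling the second chaos. Item (5) comes from \cref{rem:jet11} and \cref{lem:USEFULREL}, and your term-by-term bound on $\e_0,\e_1,\e_0'$ is an unpacked version of that lemma. In the regime that matters ($\delta_\ell\to0$, $r_\ell\equiv r_0$) your arguments for the five items are sound.

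The one genuine addition is the covariance Cauchy--Schwarz inequality. From the diagonal jet alone it gives $\|\sigma_\ell^{-2}j^{1,1}_{x,y}E_\ell\|_{g^{\lambda_\ell}}\le 1+\delta_\ell$ for \emph{all} $x,y\in M$. This is what lets you bound $\int_{M\times M}\|\cdot\|^4$ by $\Oh(1)\int_{M\times M}\|\cdot\|^2$ over the whole product in (4). Alternatively, one can keep the domain $\{\mathrm{dist}\le\rho\}$ of \cref{thm:scasca} and use (1).

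You should use Cauchy--Schwarz for the upper bound in (1) as well. Your decomposition argument there appeals to bounded derivatives of $\mathrm{dist}$, but $\nabla_x\nabla_y\mathrm{dist}$ blows up like $1/\mathrm{dist}$ at the diagonal. The term $\lambda_\ell^{-1}K'(\lambda_\ell\,\mathrm{dist})\nabla_x\nabla_y\mathrm{dist}$ stays bounded only because $K'(0)=0$ makes $K'(t)/t$ bounded.

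Two side claims are wrong and should be dropped.

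\emph{The reverse direction of (3).} It is not needed, since the paper only uses (2)$\Rightarrow$(3), and your argument does not give it. The inequality $\|\cdot\|^4\le\sup\|\cdot\|^2\,\|\cdot\|^2$ controls $\int_{\mathrm{dist}>\rho}\|\cdot\|^4$ by a second-moment integral. It does not control it by $r_\ell^n\lambda_\ell^{-n}+\int_{r_\ell/\lambda_\ell\le\mathrm{dist}\le\rho}\|\cdot\|^4$, and the hypotheses say nothing about $E_\ell$ beyond distance $\rho$. Read ``equivalently'' as ``the conclusion implies''.

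\emph{The large-$\delta_\ell$ fallback in (5).} The claim that for larger $\delta_\ell$ ``the bound is trivial or is handled by enlarging constants'' is not valid, because the constant $8$ is explicit. Moreover, for $\delta_\ell$ of order one, $\e(\phi_\ell)$ can be arbitrarily large. For example, multiply a field with a scaling limit by a fixed smooth weight $0<s\le\sqrt2$ with $\fint_M s^2$ tiny. The scaling error stays $\Oh(1)$, while $\max_x\sqrt{E_\ell(x,x)}/\sigma(\phi_\ell)\approx\sqrt{2/\fint_M s^2}$. The paper's \cref{lem:USEFULREL} also works only for $\delta_\ell^0$ below an absolute threshold. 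So the clause ``for any scaling bound'' must be read with $\delta_\ell$ small, which is all that is ever used, since $\delta_\ell\to0$.
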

\begin{proof} 
The scaling limit condition, ensures that there exists $r_*>0$, possibly $r_*\le r_0$, and $c>0$ such that if $\dist xy \le \frac{r_*}{\lambda_\ell}$ then $\|\sigma_\ell^{-2}j^{1,1}_{x,y}E_\ell\|_{g^{\lambda_\ell}}\ge c$, and $\|\sigma_\ell^{-2}j^{1,1}_{x,y}E_\ell\|_{g^{\lambda_\ell}}$ is uniformly bounded; this is (1). (1)$\implies$(2)$\implies$(3)$\implies$(4) are straightforward. 
Here, for the first term one can use \cref{cor:var_dx} and \cref{eq:EtoC3}. 
Point (5) follows from \cref{rem:jet11} and \cref{lem:USEFULREL}. A more detailed justification of the claims made in this proof, can be deduced by inspecting the proof of \cref{thm:scasca}. 
\end{proof}

\subsubsection{Decorrelation control}
\begin{lemma}\label{cor:decor}
Assume that $\phi_\ell\randin \mC^\infty(M)$ is a sequences of Gaussian fields that satisfies a $\mC^k$ scaling limit as in \cref{def:scalim}, with parameters $\mathsf P_\ell=(K, \sigma_\ell,\lambda_\ell, 0,\rho)$. Let $\|j^{2k}_rK\|=\max_{0\le i\le 2k}|\frac{d^i}{dr^{i}}K(r)|$ and let $\mathfrak{D}_\ell(r)$ denote the error in the range $(\frac{r}{\lambda_\ell},\frac{r}{\lambda_\ell})$. Then, 
\be 
\sup_{\substack{x,y\in M,
\\
\mathrm{dist}(x,y)= \frac{r}{\lambda_\ell}}
}\sigma(\phi_\ell)^{-2}\|j^{k,k}_{x,y}E_\ell\|_ {g^{\lambda(\phi_\ell)}}
\le  
\Oh(1)\max\kop\|j^{2k}_rK\|,\mathfrak{D}_\ell(r)\pok.
\ee
\end{lemma}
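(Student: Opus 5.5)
The approach is to write $E_\ell(x,y)=\sigma_\ell^2\bigl(K(\lambda_\ell\dist xy)+R_\ell(x,y)\bigr)$ as in \cref{eq:scalimsetting}, bound the two pieces separately at points with $\dist xy=r/\lambda_\ell$, and then change normalization from $(\sigma_\ell,\lambda_\ell)$ to $(\sigma(\phi_\ell),\lambda(\phi_\ell))$. By definition of $\mathfrak D_\ell(r)$ as the error in the degenerate range $(\frac r{\lambda_\ell},\frac r{\lambda_\ell})$, the remainder contributes
\be
\max_{a,b\le k}\frac{\|\nabla^a_x\nabla^b_y R_\ell(x,y)\|}{\lambda_\ell^{a+b}}\sqrt n^{a+b}\le \mathfrak D_\ell(r),
\ee
so it only remains to control the main term $\|j^{k,k}_{x,y}K(\lambda_\ell\,\mathrm{dist}(\cdot,\cdot))\|_{g^{\lambda_\ell}}$.

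\textbf{Main term.} Set $h(x,y)=\lambda_\ell\dist xy$. By the Faà di Bruno formula, $\nabla^a_x\nabla^b_y K(h)$ is a finite sum of terms $K^{(i)}(h)$, $i\le a+b\le 2k$, each multiplied by products of mixed covariant derivatives of $h$. Every derivative falling on $h$ produces a factor $\lambda_\ell$ times derivatives of the distance function. After dividing by $\lambda_\ell^{a+b}$, the terms fall into two groups. Terms where each derivative hits a separate copy of $h$ are bounded by $|K^{(i)}(r)|$ times products of $\|\nabla \mathrm{dist}\|=1$. Terms where several derivatives hit the same copy of $h$ carry $\lambda_\ell\nabla^{m}\mathrm{dist}$ with $m\ge2$, which is of size $\lambda_\ell\,\dist xy^{1-m}=\lambda_\ell^{m}r^{1-m}$, and they lose powers of $\lambda_\ell$ in their prefactor correspondingly. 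After the normalization by $\lambda_\ell^{a+b}$ these are bounded by $\Oh(1)r^{1-m}|K^{(i)}(r)|$, and smoothness of $\dist{\cdot}{\cdot}^2$ within the injectivity radius (recall $\rho$ is below it) keeps the geometric constants uniform. This step is the main obstacle. The distance is singular at the diagonal, so the terms with $m\ge2$ blow up as $r\to0$. I would handle this by working with $K(\lambda_\ell\sqrt{s})$ with $s=\dist{x}{y}^2$ smooth, using that $K$ is even and smooth, which makes $u\mapsto K(|u|)$ smooth. For small $r$ this gives a bound by $\Oh(1)\max_{i\le2k}\sup_{[0,r]}|K^{(i)}|$; for $r$ bounded below the Faà di Bruno bound with $r^{1-m}\le \Oh(1)$ applies. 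Either way the main term is at most $\Oh(1)\|j^{2k}_rK\|$, possibly after enlarging the definition to a sup over a neighborhood, which is harmless for the intended use at large $r$.

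\textbf{Renormalization.} To pass to the actual parameters, I would invoke \cref{prop:addendum}(5), equivalently \cref{rem:jet11} with \cref{eq:compare_lambda_scalim}. It gives $\sigma(\phi_\ell)=\sigma_\ell\sigma(F)(1+\Oh_1(\delta_\ell))$ and $\lambda(\phi_\ell)=\lambda_\ell\lambda(F)(1+\Oh_1(\delta_\ell))$. Since $\delta_\ell\to0$, the two norms $\|\cdot\|_{g^{\lambda(\phi_\ell)}}$ and $\|\cdot\|_{g^{\lambda_\ell}}$ on $J^{k,k}$ are comparable up to factors $(\lambda(F)(1\pm\delta_\ell))^{\pm 2k}$, and likewise for the $\sigma$-normalizations. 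All these factors are absorbed into the $\Oh(1)$. Combining this with the triangle inequality over the two pieces yields the bound by $\Oh(1)\bigl(\|j^{2k}_rK\|+\mathfrak D_\ell(r)\bigr)\le\Oh(1)\max\{\|j^{2k}_rK\|,\mathfrak D_\ell(r)\}$.
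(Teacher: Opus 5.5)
Your argument is correct and follows the paper's route: split $E_\ell=\sigma_\ell^2\bigl(K(\lambda_\ell\,\mathrm{dist})+R_\ell\bigr)$, bound the remainder by $\mathfrak{D}_\ell(r)$, bound the main term by the chain rule, and absorb the passage from $(\sigma_\ell,\lambda_\ell)$ to $(\sigma(\phi_\ell),\lambda(\phi_\ell))$ into $\Oh(1)$. It is in fact more careful than the paper's proof, which asserts a uniform bound on $\|j^{k,k}_{x,y}\mathrm{dist}\|$ for all $x\neq y$ (this fails near the diagonal, where, as you note, $\lambda_\ell^{1-m}\nabla^m\mathrm{dist}$ has size $r^{1-m}$) and which controls the normalization ratios by $\mathfrak{D}_\ell(r)$ rather than by the diagonal error $\delta_\ell$, as you correctly do.

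Your hedge about replacing the value at $r$ by a sup over $[0,r]$ is unnecessary. Since $K(0)>0$, one has $|K|\ge K(0)/2$ on some interval $[0,r_1]$, so your bounded small-$r$ estimate is already $\le\Oh(1)|K(r)|\le\Oh(1)\|j^{2k}_rK\|$. For $r\ge r_1$ the factors $r^{1-m}$ are bounded, so the Fa\`a di Bruno bound applies directly.
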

\begin{proof}
\bega 
&\sup_{\substack{x,y\in M,
\\
\mathrm{dist}(x,y)= \frac{r}{\lambda_\ell}}
}
\sigma(\phi_\ell)^{-2}\|j^{k,k}_{x,y}E_\ell\|_ {g^{\lambda(\phi_\ell)}}
\\
&=
\sup_{\substack{x,y\in M,
\\
 \mathrm{dist}(x,y)= \frac{r}{\lambda_\ell}}
} \frac{\sigma_{\ell}^2}{\sigma(\phi_\ell)^2} \|j^{k,k}_{x,y}\tyu 
K\tyu \lambda_\ell\cdot \dist x y \uyt
+
R_\ell(x,y)
\uyt\|_{g^{\lambda_\ell}}\max_{a\in \{0,\dots,2k\}}\frac{\lambda_\ell^a}{\lambda{(\phi_\ell)}^a}
\\
&\le 
\sup_{\substack{x,y\in M,
\\
 \mathrm{dist}(x,y)= \frac{r}{\lambda_\ell}}
} (1+\Oh(\mathfrak{D}_\ell(r))) \tyu \|j^{k,k}_{x,y}\tyu 
K\tyu \lambda_\ell\cdot \dist x y \uyt
\uyt\|_{g^{\lambda_\ell}}+ \Oh(\mathfrak{D}_\ell(r))\uyt
\\
&\le 
\sup_{\substack{x,y\in M,
\\
 \mathrm{dist}(x,y)= \frac{r}{\lambda_\ell}}
} (1+\Oh(\mathfrak{D}_\ell(r))) \tyu \|j^{k,k}_{x,y} \mathrm{dist}\|_{g^{1}}^6 \cdot\|j^{2k}_rK\|+ \Oh(\mathfrak{D}_\ell(r))\uyt
\eega
Compactness of $M$ ensures that for any $k\in \N$ there is a geometric constant $G=G(\n,M,g,k)$ such that $\|j^{k,k}_{x,y} \mathrm{dist}\|_{g^{1}}^6\le G$ for all $x\neq y \in M$, given that $\mathrm{dist}^2$ is smooth on the whole $M\times M$. 
Then,
\bega \label{eq:mah}
\sup_{\substack{x,y\in M,
\\
\mathrm{dist}(x,y)=\frac{r}{\lambda_\ell}
}}
\sigma^{-2}\|j^{1,1}_{x,y}E_\ell\|_{g^{\lambda_\ell}}
&\le 
2\tyu G \cdot \|j^{2k}_rK\|+ \Oh(\mathfrak{D}_\ell(r))\uyt.
\eega
\end{proof}
\subsection{Proof of \texorpdfstring{\cref{thm:scasca}}{}}\label{sec:scascaproof}
The assumptions imply that $\phi_\ell$ satisfies a $\mC^3$ scaling limit, in any range $[a,b]\subset [0,\rho)$,  with \emph{parameters}: $\mathsf P_\ell=(K, \sigma_\ell,\lambda_\ell, a,b)$. Let us denote the errors in such a scaling limit as $\delta_\ell(a,b)$. That means that 
\be 
\sup_{a\le \mathrm{dist}(x,y)\le b}\|j^{3,3}_{x,y}R_\ell\|_{g^{\lambda_\ell}}=\delta_\ell(a,b)\le \delta_\ell(0,\rho)\to_{\ell\to +\infty} 0.
\ee
Moreover, we have an additional sequence $r_\ell$ such that $0< r_\ell\le \rho\lambda_\ell$.
To keep the proof open to future generalizations, we will distinguish three regimes: the error in the covariance of the first jet $\delta_\ell^0=\delta_\ell(0,0)$; the near-diagonal regime $\delta_\ell^\Delta=\delta_\ell(0,\frac{r_\ell}{\lambda_\ell})$; and the off-diagonal regime $\delta_\ell^\infty=\delta_\ell(\frac{r_\ell}{\lambda_\ell},\rho)$.
The additional assumption of the theorem is:
\be 
\sup_r\max_{u\ge r}\max_{0\le i\le 6} |K^{(i)}(u)|<+\infty;
\ee
we will use it to verify condition (2) of \cref{thm:drago}. We shall assume, without loss of generality, that $K$ is Berry-normal.
\subsubsection{Variance and frequency}
Here and henceforth, we will denote $\Oh_1(a)$ any quantity that satisfies $|\Oh_1(a)|\le a$.

By standard arguments we can see that, the metric and the covariance function $K$ satisfy the Taylor approximation:
\be 
\dist{\exp_x(u)}{\exp_x(v)}^2=\|u-v\|^2+\Oh\tyu \|(u,v)\|^4\uyt; \quad K(t)=1-\frac{1}{n}\frac{t^2}{2}+\Oh(t^4),
\ee
Therefore \cref{eq:scalimsetting} implies that we have
\be 
j^{1,1}_{x,x}E_\ell=\sigma_\ell^2\tyu \begin{pmatrix}
    1 & 0 
    \\ 0  & \frac{\lambda_\ell^2}{n} g_x
\end{pmatrix}
+ j^{1,1}_{x,x}R_\ell\uyt = \sigma_\ell^2\tyu \begin{pmatrix}
    1 & 0 
    \\ 0  & \frac{\lambda_\ell^2}{n} g_x
\end{pmatrix}
+ \begin{pmatrix}
    \Oh_1(\delta_\ell^0) & \Oh_1\tyu\delta_\ell^0\frac{\lambda_\ell}{\sqrt{n}}\uyt
    \\ \Oh_1\tyu\delta_\ell^0\frac{\lambda_\ell}{\sqrt{n}}\uyt  & \Oh_1\tyu\delta_\ell^0\frac{\lambda_\ell^2}{n}\uyt
\end{pmatrix}\uyt 
\ee
In particular, then $|\tr_g(\nabla^{(1,1)}_{x,x}R_\ell)|\le \delta_\ell^0 \lambda_\ell^2$, hence we can conclude that
\be 
\sigma(\phi_\ell)=\sigma_\ell(1+\Oh_1(\delta_\ell^0)),\quad \lambda(\phi_\ell)=\lambda_\ell(1+\Oh_1(\delta_\ell^0)).
\ee
in the following, we will check that the hypotheses of \cref{thm:drago} are satisfied.
\subsubsection{(1) of \cref{thm:drago}} 
By definition,
\begin{lemma}\label{lem:USEFULREL}
\be 
2\e(\phi_\ell)\le \max_{x\in M} \left\|\sigma(\phi_\ell)^{-2} j^{1,1}_{x,x}E_\ell - \begin{pmatrix}
    1 & 0 
    \\ 0  & \frac{\lambda(\phi)^2}{n} g_x
\end{pmatrix}\right\|_{g^{\lambda(\phi)}}=:\delta(\phi_\ell)\le 16\delta_\ell^0
\ee
\end{lemma}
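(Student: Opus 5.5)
The plan is to compare both sides entrywise, using the block structure of $j^{1,1}_{x,x}E_\ell$. Write $\sigma=\sigma(\phi_\ell)$, $\lambda=\lambda(\phi_\ell)$, $\sigma(x)^2=E_\ell(x,x)$, and
\[
D(x):=\sigma^{-2} j^{1,1}_{x,x}E_\ell - \begin{pmatrix} 1 & 0\\ 0 & \frac{\lambda^2}{n}g_x\end{pmatrix},
\]
with blocks $D_{00}(x)\in\R$, $D_{01}(x)\in T_x^*M$ and $D_{11}(x)\in T^*_xM\otimes T^*_xM$. By \cref{eq:norm}, $\|D(x)\|_{g^\lambda}$ is the maximum of $|D_{00}|$, $\frac{\sqrt n}{\lambda}\|D_{01}\|$ and $\frac{n}{\lambda^2}\|D_{11}\|$. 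So it suffices to bound each of the three summands of $\e(\phi_\ell)$ in \cref{eq:eps} by one of these blocks, and then each block by $\delta^0_\ell$.

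\textbf{First inequality.} I take the three summands of \cref{eq:eps} one at a time.
\begin{enumerate}[(a)]
\item For the variance term, $\sigma(x)^2/\sigma^2-1=D_{00}(x)$. Since $|\sqrt{1+a}-1|\le |a|/(1+\sqrt{1+a})\le |a|$ (and $\le |a|/2$ when $a\ge 0$), this gives $|\sigma(x)/\sigma-1|\lesssim |D_{00}|$.
\item For the derivative term, differentiate $\sigma(x)^2=E_\ell(x,x)$ and use symmetry: $d_x\sigma(x)^2=2\nabla^{1,0}E_\ell(x,x)$. Hence $d_x(\sigma(\cdot)/\sigma)=D_{01}(x)\,\sigma/\sigma(x)$, and the factor $\sigma/\sigma(x)$ is controlled by (a).
\item For the eccentricity term, $\frac{n}{\lambda^2\sigma^2}\E|\langle d_x\phi_\ell,u\rangle|^2=1+\frac{n}{\lambda^2}D_{11}(u,u)$ for unit $u$. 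The same square-root inequality then gives $|\sqrt{\cdot}-1|\le \|D\|_{g^\lambda}$.
\end{enumerate}
Summing the three estimates bounds $\e(\phi_\ell)$ by a constant multiple of $\delta(\phi_\ell)$. Getting exactly the stated constant $2$ is bookkeeping: one has to track the halving in $\sqrt{1+a}-1\approx a/2$ in each term, together with the factor $\sigma/\sigma(x)$ in (b). I expect this constant-matching to be the fussiest point. If it does not close as stated, I would record the inequality with whatever absolute constant comes out, since only $\e(\phi_\ell)=\Oh(\delta^0_\ell)$ is used downstream (cf.\ \cref{prop:addendum}(5)).

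\textbf{Second inequality.} Insert the scaling-limit representation at $x=y$:
\[
\sigma^{-2} j^{1,1}_{x,x}E_\ell=\frac{\sigma_\ell^2}{\sigma^2}\left[\begin{pmatrix}1&0\\0&\frac{\lambda_\ell^2}{n}g_x\end{pmatrix}+j^{1,1}_{x,x}R_\ell\right],
\]
where $\|j^{1,1}_{x,x}R_\ell\|_{g^{\lambda_\ell}}\le\delta^0_\ell$ by definition. The identities established just above give $\sigma=\sigma_\ell(1+\Oh_1(\delta^0_\ell))$ and $\lambda=\lambda_\ell(1+\Oh_1(\delta^0_\ell))$. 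So $D$ splits into three pieces:
\begin{itemize}
\item $(\sigma_\ell^2/\sigma^2-1)\,\mathrm{diag}(1,\frac{\lambda_\ell^2}{n}g)$;
\item $\mathrm{diag}\bigl(0,\frac{\lambda_\ell^2-\lambda^2}{n}g\bigr)$;
\item $\frac{\sigma_\ell^2}{\sigma^2}\,j^{1,1}_{x,x}R_\ell$.
\end{itemize}
Changing the norm from $g^{\lambda_\ell}$ to $g^{\lambda}$ costs at most a factor $(\lambda_\ell/\lambda)^2\le(1-\delta^0_\ell)^{-2}$.

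For $\delta^0_\ell\le\frac12$, which holds eventually since we have a scaling limit, each piece is at most an absolute constant times $\delta^0_\ell$:
\begin{itemize}
\item the first piece is $\le 3\delta^0_\ell\cdot 4$;
\item the second is $\le 3\delta^0_\ell\cdot 4$;
\item the third is $\le 4\delta^0_\ell\cdot 4$.
\end{itemize}
The norm is a maximum over blocks rather than a sum, so cross terms do not add. Taking the worst block then yields $\delta(\phi_\ell)\le 16\delta^0_\ell$. The only care needed is to sharpen the crude factors $(1\pm\delta^0_\ell)^{\pm2}$ enough to land on $16$; any fixed constant would suffice for the applications.
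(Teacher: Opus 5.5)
Your strategy is the paper's: match the summands of $\e(\phi_\ell)$ in \cref{eq:eps} with the blocks of $D(x)$, then expand $D$ through the scaling representation at $x=y$. Your three-piece split is more accurate than the paper's one-line expansion $\bigl(\frac{\sigma_\ell^2}{\sigma^2}-1\bigr)\mathrm{diag}\bigl(1,\frac{\lambda^2}{n}g\bigr)+\frac{\sigma_\ell^2}{\sigma^2}j^{1,1}_{x,x}R_\ell$, which silently drops the $\lambda_\ell\neq\lambda$ contribution that your middle piece restores. Your smallness restriction on $\delta^0_\ell$ is also genuinely needed, since for $\delta^0_\ell\ge 1$ the ratio $\sigma_\ell^2/\sigma^2$ is not controlled.

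However, the step ``the norm is a maximum over blocks, so cross terms do not add'' is wrong, because your pieces are not supported on disjoint blocks. The $(1,1)$ block receives all three pieces and the $(0,0)$ block two of them. Within a block the bounds therefore add, and your numbers give $40\delta^0_\ell$, not $16\delta^0_\ell$. The clean fix uses the exact identities behind the trace computation. Put $a:=\fint_M R_\ell(x,x)\,dx$ and $c:=\lambda_\ell^{-2}\fint_M \mathrm{tr}_g\nabla^{1,1}_{x,x}R_\ell\,dx$; both have modulus at most $\delta^0_\ell$. Then $\sigma^2=\sigma_\ell^2(1+a)$ and $\lambda^2=\lambda_\ell^2\frac{1+c}{1+a}$. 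Measured in $g^{\lambda}$, the three blocks of $D(x)$ are exactly $\frac{|R_\ell(x,x)-a|}{1+a}$, $\frac{1}{1+a}\frac{\lambda_\ell}{\lambda}\cdot\frac{\sqrt n}{\lambda_\ell}\|\nabla^{1,0}_{x,x}R_\ell\|$, and $\frac{1}{1+c}\bigl\|\frac{n}{\lambda_\ell^2}\nabla^{1,1}_{x,x}R_\ell-c\,g_x\bigr\|$. For $\delta^0_\ell\le\frac12$ each is at most $4\delta^0_\ell$, so $\delta(\phi_\ell)\le 4\delta^0_\ell$.

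Second, the constant $2$ in the first inequality is not a matter of bookkeeping, and no bookkeeping will produce it. There is no halving in (b): the $2$ in $d_x\sigma(\cdot)^2=2\nabla^{1,0}_{x,x}E_\ell$ cancels the $\frac12$ from the square root. So the last summand of \cref{eq:eps} is the maximum over $x$ of the off-diagonal block of $D(x)$ times $\sigma/\sigma(x)$. When that block dominates and is attained where $\sigma(x)\le\sigma$, one gets $\e(\phi)\ge\delta(\phi)$, and $2\e(\phi)\le\delta(\phi)$ fails. An example is $\phi=sf$ on a flat torus, with $f$ unit-variance homothetic and $s^2=1+\kappa\cos(\omega x_1)$, where $\kappa\ll\frac{\sqrt n\,\kappa\omega}{2\lambda}\ll 1$.

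The paper's proof hides the same issue. Its $\delta_0'=\max_x\|\nabla_x(1+\e_0)^2\|\frac{\sqrt n}{\lambda}$ is \emph{twice} the off-diagonal block, so $\max\{\delta_0,\delta_0',\delta_1\}$ is not the jet norm $\delta(\phi)$. Its identity $\delta_0=\e_0^2+2\e_0$ also ignores negative deviations of $\sigma(x)/\sigma$.

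What your (a)--(c) actually prove is $\e(\phi_\ell)\le 3\,\delta(\phi_\ell)$ whenever $\delta(\phi_\ell)\le\frac12$. Combined with the fix above, this gives $\e(\phi_\ell)\le 12\,\delta^0_\ell$ once $\delta^0_\ell\le\frac18$. That is all that is used downstream: \cref{thm:scasca} only needs $\e=\Oh(\delta^0_\ell)$, and the constant $8$ in \cref{prop:addendum}(5) and \cref{lem:lambda_ell_RRW} can be any absolute constant. Your fallback is therefore the correct statement; drop the claim that the stated constant is reachable.
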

(There is a difference is in how the norms are measured for $\e(\phi_\ell)$ and for $\delta_\ell^0$.)
\begin{proof}
Within this proof, let us drop the subscript $\ell$ and fix $E=E_\ell$, $\phi=\phi_\ell$.
Let us take up the notations introduced in \cref{sec:aux}.
 Then, $1+\e_0(x):=\sqrt{\sigma(\phi)^{-2}E(x,x)}$ and $\e=\max\{\e_1,\e_0,\e_0'\},$ and $\delta(\phi):=\max\{ \delta_1, \delta_0,\delta_0'\}$, where
\be 
\e_0= \max_{x\in M} \e_0(x), \quad  \delta_0:=\max_{x\in M} (1+\e_0(x))^2-1=\e_0^2+2\e_0
\ee 
\be 
\e_0'= \max_{x\in M} \|\nabla_x\e_0\|\frac{\sqrt{n}}{\lambda(\phi)}, \quad  \delta_0':=\max_{x\in M} \|\nabla_x(\e_0^2+2\e_0)\|\frac{\sqrt{n}}{\lambda(\phi)}
\ge 2\e_0'
\ee 
Let us define the positive definite scalar product $b_x:=\frac{n}{\sigma^(\phi)^2\lambda(\phi)^2}\nabla_{(x,x)}^{(1,1)}E$. Then, 
\be  
\e_1=\max_{x\in M}\max_{u\in S(T_xM)} |\sqrt{b_x(u,u)}-1| , 
\quad 
\delta_1:=\max_{x\in M}\max_{u,v \in S(T_xM)} |b_x(u,v)-g_x(u,v)|=\e_1^2+2\e_1
\ee
This shows that $2\e\le \delta(\phi)$. 
Now, 
\bega 
\delta(\phi)&=
\left\|\sigma(\phi)^{-2} j^{1,1}_{x,x}E - \begin{pmatrix}
    1 & 0 
    \\ 0  & \frac{\lambda(\phi)^2}{n} g_x
\end{pmatrix}\right\|_{g^{\lambda(\phi)}}
\\
&=
\left\|
\frac{\sigma_\ell^2}{\sigma(\phi)^2} j^{1,1}_{x,x}R_\ell+\tyu\frac{\sigma_\ell^2}{\sigma(\phi)^2}-1\uyt \begin{pmatrix}
    1 & 0 
    \\ 0  & \frac{\lambda(\phi)^2}{\lambda_\ell^2}\frac{\lambda_\ell^2}{n} g_x
\end{pmatrix}\right\|_{g^{\lambda(\phi)}}
\\
&\le \delta_\ell^0(1+\delta_\ell^0)^2+\delta_\ell^0(2+\delta_\ell^0)(1+\delta_\ell^0)^2\le 16 \delta_\ell^0.
\eega
\end{proof}
Arguing similarly, we have 
\be 
\t(\phi):=\max_{x\in M}\|j^3_x(\frac{\sqrt{E(x,x)}}{\sigma}-1)\|_{g^{\lambda(\phi)}}=\Oh(\delta_\ell^0).
\ee
So, for $\ell$ big enough and $T\ge T_0$, the field has small eccentricity:
\be 
\max\{\e(\phi_\ell),\t(\phi_\ell)\}=\Oh(\delta_{\ell}^0)\le \Oh(\delta_\ell)<\frac 1T,
\ee
hence condition (1) of \cref{thm:drago} is satisfied.
\subsubsection{(2) of \cref{thm:drago}: compactness in \texorpdfstring{$(0,\frac{r_\ell}{\lambda_\ell})$}{
}}

Here, $\delta(0,\frac{r_\ell}{\lambda_\ell})=\delta_\ell^\Delta$.
By \cref{cor:decor} we have that 
\bega 
&\sup_{\substack{x,y\in M,
\\
\mathrm{dist}(x,y)\le \frac{r_\ell}{\lambda_\ell}}
}
\sigma(\phi_\ell)^{-2}\|j^{3,3}_{x,y}E_\ell\|_ {g^{\lambda(\phi_\ell)}}
\le \Oh(1)\max\kop\|j^{6}_uK\|,\mathfrak{D}_\ell(u)\pok.
\eega

Moreover, under the assumption at \cref{eq:limsupK}, we have that $\|j^6_{u}K\|=\max_{0\le i\le 6} |K^{(i)}(u)|$ is a continuous function that goes to zero as $u\to+\infty$, therefore it is bounded uniformly for every $u\in \R$ and we have:
\be 
{ \sup_{0\le u \le r_\ell}\|j^6_uK\|\le \sup_{u\in \R}\max_{0\le i\le 6} |K^{(i)}(u)| =:\|j^6K\|_\infty<+\infty};
\ee
Since $\delta_\ell^0$ and $\delta_\ell^\Delta$ are also bounded, we deduce that there exists $T>0$ big enough, such that
\be 
\sup_{\substack{x,y\in M,
\\
 \mathrm{dist}(x,y)\le \frac{r_\ell}{\lambda_\ell}}
}
\sigma(\phi_\ell)^{-2}\|j^{3,3}_{x,y}E_\ell\|_{g^{\lambda(\phi_\ell)}} \le T.
\ee
Therefore, (2) of \cref{thm:drago} is verified.
\subsubsection{(3) of \cref{thm:drago}: fourth chaos off-diagonal}
Recall that $\delta^\infty_\ell=\delta_\ell(\frac{r_\ell}{\lambda_\ell},\rho)$. Arguing exactly as in the previous section, we have 
\begt \label{eq:rzero}
\sup_{\substack{x,y\in M,
\\
\frac{r_\ell}{\lambda_\ell}
\le \mathrm{dist}(x,y)\le \rho
}}
\sigma^{-2}\|j^{1,1}_{x,y}E_\ell\|_{g^{\lambda_\ell}}
\le 
\sup_{\substack{x,y\in M,
\\
\frac{r_\ell}{\lambda_\ell}
\le \mathrm{dist}(x,y)\le \rho
}}
(1+\Oh(\delta_\ell^0)) \tyu G \cdot\sup_{r_\ell \le u \le \rho \lambda_\ell}\|j^2_uK\|+ \delta_\ell^\infty\uyt
\\
\le 
(1+\Oh(\delta_\ell^0))(\Oh(\mathfrak{D}(r_\ell))+\Oh(\delta_\ell^\infty))
\\
\le 
\Oh(\mathfrak{D}(r_\ell))+\Oh(\delta_\ell)
\eegt
The assumption at \cref{eq:limsupK} ensures that we can take $r_0>0$ big enough, depending on $G$ and on $K$, such that 
\be 
\mathfrak{D}(r_\ell)=\max_{r_\ell\le u\le \rho \lambda_\ell}\|j^2_uK\|\le \max_{r_0\le u}\max_{0\le i\le 2} |K^{(i)}(u)|=G^{-1}\frac{1}{8},
\ee
where $r_0\le r_\ell$ as assumed.
For $\ell$ big enough, we have that $\delta^\infty_\ell,\delta^0_\ell\le \frac{1}{T}$, so we deduce that for such $\ell$ large enough, we have 
\be 
\sup_{\substack{x,y\in M,
\\
\frac{r_\ell}{\lambda_\ell}
\le \mathrm{dist}(x,y)\le \rho
}}
\sigma(\phi_\ell)^{-2}\|j^{1,1}_{x,y}E_\ell\|_{g^{\lambda(\phi_\ell)}}
<\frac18+\Oh(\delta_\ell^0)+\Oh(\delta_\ell^\infty)\le \frac{1}{4}
\ee
provided that $T\ge 8$ was chosen at the beginning of the proof.
This ensures (3) of \cref{thm:drago}. 

\begin{proof}[Conclusion of the proof of \cref{thm:scasca}]
We conclude by applying \cref{thm:drago} that 
\begin{gather}
\Var\kop \frac{\m L_{\phi_\ell}(M)}{\lambda(\phi_\ell)}\pok\le 
\Var\kop \frac{\lf(M)[2]}{\lambda(\phi_\ell)}\pok
+\Oh(1) r_\ell^{n}\lambda_\ell^{-n}+
\\
+
\Oh(1)\int_{\substack{x,y\in M,
\\
\frac{r_\ell}{\lambda_\ell}
\le \mathrm{dist}(x,y)\le \rho}}\|\sigma^{-2}j^{1,1}_{x,y}E\|^4_{g^{\lambda}} \dd x \dd y 
\Bigg).
\end{gather}
\end{proof}
\section{The scaling limits of random waves}\label{sec:scalimrandomwaves}
\subsection{Remarks on the scaling bound of Random waves: \texorpdfstring{\cref{def:scalimRW}}{}}
Recall that (see \cref{sec:intervals}) we want to consider sequences of intervals $I_\ell$
of the form 
\be \label{eq:Il}
I_\ell=[\a\ell-\beta_\ell,\ell]=[\ell-\eta_\ell, \ell]
\ee
where $\a\in [0,1]$, $\beta_\ell=o(\ell)$, or equivalently such that $\exists\lim_{\ell\to +\infty}\ell^{-1}\eta_\ell=1-\a\in [0,1]$, where $\eta_\ell=|I_\ell|$ denotes the amplitude of the interval.
Then, we have that, {for $I=(\alpha,1)$,}
\be 
\lim_{\a\to 1}E^n_{\mathsf U (\a,1)} =E^n_{\mathsf U (1,1)}=E^n_{\mathsf U (\{1\})}=E^n_{1}\neq E^n_{\{1\}}.
\ee
It is thus intuitively clear that, by writing the formulas in terms of $E^n_{\mathsf U (\a,1)}$, in place of $E^n_{[\a,1]}$, we will see a more \emph{continuous} behavior with respect to different values of $\a$. Indeed, such notation allows us to write the scaling bound of random waves in the same form for all $\a\in [0,1]$, without the need to single out the case $\a=1$.
\begin{lemma} \label{lem:lambdasigma_Eucli_RRW}
If $\a\in [0,1)$, we have that
    \begt 
\sigma\tyu \phi_{\mathsf{U}(\a,1)}^{\R^n}\uyt^2=E^n_{\mathsf U (\a,1)} (0)=\frac{s_{n-1}}{(2\pi)^n}\frac{1-\a^n}{n(1-\a)} \to_{\a\to 1} \frac{s_{n-1}}{(2\pi)^n}, \quad \text{and} 
\\
\lambda \tyu \phi_{\mathsf{U}(\a,1)}^{\R^n}\uyt^2=-n\frac{(E^n_{\mathsf U (\a,1)})'' (0)}{E^n_{\mathsf U (\a,1)} (0)}=
\frac{n}{n+2} \cdot \frac{1-\a^{n+2}}{1-\a^n}
\to_{\a\to 1}
1.
    \eegt
Moreover, both the above functions are continuous with respect to $\a$, for $\a\in [0,1]$. In addition, $\lambda \tyu \phi_{\mathsf{U}(\a,1)}^{\R^n}\uyt<1$ for all $\a\in [0,1)$.
\end{lemma}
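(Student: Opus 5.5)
Since $E^n_{\mathsf U(\a,1)}(|u|)=\frac{1}{1-\a}E^n_{[\a,1]}(|u|)$ is an explicit radial Fourier integral, the plan is to compute $E(0)$ and $E''(0)$ directly from \cref{eq:defRWberry}. At $u=0$ the inner integral over $\lambda\S^{n-1}$ equals $s_{n-1}\lambda^{n-1}$. Hence
\[
E^n_{\mathsf U(\a,1)}(0)=\frac{1}{(2\pi)^n(1-\a)}\int_\a^1 s_{n-1}\lambda^{n-1}\dd\lambda=\frac{s_{n-1}}{(2\pi)^n}\frac{1-\a^n}{n(1-\a)} .
\]
For the derivative, I will not differentiate the radial profile. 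Instead I will use $\lambda(F)^2=\sigma^{-2}\E\{\|\nabla F(0)\|^2\}$, which equals $-\Delta_u E(|u|)$ at $u=0$, i.e.\ $-nK''(0)$ for a radial $K$. Differentiating under the integral gives
\[
-\Delta_u\big|_{u=0}\int_{\lambda\S^{n-1}}e^{i\langle u,\xi\rangle}\dd\xi=\int_{\lambda\S^{n-1}}|\xi|^2\dd\xi=s_{n-1}\lambda^{n+1}.
\]
Integrating in $\lambda$ over $[\a,1]$ gives $\frac{s_{n-1}}{(2\pi)^n(1-\a)}\frac{1-\a^{n+2}}{n+2}$. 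Dividing by $E(0)$ yields $\frac{n}{n+2}\cdot\frac{1-\a^{n+2}}{1-\a^n}$. The differentiation under the integral is legitimate because the domain of integration is compact.

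For the limits as $\a\to1$ I will use the elementary fact $\frac{1-\a^m}{1-\a}=\sum_{j<m}\a^j\to m$. This gives $\sigma^2\to \frac{s_{n-1}}{(2\pi)^n}$, and
\[
\lambda^2=\frac{n}{n+2}\cdot\frac{\sum_{j<n+2}\a^j}{\sum_{j<n}\a^j}\to\frac{n}{n+2}\cdot\frac{n+2}{n}=1 .
\]
The polynomial-quotient expressions extend continuously to $\a=1$ with exactly these values. This extension is consistent with $E^n_{\mathsf U(\{1\})}=E^n_1$: for the single sphere, $E(0)=\frac{s_{n-1}}{(2\pi)^n}$ and $-\Delta E(0)$ gives $\lambda^2=1$. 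So continuity holds on all of $[0,1]$.

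The main, though mild, obstacle is the strict inequality $\lambda<1$ for $\a<1$. This is equivalent to $n\sum_{j=0}^{n+1}\a^j<(n+2)\sum_{j=0}^{n-1}\a^j$. A cleaner route is probabilistic: $\lambda^2$ is the ratio $\frac{\int_\a^1\lambda^{n+1}}{\int_\a^1\lambda^{n-1}}$, i.e.\ the mean of $\lambda^2$ under the probability density proportional to $\lambda^{n-1}$ on $[\a,1]$. That distribution is non-degenerate when $\a<1$, so the mean is strictly less than $1$, since $\lambda^2<1$ on a set of positive measure. With the same argument I will double-check that the formula at $\a=0$ gives $\frac{n}{n+2}$, consistent with \cref{eq:siglam_colorful}.
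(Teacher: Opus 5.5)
Your proposal is correct. The paper states this lemma without a written proof, treating it as a direct computation from \cref{eq:defRWberry}, and your argument is exactly that computation: the sphere mass gives $E^n_{\mathsf U(\a,1)}(0)$, applying $-\Delta_u$ at $u=0$ gives $\int_{\lambda\S^{n-1}}|\xi|^2\dd\xi=s_{n-1}\lambda^{n+1}$, the limits follow from $\frac{1-\a^m}{1-\a}\to m$, and strictness follows from the mean of $\lambda^2$ under the density proportional to $\lambda^{n-1}$ on $[\a,1]$.

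Working on the Fourier side rather than with Bessel functions is also the safer choice. The closed form for $E^n_{\mathsf U(a,b)}$ written later in the paper omits the factors $b^n$ and $a^n$ in front of $\frac{J_{n/2}(bs)}{(bs)^{n/2}}$ and $\frac{J_{n/2}(as)}{(as)^{n/2}}$, and as written it would give $E^n_{\mathsf U(a,b)}(0)=0$.
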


    \begin{remark}
Notice that the \cref{def:scalimRW} is actually \emph{static}: it depends only on the $\ell$-th term of the sequences. In particular, the functions: 
\be 
(I,\a,k,\rho',\rho ) \mapsto  \delta_I^M(\a,\mC^k,[\rho',\rho]), \quad \text{and} \quad (\a,I)\mapsto R^M_{\a,I}\in \mC^\infty(M\times M),
\ee 
are two well defined isometry invariants of $(M,g)$. However, the above scaling bound of \emph{type} $\a$ is meaningful only for a sequence of intervals $I_\ell$ with a limit: $[\a,1]=\lim_{\ell\to+\infty} \frac{1}{\ell}I_\ell$. 
    \end{remark}
\begin{remark}
Clearly, the scaling bound of random waves is also a scaling bound in the sense of \cref{def:scalim}, with parameters
    \be 
\mathsf{P}_\ell=\tyu E^n_{\mathsf U(\a,1)}, \sigma_\ell=\sqrt{\ell^{{n-1}}|I_\ell|}, \lambda_\ell=\ell, 0,\rho_\ell\uyt,
    \ee
so that the translation invariant limit field is $F=\phi_{\mathsf{U}(\a,1)}^{\R^n}$.
In particular, note that:
    \be 
\sigma_\ell^2=\ell^{n-1}\eta_\ell=\ell^{n-1}((1-\a)\ell-\beta_\ell).
    \ee
    \end{remark}
The following holds.
\begin{lemma}\label{lem:lambda_ell_RRW}
In the settings of \cref{def:scalimRW},
\bega 
\sigma\tyu \phi^M_{I_\ell}\uyt^2
&=\ell^{n-1}\eta_\ell\tyu \frac{s_{n-1}}{(2\pi)^n}\frac{1-\a^n}{n(1-\a)}  +\Oh_1\tyu\delta_{I_\ell}^M\uyt\uyt,
\\
\lambda\tyu \phi^M_{I_\ell}\uyt^2
&=\ell^2\tyu \frac{n}{n+2} \cdot \frac{1-\a^{n+2}}{1-\a^n} +\Oh_1\tyu\delta_{I_\ell}^M\uyt\uyt.
\\
\e\tyu \phi^M_{I_\ell} \uyt &\le 8\delta_{I_\ell}^M.
\eega  
\end{lemma}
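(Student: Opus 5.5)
The plan is to recognize \cref{lem:lambda_ell_RRW} as a special case of \cref{rem:jet11} and \cref{prop:addendum}(5), applied to the scaling bound of random waves. By the remark following \cref{lem:lambdasigma_Eucli_RRW}, \cref{def:scalimRW} is a scaling bound in the sense of \cref{def:scalim}. Its parameters are $\mathsf P_\ell=(E^n_{\mathsf U(\a,1)},\sqrt{\ell^{n-1}\eta_\ell},\ell,\rho'_\ell,\rho_\ell)$, its limit field is $F=\phi^{\R^n}_{\mathsf U(\a,1)}$, and its error is $\delta^M_{I_\ell}$. The three claims will therefore be read off from the first-jet covariance on the diagonal, combined with the explicit Euclidean values of $\sigma(F)$ and $\lambda(F)$ from \cref{lem:lambdasigma_Eucli_RRW}.

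\textbf{Step 1.} Evaluate \eqref{eq:scalimRW} and its $(1,1)$-derivatives at $x=y$, assuming the range contains $0$; otherwise use continuity of $R^M_{\a,I_\ell}$ up to the diagonal. At distance $0$, the function $E^n_{\mathsf U(\a,1)}(\ell\,\mathrm{dist}(x,y))$ has $(0,0)$-jet $E^n_{\mathsf U(\a,1)}(0)$ and vanishing mixed first-order terms, because $K'(0)=0$. Its $(1,1)$-part is $-\ell^2(E^n_{\mathsf U(\a,1)})''(0)\,g_x$. This uses the Taylor expansion $\mathrm{dist}(\exp_x u,\exp_x v)^2=|u-v|^2+\Oh(|(u,v)|^4)$, exactly as in the subsection ``Variance and frequency'' of the proof of \cref{thm:scasca}. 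The remainder contributes entries bounded by $\delta^M_{I_\ell}$ in the $g^\ell$-normalization, which gives the matrix in \cref{rem:jet11}.

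\textbf{Step 2.} Average over $M$. Averaging the $(0,0)$ entry gives
\[
\sigma(\phi^M_{I_\ell})^2=\ell^{n-1}\eta_\ell\big(E^n_{\mathsf U(\a,1)}(0)+\Oh_1(\delta^M_{I_\ell})\big).
\]
Taking the trace of the $(1,1)$ entry, averaging, and dividing by $\sigma^2$ gives $\lambda(\phi^M_{I_\ell})^2$. These are \eqref{eq:compare_lambda_scalim}. Inserting the explicit constants $\frac{s_{n-1}}{(2\pi)^n}\frac{1-\a^n}{n(1-\a)}$ and $\frac{n}{n+2}\frac{1-\a^{n+2}}{1-\a^n}$ from \cref{lem:lambdasigma_Eucli_RRW} yields the first two lines of the lemma.

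\textbf{Main obstacle.} The constants require care. $F$ is not Berry-normal, so in the frequency the ratio of the averaged $(1,1)$ and $(0,0)$ entries is a quotient of two perturbed quantities. Writing it as $\ell^2(\lambda(F)^2+\Oh_1(\delta))$ with the \emph{same} symbol $\Oh_1$ needs either $\sigma(F)$ bounded below or a harmless redefinition of $\delta$ by a constant. I would treat the second option through the $\sqrt n^{|a|+|b|}$ weights in \eqref{eq:scalimRW}, accepting that $\Oh_1$ may have to be read as $\Oh(\cdot)$ with a constant depending only on $n$ and $\a$. The case $\a=1$ is covered by continuity in \cref{lem:lambdasigma_Eucli_RRW}.

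\textbf{Step 3.} For the eccentricity, invoke \cref{lem:USEFULREL}. It bounds $2\e(\phi_\ell)$ by the sup-norm distance of $\sigma(\phi)^{-2}j^{1,1}_{x,x}E$ from $\mathrm{diag}(1,\frac{\lambda(\phi)^2}{n}g_x)$, and this distance is at most $16\delta^0_\ell\le16\delta^M_{I_\ell}$. Hence $\e(\phi^M_{I_\ell})\le8\delta^M_{I_\ell}$, which is \cref{prop:addendum}(5). That item holds for any scaling bound, not only scaling limits, so no smallness of $\delta^M_{I_\ell}$ is needed.
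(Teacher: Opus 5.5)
Your proposal follows the paper's route. The paper's proof of \cref{lem:lambda_ell_RRW} is the single line ``combine \cref{eq:compare_lambda_scalim}, \cref{lem:USEFULREL} and \cref{lem:lambdasigma_Eucli_RRW}'', which is exactly your Steps 1--3.

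\textbf{The constant in Step 3.} The Berry-normality caveat you raise for the frequency line applies equally to Step 3, so you cannot read off the literal constant $8$ there.
\begin{itemize}
\item Both \cref{eq:compare_lambda_scalim} and the inequality $\delta(\phi_\ell)\le 16\delta^0_\ell$ in \cref{lem:USEFULREL} are derived using $j^{1,1}_{x,x}\big[K(\lambda_\ell\,\mathrm{dist})\big]=\mathrm{diag}\big(1,\tfrac{\lambda_\ell^2}{n}g_x\big)$. That identity holds only for Berry-normal $K$; this is the ``without loss of generality'' in the proof of \cref{thm:scasca}.
\item Here, however, $\sigma(F)^2=E^n_{\mathsf U(\a,1)}(0)$ is small, e.g.\ $\frac{1}{2\pi^2}$ for $n=3$, $\a=1$.
\item Renormalizing $K$ divides $R^M_{\a,I_\ell}$ by $\sigma(F)^2$ and rescales derivatives by powers of $\lambda(F)^{-1}$.
\item Concretely, the hypothesis $|R(x,x)|\le\delta^M_{I_\ell}$ allows $R(x,x)$ to vary by $\pm\delta^M_{I_\ell}$ over $M$. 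Then the term $\max_x|\sqrt{E(x,x)}/\sigma-1|$ of $\e$ alone can be about $\delta^M_{I_\ell}/(2\sigma(F)^2)=\pi^2\delta^M_{I_\ell}>8\delta^M_{I_\ell}$ in that example.
\end{itemize}
What the argument actually yields is $\e(\phi^M_{I_\ell})\le C(n,\a)\,\delta^M_{I_\ell}$. Likewise, the frequency line holds with $\Oh(\delta^M_{I_\ell})$ and an $(n,\a)$-dependent constant. The variance line does hold with $\Oh_1$, directly from $|R(x,x)|\le\delta^M_{I_\ell}$. The paper's one-line proof has the same imprecision. It is harmless downstream, where only $\e=\Oh(\delta)$ is ever used.

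\textbf{The range.} Your fallback ``use continuity of $R$ up to the diagonal'' does not help for a closed range $[\rho'_\ell,\rho_\ell]$ with $\rho'_\ell>0$. The lemma genuinely needs $\rho'_\ell=0$, as in the range-$(0,0)$ statement \cref{eq:compare_lambda_scalim}.
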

\begin{proof}
 Combine \cref{eq:compare_lambda_scalim}, \cref{lem:USEFULREL} and \cref{lem:lambdasigma_Eucli_RRW}.
\end{proof}
    {
\subsubsection{Meaning of the scaling bound}
We recall that, by definition, if $I_\ell=[\ell-\eta_\ell,\ell]$ then $|I_\ell|=\eta_\ell$ and we have that
\be 
\frac{1}{\ell^{n-1}\eta_\ell}E_{[\ell-\eta_\ell,\ell]}^M(x,y):=\frac{1}{\ell^{n-1}\eta_\ell}\sum_{i:\lambda_i\in [\ell-\eta_\ell,\ell]}\f_i(x)\f_i(y),\quad \forall x,y\in M,
\ee
is the covariance of a renormalized field, having approximately constant average variance:
\be \label{eq:constavgvar}
\frac{\#\{i:\lambda_i\in [\ell-\eta_\ell,\ell]\}}{\ell^{n-1}\eta_\ell}=\fint_M \frac{1}{\ell^{n-1}\eta_\ell}E_{[\ell-\eta_\ell,\ell]}^M(x,x) \dd x \sim E_{\mathsf{U}(\a,1)}^n(0),
\ee
due to Weyl's law, indeed $\ell^n-(\ell-\eta_\ell)^n\sim n \ell^{n-1}\eta_\ell$. The scaling bound compares the above covariance with that of the Euclidean analogue in the case of a fixed interval, that is, the constant variance field, having covariance
\begin{gather}
E_{\mathsf{U}(a,b)}^n(s)=(b-a)^{-1}E_{[a,b]}^n(s){=(b-a)^{-1}(E^n_{[0,b]}(s)-E^n_{[0,a]}(s))}
\\
= \frac{1}{(2\pi)^n}\fint_{a}^b \int_{\lambda\S^{n-1}}e^{is\xi_1}\dd \xi \dd\lambda
\\
=
\fint_{a}^b \frac{\lambda^{n-1}}{(2\pi)^{\frac{\n}2}}\frac{J_{\frac{\n-2}{2}}(\lambda s)}{(\lambda s)^{\frac{\n -2} 2}}\dd \lambda
=
\frac{1}{b-a}\qwe \frac{1}{(2\pi)^{\frac n2}}\frac{J_{\frac{n}{2}}(b s)}{(b s)^{\frac{n} 2}}-\frac{1}{(2\pi)^{\frac n2}}\frac{J_{\frac{n}{2}}(a s)}{(a s)^{\frac{n} 2}}
\ewq
\end{gather}
where $\fint_a^b=(b-a)^{-1}\int_a^b$ and, in case $b=a$, denotes the evaluation at $\lambda=a$ (i.e., $\fint_a^a=\delta_a$). 
The \emph{remainder} in such comparison is the difference between the above two functions: 
\bega \label{eq:remainder}
R^M_{\a,[\ell-\eta_\ell,\ell]}(x,y)
&=\frac{1}{\ell^{n-1}\eta_\ell}E_{[\ell-\eta_\ell,\ell]}^M(x,y)-E_{\mathsf{U}(\a,1)}^n(\ell \theta )
\eega 
where $\theta:=\dist xy$.
The error in the scaling bound $\delta^M_{[\ell-\eta_\ell,\ell]}$ measures the size of the remainder, uniformly with respect to all derivatives up to order $k$ (where each differentiation is divided by $\ell$) and to all pairs of points $x,y\in M$ at distance at most $\theta \le \rho_\ell$, for different choices of sequences $\rho_\ell$ (our main focus is $\rho_\ell=const$, while most common versions of the scaling limit consider the near-diagonal behavior: $\rho_\ell=\Oh(\ell^{-1})$).
}
\subsubsection{One way to interpret the rescaling?}
Let us denote by $\ell M$ the manifold with Riemannian metric {$g_{\ell M}=\ell^2 g_M$}. Then, one can easily show that $\mathrm{dist}_{\ell M}(x,y)=\ell \mathrm{dist}_{M}(x,y)$. Moreover, $\Delta_{\ell M}=\ell^2\Delta_M$, $\lambda_i^{\ell M}=\ell \lambda_i^M$
 and $\f_i^{\ell M}=\ell^{-\frac n2}\f_i^M$. Therefore, 
\begin{proposition}
    \be 
\frac{1}{\ell^n}E^M_I(x,y)=E^{\ell M}_{\ell I}(x,y).
    \ee
\end{proposition}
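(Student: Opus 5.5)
The plan is to prove the identity directly from the definition \eqref{eq:E} of the covariance, by matching the eigen-data of $\ell M$ with those of $M$ through the dictionary recorded just before the statement. That dictionary is: $\Delta_{\ell M}=\ell^2\Delta_M$, $\lambda_i^{\ell M}=\ell\lambda_i^M$, and $\f_i^{\ell M}=\ell^{-\frac n2}\f_i^M$. I will take these relations as given, in the form the paper states them, since the statement is phrased in exactly those conventions. I will not re-derive the transformation law of the Laplacian under $g\mapsto \ell^2 g$.

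The first step is to check that the family $(\f_i^{\ell M})_i$ is a legitimate choice in \cref{def:RRW}, i.e.\ an $L^2(\ell M)$-orthonormal eigenbasis ordered by non-decreasing eigenvalue. Orthonormality follows from the change of Riemannian volume: $\dd\vol{n}_{\ell M}=\ell^n\,\dd\vol{n}_M$. Hence
\[
\int_{\ell M}\f_i^{\ell M}\f_j^{\ell M}\,\dd\vol{n}_{\ell M}=\ell^{-n}\ell^{n}\int_M\f_i^M\f_j^M\,\dd x=\delta_{ij}.
\]
Completeness is inherited because the map $\f\mapsto \ell^{-\frac n2}\f$ is, up to this isometry, the identity on functions. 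The ordering is preserved because $\lambda\mapsto\ell\lambda$ is increasing. The eigenfunction property is read off from the recorded relations between $\Delta_{\ell M}$, $\Delta_M$ and the eigenvalues.

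The second step is the index matching. Since $\lambda_i^{\ell M}=\ell\lambda_i^M$, we have $\lambda_i^{\ell M}\in \ell I$ if and only if $\lambda_i^M\in I$, so the two sums in \eqref{eq:E} run over the same index set $\lI$. Substituting gives
\[
E^{\ell M}_{\ell I}(x,y)=\sum_{\lI}\f_i^{\ell M}(x)\f_i^{\ell M}(y)=\ell^{-n}\sum_{\lI}\f_i^M(x)\f_i^M(y)=\frac1{\ell^n}E^M_I(x,y).
\]

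Finally, the result should not depend on the choice of basis. $E^M_I$ is the integral kernel of the spectral projector onto $\bigoplus_{\lambda\in I}\ker(\Delta+\lambda^2)$, so it is basis independent, and the same holds on $\ell M$. This removes any ambiguity from repeated eigenvalues.

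The only delicate point is bookkeeping: making sure the eigenvalue scaling and the volume normalization are used consistently with the paper's recorded conventions, so that the factor $\ell^{-n}$ comes entirely from the two factors $\ell^{-\frac n2}$ and the interval rescales as $\ell I$. Once the dictionary is fixed, the remaining steps are immediate.
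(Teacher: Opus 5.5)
There is a genuine gap, and it is at exactly the step you chose not to check. You correctly flagged the bookkeeping as the delicate point.

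For $g_{\ell M}=\ell^2 g_M$ one has $\Delta_{\ell M}=\ell^{-2}\Delta_M$, not $\ell^{2}\Delta_M$. In coordinates, $\Delta_g u=|g|^{-1/2}\partial_i\bigl(|g|^{1/2}g^{ij}\partial_j u\bigr)$. The factors $|g|^{\pm 1/2}$ pick up $\ell^{\pm n}$ and cancel, while $g^{ij}$ picks up $\ell^{-2}$. Hence $\lambda_i^{\ell M}=\ell^{-1}\lambda_i^M$; for instance, a circle of length $2\pi\ell$ has frequencies $k/\ell$.

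Your two uses of the dictionary are incompatible. The orthonormality check correctly uses $d\mathrm{Vol}^{n}_{\ell M}=\ell^{n}\,d\mathrm{Vol}^{n}_{M}$, which belongs to the metric $\ell^2 g$. The relation $\lambda_i^{\ell M}=\ell\lambda_i^M$ belongs to the metric $\ell^{-2}g$. The paper's own justification is the same dictionary followed by ``therefore'', so it has the same defect. Elsewhere the paper uses the correct law: $c^2\Delta_{cM}=\Delta_M$ and $N^{cM}_I=N^M_{cI}$ in the proof of \cref{cor:berrycanc}.

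With the correct law, $\lambda_i^{\ell M}\in \ell I$ if and only if $\lambda_i^M\in \ell^2 I$. Your computation therefore actually yields $E^{\ell M}_{\ell I}=\ell^{-n}E^M_{\ell^2 I}$, and the identity as printed is false. Here is a check that avoids the Laplacian computation. Integrating the diagonal against $d\mathrm{Vol}^{n}_{\ell M}$ would force $N^{\ell M}_{\ell I}=N^M_I$. But Weyl's law on each manifold gives $N^{\ell M}_{[0,\ell s]}/N^{M}_{[0,s]}\to \ell^{2n}$ as $s\to+\infty$.

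Concretely, take $M=\mathbb{R}/2\pi\mathbb{Z}$, $\ell=2$, $I=[1,2]$. Then $\tfrac12 E^M_I(x,x)=\tfrac1\pi$, whereas $E^{2M}_{[2,4]}(x,x)=\tfrac{5}{2\pi}$. No convention rescues the printed form: the prefactor $\ell^{-n}$ needs the volume to grow, while the interval $\ell I$ needs the frequencies to grow.

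What is true is the identity below. It is also what the paper actually uses in \eqref{eq:remainder2}, where $[\ell-\eta_\ell,\ell]$ on $M$ becomes $[1-\frac{\eta_\ell}{\ell},1]$ on $\ell M$:
\[
\frac{1}{\ell^n}E^M_I(x,y)=E^{\ell M}_{\ell^{-1}I}(x,y),
\qquad\text{equivalently}\qquad
\ell^{n}E^M_I(x,y)=E^{\ell^{-1}M}_{\ell I}(x,y),
\]
where $\ell^{-1}M$ carries the metric $\ell^{-2}g$. Your three steps prove this verbatim once you use $\lambda_i^{\ell M}=\ell^{-1}\lambda_i^M$. Those steps are orthonormality via the volume change, the index matching, and basis independence of the spectral-projector kernel. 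The fix is to derive the scaling of the Laplacian rather than import it.
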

It follows that we can rewrite \cref{eq:remainder} as 
\bega \label{eq:remainder2}
R^M_{\a,[\ell-\eta_\ell,\ell]}(x,y)
&=\frac{\ell}{\eta_\ell}E_{[1-\frac{\eta_\ell}{\ell},1]}^{\ell M}(x,y)-E_{\mathsf{U}(\a,1)}^n\tyu \mathrm{dist}_{\ell M}(x,y)\uyt \quad \tyu = R^{\ell M}_{\a,[1-\frac{\eta_\ell}{\ell},1]}(x,y)\uyt,
\eega 
where $\frac{\ell}{\eta_\ell}E_{[1-\frac{\eta_\ell}{\ell},1]}^{\ell M}(x,y)$ is the covariance of a field having approximately constant average variance computed exactly as in \cref{eq:constavgvar}. 

The error in the scaling bound $\delta^M_{[\ell-\eta_\ell,\ell]}$ measures the size of the remainder, as a function on $\ell M$, uniformly with respect to all derivatives up to order $k$ and to all pairs of points $x,y\in \ell M$ at distance at most $\ell \rho_\ell$. In other words it measures the difference in the comparison:
\be 
\tyu \ell M, \sqrt{\frac{\ell}{\eta_\ell}}\phi^{\ell M}_{[1-\frac{\eta_\ell}{\ell},1]} \uyt \sim \tyu \R^n, \phi^{\R^n}_{[\a,1]}\uyt,
\ee
in a $\mC^k$ sense, at \emph{large} scale $\ell \rho_\ell$. We shall think of this as global comparison, not restricted to a compact ball, because in the following we will require $\rho_\ell=const$, hence $\ell \rho_\ell\to +\infty$. 

\subsubsection{Connection with the usual scaling limit.}
We note that setting $\rho_\ell = \ell^{-1}$ recovers the standard local scaling limit traditionally employed in the study of random waves, see \cite{igorsurvey}. In that classical framework, the analysis is confined to a small neighborhood of order $\mathcal{O}(\ell^{-1})$ on the original manifold $M$, which translates to a fixed, compact domain of radius $\ell \rho_\ell = 1$ on the manifold $\ell M$. By contrast, our setting requires $\rho_\ell = \text{const}$, allowing $\ell \rho_\ell \to +\infty$. This choice represents a significant generalization, as it forces the stochastic comparison between $(\ell M, \phi^{\ell M})$ and the flat Euclidean model $(\mathbb{R}^n, \phi^{\mathbb{R}^n})$ to hold globally on macroscopically large scales.
\subsection{The scaling bound in the Euclidean case}

In the Euclidean case, with $\theta=|x-y|$ the remainder term in the scaling bound is: if $\a=0$ (colorful case)
\be  \label{eq:rema_0}
R^{\R^n}_{0,[\ell-\eta_\ell,\ell]}(x,y)=0;
\ee
if $\a>0$ (and $0\le \beta_\ell\le \a \ell$)
\bega 
R^{\R^n}_{\a,[\ell-\eta_\ell,\ell]}(x,y)
&=\frac{1}{\ell^{n-1}\eta_\ell}E_{[\ell-\eta_\ell,\ell]}^{\R^n}(x,y)-E_{\mathsf{U}(\a,1)}^n(\ell \theta )
\\
&=\frac{1}{\ell^{n-1}\eta_\ell}E_{[\ell-\eta_\ell,\ell]}^{n}(\theta)-E_{\mathsf{U}(\a,1)}^n(\ell \theta )
\\
&=E_{\mathsf{U}(1-\frac{\eta_\ell}{\ell},1)}^{n}(\ell \theta)-E_{\mathsf{U}(\a,1)}^n(\ell \theta )
\\
&=E_{\mathsf{U}(\a-\frac{\beta_\ell}{\ell},1)}^{n}(\ell \theta)-E_{\mathsf{U}(\a,1)}^n(\ell \theta )=
\dots
\eega 
so that for any fixed $\a\in (0,1)$, recalling that $\beta_\ell=o(\ell)$ {and exploiting that $E_{[\a-\frac{\beta_\ell}{\ell},1]}^n(\ell \theta )=E_{[\a-\frac{\beta_\ell}{\ell},\a]}^n(\ell \theta )+E_{[\a,1]}^n(\ell \theta )$}, we have 
\bega \label{eq:rema_a}
\dots &=\frac{\beta_\ell}{\ell}\cdot (1-\a+\frac{\beta_\ell}{\ell})^{-1}\cdot E^n_{\mathsf{U}(\a-\frac{\beta_\ell}{\ell},\a)}(\ell \theta) + \tyu \frac{1-\a}{1-\a+\frac{\beta_\ell}{\ell}}-1\uyt E_{\mathsf{U}(\a,1)}^n(\ell \theta )
\\
&=
\frac{\beta_\ell}{\ell} \cdot \tyu \Oh(1) E^n_{\mathsf{U}(\a-\frac{\beta_\ell}{\ell},\a)}(\ell \theta) + \Oh(1) E_{\mathsf{U}(\a,1)}^n(\ell \theta )\uyt 
\eega
\begin{remark}
The above expression would be much simpler if in the case $0<\a<1$, we took $\tilde{\sigma}_\ell^2=\ell^n(1-\a)$ instead than $\sigma_\ell^2=\ell^{n-1}((1-\a)\ell-\beta_\ell)$, as we do. However, in the end the order in the scaling error does not change. Moreover, when: $\beta_\ell=0$, or when $\a=1$ there is no difference. We make the current choice, to have an elegant general form of the scaling bound.
\end{remark}
Finally, in the most important case: when $\a=1$, and thus $\eta_\ell=\beta_\ell$, we have
\bega \label{eq:rema_1}
R^{\R^n}_{1,[\ell-\eta_\ell,\ell]}(x,y) 
&=
E_{\mathsf{U}(1-\frac{\beta_\ell}{\ell},1)}^{n}(\ell \theta)-E_{1}^n(\ell \theta )
\\
&=
\fint_{1-\frac{\beta_\ell}{\ell}}^1 E_{1}^n(\lambda \ell \theta ) \lambda^{n-1}\dd \lambda-E_{1}^n(\ell \theta ).
\eega

The sequence $m_\ell$ satisfies $\sqrt{\frac{\beta_\ell}{\ell}}\le m_\ell\le 1$, depending essentially on the behavior of $\beta_\ell \rho_\ell$. It turns that in dimension $1$ and $2$ the error in the Euclidean scaling bound depends on $m_\ell$.
\begin{lemma}\label{lem:euclidWeyl}
Let $\a\in [0,1]$ and $\beta_\ell=o(\a \ell)$ as above. 
If $M=\R^n$, for any $k\in \N$, the error $\delta^{\R^n}_{[\ell-\eta_\ell,\ell]}=\delta^{\R^n}_{[\ell-\eta_\ell,\ell]}(\a,\mC^k,[0,\rho_\ell])$ in the scaling bound \eqref{eq:scalimRW} satisfies
\begin{gather}
\delta_{[\ell-\beta_\ell,\ell]}^{\R^n}
={\Oh(1)\begin{cases}
    \frac{\beta_\ell}{\ell} & n\ge 3
    \\
    m_\ell \sqrt{\frac{\beta_\ell}{\ell}} 
    & n =2 
    \\
    m_\ell^2& n =1
\end{cases}
}
\\
\delta_{ [\a\ell-\beta_\ell,\ell]}^{\R^n}=O\tyu \frac{\beta_\ell}{\ell}
\uyt,\  \text{if $\a\in [0,1)$}.\label{eq:Eucdelta_a}
\end{gather}
in bounded range $0\le \rho_\ell \le \mathrm{diam}(M)$, as $\ell\to+\infty$. In particular, if $n\ge 2$, then $\delta_{[\ell-\beta_\ell,\ell]}^{\R^n}=\oh(1)$.
\end{lemma}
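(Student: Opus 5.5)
We estimate the Euclidean remainder explicitly, using the formulas already derived: \cref{eq:rema_0} for $\a=0$, \cref{eq:rema_a} for $\a\in(0,1)$, and \cref{eq:rema_1} for $\a=1$. Everything reduces to bounding the derivatives up to order $k$ in $s=\ell\theta$ of radial Bessel-type functions. The relevant functions are
\[
E^n_1(s)=(2\pi)^{-n/2}J_{\frac{n-2}{2}}(s)s^{-\frac{n-2}{2}}
\]
and its averages. Because $R$ depends on $(x,y)$ only through $\ell|x-y|$, each derivative divided by $\ell$ becomes a derivative in $s$ times derivatives of $|x-y|$. The chain-rule factors are harmless away from the origin. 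Near $s=0$ we use that all these kernels are smooth even functions of $s$, hence smooth in $x-y$.

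\textbf{Case $\a\in[0,1)$.} For $\a=0$ the remainder is identically zero. For $\a\in(0,1)$, \cref{eq:rema_a} writes $R$ as $\frac{\beta_\ell}{\ell}$ times a bounded combination of $E^n_{\mathsf U(\a-\beta_\ell/\ell,\a)}$ and $E^n_{\mathsf U(\a,1)}$. These are averages over $\lambda$ in a compact subset of $(0,1]$ of $\lambda^{n-1}E^n_1(\lambda s)$. Their $\mC^k$ norms in $s$ are therefore bounded uniformly on $[0,\infty)$, since $E^n_1$ and all its derivatives are bounded. This gives $\Oh(\beta_\ell/\ell)$ in any range.

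\textbf{Case $\a=1$.} Set $h=\beta_\ell/\ell\to0$. By \cref{eq:rema_1},
\[
R=\fint_{1-h}^1\big(\lambda^{n-1}E^n_1(\lambda s)-E^n_1(s)\big)\,d\lambda .
\]
Differentiating $k$ times in $s$ and using the mean value theorem in $\lambda$, each integrand is bounded by $\Oh(|1-\lambda|)\,\sup_{\lambda}\big|\partial_\lambda\big(\lambda^{n-1+j}(E^n_1)^{(j)}(\lambda s)\big)\big|$.

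The $\lambda$-derivative produces a factor $s\,(E^n_1)^{(j+1)}(\lambda s)$. By the Bessel asymptotics (\cref{lem:bessympt}), this is of size $\langle s\rangle\cdot\langle s\rangle^{-\frac{n-1}{2}}=\langle s\rangle^{\frac{3-n}{2}}$. So we get two bounds:
\begin{itemize}
\item the naive bound $|R^{(j)}|\lesssim h\langle s\rangle^{\frac{3-n}{2}}$;
\item the trivial bound $|R^{(j)}|\lesssim \langle s\rangle^{\frac{1-n}{2}}$, since each term decays.
\end{itemize}

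For $n\ge3$ the naive bound is not uniform in $s$, so we must exploit the oscillation. Write
\[
J_\nu(\lambda s)\approx \sqrt{2/(\pi\lambda s)}\,\cos(\lambda s-\phi_\nu).
\]
Averaging over $\lambda\in[1-h,1]$ then gives a factor
\[
\frac{\sin(hs/2)}{hs/2}\cos(\dots),
\]
whose difference from $\cos(s-\phi_\nu)$ is bounded by $\min\{hs,1\}$. Hence
\[
|R^{(j)}|\lesssim h+\langle s\rangle^{\frac{1-n}{2}}\min\{hs,1\},
\]
where the first term comes from the $\lambda^{n-1}$ weight and the lower-order asymptotic terms.

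Now take the supremum over $s\le \ell\rho_\ell$. The function $s^{\frac{1-n}{2}}\min\{hs,1\}$ is maximized at $s\approx 1/h$ when that point lies in range. Its value there is $h^{\frac{n-1}{2}}$. This is at most $h$ for $n\ge3$. For $n=2$ it equals $\sqrt h$. For $n=1$ it equals $1$, capped by $\min\{h\ell\rho_\ell,1\}=\min\{\beta_\ell\rho_\ell,1\}$.

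In general, for $n=2$ the supremum is $\sqrt{\min\{\max\{h,\beta_\ell\rho_\ell\},1\}}\cdot\sqrt h$, which is exactly $m_\ell\sqrt{\beta_\ell/\ell}$. For $n=1$ the same computation gives $m_\ell^2$. This produces the claimed trichotomy, and $\oh(1)$ for $n\ge2$.

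The main obstacle is making the oscillatory averaging rigorous uniformly in $k$ derivatives. The plan is to use the full Hankel expansion of $J_\nu$ with an explicit remainder of order $\langle s\rangle^{-\frac{n+1}{2}}$, plus the corresponding expansions for its derivatives. After multiplying by $s$, the remainders still contribute only $\Oh(h)$. The cosine main term is then handled by the explicit $\lambda$-average computed above.
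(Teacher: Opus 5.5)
Your proof is correct, and its core is the same as the paper's. For $\a<1$ you use the same boundedness argument. For $\a=1$ the paper also uses a mean-value argument in $\lambda$, which gives $|I(\e,s)|\lesssim \e(1+s)^{\frac{3-n}{2}}$ with $\e=\beta_\ell/\ell$ (your $h$). It pairs this with the trivial decay bound $(1+s)^{\frac{1-n}{2}}$ and then splits at $s=\e^{-1}$: the first bound below the threshold, the second above.

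Where you depart from this, you either misstate something or do unnecessary work.

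\begin{itemize}
\item \emph{The case $n\ge 3$.} Your claim that the naive bound is not uniform in $s$ for $n\ge3$ is false. The exponent $\frac{3-n}{2}$ is $\le 0$, so $h\langle s\rangle^{\frac{3-n}{2}}\le h$ and you already have $\Oh(\beta_\ell/\ell)$ uniformly; this is exactly how the paper closes that case.
\item \emph{Location of the peak.} For $n\ge4$ the function $\langle s\rangle^{\frac{1-n}{2}}\min\{hs,1\}$ does not peak at $s\approx 1/h$. It peaks at $s\approx 1$, with value $\approx h$. Your conclusion $\Oh(h)$ is still right.
\item \emph{The oscillatory step.} The Hankel-expansion averaging you single out as the main obstacle buys nothing. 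Up to constants and an additive $\Oh(h)$, your resulting bound $\langle s\rangle^{\frac{1-n}{2}}\min\{hs,1\}$ equals $\langle s\rangle^{\frac{1-n}{2}}\min\{h\langle s\rangle,1\}=\min\{h\langle s\rangle^{\frac{3-n}{2}},\langle s\rangle^{\frac{1-n}{2}}\}$. That is simply the minimum of your two bullet bounds, which you had already proven.
\end{itemize}

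So you can drop the asymptotic expansion, and with it the issue of uniform remainder control for $k$ derivatives. Taking the supremum over $s\le \ell\rho_\ell$ of that minimum gives $m_\ell\sqrt{\beta_\ell/\ell}$ for $n=2$ and $m_\ell^2$ for $n=1$, exactly as you computed. This is the paper's two-line split at $s=\e^{-1}$.
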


\begin{proof}
We explain the case $k=0$; since the function $\mathrm{dist}^2$ is smooth on $M\times M$ and thus have bounded derivatives, the same argument works for any $k$ at the only cost of a heavier notation. 

Let us denote $\e=\frac{\beta_\ell}{\ell}\in [0,\a]$, $s=\ell\dist xy \in [0,+\infty)$ and $K(u):=E_1^n(u)$. By \cref{lem:bessympt}, the latter satisfies: 
\be \label{eq:beryineq}
K(
s)=\frac{1}{(2\pi)^{\frac{\n}2}}\frac{J_{\frac{\n-2}{2}}(s)}{s^{\frac{\n -2} 2}},\quad  \text{and} \quad |K^{(a)}(s)|\le C(1+s)^{-\frac{\n-1}{2}},\ \forall a\in \N.
\ee
Recalling \cref{eq:genRemainder}, we have to study the behavior as $\e\to 0$, of the quantity:
\bega\label{eq:quantity}
{E^n_{\mathsf U(\a-\frac{\beta_\ell}{\ell},\a)}}\tyu \ell \dist xy\uyt 
&=
{E^n_{\mathsf U(\a-\e,\a)}}\tyu s\uyt 
=
\frac{1}{\e}\int_{\a-\e}^\a {E_\lambda^n}(s)\dd \lambda = \frac{1}{\e}\int_{\a-\e}^\a {\lambda^{n-1}} K(\lambda  s)\dd \lambda;
\eega
precisely, our goals are: if $\a=1$, to compare the above with its limit for fixed $s$:
\be 
E_1(\ell \dist xy)= K(s).
\ee
If $\a\in [0,1)$ (recall \cref{eq:rema_1}), we want to show that \eqref{eq:quantity} is bounded, uniformly for all $\a \in [0,1)$, all $0<\e<\a$ and $s>0$. Since $K$ is bounded, the latter is automatic, so \cref{eq:Eucdelta_a} is proven. Let us fix $\a=1$, from this point on.

By Lagrange Theorem, by seeing the following as a function of $\e$, we have that there exists a function $\e\mapsto \lambda_\e\in [1-\e,1]$ such that
\bega 
I(\e,s):=\frac{1}{\e}\int_{1-\e}^1 {\lambda^{n-1}} K(\lambda  s)\dd \lambda-K(s) &=\lambda_\e^{n-1}K(\lambda_\e s)-K(s)
\eega
We can bound the above quantity as follows
\bega 
|I(\e,s)|
&=
\left|\lambda_\e^{n-1}\tyu K(\lambda_\e s)-K(s)\uyt + \tyu\lambda_\e^{n-1}-1\uyt K(s)\right|
\\
&\le 
\left| K(\lambda_\e s)-K(s)\right| + \left|\tyu (1-\e)^{n-1}-1\uyt K(s)\right|
\\
&=
\left| \int_{\lambda_\e}^1 K'(\lambda s)s \dd\lambda\right| + \left|\tyu (1-\e)^{n-1}-1\uyt \right| \left|K(s)\right|
\\
&\le
\tyu 
\frac{ s+1}{(1+s/2)^{\frac{n-1}{2}}}
\uyt \Oh(\e)=\Oh(\e)(1+s)^{\frac{3-n}{2}}
\eega
where we used \cref{eq:beryineq} and {assumed that $\e\le \frac12$}. 

To prove the lemma, given $R=\ell \rho_\ell$, we need to study $\delta(\e,R):=\sup_{s\le R}|I(\e,s)|$. From the above inequality we conclude that for all $n\ge 3$ we have $\delta(\e,R)\le \Oh(\e)$, which is what we wanted to prove. For $n\in \{1,2\}$, we need to analyze the first integral more carefully. 

Observe that we have a trivial bound valid for all $\e\in [0,1]$: 
\be 
|I(\e,s)|\le \frac{\Oh( 1)
}{(1+s/2)^{\frac{n-1}{2}}}
\ee 
 If $n=1$, the combination of the two above inequalities give that
\be 
\delta(\e,R)\le \Oh(1)\min\kop \max\{\e,R\e\},1\pok,
\ee
which implies the case $n=1$. 
Now, it remains only to improve such inequality in dimension $n=2$.

We separate the cases $s\le \e^{-1}$ and $s\ge \e^{-1}$, so that we have that 
\be 
\sup_{s\le \e^{-1}}|I(\e,s)|\le \Oh(\e)(1+1/\e)^{\frac12}=\Oh(1)\sqrt{\e}, 
\ee 
and 
\be 
\sup_{\e^{-1}\le s \le R}|I(\e,s)|\le \Oh(1) \frac{\Oh(1)}{\tyu1+\e^{-1}\uyt^{\frac{1}{2}}}\le \Oh(1)\sqrt{\e}
\ee
From this we deduce that $\delta(\e,R)\le \Oh(\sqrt{\e})$, so that combining with the first above inequality, we conclude that 
\be 
\delta(\e,R)\le \Oh(1)\min\kop \max\{\e, \e \sqrt{R}\}, \sqrt{\e}\pok
\ee

This concludes the $2$-dimensional case and thus the proof of the lemma.
\end{proof}
\begin{remark}
    If $n=1$, then $K(s)=E^1_1(s)=\cos(s)$, so that 
    \be 
I(\e,s)=\frac{\sin(s)-\sin(s-s\e)}{s\e}-\cos(s)=\Oh(s\e),
    \ee
 is bounded from below along almost all curves $s\e=const $, which corresponds to the case $\beta_\ell \rho_\ell = const$. Hence, the bound of the theorem cannot be improved. In dimension $n=1$, the scaling limit holds if and only if $s\e\approx\beta_\ell\rho_\ell \to 0$.
\end{remark}
\begin{remark}\label{rem:sharp_scabo2}
    If $n=2$, then by classical Bessel functions asymptotics (see \cite[Theorem 4, Equation 6]{Krasikov_2014}, or \cite[Lemma 2.5]{GMT}), one can see that as $t\to +\infty$, $K(t)\sim c_2\frac{\cos(t-\phi_2)}{\sqrt{t}}$, for appropriately chosen constants $c_2,\phi_2$, so that 
    \be 
I(\e,s)\simeq \frac{1}{\sqrt{s}}\int_{1-\e}^1\sqrt{\lambda} \cos\tyu \lambda s-\phi_2\uyt - \cos\tyu s-\phi_2\uyt \dd \lambda. 
    \ee
    One can see that the above quantity is of order $\frac{1}{\sqrt{s}}\simeq \sqrt{\e}$, along subsequences of the form (for instance) $s\e=\frac{\pi}{2}$ with $s=\phi_2+2k\pi+\frac{\pi}{2}$.
  This, again, corresponds to the case $\beta_\ell \rho_\ell = const$. Hence, the bound of the theorem cannot be improved. Indeed, in dimension $n=2$, the theorem implies that the scaling limit error is of order $\oh(\sqrt{\e})=\oh (\sqrt{\frac{\beta_\ell}{\ell}})$, if  $s\e\approx\beta_\ell\rho_\ell \to 0$.
\end{remark}
\subsection{The scaling error of general random waves: proof of \cref{thm:OdeltaRWintro}}
We will report a general criterion to deduce the scaling bound or scaling limit for general choice of intervals $I_\ell$. As in most instances appearing in the literature, the analysis starts by taking the difference in \cref{eq:horlaw}:
\bega 
E^M_{[\ell-\eta_\ell,\ell]}{}&=E^M_{[0,\ell]}{}-E^M_{[0,\ell-\eta_\ell]}{}
\\
&=
E^n_{[\ell-\eta_\ell,\ell]}\circ \mathrm{dist} 
+
\ell^n\tyu R^M_{[0,\ell]}{}-R^M_{[0,\ell-\eta_\ell]}{}\uyt
\\
&=
\ell^{n-1}\eta_\ell\tyu  
E_{\mathsf U(\a,1)}
+R^{\R^n}_{\a,[\ell-\eta_\ell,\ell]}\circ (0,\mathrm{dist}) 
+
\frac{\ell}{\eta_\ell}\tyu R^M_{[0,\ell]}{}-R^M_{[0,\ell-\eta_\ell]}{}\uyt \uyt
 \eega
Here, $(0,\mathrm{dist})$ is the function $(x,y)\mapsto (0,\mathrm{dist}(x,y))$. Therefore, the general remainder function is:
\bega \label{eq:genRemainder}
R^M_{\a,[\ell-\eta_\ell,\ell]}
&=
R_{\a,[\ell-\eta_\ell,\ell]}^{\R^n} \circ (0,\mathrm{dist})+\frac{\ell}{\eta_\ell}\tyu R^M_{[0,\ell]}{}-R^M_{[0,\ell-\eta_\ell]}{} \uyt, 
\eega
Here, $R_{\a,[\ell-\eta_\ell,\ell]}^{\R^n} \circ (0,\mathrm{dist})$ is controlled as in \cref{lem:euclidWeyl}. 
The second part is controlled in terms of $\delta_{[0,\ell]}^M$, by triangular inequality. 

\begin{lemma}\label{thm:OdeltaRW}
The error $\delta^M_{[\ell-\eta_\ell,\ell]}$ in the scaling bound \eqref{def:scalimRW} in bounded range $\rho_\ell$, satisfies
\be 
\delta_{[\ell-\eta_\ell,\ell]}^M=
\delta_{[\ell-\eta_\ell,\ell]}^{\R^n}
+ \Oh\tyu \frac{\ell}{\eta_\ell}\delta^M_{[0,\ell]}\uyt,
\ee
as $\ell\to +\infty$, for all $\a\in [0,1]$. 
\end{lemma}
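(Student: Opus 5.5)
The plan is to start from the decomposition \cref{eq:genRemainder}. It writes the remainder $R^M_{\a,[\ell-\eta_\ell,\ell]}$ as the sum of the Euclidean remainder composed with $(0,\mathrm{dist})$ and the term $\frac{\ell}{\eta_\ell}\bigl(R^M_{[0,\ell]}-R^M_{[0,\ell-\eta_\ell]}\bigr)$. The rescaled derivative norm defining $\delta$ in \cref{def:scalimRW} is a seminorm: a supremum over pairs at distance in the range, and over multi-indices $|a|,|b|\le k$, of $\ell^{-|a|-|b|}\sqrt n^{|a|+|b|}\|\nabla^a\nabla^b\cdot\|$. So the triangle inequality bounds $\delta^M_{[\ell-\eta_\ell,\ell]}$ by the sum of the two corresponding seminorms.

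\textbf{The Euclidean term.} Its seminorm on $M$ should equal $\delta^{\R^n}_{[\ell-\eta_\ell,\ell]}$ up to a bounded factor, and that factor can be absorbed into the $\Oh$ stated in \cref{lem:euclidWeyl}. The Euclidean remainder is a radial function $h(\ell\theta)$, where $\theta=\mathrm{dist}(x,y)$. Differentiating $h(\ell\,\mathrm{dist}(x,y))$ with the chain rule (Faà di Bruno) gives terms of the form $\ell^{j}h^{(j)}(\ell\theta)$ times products of covariant derivatives of $\mathrm{dist}$. I would write $h(\ell\theta)=\tilde h(\ell^2\theta^2)$, with $\tilde h$ smooth because $h$ is even, so that only derivatives of the smooth function $\mathrm{dist}^2$ appear; these are bounded on the bounded range $\rho_\ell\le\rho(M)$ below the injectivity radius. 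Dividing by $\ell^{|a|+|b|}$, every term is controlled by $\sup_{s\le\ell\rho_\ell}|h^{(j)}(s)|$, $j\le 2k$, times geometric constants, plus lower-order terms carrying extra negative powers of $\ell$. These are exactly the quantities that the proof of \cref{lem:euclidWeyl} bounds, since that proof already noted that the case of general $k$ follows from the smoothness of $\mathrm{dist}^2$. In the equality statement I would therefore regard $\delta^{\R^n}$ as the Euclidean error measured through $\mathrm{dist}$ on $M$, which is the quantity the lemma's bounds control.

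\textbf{The Weyl-remainder term.} For $R^M_{[0,\ell]}$, the definition of $\delta^M_{[0,\ell]}$ gives directly that its seminorm at scale $\ell$ is $\le\delta^M_{[0,\ell]}$. For $R^M_{[0,\ell-\eta_\ell]}$ the derivatives are naturally normalized by $\ell'=\ell-\eta_\ell$ rather than by $\ell$. There is also a subtlety in the prefactor: \cref{eq:horlaw} has $\ell'^n R^M_{[0,\ell']}$, so in \cref{eq:genRemainder} the second remainder actually appears as $(\ell'/\ell)^nR^M_{[0,\ell']}$. Since $\ell'\le\ell$, we have $\ell^{-|a|-|b|}\le\ell'^{-|a|-|b|}$ and $(\ell'/\ell)^n\le 1$, so its contribution is $\le\delta^M_{[0,\ell']}$. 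This is the main obstacle: we must compare $\delta^M_{[0,\ell']}$ with $\delta^M_{[0,\ell]}$. I would interpret $\delta^M_{[0,\ell]}$ as the monotone rate function from \cref{prop:chk}, such as $C/\ell$ or $C/(\ell\log\ell)$; that is, I replace it by a nonincreasing majorant. Then there are two cases. If $\a<1$, or more generally $\ell'\ge c\ell$, the rates at $\ell'$ and at $\ell$ are comparable. If $\ell'$ is small, I instead use the crude bound $\delta^M_{[0,\ell']}=\Oh(1)$, or treat the finitely many low eigenvalues directly. In that case $\eta_\ell\sim\ell$, so the prefactor $\ell/\eta_\ell$ is bounded and the overall error is $\Oh(\ell^{-1})$-type; it is then absorbed by the stated bound $\Oh\bigl(\frac{\ell}{\eta_\ell}\delta^M_{[0,\ell]}\bigr)$ once $\delta^M_{[0,\ell]}$ is taken to be the rate.

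\textbf{Conclusion.} Multiplying the second seminorm by $\ell/\eta_\ell$ and adding the Euclidean seminorm gives $\delta^M_{[\ell-\eta_\ell,\ell]}\le\delta^{\R^n}_{[\ell-\eta_\ell,\ell]}+\Oh\bigl(\frac{\ell}{\eta_\ell}\delta^M_{[0,\ell]}\bigr)$. The reverse inequality follows by the same triangle inequality applied the other way, which yields the stated identity up to $\Oh$ terms.
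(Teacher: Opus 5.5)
Your proposal is the paper's own argument: the decomposition \cref{eq:genRemainder}, the triangle inequality for the rescaled $\mC^k$ seminorm, \cref{lem:euclidWeyl} for the Euclidean part, and $\delta^M_{[0,\ell]}$ for the Weyl remainders (the paper simply says the statement ``follows immediately''); you are in fact more careful than the paper about the prefactor $(\ell'/\ell)^n$ that \cref{eq:genRemainder} omits, and about normalizing derivatives at $\ell'$ rather than at $\ell$. One small repair is needed: in the regime $\a=0$, $1\ll\ell'=\ell-\eta_\ell\ll\ell$, neither of your branches works as written, because the rates at $\ell'$ and at $\ell$ are not comparable there, and once you have discarded $(\ell'/\ell)^n$ the crude bound gives only $\Oh(1)$; instead keep the factor $(\ell'/\ell)^{n+|a|+|b|}$, which together with H\"ormander's rate at $\ell'$ gives a contribution $\lesssim \ell'^{\,n-1}\ell^{-n}\le \ell^{-1}$ uniformly in $1\le\ell'\le\ell$ (bounded $\ell'$ being handled directly as you suggest, with contribution $\Oh(\ell^{-n})$).
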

We recall that here $I_\ell=[\ell-\eta_\ell,\ell]$ is as in \cref{eq:Il}, so that $\eta_\ell=(1-\a)\ell+\beta_\ell$ and that $\beta_\ell=\oh(\ell)$.
\begin{proof}
Combining \cref{lem:euclidWeyl} and \cref{eq:genRemainder}, the statement follows immediately.
\end{proof}
\begin{proof}[Proof of \cref{thm:OdeltaRWintro}]
Combine \cref{lem:euclidWeyl} and \cref{thm:OdeltaRW} and recall that in general $\delta_{[0,\ell]}^M=\Oh(\ell^{-1})$ by \cref{thm:hoermplaw}.
\end{proof}
\subsection{Second chaos: Berry cancellation}
Let us consider a random wave sequence $\phi_\ell=\phi_{I_\ell}^M$, with $I=I_\ell=[\ell-\eta_\ell,\ell]$ defined as in \cref{eq:Il} and consider the scaling bound \cref{eq:scalimRW}, with effective variance $\sigma_\ell=\ell^{n-1}\eta_\ell$, effective frequency $\lambda_\ell=\ell$ and limit field $F_\a:=\phi_{\mathsf U(\a,1)}^{\R^n}$ as in \cref{def:RWUI}. We recall that the parameters of the field defined as in \cref{def:three} (i.e. \cite[Def. 1.11]{cgv2025StecconiTodino}), are given in \cref{eq:phiI}. 
By \cref{prop:addendum}, we can also deduce that they are related with the parameters in the scaling limit by the following relations:
\be 
\|\frac{\sigma(\phi_{I_\ell}^M)}{\ell^{n-1}\eta_\ell \sigma(F_\a)}-1\|\le \delta_{I_\ell}^M,\quad \|\frac{\lambda(\phi_{I_\ell}^M)}{\ell\lambda(F_\a)}-1\|\le \delta_{I_\ell}^M, \quad \e(\phi_{I_\ell}^M)\le 8\delta_{I_\ell}^M.
\ee
The first relation, essentially corresponds to Weyl's law: $\sigma(\phi_{[0,\ell]}^M)\sim \ell^{n}\frac{b_n}{(2\pi)^n}$.

We recall a result from \cite{cgv2025StecconiTodino} in our language (namely, in terms of \cref{def:Y}). It allows us to control the second chaos term in \cref{cor:scaberry}. We state the theorem for a manifold of arbitrary volume.
\begin{corollary}[\texorpdfstring{\cite[Corollary 2.1]{cgv2025StecconiTodino}}{}] \label{cor:berrycanc} Let $I$ a bounded interval, and let $N=N_I^M, \lambda=\lambda(\phi_I^M),\e=\e(\phi_I^M)$ as above. 
Then,
    \be\label{eq:RWvar} 
\Var\tyu Y_M(I) [2]\uyt= \frac14  \cdot 
\frac{1}{N}\qwe 
\tyu\frac{1}{N}\sum_{i:\lambda_i\in I}(\lambda_i^2-\lambda^2)^2\uyt\frac{1}{\lambda^4}+O_\n(\e)
\ewq
    \ee

Here, $O_n(\e)$ denotes a quantity that is bounded by a dimensional constant times $\e$.
\end{corollary}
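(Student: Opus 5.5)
The plan is to specialize the general second-chaos formula of \cite{cgv2025StecconiTodino} to the Riemannian random wave $\phi_I^M$ and to rewrite it in terms of $Y_M(I)$. First I will recall the general statement for the second chaotic component of the nodal volume of an arbitrary non-degenerate Gaussian field. This is \cite[Theorem 1.13/1.14]{cgv2025StecconiTodino}. It expresses $\lf(M)[2]$ as a deterministic linear combination of the Wick-squared random variables $H_2(\phi(x)/\sigma)$ and $\|d_x\phi\|^2-\E\|d_x\phi\|^2$, integrated over $M$. Up to an error controlled by the eccentricity $\e$, its kernel is that of a homothetic field. Concretely, I expect
\[
\frac{\lf(M)[2]}{c_n\lambda}=\frac{1}{2}\int_M\Big(\frac{\|d_x\phi\|^2-\E\|d_x\phi\|^2}{\sigma^2\lambda^2}-\frac{\phi(x)^2-\sigma^2}{\sigma^2}\Big)dx\,(1+O_n(\e)),
\]
with the relative error coming from replacing $\sigma(x),\lambda(f,x)$ by their averages, using \cref{eq:lambdafix}.

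Next I will insert the Karhunen--Lo\`eve expansion $\phi=\sum_{\lambda_i\in I}\gamma_i\f_i$. By orthonormality, $\int_M\phi^2=\sum\gamma_i^2$. By Green's formula, $\int_M\|d\phi\|^2=\sum\lambda_i^2\gamma_i^2$, since $\int\langle d\f_i,d\f_j\rangle=\lambda_i^2\delta_{ij}$. This is exactly the Berry cancellation mechanism. The homothetic part of the second chaos therefore collapses to
\[
\frac{1}{2\sigma^2\lambda^2}\sum_{\lambda_i\in I}(\lambda_i^2-\lambda^2)(\gamma_i^2-1).
\]
Its variance is $\frac{2}{4\sigma^4\lambda^4}\sum(\lambda_i^2-\lambda^2)^2$. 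Dividing by $\vol{n}(M)^2$ from \cref{def:Y} and using $\sigma^2\vol{n}(M)=N$ from \cref{eq:phiI} gives the main term
\[
\frac{1}{4}\frac{1}{N}\Big(\frac1N\sum(\lambda_i^2-\lambda^2)^2\Big)\frac{1}{\lambda^4}.
\]
I will check the constant $\frac14$ against the factor $2$ from $\Var(\gamma^2)$ together with the precise prefactor in the chaos formula.

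The main obstacle is the error term. The eccentric corrections to the kernel produce additional second-chaos terms of the form $\int_M a(x)\,(\cdot)\,dx$ with $\|a\|_\infty=O(\e)$. Their variance, and their covariance with the main term, must be bounded by $O_n(\e)/N$ rather than merely $O(\e)$. My first attempt is to use the bound from \cref{cor:var_dx} with $q=2$. This bounds every such cross term by
\[
\int_{M\times M}\|j^{1,1}_{x,y}C\|^2\,dx\,dy\cdot O(\e),
\]
which by \cref{eq:NEEsig} equals $O(\e)\,N/\vol{n}(M)^2\cdot\sigma^{-4}$, and this is of order $O(\e)/N$ after normalization. Finally, replacing $\lambda(f)$ by $\lambda$ costs another factor $1+O(\e)$ by \cref{eq:prelambdafix}.
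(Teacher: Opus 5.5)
Your route differs from the paper's. The paper derives nothing here: it quotes \cite[Corollary 2.1]{cgv2025StecconiTodino} for $\vol{n}(M)=1$ and passes to arbitrary volume via $Y_{cM}(I)=Y_M(cI)$ and $N^{cM}_I=N^M_{cI}$. Your direct derivation (second-chaos kernel plus $\int_M\phi^2=\sum\gamma_i^2$ and $\int_M\|d\phi\|^2=\sum\lambda_i^2\gamma_i^2$) is more informative, and it handles any volume at once. Your homothetic kernel $\frac12\big(\frac{\|d_x\phi\|^2-\E\|d_x\phi\|^2}{\sigma^2\lambda^2}-\frac{\phi(x)^2-\sigma^2}{\sigma^2}\big)$ is correct. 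It follows from $\delta(t)=\frac{1}{\sqrt{2\pi}}(1-\frac12H_2(t)+\dots)$, the second-chaos projection $\frac{\E\|Z\|}{2n}(\|Z\|^2-n)$ of $\|Z\|$, and the independence of $f(x)$ and $d_xf$.

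The step where you land on the displayed formula, however, is an arithmetic slip: $\frac{2}{4\sigma^4\lambda^4\vol{n}(M)^2}\sum(\lambda_i^2-\lambda^2)^2=\frac12\cdot\frac1N\big(\frac1N\sum(\lambda_i^2-\lambda^2)^2\big)\frac{1}{\lambda^4}$. So your computation gives the constant $\frac12$, not $\frac14$, and checking the prefactor will not repair this. Take $M=\R/2\pi\mathbb{Z}$ and $I=[\tfrac12,\tfrac52]$. The field is stationary ($\e=0$), $N=4$, $\lambda^2=\frac52$, and the bracket equals $\frac{9}{25}$. A direct two-point computation with $r(\tau)=\frac12(\cos\tau+\cos2\tau)$ gives $\Var(Y_M(I)[2])=\frac{9}{200}$, twice the value $\frac{9}{400}$ the statement predicts. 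Your computation is dimension-free, and flat square tori give homothetic examples in every dimension. As written, therefore, your argument does not prove the displayed identity; it proves the version with $\frac12$. The factor $2$ appears to be an error in the stated constant inherited from the citation. That is harmless for the paper, which only uses the formula up to $\Theta(\cdot)$, but you should state what you actually prove rather than force the match.

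Second, the error term is not yet under control. \cref{cor:var_dx} bounds the full second-chaos density of $\lf$ and carries no factor $\e$. For the perturbation you need the elementary Isserlis bound
\[
\Big|\Cov\Big(\int_M\langle A_x,(j^1_xf)^{\otimes2}-\E(j^1_xf)^{\otimes2}\rangle dx,\int_M\langle B_y,(j^1_yf)^{\otimes2}-\E(j^1_yf)^{\otimes2}\rangle dy\Big)\Big|\lesssim_n\|A\|_\infty\|B\|_\infty\int_{M\times M}\|j^{1,1}_{x,y}C\|^2dxdy .
\]
More importantly, \cref{eq:NEEsig} only controls the $(0,0)$ block. The $(1,1)$ block contributes $\int\int\|\nabla_x\nabla_yE\|^2\le\sum\lambda_i^4$. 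After normalization your error is therefore $O_n(\e)\frac1N\cdot\frac{\frac1N\sum\lambda_i^4}{\lambda^4}=O_n(\e)\frac{1+V}{N}$, where $V:=\frac1N\sum(\lambda_i^2-\lambda^2)^2/\lambda^4$ is the bracketed spectral term. This is the additive $O_n(\e)/N$ of the statement only if $\frac1N\sum\lambda_i^4\lesssim\lambda^4$. That condition holds in every regime the paper uses ($\lambda_i\le\ell$, $\lambda\asymp\ell$), but for an arbitrary bounded $I$ you must either assume it or state the error as relative.
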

\begin{proof}
The theorem coincides with \cite[Corollary 2.1]{cgv2025StecconiTodino} if $\vol{}(M)=1$. To deduce the general case, one can observe that since $c^2\Delta_{cM}=\Delta_M$, we have that $Y_{cM}(I)=Y_M(cI)$, that $N_{I}^{cM}=N_{cI}^M$ and that $\lambda_i$ and $\lambda$ change by the same constant factor, so that the expression is invariant under such rescalings.
\end{proof}
We deduce the following, also from \cite{cgv2025StecconiTodino}.
\begin{corollary}\label{cor:wMRWbercanc}
For any sequence $\eta_\ell\le \ell$, let $\e_\ell=\e(\phi_{[\ell-\eta_\ell,\ell]}^M)$. Then, we have
    \be\label{eq:RWvarmonochaos2} 
\Var\tyu Y_M([\ell-\eta_\ell,\ell])[2]\uyt
\le
\frac{1}{\ell^{n-1}\eta_\ell}\tyu \Oh\tyu \frac{\eta_\ell}{\ell}\uyt^2 +\Oh\tyu \e_\ell \uyt\uyt
    \ee
    and if $1-\a=\liminf \frac{\eta_\ell}{\ell} >0$, then
    \be\label{eq:RWvarfull_chaos2} 
\Var\tyu Y_M([\ell-\eta_\ell,\ell])[2]\uyt
=\Theta\tyu \ell^{-n}\uyt,
    \ee
    as $\ell \to +\infty$.
\end{corollary}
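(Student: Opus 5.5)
The approach is to plug the spectral data of the window into the formula of \cref{cor:berrycanc}. With $I=[\ell-\eta_\ell,\ell]$, $N=N^M_I$ and $\lambda=\lambda(\phi_I^M)$, that corollary gives
\be
\Var\tyu Y_M(I)[2]\uyt=\frac{1}{4N}\qwe \frac{1}{N\lambda^4}\sum_{i:\lambda_i\in I}(\lambda_i^2-\lambda^2)^2+O_n(\e_\ell)\ewq .
\ee
It then remains to estimate two things: the size of $N$, and the normalized spectral variance of $\lambda_i^2$ on $I$.

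\emph{Upper bound.} Every $\lambda_i$ and $\lambda$ lie in $I$, so
\be
|\lambda_i^2-\lambda^2|\le (\lambda_i+\lambda)\,|\lambda_i-\lambda|\le 2\ell\,\eta_\ell .
\ee
Moreover $\lambda^2\ge(\ell-\eta_\ell)^2$. Hence the normalized spectral variance is at most $4\eta_\ell^2/(\ell-\eta_\ell)^2$. This is $\Oh((\eta_\ell/\ell)^2)$ whenever $\eta_\ell/\ell$ stays away from $1$.

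The case $\a=0$ (the window reaching down to $0$) needs separate care. There $\lambda$ may be small compared with the largest $\lambda_i$. However, the average $\lambda^2$ over $[0,\ell]$ is $\Theta(\ell^2)$ by \cref{eq:siglam_colorful}, or more generally by \cref{lem:lambda_ell_RRW}. Then $(\lambda_i^2-\lambda^2)^2\le \ell^4$ yields a bound $\Oh(1)=\Oh((\eta_\ell/\ell)^2)$ since $\eta_\ell\sim\ell$. In all cases I will use $\lambda\gtrsim\ell$ from \cref{lem:lambda_ell_RRW}; this requires $\delta_{I_\ell}^M$ small or at least $\lambda(\phi_I)=\Theta(\ell)$, which can also be checked directly by counting eigenvalues near the top of $I$ with Weyl's law.

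Finally, Weyl's law in the form \cref{eq:weyllaw_interval} gives the lower bound $N\gtrsim \ell^{n-1}\eta_\ell$ (I will invoke its sharpness, which follows from \cref{thm:OdeltaRWintro} or, for $\eta_\ell\to\infty$, from the difference of Hörmander's laws). Then $1/N=\Oh(1/(\ell^{n-1}\eta_\ell))$, which gives \eqref{eq:RWvarmonochaos2}.

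\emph{The case $\liminf \eta_\ell/\ell>0$.} Here $\eta_\ell/\ell\to 1-\a>0$ and $N=\Theta(\ell^n)$, so the upper bound is immediate. For the lower bound I must show that the normalized spectral variance stays bounded below and that $\e_\ell\to 0$. The second point follows from \cref{lem:lambda_ell_RRW} combined with \cref{thm:OdeltaRWintro}, which give $\delta^M_{I_\ell}=\Oh((\beta_\ell+1)/\ell)\to 0$.

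For the first point, the Weyl law applied to the counting function on subintervals $[0,t\ell]$, with the Hörmander remainder $\Oh(\ell^{n-1})$, shows that the normalized empirical distribution of $\lambda_i/\ell$ on $I$ converges weakly to the density proportional to $t^{n-1}$ on $[\a,1]$. Consequently
\be
\frac{1}{N\ell^4}\sum_{i:\lambda_i\in I}(\lambda_i^2-\lambda^2)^2
\ee
converges to the variance of $t^2$ under that law. This variance is strictly positive because the law is non-degenerate when $\a<1$. Dividing by $\lambda^4/\ell^4\to$ a positive constant then gives $\Theta(1)$ inside the bracket, hence $\Theta(\ell^{-n})$ overall.

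\emph{Main obstacle.} I expect the delicate step to be the positivity and uniformity of this limiting spectral variance. It requires controlling Weyl remainders uniformly over subintervals, but the $\Oh(\ell^{n-1})$ error is negligible against $N=\Theta(\ell^n)$, so I expect it to go through.
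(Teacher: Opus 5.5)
This is correct and is essentially the paper's proof. You use the formula of \cref{cor:berrycanc}, the crude bound $|\lambda_i^2-\lambda^2|\le 2\ell\eta_\ell$ with $\lambda=\Theta(\ell)$, and Weyl's law for $N$; for the lower bound you use $\e_\ell\to 0$ together with Weyl's law on macroscopic subwindows, and your weak convergence to the density $\propto t^{n-1}$ is just a cleaner packaging of the paper's splitting of the window around $c_\a\ell$.

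Two minor points. After dividing by $\lambda^4\ge(\ell-\eta_\ell)^4$ the bound is $4\ell^2\eta_\ell^2/(\ell-\eta_\ell)^4$, not $4\eta_\ell^2/(\ell-\eta_\ell)^2$; this is harmless in the regime where you use it. Also, the lower bound $N\gtrsim\ell^{n-1}\eta_\ell$ is only available when Weyl's law is sharp on the window, not for arbitrary $\eta_\ell$; you correctly flag this, and the paper relies on it implicitly as well.
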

\begin{proof}
The first statement was proven in \cite{cgv2025StecconiTodino}. We report the proof, as it is very short.
In this case $\lambda(\phi_\ell)=\ell+o(\ell)$ and the term corresponding to the variance of the ensemble $(\lambda_i^2)_{i:\lambda_i\in I}$, is bounded by $(\eta_\ell^2+2\eta_\ell \ell)^2=o(\ell^2\eta_\ell^2)$. Finally, note that Weyl's law and \cref{eq:phiI} yield that $N=\Theta(\ell^{\n-1}\eta_\ell)$.
This implies also the upper bound in the second statement, {exploiting that $N=\Theta (\ell^n)$} in that case. 

For the lower bound in the second statement, observe that we have $\delta_\ell=\Oh(\ell^{-1})$ by \cref{prop:chk}. Thus, applying \cref{lem:lambda_ell_RRW}, we deduce that $\e_\ell=\Oh(\ell^{-1})$ and that $ \lambda^2=c_\a^2\ell^2(1+\oh(\frac 1\ell))$, where $\a<c_\a-a<c_\a<c_\a+a<1$, for some $a>0$. It follows that 
\begt 
\tyu\frac{1}{N}\sum_{i:\lambda_i\in [\ell-\eta_\ell,\ell]}(\lambda_i^2-\lambda^2)^2\uyt\frac{1}{\lambda^4}= \frac{1}{N}\sum_{i:\lambda_i\in [\a\ell-\beta_\ell,\ell]}\tyu \frac{\lambda_i^2}{\lambda^2}-1\uyt^2 
\\
\ge \frac{1}{N}\sum_{i:\lambda_i\in [(c_\a+a)\ell,\ell]}\tyu \frac{(c_\a+a)^2\ell^2}{\lambda^2}-1\uyt^2
+
\frac{1}{N}\sum_{i:\lambda_i\in [\a\ell-\beta_\ell,(c_\a-a)\ell]}\tyu 1-\frac{(c_\a-a)^2\ell^2}{\lambda^2}\uyt^2
\\
\ge \frac{1}{N}\sum_{i:\lambda_i\in [(c_\a+a)\ell,\ell]}\tyu \frac{(c_\a+\frac12 a)^2}{c_\a^2}-1\uyt^2
+
\frac{1}{N}\sum_{i:\lambda_i\in [\a\ell-\beta_\ell,(c_\a-a)\ell]}\tyu 1-\frac{(c_\a-\frac12 a)^2}{c_\a^2}\uyt^2
\\
=\Theta(1)\cdot\qwe 
\tyu \frac{(c_\a+\frac12 a)^2}{c_\a^2}-1\uyt^2(1-(c_\a+a)^n)
+
\tyu 1-\frac{(c_\a-\frac12 a)^2}{c_\a^2}\uyt^2((c_\a-a)^n-\a^n)
\ewq
\\
=\Theta(1).
\eegt
where we used Weyl Law in the last step.
We conclude, from \cref{cor:berrycanc}, that 
\be 
\Var\tyu Y_M(I) [2]\uyt= \frac14  \cdot 
\frac{1}{N}\qwe 
\Theta(1)+\Oh(\e_\ell)
\ewq
=\Theta(\ell^{-n}).
\ee
\end{proof}
\subsection{Main results on random waves: Proof of  \cref{thm:ratefall} and \cref{thm:ape}}
We use the results of the previous section to apply \cref{cor:scaberry}. 
\begin{proof}[Proof of \cref{thm:ratefall}]
As a preliminary observation, let us note that 
\be \label{eq:dafuck}
\int_{\substack{x,y\in M,
\\
\frac{r_\ell}{\ell}
\le \mathrm{dist}(x,y)\le \rho}}\|(\ell^{n-1}\eta_\ell)^{-1}j^{1,1}_{x,y}E_{[\ell-\eta_\ell,\ell]}\|^2_{g^{\ell}} \dd x \dd y =\Oh\tyu \frac{1}{\ell^{n-1}\eta_\ell}\uyt,
\ee
due to {since $\int_{M\times M} ||jE||^2=O(N_I^M)=O(\ell^{n-1}\eta_\ell)$}.
Consider the $\mC^3$ scaling limit \cref{def:scalimRW}, with $\a=1$, and $(\rho_\ell',\rho_\ell)=(0,\rho)$. The first inequality is obtained by applying \cref{thm:scasca} to the sequence $\phi_\ell=\phi^M_{[\ell-\beta_\ell,\ell]}$ and using \cref{cor:berrycanc} for the second chaos. We get the following. 
 \begin{gather}\label{eq:RWvarmonoproof} 
\Var\tyu Y_M([\ell-\eta_\ell,\ell])\uyt
\le
\frac{1}{\ell^{n-1}\eta_\ell}\tyu \Oh\tyu \frac{\eta_\ell}{\ell}\uyt^2 +\Oh\tyu \e_\ell \uyt\uyt + \Oh(\ell^{-n}r_\ell^n) \\
+ \Oh(1)\int_{\substack{x,y\in M,
\\
\frac{r_\ell}{\ell}
\le \mathrm{dist}(x,y)\le \rho}}\|(\ell^{n-1}\eta_\ell)^{-1}j^{1,1}_{x,y}E_{[\ell-\eta_\ell,\ell]}\|^4_{g^{\ell}} \dd x \dd y=\dots
\end{gather}
Using \cref{cor:decor}, with $\mathfrak{D}_\ell(r_\ell)\le \delta_\ell$ and $\|j^4_rK\|=\Oh(r_\ell^{\frac{1-n}{2}})$ by \cref{lem:bessympt}, and then \cref{eq:dafuck}, we get:
\be 
\|(\ell^{n-1}\eta_\ell)^{-1}j^{1,1}_{x,y}E_{[\ell-\eta_\ell,\ell]}\|^4_{g^{\ell}} \le 
\max\kop
\|(\ell^{n-1}\eta_\ell)^{-1}j^{1,1}_{x,y}E_{[\ell-\eta_\ell,\ell]}\|^2_{g^{\ell}} \delta_\ell^2, (1+\ell\mathrm{dist}(x,y))^{\frac{1-n}{2}\cdot 4} \pok
\ee
with $r_\ell=\Oh(1)$,
\begt 
\int_{\substack{x,y\in M,
\\
\frac{r_\ell}{\ell}
\le \mathrm{dist}(x,y)\le \rho}}
(1+\ell\mathrm{dist}(x,y))^{\frac{1-n}{2}\cdot 4} \dd x \dd y
\le \Oh(1)\int_{\frac{r_\ell}{\ell}}^{\rho} (1+\ell t)^{2-2n} t^{n-1} dt
\\
\le \Oh(\ell^{2-2n})\int_{\frac{r_\ell}{\ell}}^{\rho} t^{1-n} dt
\le \Oh(1)\begin{cases}
    \frac{\log \ell}{\ell^2}, \quad \text{if $n=2$,}
    \\
\ell^{-n}, \quad \text{if $n\ge 3$,}
\end{cases}=\ell^{-n}a(\ell)
\eegt
Then the full variance is controlled as:
\begt 
\dots\le \frac{1}{\ell^n}\tyu \Oh(r_\ell^n)+\frac{\ell}{\eta_\ell}\Oh(\e_\ell+\delta_\ell^2)+a(\ell)\uyt.
\eegt
    \end{proof}
\begin{proof}[Proof  of \cref{thm:ape}]
For $\a=1$, \cref{thm:ape} follows from \cref{thm:ratefall}, for general manifolds, while  for manifolds without conjugated pairs, the desired bound follows by substituting $\delta_{[0,\ell]}^M=O(\frac{1}{\ell\log \ell})$ in (\ref{bo1}). 

It remains only to show the case $\a<1$. \cref{thm:ratefall} is irrelevant in such setting.
The lower bound is provided by the one on the second chaos, see \cref{cor:wMRWbercanc}. The upper bound follows by \cref{prop:addendum}. Both are of order equal to
    \be 
\int_{\substack{x,y\in M,
\\
\frac{r_\ell}{\ell}
\le \mathrm{dist}(x,y)\le \rho}}\|(\ell^{n})^{-1}j^{1,1}_{x,y}E_{[\ell-\eta_\ell,\ell]}\|^2_{g^{\ell}} \dd x \dd y =\Theta (\ell^{-n}).
    \ee
\end{proof}

\begin{appendix}

\section{Some properties of the Bessel functions}
\begin{lemma}\label{lem:bessympt} For any $u\in \R^\n$, we have
\be \label{eq:Jhorm}
K_{\horm;n}(
u)=\fint_{\B^{n}} e^{i\langle u , \xi \rangle} \dd \xi=\frac{(2\pi)^{\frac{\n}2}}{b_{\n}}\frac{J_{\frac{\n}{2}}(|u|)}{|u|^{\frac \n 2}},\quad  \text{and} \quad |\nabla_u^\a K_{\horm;n}(u)|\le C(1+|u|)^{-\frac{\n+1}{2}},\ \forall \a\in \N^\n;
\ee    
\be \label{eq:Jberry}
K_{\berry;n}(
u)=\fint_{\S^{n-1}} e^{i\langle u , \xi \rangle} \dd \xi=\frac{(2\pi)^{\frac{\n}2}}{s_{\n-1}}\frac{J_{\frac{\n-2}{2}}(|u|)}{|u|^{\frac{\n -2} 2}},\quad  \text{and} \quad |\nabla_u^\a K_{\berry;n}(u)|\le C(1+|u|)^{-\frac{\n-1}{2}},\ \forall \a\in \N^\n;
\ee    
where $(J_\nu)_{\nu\in \R}$ are the Bessel functions of the first kind and $C=C(n,\a)>0$ is a constant depending only on $\a$ and $\n$.
\end{lemma}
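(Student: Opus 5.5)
The two identities should come from writing both kernels as spherical averages of plane waves and evaluating them with the classical Bessel integral. The decay estimates should then follow from the standard asymptotics of $J_\nu$ together with one extra derivative argument.

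\textbf{Berry kernel.} Fix $u\neq0$. By rotation invariance we may take $u=|u|e_1$, so
\[
K_{\berry;n}(u)=\frac{s_{n-2}}{s_{n-1}}\int_0^\pi e^{i|u|\cos\theta}\sin^{n-2}\theta\,\dd\theta .
\]
We then apply Poisson's integral
\[
J_\nu(t)=\frac{(t/2)^\nu}{\Gamma(\nu+\frac12)\Gamma(\frac12)}\int_0^\pi e^{it\cos\theta}\sin^{2\nu}\theta\,\dd\theta
\]
with $\nu=\frac{n-2}{2}$. Using $s_{n-2}=2\pi^{\frac{n-1}2}/\Gamma(\frac{n-1}2)$, the Gamma factors collapse, and we obtain $K_{\berry;n}(u)=\frac{(2\pi)^{n/2}}{s_{n-1}}\,J_{\frac{n-2}2}(|u|)\,|u|^{-\frac{n-2}2}$. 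This is only bookkeeping of constants; for $n=1$ the same formula gives $\cos$, consistent with the paper.

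\textbf{Hörmander kernel.} Integrate in polar coordinates:
\[
K_{\horm;n}(u)=\frac{s_{n-1}}{b_n}\int_0^1 r^{n-1}K_{\berry;n}(ru)\,\dd r .
\]
The identity $\frac{d}{dt}\bigl(t^{\nu+1}J_{\nu+1}(t)\bigr)=t^{\nu+1}J_\nu(t)$ with $\nu=\frac{n-2}2$ yields $\int_0^1 r^{n-1}\frac{J_{\nu}(r|u|)}{(r|u|)^{\nu}}\,\dd r=\frac{J_{n/2}(|u|)}{|u|^{n/2}}$, which gives the stated formula.

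\textbf{Decay bounds.} Both kernels are Fourier transforms of compactly supported finite measures, so they are entire and all their derivatives are bounded by a constant. This handles $|u|\le1$, and it remains to treat $|u|\ge1$.

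For large $|u|$, a derivative $\nabla_u^\a$ of $\int_{\S^{n-1}}e^{i\langle u,\xi\rangle}\dd\xi$ equals $\int_{\S^{n-1}}(i\xi)^\a e^{i\langle u,\xi\rangle}\dd\xi$, the Fourier transform of a smooth density on the sphere. Stationary phase on a hypersurface with nonvanishing Gaussian curvature then gives the decay $O(|u|^{-\frac{n-1}2})$. Equivalently, one can use $J_\nu(t)=O(t^{-1/2})$ together with the fact that derivatives of $t^{-\nu}J_\nu(t)$ are again combinations of $t^{-\nu-k}J_{\nu+k}(t)$, which are no worse.

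For $K_{\horm;n}$ the claim is one half-power better, namely $(1+|u|)^{-\frac{n+1}2}$. The derivative $\nabla^\a\int_{\B^n}e^{i\langle u,\xi\rangle}\dd\xi=\int_{\B^n}(i\xi)^\a e^{i\langle u,\xi\rangle}\dd\xi$ is the transform of a smooth density on the ball. Integrating by parts once in the radial direction using $e^{i\langle u,\xi\rangle}=\frac{\langle u,\nabla_\xi\rangle}{i|u|^2}e^{i\langle u,\xi\rangle}$ moves the integral to a boundary term on $\S^{n-1}$ times $|u|^{-1}$, plus a volume term of the same form. Iterating and then applying the sphere bound gives $O(|u|^{-1-\frac{n-1}2})$.

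The main obstacle, if one can call it that, is making this gain uniform in $\a$. I would handle it by recording the general fact that the Fourier transform of $a(\xi)\mathbf 1_{\B^n}$, with $a$ smooth, decays like $|u|^{-\frac{n+1}2}$. With that fact in hand, the constant $C(n,\a)$ depends on finitely many derivatives of $\xi^\a$.
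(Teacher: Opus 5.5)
Your proposal is correct, and for the decay estimates it follows a genuinely different route from the paper. The paper's proof takes the closed forms as known and obtains the bounds from special-function facts alone. The large- and small-argument asymptotics of $J_\nu$ give $|J_\nu(t)|\le C(1+t)^{-1/2}$, hence $t^{-\nu}|J_\nu(t)|\lesssim(1+t)^{-\nu-\frac12}$. This is $(1+|u|)^{-\frac{n+1}2}$ for $\nu=\frac n2$ and $(1+|u|)^{-\frac{n-1}2}$ for $\nu=\frac{n-2}2$, and the paper handles derivatives with the recurrence $J_\nu'=\frac12(J_{\nu-1}-J_{\nu+1})$. Your Poisson-integral computation and the identity $\frac{d}{dt}(t^{\nu+1}J_{\nu+1})=t^{\nu+1}J_\nu$ supply the derivation of both formulas that the paper omits; the constants do collapse as you claim. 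Your \emph{equivalently} remark for $K_{\berry;n}$ is essentially the paper's argument.

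Your main argument is Fourier-analytic instead. It combines the trivial bound $|\nabla^\alpha_u\hat\mu(u)|\le\int|\xi^\alpha|\,d\mu\le 1$, stationary phase on $\S^{n-1}$, and integration by parts to pass from $\B^n$ to its boundary at the cost of a factor $|u|^{-1}$. This treats all Cartesian derivatives $\nabla_u^\alpha$ uniformly, with no chain rule for radial functions. It also avoids the apparent singularities at $u=0$, which the recurrence route as sketched in the paper passes over. Finally, it explains the extra half power for $K_{\horm;n}$ as a boundary effect valid for any smooth amplitude on the ball. The Bessel route is shorter given the classical asymptotics, and it yields the explicit oscillatory leading term $\cos(t-\nu\frac\pi2-\frac\pi4)$, which the paper relies on elsewhere.

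Two small precisions. Your integration by parts is along the fixed direction $\hat u=u/|u|$, not radial, so the boundary amplitude is $a(\xi)\langle\hat u,\xi\rangle$. It depends on $\hat u$ but stays in a compact family of smooth functions, so the stationary-phase constant is uniform. Also, no uniformity in $\alpha$ is required, since the statement allows $C=C(n,\alpha)$.
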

\begin{proof}
   
When $\a=0$, it follows
 \be   J_{\frac{\nu}2}(t)\le \frac{C_\nu 
    }{\sqrt{(1+|t|)}},
    \ee
    
    for $\nu \in \N$.
    This can be seen from the asymptotic behavior of the Bessel functions:
\bega\label{eq:porcamad}
J_\nu(t)&=(\frac{2}{\pi t})^{1/2}\cos (t-\nu\frac{\pi}{2}-\frac{\pi}{4})+O(t^{-3/2}) \quad \mbox{ as } t \to \infty,\\
J_\nu(t)&=\frac{(t)^{\nu}}{2^\nu \nu!}
+O(t^{\nu+2}) \quad \mbox{ as } t \to 0,
\eega
  see for instance \cite[Eq. 9.2.1 and 9.1.7]{Abramowitz}
   . For general $\a$ (we only need $|\a|\le 2$), the recursive formula (\cite[Eq. 9.1.27]{Abramowitz})
$$J'_\nu(x)=\frac{1}{2} [ J_{\nu-1}(x)-J_{\nu+1}(x)]$$ leads to the same bound with $C=C(\nu,\alpha)$.    
\end{proof}

\section{Proof of \cref{cor:berrycanc2}}
\begin{theorem}\label{thm:nonzero} Let $(M,g)$ be a compact Riemannian manifold of dimension $n\ge 2$ without conjugate points. Let us consider the monochromatic case: $\phi_\ell:=\phi_I^M$, with $I=[\ell-\eta_\ell, \ell]$, where $\eta_\ell=\oh(\ell)$. Assume in addition that: if the dimension is $\d=2$, then $\eta_\ell\log\ell=\Oh(\ell)$. Then, we have
      \be
\Var\tyu \frac{\vol{n-1}(\{x:\phi_\ell(x)=t\sigma_\ell(x)\})[2]}{\lambda_\ell}\uyt\ge\Theta\tyu \frac{1}{N^M_{[\ell-\eta_\ell,\ell]}}\uyt,
    \ee
where $\sigma_\ell(x)^2:=\Var(\phi_I^M(x))$ and $\lambda_\ell:=\lambda\tyu \phi_{[\ell-\eta_\ell,\ell]}^M\uyt$.
\end{theorem}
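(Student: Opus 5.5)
The plan is to compute the second Wiener chaos component of the level volume explicitly and then bound its variance from below by a Weyl-law quantity. Write $f_\ell:=\phi_\ell/\sigma_\ell$, which is a unit-variance field. The level set $\{\phi_\ell=t\sigma_\ell\}$ equals $\{f_\ell=t\}$. We use the Kac--Rice/chaos representation of the level-$t$ volume measure $\m L_{f_\ell,t}(dx)=\delta_t(f_\ell(x))\|d_xf_\ell\|\,dx$, exactly as in \cite{cgv2025StecconiTodino}, but now with the Hermite coefficients of $\delta_t$ in place of $\delta_0$.

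The second chaos has two types of terms.
\begin{itemize}
\item A term $\frac{\varphi(t)}{2}H_2(t)\,\E\|d_xf_\ell\|\cdot H_2(f_\ell(x))$, which comes from the second Hermite coefficient of $\delta_t$. Here $\varphi$ is the standard Gaussian density and $H_2(t)=t^2-1$.
\item A term $\varphi(t)\times$(second-chaos part of $\|d_xf_\ell\|$), which is a combination of $H_2$ of the normalized gradient components.
\end{itemize}
There are also cross terms $H_1(t)H_1(f)\cdot H_1(d f)$. These vanish in expectation at leading order because $f(x)$ and $d_xf(x)$ are independent for a unit-variance field. Up to errors $\Oh(\e_\ell)$, and by \cref{lem:lambda_ell_RRW} $\e_\ell\le 8\delta_\ell\to0$ (no conjugate points, via \cref{thm:OdeltaRWintro}), the gradient term can be rewritten through the identity $\|d f\|^2$-trace. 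For a monochromatic field $\Delta\phi\approx-\ell^2\phi$, and integrating the gradient second-chaos term over $M$ by parts turns it into $-c_n\lambda\,\varphi(t)\int_M H_2(f_\ell)$ plus a small error. This is precisely the Berry cancellation mechanism of \cite[Cor. 2.1]{cgv2025StecconiTodino}: when $t=0$ it exactly cancels the first term (up to $\Oh(\e_\ell)$ and the spread of eigenvalues), but when $t\neq0$ it leaves a residual. I expect the total to take the form
\[
\frac{\m L_{f_\ell,t}(M)[2]}{\lambda_\ell}=\kappa_n\varphi(t)\,t^2\int_M H_2(f_\ell(x))\,dx+\mathcal R_\ell,
\]
with $\kappa_n\neq0$ an explicit dimensional constant. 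Indeed, at $t=0$ the coefficient $H_2(t)+1=t^2$ vanishes, matching the $t=0$ cancellation.

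Next we compute the variance of the main term. Since $\E[H_2(f(x))H_2(f(y))]=2C_\ell(x,y)^2$, its variance is
\[
2\kappa_n^2\varphi(t)^2t^4\int_{M\times M}\frac{E_\ell(x,y)^2}{\sigma_\ell(x)^2\sigma_\ell(y)^2}\,dx\,dy .
\]
Because $\sigma_\ell(x)^2=\sigma(\phi_\ell)^2(1+\Oh(\e_\ell))$, \cref{eq:NEEsig} shows this equals $2\kappa_n^2\varphi(t)^2t^4\,\vol{n}(M)^2/N^M_{I}\,(1+\oh(1))$, i.e. $\Theta(1/N)$ for $t\ne0$. Finally the remainder $\mathcal R_\ell$ must have variance $\oh(1/N)$. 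It consists of:
\begin{itemize}
\item the eigenvalue-spread term $\frac1N\sum(\lambda_i^2-\lambda^2)^2/\lambda^4=\Oh(\eta_\ell^2/\ell^2)=\oh(1)$, as in \cref{cor:wMRWbercanc};
\item the eccentricity errors $\Oh(\e_\ell)$, each multiplied by $1/N$ via the bound of \cref{cor:var_dx} together with $\int\|j^{1,1}C\|^2\lesssim$ the Weyl-law integral \cref{eq:dafuck}.
\end{itemize}
The lower bound for the main term then survives by the reverse triangle inequality for $L^2$ norms.

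The main obstacle is the exact identification of the second-chaos projection for a non-homothetic field, with $\sigma_\ell(x)$ non-constant, and showing that the integration-by-parts step costs only $\Oh(\e_\ell+\tau_\ell)$ times the $1/N$ scale rather than something larger. I would first carry it out in the homothetic model (where it reduces to the classical sphere and torus computations) and then control the perturbation terms $d\e_0$ by means of $\tau(\phi_\ell)=\Oh(\delta_\ell)$, as in the proof of \cref{thm:dragoverkill}. Where the hypothesis $\eta_\ell\log\ell=\Oh(\ell)$ for $n=2$ enters is only through ensuring $\delta_\ell\to0$ via \cref{thm:OdeltaRWintro}.
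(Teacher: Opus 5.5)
Your proposal is correct and has the same skeleton as the paper's proof. You split the level-$t$ second chaos into $e^{-t^2/2}$ times the nodal second chaos plus a term proportional to $t^2H_2(f(x))$. You get $\Theta(1/N)$ for the latter from $\E\{H_2(f(x))H_2(f(y))\}=2C(x,y)^2$ and $\int_{M\times M}C^2=\Theta(\vol{n}(M)^2/N)$. You then conclude by the reverse triangle inequality once the nodal piece is shown to be $\oh(1/N)$.

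It differs from the paper in two respects.

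First, the splitting is an exact pointwise identity, so no integration by parts is needed. The fields $f(x)$ and $d_xf$ are independent, and the even functional $\|d_xf\|$ has no first chaos; hence your cross terms vanish identically, not only ``at leading order''. Comparing the Hermite coefficients of $\delta_t$ and $\delta_0$ then gives
\[
\bigl(\delta_t(f(x))\|d_xf\|\bigr)[2]=e^{-\frac{t^2}{2}}\bigl(\delta_0(f(x))\|d_xf\|\bigr)[2]+\frac{\varphi(t)\,t^2}{2}\,\E\{\|d_xf\|\}\,H_2(f(x)),
\]
which is \cite[eq. (3.34)]{cgv2025StecconiTodino}, the identity the paper uses. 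So your $\m R_\ell$ is exactly $e^{-t^2/2}$ times the normalized nodal second chaos. The ``main obstacle'' you anticipate (non-homothetic projection, non-constant $\sigma_\ell(x)$, integration by parts) is already absorbed in \cref{cor:berrycanc}. You can simply cite \cref{cor:wMRWbercanc}, which bounds that variance by $\frac{1}{\ell^{n-1}\eta_\ell}\bigl(\Oh(\eta_\ell/\ell)^2+\Oh(\e_\ell)\bigr)=\oh(1/N)$.

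Second, this direct second-chaos bound is actually leaner than the paper's route. The paper dominates the nodal second chaos by the full nodal variance and invokes \cref{cor:berrycanc_intro}, which rests on the fourth-chaos analysis behind \cref{thm:ape}. That is exactly where the hypothesis $\eta_\ell\log\ell=\Oh(\ell)$ for $n=2$ is used. Your route needs only $\eta_\ell=\oh(\ell)$ and $\e_\ell\to0$.

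Your remark that the $n=2$ hypothesis is what ensures $\delta_\ell\to0$ is inaccurate. In \cref{thm:OdeltaRWintro} the two-dimensional Euclidean term satisfies $m_\ell\sqrt{\beta_\ell/\ell}\le\sqrt{\beta_\ell/\ell}$, which tends to $0$ without that assumption.
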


\begin{proof}
Let $\e=\e(\phi_{[\ell-\eta_\ell,\ell]}^M)$ be as in \cref{def:three}. Note that, by \cref{thm:ape}, we have that $\e = \oh(1)$ as $\ell \to \infty$. Moreover, notice that \cref{cor:berrycanc_intro} applies in this setting.

Let us define 
$$ 
f(x):=\frac{\phi_I^M(x)}{\sqrt{\Var(\phi_I^M(x))}}.
$$

For any $x\in M$, denote by $S_xM :=S(T_xM)= \{u \in T_xM : \|u\|_g = 1\}$ the unit tangent sphere at $x$.
From \cite[eq. (3.34)]{cgv2025StecconiTodino}, for $x\in M$ and $u\in S_xM$, we write the projection onto the second Wiener chaos as
\begin{align}
\hat{\m L}_{f-t}(x,u)[2]&:=
\sum_{a+b=1}
e^{-\frac{t^2}{2}}\frac{H_{2a}(t)}{H_{2a}(0)}
\frac{\coeff(a,b)}{s_\n\sqrt{\n}} H_{2a}(f(x))H_{2b}\tyu \frac{\langle \nabla_xf, u\rangle}{\|\Lambda_x u\|} \sqrt{\n}\uyt \left\|\Lambda_x u\right\|\\
&=
\frac{e^{-\frac{t^2}{2}}}{2}\frac{1}{s_\n\sqrt{\n}} \qwe
 H_{2}(t)
H_{2}(f(x)) \left\| u\right\|_{g^f}+ 
 H_{2}\tyu \frac{\langle \nabla_xf , u\rangle}{\|u\|_{g^f}} \sqrt{\n}\uyt \left\|u\right \|_{g^f} \ewq \\
&=
e^{-\frac{t^2}{2}} \qwe
\frac{t^2}{2s_\n\sqrt{\n}}
H_{2}(f(x)) \|u\|_{g^f}+ \hat{\mathcal{L}}_{f}(x,u)[2] \ewq, \label{eq}
\end{align}
where in the first passage $\Lambda_x$ is defined in \cite[Sec. 3.4, def. 3.6]{cgv2025StecconiTodino} and gives $\|\Lambda_x u\|=\sqrt{n}\|u\|_{g^f}$. 

Let us focus on the first term in \cref{eq}. We have

\bega
\E\qwe\frac{t^2}{2s_\n\sqrt{\n}}
H_{2}(f(x)) \left\| u\right\|_{g^f}\frac{t^2}{2s_\n\sqrt{\n}}
H_{2}(f(y)) \left\| v\right\|_{g^f}\ewq &=
\frac{t^4}{4s_\n^2 \n } \E[
H_{2}(f(x))H_{2}(f(y))]\left\| u\right\|_{g^f}\left\| v\right\|_{g^f}\\&
=\frac{t^4}{2s_\n^2 \n } C(x,y)^2\left\| u\right\|_{g^f}\left\| v\right\|_{g^f},
\eega
       where $C(x,y):=\E[f(x)f(y)]$.
It then follows, using \cite[Definition 3.9]{cgv2025StecconiTodino} and the triangular inequality of the form $\|X+Y\|_{L^2}^2 \ge \frac{1}{2}\|X\|_{L^2}^2 - \|Y\|_{L^2}^2$, that

\bega\label{var_level}
   & \Var\tyu \frac{\vol{n-1}(\{x:\phi_\ell(x)=t\sigma_\ell(x)\})[2]}{\lambda_\ell}\uyt   
   =
  \Var\tyu \frac{1}{\lambda_\ell}\int_{M}\int_{S_xM}\hat{\m L}_{f-t}(x,u)[2] dudx\uyt
   \\
&\geq \frac{e^{-t^2}}{\lambda_\ell^2} \bigg\{  \frac{t^4}{4s_\n^2 \n}\int_{M\times M} \int_{S_xM\times S_yM}
 C(x,y)^2 \left\| u\right\|_{g^f} \left\| v\right\|_{g^f} -\E(\hat{\m L}_{f}(x,u)[2]\hat{\m L}_{f}(y,v)[2])\,du\,dv\,dx\,dy\bigg\} . 
\eega
We note that, from \eqref{eq:eps}, $$\bigg|\frac{\sqrt{\E[|d_x\phi_\ell(u)|^2]}\sqrt{n}}{\sigma_\ell \lambda_\ell} -1 \bigg| \leq \varepsilon \quad \Rightarrow \quad \bigg|\frac{\E[|d_x\phi_\ell(u)|^2]\n}{\sigma_\ell^2 \lambda_\ell^2} -1\bigg|\leq 2\varepsilon+\varepsilon^2=O(\varepsilon) $$ 
and thus 
$$\bigg| \frac{n}{\lambda_\ell^2} \bigg|\bigg| \frac{u}{\|u\|_{g}}\bigg|\bigg|^2_{g^f}-1\bigg| \leq 2 \varepsilon+\varepsilon^2 \quad \Rightarrow \quad \bigg| \|u\|_{g^f}^2- \frac{\lambda_\ell^2}{n} \|u\|_{g}^2\bigg| \leq \frac{\lambda_\ell^2}{n} \|u\|_g^2 (2 \varepsilon+\varepsilon^2). $$

Defining $R_x(u):= \|u\|_{g^f}^2-\frac{\lambda_\ell^2}{n}\|u\|_{g}^2$, we have that  $R(u):=\sup_{x\in M,\\u\ne0} \frac{|R_x(u)|}{\|u\|^2_{g}} \leq \frac{\lambda_\ell^2}{n}\varepsilon(2+\varepsilon)$. 
This implies that

\bega
\|u\|_{g^f}&= \sqrt{\frac{\lambda_\ell^2}{n}\|u\|_{g}^2+ R_x(u)}=\frac{\lambda_\ell}{\sqrt{n}} \|u\|_{g}\sqrt{1+\n\frac{R_x(u)}{\lambda_\ell^2 \|u\|_{g}^2}}=\frac{\lambda_\ell}{\sqrt{n}} \|u\|_{g}\tyu 1+O\tyu\frac{R(u)}{\lambda_\ell^2 \|u\|_{g}^2}\uyt\uyt \\ &= \frac{\lambda_\ell}{\sqrt{n}} \|u\|_{g}\tyu 1+O(\varepsilon)\uyt
\eega 

Exploiting this expression in the first term on the r.h.s. in \cref{var_level} we obtain
\bega
&\frac{1}{\lambda_\ell^2}  \frac{e^{-t^2}t^4}{4s_\n^2 \n}\int_{M\times M} \int_{S_xM\times S_yM}
 C(x,y)^2 \left\| u\right\|_{g^f} \left\| v\right\|_{g^f}\,du\,dv\,dx\,dy \\&
=  \frac{e^{-t^2}t^4}{4s_\n^2 \n^2}\int_{M\times M} \int_{S_xM\times S_yM}
 C(x,y)^2 ||u||_{g} ||v||_{g} \left(1+O(\varepsilon)\right)\left(1+O(\varepsilon)\right)\,du\,dv\,dx\,dy\\&
= \frac{e^{-t^2}t^4s_{n-1}^2}{4s_\n^2 \n^2}\int_{M\times M} 
 C(x,y)^2 \left(1+O\left(\varepsilon \right)\right)\,dxdy.
 \eega

Observe (as in \cite[Eq. (2.4)]{cgv2025StecconiTodino}) that the covariance function $C$ of the unit-variance normalization $f$ of a random wave $\phi_I$, is 
\be 
C(x,y)=\frac{(1+R(x,y))}{\sigma(\phi_I^M)^2}\sum_{\lambda_i\in I}\f_i(x)\f_i(y),
\ee where $R(x,y)$ denotes a function with $\|j''_{x,y}R\|_{g}\le \e$. Recalling that 
$$\sigma(\phi_I^M)^2
=\frac{N_I^M}{\vol{n}(M)} $$

the integral of $C(x,y)^2$ gives $\Theta (\frac{\vol{n}(M)}{N_I^M})$.
Finally, since the second term involves 
\begt 
\frac{1}{\lambda_\ell^2} \int_{M\times M}\int_{S_xM \times S_yM} \E(\hat{\m L}_{f}(x,u)[2]\hat{\m L}_{f}(y,v)[2])\,du\,dv\,dx\,dy 
\\
=\E\qwe \left|\lambda_\ell^{-1}\hat{\mathcal{L}}_{f}(x,u)[2]\right|^2\ewq\le \Var \qwe \left|\lambda_\ell^{-1}\hat{\mathcal{L}}_{f}(x,u)\right|^2\ewq =\Var\tyu Y_M([\ell-\eta_\ell,\ell])\uyt,
\eegt
which is proven to be $ \oh\tyu\frac{1}{\sigma(\phi_I^M)^2}\uyt $ by \cref{cor:berrycanc_intro}, this concludes the proof.
\end{proof}

\end{appendix}
\bibliographystyle{abbrv}
\bibliography{Shermite.bib}

\end{document}